
\documentclass{amsart}
\usepackage{amsmath}
\usepackage{amsfonts}
\usepackage{stmaryrd}
\usepackage{amssymb}
\usepackage{graphicx}
\usepackage[all]{xy}
\usepackage{pdfsync}
\usepackage{enumerate}
\usepackage{hyperref}
\usepackage{multirow} 
\usepackage{tensor} 
\usepackage{xcolor}



\usepackage[english]{babel}
\usepackage[utf8]{inputenc}
\usepackage[T1]{fontenc}

\usepackage{tikz-cd}
\usepackage[dvips]{geometry}

\newtheorem{theorem}{Theorem}[section]
\theoremstyle{definition}
\newtheorem{definition}{Definition}[section]

\theoremstyle{corollary}
\newtheorem{corollary}[theorem]{Corollary}

\newtheorem{example}[theorem]{Example}
\newtheorem{examples}[theorem]{Examples}

\newtheorem{lemma}[theorem]{Lemma}
\theoremstyle{proposition}
\newtheorem{proposition}[theorem]{Proposition}

\theoremstyle{remark}
\newtheorem{remark}[theorem]{Remark}



\newenvironment{sketchproof}[1][Sketch of proof]{\noindent\textit{#1.\;}}{\ \hfill$\lozenge$}

\def\e{{\varepsilon}}
\def\l{{\lambda}}
\def\i{{\iota}}

\def\calG{{\mathcal{G}}}
\def\calH{{\mathcal{H}}}
\def\calK{{\mathcal{K}}}
\def\calO{{\mathcal{O}}}
\def\V{{\mathcal{V}}}

\def\frakk{{\mathfrak{k}}}
\def\frakl{{\mathfrak{l}}}

\newcommand{\Rr}{\mathbb R}


\newcommand{\G}{\mathcal{G}}            
\renewcommand{\H}{\mathcal{H}}          

\newcommand{\Ker}{\text{\rm Ker}\,}     
\newcommand{\tto}{\rightrightarrows}    






\tikzset{commutative diagrams/.cd,
mysymbol/.style={start anchor=center,end anchor=center,draw=none} }

\newcommand{\nocontentsline}[3]{}
\newcommand{\tocless}[2]{\bgroup\let\addcontentsline=\nocontentsline#1{#2}\egroup}

\newcommand{\Addresses}{{
  \bigskip
  \footnotesize

  \textsc{Universidade Federal Fluminense, Instituto de Matemática e Estatística, Rua Prof.
Marcos Waldemar de Freitas Reis, S/n, 24210-201, Niterói, RJ, Brazil}\par\nopagebreak
  \textit{E-mail address}, C.~C.~C\'ardenas: \texttt{ccardenascrist@gmail.com}



}}

\begin{document}

\title{Deformations of Lie groupoid morphisms}
 
\author{Cristian Camilo C\'ardenas}

%
%
%
\maketitle 
 
\begin{abstract}
We establish the deformation theory of Lie groupoid morphisms, describe the corresponding deformation cohomology of morphisms, and show the properties of the cohomology. We prove its invariance under isomorphisms of morphisms. Additionally, we establish stability properties of the morphisms using Moser-type arguments. Furthermore, we demonstrate the Morita invariance of the deformation cohomology and consider simultaneous deformations of the morphism, its domain, and codomain. These simultaneous deformations are utilized to define cohomology for generalized morphisms and to study deformations of multiplicative forms on Lie groupoids.
\end{abstract}

\tableofcontents    

\section{Introduction}

Deformation theory constitutes a distinct research area in mathematics, with numerous noteworthy works emerging in connection with various branches of mathematics and physics such as algebraic geometry, quantum mechanics, complex geometry, and algebra. As defined in Kontsevich's lectures, deformation theory is the infinitesimal study of moduli spaces. Specifically, it involves the infinitesimal study of families of structures, which we call deformations, around a given structure. This infinitesimal study yields tangent vectors to the moduli spaces, measuring the direction of each family. Typically, these vectors are elements of low degree (1 or 2) for a specific cohomology, known as the \emph{deformation cohomology} of that structure. Almost every mathematical object possesses its own deformation theory. Examples include Lie algebras, Lie subalgebras, associative algebras, and algebra homomorphisms, each exhibiting a rich deformation theory \cite{richardson1969deformationssubalg.}, \cite{gerstenhaber1964deformation}, \cite{nijenhuisrichardson1967deformationshomom}. Some works in the realm of differential geometry span topics ranging from complex manifolds, foliations, G-structures, and pseudogroup structures to Lie groups, Lie algebroids, and Lie groupoids, among others \cite{kodairaspencer1958deformations}, 
\cite{2001deformationtransvhomogfoliations}, 
\cite{2022deformationssympfoliationsZambon}, 
\cite{1966deformationpseudogstruc}, 
\cite{griffiths1964deformationsGstructures}, 
\cite{2015methoddeformingGstructBunk}, 
\cite{blaom2006geometricstructdeformedinfinitesymmeties}.   
Notably, deformations of the latter two Lie objects enable us to address the deformations of many well-known geometric structures, including foliations, Lie groups, Lie group actions, and Poisson structures, as evidenced in  \cite{CM}, \cite{CMS} and \cite{deformationscptfoliat}. This is not coincidental; indeed, Lie algebroids and Lie groupoids have recently attracted significant attention for their role in codifying various geometric structures. Moreover, they boast numerous connections with physics-related topics such as symplectic foliations, Poisson structures, Dirac structures, quantization, non-commutative geometry, and more \cite{M}, \cite{MM}, \cite{BCWZ}, \cite{cabrera2022generating-semiquantizat}, \cite{crainic2011lecturesintegrability}, \cite{crainic2004integrability}.

In general, the philosophy of Deformation Theory asserts that deformation cohomology arises from either a DGLA structure or a $L_{\infty}$-algebra structure, as exemplified in \cite{Zambon-Schatz2013}, \cite{oh-park2005}. In this context, their Maurer-Cartan elements correspond to structures nearby to the initial structure undergoing deformation. The study of deformations, therefore, allows us to understand the behaviour of structures around a given one.

However, obtaining the algebraic structure on the deformation complex is not always straightforward. This is exemplified well in the case of Lie groups or Lie group homomorphisms \cite{CardS1}, where the usual Lie group cohomology is employed to study deformations, but an algebraic structure on this complex remains unknown. The \emph{deformation complex of morphisms of Lie groupoids}, which we will work in this paper, also falls into this category of having (so far) an unknown algebraic structure on the complex. Nevertheless, we can still use the deformation cohomology to approach the study of nearby structures in an alternative manner. Indeed, in Section \ref{Section:Rigidity} we approach the problem of \emph{stability under deformations} of morphisms which give conditions to understand when \emph{any} smooth family of Lie groupoid morphisms represents a constant path in the corresponding moduli space. 
\emph{Stable} morphisms under deformations are closely related to representing the isolated points on the moduli space of morphisms. For example, one can consider the compact-open topology in the space of morphisms between two Lie groupoids. If this space is locally path-connected around a fixed morphism $\Phi$ then the stability of $\Phi$ under deformations amounts to representing an isolated point in the moduli space. 
However, despite the known fact that the space of smooth maps between manifolds is locally path-connected under compactness of the domain (See \cite{guillemin-golubitsky}, Theorem 1.11, p. 76), the question of whether or not the space of morphisms between two Lie groupoids is locally path-connected is rather subtle, and we leave it to be explored elsewhere. The stability under deformations is then equivalent to stating that $\Phi$ can be deformed only in some \emph{trivial} ways determined by the conjugation by either \emph{bisections} or \emph{gauge-maps}; we will detail these \emph{trivial ways} and relations between them in Section \ref{Section:Deform.morph.}.
For instance, we employ the properness of Lie groupoids to verify that the stability under deformations property holds for morphisms as stated in the following result.

\begin{theorem}
Let $\Phi:\calH\rightarrow\calG$ be any morphism of Lie groupoids. If $\H$ is proper and $\calG$ is a transitive Lie groupoid then any deformation of $\Phi$ is trivial.
\end{theorem}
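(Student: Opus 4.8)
The plan is to run a Moser-type argument. Given a deformation $\Phi_t$ of $\Phi=\Phi_0$, I would first differentiate the family to obtain, at each time $t$, an infinitesimal deformation $u_t$, a degree-$1$ cocycle in the deformation complex of $\Phi_t$ set up in Section \ref{Section:Deform.morph.}. By the general theory the deformation is trivial as soon as each $u_t$ is a coboundary, uniformly in $t$: a smooth family of degree-$0$ primitives $\xi_t$ plays the role of the infinitesimal generator of a family of conjugations (by bisections of $\calG$ and gauge-maps of $\calH$), and integrating $\xi_t$ exhibits $\Phi_t$ as conjugate to $\Phi_0$. Thus the whole problem reduces to the vanishing of the first cohomology of the deformation complex of $\Phi$, together with a smooth choice of primitive and an integration step.

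To obtain the vanishing I would exploit the two hypotheses separately. The coefficients of the deformation complex come from the target and are controlled by the adjoint representation up to homotopy of $\calG$, namely the two-term complex $A_\calG \xrightarrow{\rho} TN$, pulled back along $\Phi$ (which covers $f:M\to N$). Here the transitivity of $\calG$ is decisive: it forces the anchor $\rho$ to be surjective, so this two-term complex is quasi-isomorphic to the honest representation on the isotropy Lie algebra bundle $\mathfrak{g}_\calG=\ker\rho$. Geometrically, surjectivity of $\rho$ is exactly what lets me absorb the variation of the object map $f_t$ into conjugation by a (time-dependent) bisection that moves objects, reducing the infinitesimal deformation to its isotropy-valued part. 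After this reduction the relevant cohomology is the groupoid cohomology $H^\bullet(\calH,f^*\mathfrak{g}_\calG)$ with coefficients in an honest representation of $\calH$.

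Now properness enters. Since $\calH$ is proper, its cohomology with coefficients in any representation vanishes in all positive degrees; the standard proof averages cochains against a Haar system using a cutoff function, and this averaging operator is itself smooth, so it simultaneously yields $H^1(\calH,f^*\mathfrak{g}_\calG)=0$ and a homotopy producing the primitive $\xi_t$ depending smoothly on $t$. Combining the two steps gives that the first cohomology of the deformation complex of $\Phi$ vanishes, together with a smooth trivializing primitive.

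Finally I would feed $\xi_t$ into the Moser scheme, integrating the time-dependent generator to a family of conjugations by bisections and gauge-maps and concluding that every $\Phi_t$ is conjugate to $\Phi_0$, i.e. the deformation is trivial. I expect the main obstacle to be the integration step rather than the cohomological one: one must guarantee that the flow generated by $\xi_t$ is complete and survives up to $t=1$, and here properness of $\calH$ is again what controls supports and forces completeness of the conjugating flow. A secondary technical point is the bookkeeping in the quasi-isomorphism that replaces the adjoint-up-to-homotopy coefficients by the isotropy representation, ensuring it is compatible with both the differential of the deformation complex and the prescribed notion of trivial deformation.
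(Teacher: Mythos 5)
Your proposal follows essentially the same route as the paper: the Moser-type argument (Theorem \ref{gauge-triviality}) reduces triviality to smooth exactness of the deformation cocycles $X_\e$, transitivity of $\calG$ kills the normal directions so that $H^{1}_{def}$ is computed by $H^{1}(\calH,\phi^{*}\mathfrak{i}_\calG)$ (the paper packages this via the low-degree exact sequence \eqref{lowdegreemorphism} rather than your quasi-isomorphism of the adjoint two-term complex), and properness of $\calH$ --- applied to the single cocycle $\tilde{X}$ over the proper groupoid $\calH\times I$, which is how the paper secures smoothness of the primitive --- makes that group vanish. The one place your reasoning diverges is the integration step: completeness of the trivializing flow is not where properness enters (Lemma \ref{lemma:completeflow} gives it unconditionally, because the base projection of the Moser vector field integrates precisely to the prescribed family $\phi_\e$), so no control of supports is needed there.
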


In Section \ref{Section:Deformation complex morph.} we introduce the deformation complex of morphisms which, together with deformations of morphisms, are the main notions we will work in this paper. As in the statement below, we show that \emph{naturally isomorphic Lie groupoid morphisms have isomorphic deformation cohomologies}. This result establishes the deformation cohomology of morphisms as a key concept within the category of Lie groupoids and equivalence classes of morphisms.

\begin{theorem}
If $\Phi$ and $\Psi$ are equivalent morphisms then their deformation cohomologies $H^{*}_{def}(\Phi)$ and $H^{*}_{def}(\Psi)$ are isomorphic.
\end{theorem}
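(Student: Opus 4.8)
The equivalence between $\Phi$ and $\Psi$ is witnessed by a natural isomorphism, that is, a smooth map $T\colon\calH^{(0)}\to\calG$ on the unit manifold of $\calH$ with $T(x)\colon\Phi(x)\to\Psi(x)$ and satisfying the naturality relation $\Psi(h)\cdot T(\s h)=T(\t h)\cdot\Phi(h)$ for every arrow $h$, equivalently $\Psi(h)=T(\t h)\,\Phi(h)\,T(\s h)^{-1}$. The plan is to promote this pointwise conjugation to an explicit isomorphism of cochain complexes $\tau_T\colon C^{\bullet}_{def}(\Phi)\to C^{\bullet}_{def}(\Psi)$, and then to recognize $\tau_{T^{-1}}$, built from the inverse natural isomorphism $x\mapsto T(x)^{-1}$, as a two-sided inverse. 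Since a cochain $c\in C^{k}_{def}(\Phi)$ assigns to a string $(h_1,\dots,h_k)\in\calH^{(k)}$ a tangent vector of $\calG$ based over $\Phi(h_1)$, I would define $(\tau_T c)(h_1,\dots,h_k)$ by transporting this vector to a tangent vector based over $\Psi(h_1)$ using the differentials of the left translation by $T(\t h_1)$ and the right translation by $T(\s h_1)^{-1}$ in $\calG$, correcting the source and target components via $T$ so as to respect the base-point data recorded by the projectability/normalization conditions that define the complex.

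The first step is to check that $\tau_T$ is well defined, i.e.\ that $\tau_T c$ again satisfies those constraints characterizing $C^{\bullet}_{def}(\Psi)$; this is where compatibility of the translations with $\s,\t$ together with the unit-level identities $\s\circ T=\Phi_0$ and $\t\circ T=\Psi_0$ enters. The central step is then to show that $\tau_T$ intertwines the two differentials, $\tau_T\circ\delta_\Phi=\delta_\Psi\circ\tau_T$. Because $\delta$ is assembled from the multiplication of $\calG$, the structure maps, and the face maps of the nerve of $\calH$, this reduces to repeatedly substituting $\Psi(h)=T(\t h)\Phi(h)T(\s h)^{-1}$ and using that conjugation is compatible with composition: along a composable pair the inner factors cancel, $T(\s h_i)^{-1}T(\t h_{i+1})=\mathbf 1$, so that $\Psi(h_i)\Psi(h_{i+1})=\Psi(h_ih_{i+1})$, and the corresponding translation differentials telescope.

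The main obstacle is precisely this last computation: keeping track of the many translation differentials, of where each tangent vector is based, and verifying that all correction terms cancel. The subtlety is already visible infinitesimally, since a \emph{fixed} $T$ cannot be used to conjugate a genuine family $\Phi_\e$ whose object map moves (the product $T(\t h)\Phi_\e(h)T(\s h)^{-1}$ fails to be composable for $\e\neq0$); it is exactly this phenomenon that forces the careful bookkeeping of base points encoded above and explains why a naive ``conjugate by $T$'' does not immediately produce a chain map.

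Finally, once $\tau_T$ is a chain map, the identities $\tau_{T^{-1}}\circ\tau_T=\mathrm{id}$ and $\tau_T\circ\tau_{T^{-1}}=\mathrm{id}$ on cochains follow at once from $T(x)^{-1}T(x)=1_{\Phi(x)}$ and functoriality of the differentials of translations, so $\tau_T$ is an isomorphism of complexes and induces the claimed isomorphism $H^{*}_{def}(\Phi)\cong H^{*}_{def}(\Psi)$. As a consistency check I would confirm in low degrees that $\tau_T$ carries the class of a deformation of $\Phi$ to the class of its $T$-conjugated deformation of $\Psi$, which is the geometric content of the statement.
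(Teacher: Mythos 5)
Your strategy---an explicit cochain-level conjugation isomorphism $\tau_T\colon C^{\bullet}_{def}(\Phi)\to C^{\bullet}_{def}(\Psi)$---is genuinely different from the paper's argument, which never constructs such a map: the paper encodes the natural isomorphism as a Lie groupoid morphism $\tilde{\tau}\colon\calH\to\calG^{I}$ into the arrow groupoid, observes that $\Phi=u\circ\tilde{\tau}$ and $\Psi=l\circ\tilde{\tau}$ for the two Morita projections $u,l\colon\calG^{I}\to\calG$, and concludes via Morita invariance of the deformation cohomology of morphisms (Proposition \ref{Mor2}, which rests on VB-Morita invariance of VB-cohomology). That route produces an isomorphism only in cohomology, through a zig-zag of quasi-isomorphisms passing through $H^{*}_{def}(\tilde{\tau})$, and it requires no auxiliary choices or hypotheses on $\calG$.

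The genuine gap in your proposal is that $\tau_T$ is never actually defined, and the recipe you give fails as stated. The differential of a left translation $l_g$ acts only on vectors tangent to the $t$-fibers and $r_g$ only on vectors tangent to the $s$-fibers, whereas a cochain value $c(h_1,\dots,h_k)\in T_{\Phi(h_1)}\calG$ has arbitrary $ds_\calG$- and $dt_\calG$-projections. Equivalently, computing $v_a\cdot c\cdot v_b^{-1}$ in the tangent groupoid $T\calG\rightrightarrows TM$ requires arrows $v_a\in T_{T(t(h_1))}\calG$ and $v_b\in T_{T(s(h_1))}\calG$ with $ds(v_a)=dt(c)$ and $ds(v_b)=ds(c)$; the zero vectors over $T(\cdot)$ only qualify when $c$ lies in the subcomplex $C^{\bullet}_{def}(\Phi)_{s,t}$. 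Your plan to ``correct the source and target components via $T$'' cannot supply $v_a,v_b$: since $T$ is defined only on $N$, differentiating it yields vectors whose $s_\calG$-projections lie in the image of $d\phi$, while $dt(c)$ and $ds(c)$ are arbitrary vectors of $TM$. To repair this one must choose a unitary splitting $\sigma\colon s^{*}TM\to T\calG$ of the core sequence (an Ehresmann connection) and set $v_a=\sigma_{T(t(h_1))}(dt(c))$, $v_b=\sigma_{T(s(h_1))}(ds(c))$; fiberwise linearity of the structure maps of $T\calG$ then does make the resulting $\tau_T^{\sigma}$ a cochain map, but your claim that $\tau_{T^{-1}}$ inverts it ``at once'' is false in general---invertibility needs the quasi-action $dt\circ\sigma_g$ on $TM$ to be invertible, i.e.\ a connection transverse to both the $s$- and $t$-fibers, which is the content of Lemma \ref{lemma:biconnection} and already presupposes $\calG$ Hausdorff (for non-Hausdorff $\calG$ a connection may not exist at all, while the theorem carries no such hypothesis). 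So the step you flag as ``the main obstacle'' is not mere bookkeeping: it is where the missing idea (the splitting, plus control of its transversality) lives, and it is exactly what the paper's detour through $\calG^{I}$ and Morita invariance is designed to avoid.
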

We also provide an alternative description of the complex using the theory of $VB$-groupoids and their cohomology \cite{GrMe-grpds}.
In Section \ref{Section:Examples} we illustrate the deformation complex with examples and explore its relation to other complexes. 
In Section \ref{Section:Triviality} we use the exactness of 1-cocycles associated to a deformation of morphisms to characterize the \emph{trivial deformations} of morphisms:

\begin{theorem}
A deformation by morphisms $\Phi_\e$ is trivial if and only if the family $X_\e\in C^{1}_{def}(\Phi_\e)$ of associated cocycles is smoothly exact.
\end{theorem}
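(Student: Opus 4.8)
The plan is to read this as a Moser-type argument in which the differential $\delta$ of the deformation complex is, by construction, the infinitesimal shadow of the very conjugation action that defines triviality. Write the deformation as $\Phi_\e:\H\to\G$ and set $X_\e:=\frac{\d}{\d\e}\Phi_\e\in C^{1}_{def}(\Phi_\e)$ for its family of associated cocycles. Recall that the degree-zero space $C^{0}_{def}(\Phi_\e)$ is exactly the space of infinitesimal generators of the trivial deformations (infinitesimal bisections, respectively gauge-maps), and that for $Y_\e\in C^{0}_{def}(\Phi_\e)$ the coboundary $\delta Y_\e\in C^{1}_{def}(\Phi_\e)$ records, by definition of the complex, the first-order effect at $\Phi_\e$ of the one-parameter family of conjugations generated by $Y_\e$. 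Saying that $X_\e$ is \emph{smoothly exact} means precisely that there is a smooth family $Y_\e\in C^{0}_{def}(\Phi_\e)$ with $X_\e=\delta Y_\e$, so the theorem asserts that $X_\e$ arises this way if and only if $\Phi_\e$ is conjugate to the constant family $\Phi_0$.

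For the forward implication, suppose $\Phi_\e$ is trivial, so $\Phi_\e=b_\e\cdot\Phi_0$ for a smooth path $b_\e$ of bisections and gauge-maps with $b_0$ the unit, where $\cdot$ denotes the conjugation action. Differentiating this relation at the parameter value $\e$ and rewriting $b_s=(b_s b_\e^{-1})\,b_\e$ gives $X_\e=\frac{\d}{\d\e}\Phi_\e=Y_\e\cdot\Phi_\e$, where $Y_\e:=\frac{\d b_\e}{\d\e}\cdot b_\e^{-1}$ is a smooth family of infinitesimal bisections, i.e. a smooth family of degree-zero cochains $Y_\e\in C^{0}_{def}(\Phi_\e)$. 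By the defining formula for $\delta$, the right-hand side $Y_\e\cdot\Phi_\e$ is exactly $\delta Y_\e$, so $X_\e=\delta Y_\e$ is smoothly exact. This direction is essentially a direct computation with the cocycle-coboundary formulas fixed when the complex is defined.

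The converse is the substantive Moser step. Assuming $X_\e=\delta Y_\e$ for a smooth family $Y_\e\in C^{0}_{def}(\Phi_\e)$, I would regard $Y_\e$ as a time-dependent infinitesimal symmetry and integrate it to a path $b_\e$ of bisections (respectively gauge-maps) solving $\frac{\d}{\d\e}b_\e=Y_\e\cdot b_\e$ with $b_0$ the unit. With $b_\e$ in hand, set $\Psi_\e:=b_\e^{-1}\cdot\Phi_\e$ and compute $\frac{\d}{\d\e}\Psi_\e$: the flow equation for $b_\e$, the hypothesis $\frac{\d}{\d\e}\Phi_\e=X_\e=\delta Y_\e$, and the compatibility of $\delta$ with the left action of $b_\e$ combine to make this derivative vanish identically. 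Hence $\Psi_\e\equiv\Psi_0=\Phi_0$, that is $\Phi_\e=b_\e\cdot\Phi_0$ is trivial.

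The main obstacle is the integration step in the converse: the sought path $b_\e$ lives in the infinite-dimensional group of bisections, so $\frac{\d}{\d\e}b_\e=Y_\e\cdot b_\e$ is not a plain finite-dimensional ODE but the flow of a (time-dependent) right-invariant vector field on $\G$ determined by $Y_\e$. The key point is that an infinitesimal bisection integrates to an honest bisection by flowing this field along the source and target fibres, and the smooth dependence of $\e\mapsto Y_\e$ lets these flows assemble into a smooth path $b_\e$ on the whole parameter interval; controlling the existence of this flow is exactly where care (and, if needed, the regularity packaged into smooth exactness) is required. I expect the verification $\frac{\d}{\d\e}\Psi_\e=0$ to reduce to careful bookkeeping of how $\delta$ intertwines with conjugation by $b_\e$, which is precisely the content of the coboundary formula for the deformation complex; modulo this bookkeeping and the flow's existence, the argument closes.
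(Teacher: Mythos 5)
Your forward direction coincides with the paper's (Remark \ref{exactcocyclesmorph.}): differentiate the gauge relation and read off the primitive $\bar{\alpha}_\lambda(x)=\frac{d}{d\e}\big|_{\e=\lambda}\big(\tau_\e(x)\tau_\lambda(x)^{-1}\big)$. Your converse is also the right Moser strategy in outline, but it has a genuine gap at the integration step. The degree-zero cochains of $C^{\bullet}_{def}(\Phi_\e)$ are sections $\bar{\alpha}_\e\in\Gamma(\phi_\e^{*}A_\G)$ of the \emph{pullback} algebroid: they assign to each $x\in N$ a vector in $A_{\phi_\e(x)}\G$ and nothing more. Such a family does \emph{not} determine a time-dependent right-invariant vector field on $\G$ (its right-invariant extension would only be defined along $t^{-1}(\phi_\e(N))$, and is parametrized by $x\in N$ rather than by points of $M$), so the evolution equation you write for $b_\e$ has no meaning on $\G$ as stated. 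If you repair this by extending $\bar{\alpha}_\e$ to global sections of $A_\G$, you change the problem: you would be manufacturing bisections, i.e.\ proving \emph{strong} triviality, which needs extra hypotheses (compactness of $N$, as in Theorem \ref{thm:strongtriviality}) that the present theorem does not assume. The missing idea is to transport the data to the fibered product $\tilde{\phi}^{*}\G=\G\;_t\!\times_{\tilde{\phi}}(N\times I)$, where $V(g,y,\e)=(r_g(\tilde{\alpha}(y,\e)),0_y,\partial/\partial\e)$ \emph{is} an honest vector field; applying its flow to the points $(\phi_0(x),x,0)$ is what produces the gauge maps $\tau_\e$.

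Two further points you flag but do not resolve. First, completeness of the flow on the whole parameter interval: the paper's Lemma \ref{lemma:completeflow} gets this for free because $V$ projects, under the target map, to a vector field on $N\times I$ whose integral curves are exactly the given curves $\e\mapsto\phi_\e(x)$, hence defined for all $\e\in I$; the groupoid structure then lifts completeness (as in Theorem 3.6.4 of Mackenzie's book). Your proposed mechanism of ``flowing along the source and target fibres'' does not supply this. Second, your verification that $\frac{d}{d\e}\big(b_\e^{-1}\cdot\Phi_\e\big)=0$ requires differentiating a conjugation by gauge maps, which cannot be done by a naive chain rule since $\tau_\e$ acts only on $\mathrm{Im}(\Phi_\e)$ (this is exactly the difficulty the paper flags in the proof of Proposition \ref{cociclomorph.}); the paper avoids it by exhibiting $\e\mapsto\Phi_\e(h)$ and $\e\mapsto\tau_\e\cdot\Phi_0(h)$ as integral curves of a single vector field $Z$ on a second fibered product and invoking uniqueness of integral curves. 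Your argument is repairable along these lines, but as written the central construction is not well defined.
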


Such \emph{deformation 1-cocycles} should be conceived as the velocity vectors of the deformation and their exactness as the vanishing of their velocities. Still in Section \ref{Section:Triviality}, we conduct a similar analysis by considering other types of relations between the morphisms that determine the respective coarse moduli spaces of morphisms. We use the associated 1-cocycles to characterize the trivial deformations in each of these moduli spaces and that arise naturally when observing the deformation complex. For instance, we consider the relation induced by the composition action of the full group of automorphisms of the codomain Lie groupoid. A morphism $\Phi$ can then be deformed by composing it with family of such automorphisms, and the family that it produces is said to be a \textit{weakly trivial} deformation.

In Section \ref{Section:Thom-Levine} we extend results from Section \ref{Section:Triviality} to $k$-deformations of morphisms. As an application, we obtain a more geometric proof of the Thom-Levine Theorem, which characterizes the triviality of $k$-deformations of smooth maps between manifolds (\cite{guillemin-golubitsky}, Theorem 3.3, p. 124). 

The Thom-Levine Theorem is crucial in the study of stability of smooth maps and provides an intermediate step in establishing the equivalence between stability and infinitesimal stability of smooth maps. It plays a key role in verifying that infinitesimal stability implies stability under deformations. This, in turn, is used to prove stability under the compactness hypothesis on the domain, ensuring that any map close to a fixed one can be reached by a deformation (path) which, at each time $\e$, is locally trivial.

In Sections \ref{section:lowdegrees} and \ref{Section:Regularsetting}, using the $VB$-cohomology description of the deformation complex, we establish additional results to enhance our understanding of the behaviour of the deformation cohomology. These results are aplicable more generally to any $VB$-groupoid $\Gamma$ over a base $\G\tto M$. For instance, if $\partial:C\rightarrow E$ is the complex of vector bundles over $M$ induced by $\Gamma$ (where $C$ is the core and $E$ the side bundles of $\Gamma$) with $\frakk=\mathrm{Ker}\: \partial$ and $\frakl=\mathrm{Coker}\: \partial$ then we show that there exists an exact sequence for the low-degree cohomology as follows:

\begin{proposition}
\begin{equation*}
0\rightarrow H^{1}(\calG, \mathfrak{k})\stackrel{r}{\rightarrow} H^{1}_{VB}(\calG,\Gamma)\stackrel{\pi}{\rightarrow}\Gamma(\mathfrak{l})^{inv}\stackrel{K}{\rightarrow} H^{2}(\calG, \mathfrak{k})\rightarrow H^{2}_{VB}(\calG,\Gamma).
\end{equation*}
\end{proposition}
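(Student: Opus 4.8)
The plan is to compute $H^{*}_{VB}(\calG,\Gamma)$ through its description as the cohomology of $\calG$ with values in the $2$-term representation up to homotopy attached to $\Gamma$, and then to read the five-term sequence off a short exact sequence of complexes obtained after passing to a minimal model. First I would invoke the description from \cite{GrMe-grpds}: once a horizontal lift is fixed, $\Gamma$ determines a representation up to homotopy of $\calG$ supported on the complex $\partial\colon C\to E$, consisting of a quasi-action $\nabla$ on $C$ and $E$ together with a curvature $R\in C^{2}(\calG;\mathrm{Hom}(E,C))$, and $H^{*}_{VB}(\calG,\Gamma)$ is the cohomology of the associated total complex, whose degree-$n$ part (in the grading for which this computes $H^{*}_{VB}$, with $C$ in complex-degree $0$ and $E$ in complex-degree $1$) is $C^{n}(\calG;C)\oplus C^{n-1}(\calG;E)$. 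This identification is canonical up to isomorphism of complexes, so every cohomological consequence is intrinsic to $\Gamma$.

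Next, using regularity (that $\partial$ has locally constant rank, so that $\mathfrak{k}=\mathrm{Ker}\:\partial$ and $\mathfrak{l}=\mathrm{Coker}\:\partial$ are smooth vector bundles over $M$), I would show that $\nabla$ induces honest representations of $\calG$ on $\mathfrak{k}$ and $\mathfrak{l}$. Indeed, the structural equations give that the curvature of $\nabla|_{C}$ equals $R\circ\partial$, which kills $\Ker\partial$, while the curvature of $\nabla|_{E}$ equals $\partial\circ R$, which takes values in $\Im\partial$ and hence vanishes on $\mathrm{Coker}\:\partial$; the $\nabla$-invariance of $\partial$ shows that these restricted and quotient structures are well defined. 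Choosing splittings $C=\mathfrak{k}\oplus\Im\partial$ and $E=\Im\partial\oplus\mathfrak{l}$, the homological perturbation lemma then replaces the representation up to homotopy by an isomorphic \emph{minimal} one, supported on $\mathfrak{k}\xrightarrow{\,0\,}\mathfrak{l}$ with the honest actions above and a reduced curvature $\overline{R}\in C^{2}(\calG;\mathrm{Hom}(\mathfrak{l},\mathfrak{k}))$. Since isomorphic representations up to homotopy have isomorphic total cohomology, I may compute $H^{*}_{VB}(\calG,\Gamma)$ from the minimal total complex $C^{n}(\calG;\mathfrak{k})\oplus C^{n-1}(\calG;\mathfrak{l})$, whose differential is $D(\xi,\eta)=(\delta\xi+\overline{R}_{*}\eta,\ \delta\eta)$ with $\delta$ now an honest, square-zero groupoid differential.

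Then I would extract the result from the short exact sequence of complexes
\[
0\to C^{\bullet}(\calG;\mathfrak{k})\to \bigl(C^{\bullet}(\calG;\mathfrak{k})\oplus C^{\bullet-1}(\calG;\mathfrak{l}),D\bigr)\to C^{\bullet-1}(\calG;\mathfrak{l})\to 0,
\]
in which the $\mathfrak{k}$-summand is a genuine subcomplex (because $\overline{R}_{*}$ lands in it) and the quotient is the $\mathfrak{l}$-complex shifted up by one. Its long exact sequence reads $\cdots\to H^{n}(\calG,\mathfrak{k})\xrightarrow{r}H^{n}_{VB}(\calG,\Gamma)\xrightarrow{\pi}H^{n-1}(\calG,\mathfrak{l})\xrightarrow{K}H^{n+1}(\calG,\mathfrak{k})\to\cdots$, where $r$ is induced by the inclusion, $\pi$ by the projection, and the connecting map $K$ is induced by $\overline{R}$ (equivalently, $K$ is the $d_{2}$-differential of the two-row spectral sequence of the total complex). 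Because the shifted quotient vanishes in degree $0$, the map $r$ becomes injective in degree $1$; and because $H^{1}$ of the shifted quotient is $H^{0}(\calG,\mathfrak{l})=\Gamma(\mathfrak{l})^{inv}$, truncating the long exact sequence after $H^{2}_{VB}(\calG,\Gamma)$ yields precisely the stated five-term sequence.

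The step I expect to be the main obstacle is the second one: establishing rigorously that the quasi-action descends to honest representations on $\mathfrak{k}$ and $\mathfrak{l}$ and that the reduction to $(0,\overline{R})$ can be carried out globally. This is exactly where regularity is indispensable, since without locally constant rank $\mathfrak{k}$ and $\mathfrak{l}$ fail to be vector bundles and the splittings used to build the minimal model do not exist; one must check that all the structural identities of the representation up to homotopy restrict to $\Ker\partial$ and descend to $\mathrm{Coker}\:\partial$ compatibly with the differential, so that $K$ is well defined on cohomology and independent of the auxiliary choices.
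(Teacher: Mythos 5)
There is a genuine gap: your entire construction hinges on a regularity hypothesis that the proposition does not make. The statement is asserted for an \emph{arbitrary} VB-groupoid $\Gamma$ over $\calG$, where the core-anchor $\partial:C\to E$ need not have constant rank, so that $\frakk=\Ker\partial$ and $\frakl=\mathrm{Coker}\,\partial$ are in general only singular ``bundles''. This is precisely the setting the paper is working in here: it defines $C^{\bullet}(\calG,\frakk)$ by hand as the cochains valued in $C$ that happen to land in $\frakk$, defines $\Gamma(\frakl)=\Gamma(E)/\Im(\partial)$, and defines $\Gamma(\frakl)^{inv}$ via the existence of an $(\tilde{s},\tilde{t})$-lift, exactly because the usual representation-theoretic machinery is unavailable. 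Your step two --- choosing splittings $C=\frakk\oplus\Im\partial$ and $E=\Im\partial\oplus\frakl$ and running the homological perturbation lemma to produce a minimal model on $\frakk\xrightarrow{0}\frakl$ --- fails at the first move when the rank of $\partial$ jumps, and you acknowledge as much in your final paragraph. So as written your argument proves a different (weaker-hypothesis, stronger-conclusion) result: under regularity it in fact yields the full long exact sequence, which is the content of a separate theorem later in the paper (proved there by a related but different device, an acyclic mapping-cone complex $\mathcal{A}^{\bullet}$ built from $C^{\bullet}(\calG,E)$, rather than a minimal model). It does not prove the proposition in the stated generality.

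The paper's own proof of this five-term sequence is a direct cochain-level diagram chase modeled on Proposition 4.11 of \cite{CMS}: one writes down $r$ (right multiplication by zero elements, embedding $C^{\bullet}(\calG,\frakk)$ into $C^{\bullet}_{VB}(\calG,\V)$), $\pi$ (the $\tilde{s}$-projection to $\Gamma(E)$ followed by the quotient to $\Gamma(\frakl)$), and $K$ (sending an invariant class $[V]$ to $\delta(X)$ for any $(\tilde{s},\tilde{t})$-lift $X$ of $V$, checking that $\tilde{s}(\delta X)=\tilde{t}(\delta X)=0$ so that $\delta(X)$ lies in $C^{2}(\calG,\frakk)$), and then verifies exactness at each spot by hand. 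None of this requires $\frakk$ or $\frakl$ to be vector bundles. If you want to keep your homotopical viewpoint, you would need to either add the regularity hypothesis (changing the statement) or replace the minimal-model reduction by an argument that works with the singular $\frakk$ and $\frakl$, e.g.\ working directly with the subcomplex $r(C^{\bullet}(\calG,\frakk))\subset C^{\bullet}_{VB}(\calG,\V)$ and analyzing the quotient in low degrees only, which is essentially what the paper does.
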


Additionally, we verify that the previous sequence can be extended in the particular case of regular groupoids, indeed if $\Gamma$ is a regular $VB$-groupoid

\begin{theorem}
There exists a map $K:H^{\bullet}(\calG, \frakl)\longrightarrow H^{\bullet+2}(\calG, \frakk)$ such that the cohomology $H^{*}_{VB}(\calG,\Gamma)$ associated to $\Gamma$ fits into a long exact sequence
\begin{equation*}
\cdots\longrightarrow H^{k}(\calG, \mathfrak{k})\stackrel{r}{\longrightarrow} H^{k}_{VB}(\calG,\Gamma)\stackrel{\pi}{\longrightarrow} H^{k-1}(\calG, \mathfrak{l})\stackrel{K}{\longrightarrow} H^{k+1}(\calG, \mathfrak{k})\longrightarrow\cdots
\end{equation*}
\end{theorem}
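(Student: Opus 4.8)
The plan is to reduce the statement to the long exact sequence of a short exact sequence of \emph{honest} cochain complexes, obtained by passing from $\Gamma$ to the two-term representation up to homotopy it determines. By the Gracia-Saz--Mehta description \cite{GrMe-grpds} already invoked above, $H^{*}_{VB}(\calG,\Gamma)$ is the cohomology of $\calG$ with coefficients in the two-term representation up to homotopy $\mathbb{E}=(C\stackrel{\partial}{\rightarrow}E)$, whose structure consists of quasi-actions $\nabla^{C},\nabla^{E}$ on the core and side bundles, the equivariant map $\partial$, and a curvature $\Omega\in C^{2}(\calG,\hom(E,C))$. Placing $C$ in complex-degree $0$ and $E$ in complex-degree $1$, the total complex is $C^{n}(\calG,\mathbb{E})=C^{n}(\calG,C)\oplus C^{n-1}(\calG,E)$, and I would first record the structure equations: $\partial\circ\nabla^{C}_{g}=\nabla^{E}_{g}\circ\partial$, while the failure of the quasi-actions to be genuine is governed by $\Omega$ through $\nabla^{C}_{g}\nabla^{C}_{h}-\nabla^{C}_{gh}=\pm\,\Omega(g,h)\circ\partial$ and $\nabla^{E}_{g}\nabla^{E}_{h}-\nabla^{E}_{gh}=\pm\,\partial\circ\Omega(g,h)$.

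The second step uses regularity. Since $\partial$ has locally constant rank, $\frakk=\Ker\partial$ and $\frakl=\mathrm{Coker}\,\partial$ are honest vector bundles over $M$, and the sequences $0\to\frakk\to C\stackrel{\partial}{\to}\Im\partial\to0$ and $0\to\Im\partial\to E\to\frakl\to0$ split smoothly. Equivariance of $\partial$ shows that $\nabla^{C}$ preserves $\frakk$ and $\nabla^{E}$ descends to $\frakl$; moreover the two curvature relations above restrict to $0$ on $\frakk=\Ker\partial$ and vanish modulo $\Im\partial$ on $\frakl$, so the induced quasi-actions on $\frakk$ and $\frakl$ are \emph{genuine} representations of $\calG$. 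I would then transfer $\mathbb{E}$ along a chosen splitting-retraction onto its cohomology bundle $\frakk\oplus\frakl$ (homological perturbation), using the homotopy invariance of the cohomology of representations up to homotopy. The outcome is a \emph{reduced} model on $\frakk\oplus\frakl$ (with $\frakk$ in complex-degree $0$, $\frakl$ in complex-degree $1$): its internal differential is zero, since the map induced by $\partial$ vanishes because $\partial|_{\frakk}=0$; its actions are the genuine representations just described; and its only surviving higher datum is a residual curvature $\bar\Omega\in C^{2}(\calG,\hom(\frakl,\frakk))$, the corresponding component of $\Omega$. The identity $D^{2}=0$ forces $\bar\Omega$ to be a $2$-cocycle with honest coefficients, and comparing two retractions shows its class $[\bar\Omega]\in H^{2}(\calG,\hom(\frakl,\frakk))$ is independent of the choices.

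With the reduced model in hand the long exact sequence is formal. Its total complex is $C^{n}(\calG,\frakk)\oplus C^{n-1}(\calG,\frakl)$ with differential $D(\beta,\alpha)=(\delta_{\frakk}\beta+\bar\Omega\smile\alpha,\ \delta_{\frakl}\alpha)$, and the inclusion $\beta\mapsto(\beta,0)$ together with the projection $(\beta,\alpha)\mapsto\alpha$ give a short exact sequence of complexes
\begin{equation*}
0\longrightarrow C^{\bullet}(\calG,\frakk)\stackrel{r}{\longrightarrow}C^{\bullet}(\calG,\frakk)\oplus C^{\bullet-1}(\calG,\frakl)\stackrel{\pi}{\longrightarrow}C^{\bullet-1}(\calG,\frakl)\longrightarrow0.
\end{equation*}
Identifying $H^{*}_{VB}(\calG,\Gamma)$ with the cohomology of the middle complex, its associated long exact sequence is exactly the asserted one. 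The connecting homomorphism is computed by lifting a closed $\alpha$ to $(0,\alpha)$ and applying $D$, which yields $K[\alpha]=[\bar\Omega\smile\alpha]$; hence $K=\smile[\bar\Omega]:H^{\bullet}(\calG,\frakl)\to H^{\bullet+2}(\calG,\frakk)$ as required. Specializing to $k=1$ recovers the five-term sequence of the Proposition (with $\Gamma(\frakl)^{inv}=H^{0}(\calG,\frakl)$), which I would check in order to confirm that this $K$ extends the connecting map defined there.

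The main obstacle is the reduction of the second paragraph: building the reduced representation up to homotopy and proving that the transfer induces an isomorphism on the total cohomologies, so that $H^{*}_{VB}$ is genuinely computed by the reduced model, together with the verification that $[\bar\Omega]$ is well defined and compatible with the class implicit in the low-degree sequence. Once that is secured, the honesty of the induced representations, the exactness of the displayed short exact sequence, and the identification of the connecting map with $\smile[\bar\Omega]$ are routine.
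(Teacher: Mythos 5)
Your argument is correct in outline but follows a genuinely different route from the paper. The paper adapts the proof of Proposition 8.1 of \cite{CMS}: it sandwiches $C^{\bullet}_{VB}(\calG,\Gamma)$ between two short exact sequences of complexes, $0\to C^{\bullet}(\calG,\frakk)\stackrel{r}{\to} C^{\bullet}_{VB}(\calG,\Gamma)\stackrel{R}{\to}\mathcal{C}^{\bullet}\to0$ and $0\to\mathcal{C}^{\bullet}\to\mathcal{A}^{\bullet}\stackrel{S}{\to}C^{\bullet}(\calG,\frakl)\to0$, where $R$ records the $\tilde{t}$- and $\tilde{s}$-projections, $\mathcal{C}^{\bullet}=\Ker S$, and $\mathcal{A}^{\bullet}=C^{\bullet}(\calG,E)\oplus C^{\bullet-1}(\calG,E)$ is the acyclic mapping cone of the identity (geometrically, the complex of the pullback VB-groupoid $an^{*}_{\G}(\mathrm{Pair}(E))$). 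Acyclicity of $\mathcal{A}$ turns the connecting map of the second sequence into an isomorphism $\theta:H^{\bullet-1}(\calG,\frakl)\to H^{\bullet}(\mathcal{C})$, and the long exact sequence of the first, rewritten through $\theta$, gives the statement; there $K$ is only defined implicitly as a composite of connecting homomorphisms. You instead transfer the two-term representation up to homotopy onto its cohomology bundle $\frakk\oplus\frakl$ and read the sequence off the resulting mapping-cone-type total complex, which buys an explicit formula $K=\smile[\bar\Omega]$ and makes the degree shift transparent. The cost is the transfer theorem itself (that a deformation retract of the underlying $2$-term complex induces an isomorphism on the total cohomology of the representation up to homotopy), which you correctly flag as the main obstacle; it is standard homological perturbation but is not lighter than what the paper does directly. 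One small imprecision: the transferred curvature $\bar\Omega$ is in general not just the $\hom(\frakl,\frakk)$-component of $\Omega$ --- the perturbation lemma adds correction terms involving the chosen homotopy and the quasi-actions --- but this does not affect your argument, since only the existence and well-definedness of the class $[\bar\Omega]$ are used.
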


This $VB$-cohomology approach gives us a way to make explicit the deformation cohomology groups of a morphism $\Phi:\calH\to\G$ under the properness condition of its domain.

\begin{proposition}
If $\calH$ is proper, then $H^{0}_{def}(\Phi)\cong\Gamma(\phi^{*}i_\calG)^{inv}$, $H^{1}_{def}(\Phi)\cong\Gamma(\phi^{*}\nu_{\mathcal{G}})^{inv}$ and $H^{k}_{def}(\Phi)=0$ for every $k>1$, where $\mathfrak{i}_\calG$ and $\nu_\calG$ are the isotropy bundle of $\calG$ and the normal bundle to the orbits of $\calG$, respectively.
\end{proposition}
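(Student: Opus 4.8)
The plan is to run the computation through the $VB$-groupoid description of the deformation complex established earlier. Concretely, I would use the identification $H^{*}_{def}(\Phi)\cong H^{*}_{VB}(\calH,\Gamma)$, where $\Gamma=\Phi^{*}(T\calG)$ is the pullback of the tangent $VB$-groupoid $T\calG\tto TN$ along $\Phi$, regarded now as a $VB$-groupoid over the base $\calH\tto M$. The first step is to make its core-side complex explicit: the core of $T\calG$ is the Lie algebroid $A\calG$, the side bundle is $TN$, and the core-anchor is the algebroid anchor $\rho$; pulling back along $\Phi$ gives core $C=\phi^{*}A\calG$, side $E=\phi^{*}TN$, and $\partial=\phi^{*}\rho\colon\phi^{*}A\calG\to\phi^{*}TN$. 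Hence $\frakk=\Ker\partial=\phi^{*}\mathfrak{i}_\calG$ and $\frakl=\mathrm{Coker}\,\partial=\phi^{*}\nu_\calG$, which is precisely what makes the isotropy bundle $\mathfrak{i}_\calG$ and the normal bundle $\nu_\calG$ appear in the statement.

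With this in hand, the whole proposition becomes a vanishing-and-identification statement for $H^{*}_{VB}(\calH,\Gamma)$ when $\calH$ is proper. The key tool I would use is the standard averaging technique for proper Lie groupoids: a Haar system together with a cutoff function on $\calH$ produces an explicit homotopy operator on the $VB$-complex that retracts it onto the subcomplex of $\calH$-invariant linear cochains. This simultaneously kills all cochains of positive bar-degree and reduces the surviving differential to the one induced by $\partial$, so that $H^{*}_{VB}(\calH,\Gamma)$ is computed by the two-term complex of invariant sections $\Gamma(C)^{inv}\xrightarrow{\partial}\Gamma(E)^{inv}$ concentrated in degrees $0$ and $1$. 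In particular $H^{k}_{def}(\Phi)=H^{k}_{VB}(\calH,\Gamma)=0$ for every $k\ge 2$, which is the last assertion.

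It then remains to read off the two surviving groups from this two-term complex. In degree $0$ we obtain $\Ker\big(\partial\colon\Gamma(C)^{inv}\to\Gamma(E)^{inv}\big)=\Gamma(\Ker\partial)^{inv}=\Gamma(\phi^{*}\mathfrak{i}_\calG)^{inv}$, giving the first isomorphism. In degree $1$ we get the cokernel $\Gamma(E)^{inv}/\partial\,\Gamma(C)^{inv}$, and I would identify it with $\Gamma(\mathrm{Coker}\,\partial)^{inv}=\Gamma(\phi^{*}\nu_\calG)^{inv}$ by using that, for proper $\calH$, the averaging projectors make the passage to invariant sections exact, so that invariants commute with $\mathrm{Coker}\,\partial$: one lifts an invariant section of $\nu_\calG$ to $TN$ and averages for surjectivity, and uses equivariance of $\partial$ to realize any invariant section of $\Im\partial$ as $\partial$ of an invariant section for injectivity. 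As a cross-check in the regular case, the degree-$1$ isomorphism drops straight out of the low-degree exact sequence displayed above (applied with base $\calH$) once $H^{1}(\calH,\frakk)=H^{2}(\calH,\frakk)=0$, and the vanishing in higher degrees from the long exact sequence displayed above together with $H^{k}(\calH,\frakk)=H^{k}(\calH,\frakl)=0$ for all $k\ge 1$, these vanishings being Crainic's theorem for the now-genuine representations $\frakk,\frakl$.

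The main obstacle is that properness does not entail regularity: the rank of $\partial$ may jump, so $\frakk$ and $\frakl$ are in general not vector bundles and the long exact sequence of the regular Theorem is unavailable. This forces the argument to be carried out directly on the $VB$-complex, where the relevant structure on $C$ and $E$ is only a quasi-action rather than a flat representation; consequently the averaging and the exactness of the invariants functor must be organized at the level of the full linear cochain complex — that is, for the representation up to homotopy determined by $\Gamma$ — rather than for $C$ and $E$ separately. A secondary point requiring care is checking that the homotopy operator built from the cutoff function preserves the linearity of $VB$-cochains, so that the retraction genuinely lands in the $VB$-subcomplex and the identification of the low-degree groups with invariant sections is legitimate.
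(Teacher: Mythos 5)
Your first step coincides with the paper's: identify $C^{\bullet}_{def}(\Phi)$ with the $VB$-complex of $\Phi^{*}T\calG$ and read off the core--side data $\partial=\phi^{*}\rho:\phi^{*}A_\calG\to\phi^{*}TM$, so that $\frakk=\phi^{*}\mathfrak{i}_\calG$ and $\frakl=\phi^{*}\nu_\calG$. (Note two slips here: the side bundle of $T\calG$ is $TM$, not $TN$, so $E=\phi^{*}TM$; and the relevant $VB$-groupoid sits over $\calH\tto N$, not $\calH\tto M$.) From there the routes diverge. The paper does not re-prove anything: it quotes the vanishing of $VB$-cohomology of proper groupoids in degrees $\geq 2$ from \cite{CD} for the top part, gets $H^{0}_{def}(\Phi)\cong\Gamma(\phi^{*}\mathfrak{i}_\calG)^{inv}$ from the degree-zero computation (which needs no properness at all), and extracts $H^{1}_{def}(\Phi)\cong\Gamma(\phi^{*}\nu_\calG)^{inv}$ from the low-degree exact sequence \eqref{lowdegreemorphism} once properness kills $H^{1}(\calH,\phi^{*}\mathfrak{i}_\calG)$ and $H^{2}(\calH,\phi^{*}\mathfrak{i}_\calG)$ --- a sequence the paper has set up so as to remain valid when $\partial$ has non-constant rank. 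You instead propose to rebuild the averaging homotopy on the full linear complex and retract onto a two-term complex of invariant sections; this is exactly the ``direct proof analogous to \cite{CMS}'' that the paper mentions in a remark as an alternative, and it buys self-containedness at the price of the technical points you yourself flag.

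Those flagged points are where care is genuinely needed, and the paper's route is designed to avoid them. Since the quasi-actions on $C$ and $E$ are not honest actions in the singular case, ``$\Gamma(E)^{inv}$'' has no intrinsic meaning, and the paper's substitute for it is precisely the lift-based definition of $\Gamma(\frakl)^{inv}$ (classes admitting an $(\tilde{s},\tilde{t})$-lift); your identification of $H^{1}$ with $\Gamma(E)^{inv}/\partial\,\Gamma(C)^{inv}$ and the commutation of invariants with $\mathrm{Coker}\,\partial$ would have to be phrased in those terms, at which point you are essentially re-deriving the exact sequence \eqref{lowdegreemorphism}. So: no fatal gap, but the low-degree part of your argument is the one place where the paper's packaging (the singular low-degree sequence plus the quoted $VB$-vanishing) does real work that your sketch still owes.
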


We consider more properties of the complex in Section \ref{Sec:Moritainv} where we establish a Morita invariance of the complex. We use this invariance, combined with \emph{simultaneous deformations} of morphisms, their domains, and codomains (detailed in Section \ref{Section:Simultaneous}) to derive as an application a \emph{deformation cohomology for generalized morphisms}. In essence, we introduce a deformation complex $C^{*}_{def}(\Psi/\Phi)$ for \emph{fractions} $\Psi/\Phi:\H\to\G$ between morphisms of groupoids, and verify that such a cohomology is invariant under equivalence of fractions:

\begin{theorem}
If $\Psi/\Phi$ and $\Psi'/\Phi'$ are equivalent fractions, then their deformation cohomologies $H^{*}_{def}(\Psi/\Phi)$ and $H^{*}_{def}(\Psi'/\Phi')$ are isomorphic.
\end{theorem}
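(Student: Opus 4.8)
The plan is to reduce the equivalence of fractions to a short list of elementary moves and then to establish invariance of the complex $C^{*}_{def}(\Psi/\Phi)$ under each of them, combining the Morita invariance proved in Section \ref{Sec:Moritainv} with the invariance under equivalence of ordinary morphisms.

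First I would make the equivalence relation on fractions explicit. A fraction $\Psi/\Phi:\H\to\G$ is a span $\H\xleftarrow{\Phi}\mathcal{K}\xrightarrow{\Psi}\G$ with $\Phi$ a weak equivalence, and by the calculus of fractions two such data over $\mathcal{K}$ and $\mathcal{K}'$ are equivalent precisely when they admit a common refinement: a groupoid $\mathcal{L}$ with weak equivalences $a:\mathcal{L}\to\mathcal{K}$ and $b:\mathcal{L}\to\mathcal{K}'$ such that $\Phi\circ a$ is equivalent to $\Phi'\circ b$ and $\Psi\circ a$ is equivalent to $\Psi'\circ b$. Consequently any equivalence $\Psi/\Phi\sim\Psi'/\Phi'$ factors through the two generating moves: (i) precomposition of both legs with a weak equivalence $a:\mathcal{L}\to\mathcal{K}$, and (ii) replacement of the legs $\Phi,\Psi$ by equivalent morphisms over the same $\mathcal{K}$. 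Since ``having isomorphic deformation cohomology'' is itself an equivalence relation (reflexive, symmetric, transitive), it suffices to prove invariance under (i) and under (ii) separately.

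For move (ii) I would upgrade the theorem asserting that equivalent morphisms have isomorphic $H^{*}_{def}$ to the fraction setting. Because $C^{*}_{def}(\Psi/\Phi)$ is assembled in Section \ref{Section:Simultaneous} from the simultaneous deformation data of the span — the deformation complex of the auxiliary groupoid $\mathcal{K}$ together with those of the legs $\Phi$ and $\Psi$ — an equivalence of legs induces a degreewise isomorphism of each constituent piece by the construction already used for a single morphism. The remaining step is the naturality check that these pieces assemble into a chain isomorphism of the whole complex, i.e. that the induced maps intertwine the connecting differentials coupling the constituents; this is a direct verification on the explicit differential of the assembled complex.

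For move (i) I would invoke the Morita invariance of Section \ref{Sec:Moritainv}: the weak equivalence $a$ induces a pullback $a^{*}:C^{*}_{def}(\Psi/\Phi)\to C^{*}_{def}(\Psi a/\Phi a)$, and I must show it is a quasi-isomorphism. I would filter the assembled complex by its constituents and compare the source and target via the five-lemma, using that on each associated graded piece — the deformation complex of the auxiliary groupoid and the deformation complex of the leg $\Psi$ — the map induced by $a$ is a quasi-isomorphism by Morita invariance, while the connecting maps are natural in $a$. \textbf{The main obstacle is exactly move (i):} Morita invariance supplies quasi-isomorphisms on the individual constituents, but $C^{*}_{def}(\Psi/\Phi)$ is their coupled total complex, so one must ensure the comparison is compatible at the level of the total differential. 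The delicate point is that the deformations of $\mathcal{K}$ and of the map $\Psi$ are linked through the requirement that $\Phi$ be a weak equivalence; I would need to verify that this linkage is preserved under pullback along $a$, so that the induced map of total complexes — and not merely of the pieces — is a cohomology isomorphism, e.g. by a spectral-sequence or mapping-cone five-lemma argument.
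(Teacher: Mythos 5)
Your proposal follows essentially the same route as the paper: the paper's (very terse) argument likewise reduces to the Morita-invariance quasi-isomorphisms $F^{*}$ and $F_{*}$ of Propositions \ref{Mor1} and \ref{Mor2} applied to the legs and the auxiliary groupoid, together with Theorem \ref{Theor:IsomorpMorphisms-Cohomology} for the replacement of legs by isomorphic morphisms, assembled through the mapping-cone structure of $C^{*}_{def}(\Psi/\Phi)$. Your plan simply makes explicit the five-lemma/zig-zag bookkeeping that the paper leaves implicit, and correctly flags the compatibility of the constituent quasi-isomorphisms with the total differential as the point needing verification.
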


Thus, this result yields a well-defined algebraic object associated to generalized morphisms or maps between differentiable stacks. The use of this complex in the study of deformations of fractions will be explored in future work. 
The study of deformations of Lie subgroupoids, which is explored in detail in \cite{CardS-Subgroupoids}, is also motivated by the work on simultaneous deformations, where the deformation complexes are obtained through the cohomological construction of the mapping-cone complex. As a final application, in Section \ref{Section:multiplicativeforms} we introduce the study of deformation of multiplicative forms on Lie groupoids. There, we define the \emph{deformation complex for multiplicative forms} and characterize the trivial deformations in terms of the exactness of the associated cocycles in the complex, which can be thought of as a Moser's type Theorem for multiplicative forms. This topic is deeply explored in \cite{CardMS} for studying deformations of symplectic groupoids.

\section*{Acknowledgements}

I would like to thank to Ivan Struchiner for many valuable discussions, important advice as well as for suggestions, and comments on initial drafts of this paper. Special thanks to Joao Nuno Mestre for valuable suggestions on the first versions of this paper. This work was also benefited by conversations with Cristian Ortiz and Matias del Hoyo, which improved the final version of this paper. This research received support from a PNPD postdoctoral fellowship at UFF.

\section{Background: Deformation Theory of Lie Groupoids and VB-Groupoids}\label{Background}
In this section we will recall some preliminary content that will be used throughout the paper. Its purpose is mostly to establish notations and to maintain this paper as self-contained as possible. For further details on Lie groupoids, Lie algebroids, deformation theory of Lie groupoids and VB-groupoids, we refer the reader to \cite{M}, \cite{CMS} and \cite{GrMe-grpds}.

Let $\calG\rightrightarrows M$ be a Lie groupoid, denote by $s$, $t$, $m$, $i$ and $u$ the source, target, multiplication, inversion and unit of $\calG$, respectively. We will write $m(g,h)=gh$ and $i(g)=g^{-1}$ when the context is clear, and identify $x\in M$ with the corresponding unity $u(x)\in\calG$. For $g\in\calG$, we write $g:x\rightarrow y$ meaning $s(g)=x$ and $t(g)=y$.\\

If $g:x\rightarrow y$, there is a right translation
$$R_{g}:s^{-1}(y)\rightarrow s^{-1}(x),\ R_g(h)=hg;$$
and analogously a left-translation $L_{g}:t^{-1}(x)\rightarrow t^{-1}(y),$ $L_g(h)=gh$, between the $t$-fibers. We denote their differentials, respectively, by $r_g$ and $l_g$. With this, $X\in\mathfrak{X}(\calG)$ is a \textit{right-invariant vector field} if $ds(X_g)=0$ and $X_{gh}=r_h(X_g)$ for all $(g,h)\in\calG^{(2)}$. Observe that the subset of right-invariant vector fields $\mathfrak{X}^{r}(\calG)\subset\mathfrak{X}(\calG)$ has a $C^{\infty}(M)$-module structure given by $f\cdot X:=(t^{*}f)X$. Further, $\mathfrak{X}^{r}(\calG)\subset\mathfrak{X}(\calG)$ is a Lie subalgebra with the usual Lie bracket on vector fields.

Just as a Lie group $G$ has an associated Lie algebra $\mathfrak{g}$, Lie groupoids also can be studied infinitesimally giving rise to the notion of Lie algebroid. The Lie algebroid $A_\calG$ of $\calG$ is determined (like $\mathfrak{g}$) by the Lie algebra of right-invariant vector fields on $\calG$. More precisely, $A_\calG$ is the vector bundle $\left.(T^{s}\calG)\right|_{M}$ over $M$, where $T^{s}\calG:=Ker(ds:T\calG\longrightarrow s^{*}TM)$ and $M\subset\calG$ is viewed inside $\calG$ as the units. In this way, there is a map $\Gamma(A_\calG)\longrightarrow\mathfrak{X}^{r}(\calG),\ \alpha\longmapsto\vec{\alpha}:g\mapsto r_g(\alpha_{t(g)}),$ which is easily seen to be an isomorphism of $C^{\infty}(M)$-modules inducing then a Lie bracket on $\Gamma(A_\calG)$. The vector field $\vec{\alpha}$ is called the right-invariant vector field associated to $\alpha$. The vector bundle $A_{\calG}$ is also equipped with a vector bundle map $\rho:A_\calG\longrightarrow TM$ given by the restriction of $dt:T\calG\longrightarrow TM$ to $A_\calG\subset T\calG$. The map $\rho$ is called the \textit{anchor map of} $A_\calG$.

In other words, the Lie algebroid associated to $\calG$ consists of the pair $(A_\calG,\rho)$ together with the Lie bracket on sections of $A_\calG$, induced from that of $\mathfrak{X}^{r}(\calG)$. With this point of view, we can abstract such a notion of Lie algebroid and to say that a vector bundle $A$ over $M$ is a Lie algebroid if there exist a vector-bundle map $\rho:A\longrightarrow TM$ together with a Lie bracket on the sections of $A$ in such a way that a Leibniz rule is satisfied:
$$[\alpha,f\beta]_{\Gamma(A)}=f[\alpha,\beta]+L_{\rho(\alpha)}(f)\cdot\beta,$$
for every $\alpha, \beta\in \Gamma(A)$ and $f\in C^{\infty}(M)$.
As the reader may expect, $A_\calG$ defined as above is an example of a Lie algebroid in this more abstract context. More examples can be found in \cite{MM} and \cite{M}.

\subsection{Deformation theory of Lie Groupoids}\label{DeformationLieGrpds}


The deformation theory of Lie groupoids was recently introduced in \cite{CMS}. In there, the authors developed the main aspects of the theory; among other things, they exhibite the corresponding cohomology attached to deformations of Lie groupoids and use it to prove the stability of compact Lie groupoids. We recall here some key facts of the constructions in \cite{CMS}.\\

\subsubsection*{\textbf{Deformations of Lie groupoids}}
A deformation of a manifold is roughtly understood in terms of a smooth family of manifolds. A smooth family of manifolds $\{\left.M_\e\right|\e\in I\}$ is viewed as a manifold $\tilde{M}$ together with a submersion $\tilde{\pi}:\tilde{M}\longrightarrow I$, such that every $M_\e$ is the fiber $\pi^{-1}(\e)$ over $\e$. One also says that the family $\{\left.M_\e\right|\e\in I\}$ is \textit{smoothly parametrized by} $I$. This notion is the central idea to define deformations of Lie groupoids. Explicitly,

\begin{definition}(Smooth family of Lie groupoids)\\
A \emph{smooth family of Lie groupoids} parametrized by a manifold $B$ is given by a Lie groupoid $\tilde{\calG}\rightrightarrows\tilde{M}$ and a surjective submersion $\pi$ such that $\pi\circ\tilde{s}=\pi\circ\tilde{t}$
$$\tilde{\calG}\rightrightarrows\tilde{M}\stackrel{\pi}{\rightarrow}B.$$

In this way, $\pi$ determines the family of Lie groupoids $\{\left.\calG_b\right|b\in B\}$, where $\calG_b$ denotes the restricted groupoid over $M_b=\pi^{-1}(b)$. One says that the family is \textit{proper} if $\tilde{\calG}$ is proper, i.e., if $\tilde{s}\times\tilde{t}:\tilde{\calG}\longrightarrow\tilde{M}\times\tilde{M}$ is a proper map.\\
Two familes $\tilde{\calG}\rightrightarrows\tilde{M}\stackrel{\pi}{\rightarrow} B$ and $\tilde{\calG}'\rightrightarrows\tilde{M}'\stackrel{\pi'}{\rightarrow} B$ are \textit{isomorphic} if there exists an isomorphism of groupoids $(F,f):\tilde{\calG}\rightarrow\tilde{\calG}'$ compatible with the submersions $\pi$ and $\pi'$ in the sense that $\pi'\circ f=\pi$. This isomorphism $F$ can be thought of as a smooth family of isomorphisms $F_b:\calG_b\longrightarrow\calG'_b$ parametrized by $B$.
\end{definition}

\begin{definition}\label{Deform.grpds}(Deformation of Lie groupoids)\\
Let $\calG\rightrightarrows M$ be a Lie groupoid with structural maps $s,\ t,\ m,\ i,\ u.$ A \textit{deformation} of $\calG$ is a smooth family of Lie groupoids $\tilde{\calG}$ parametrized by an open interval $I$ containing zero,
$$\tilde{\calG}=\{\calG_\e\rightrightarrows M_\e:\e\in I\}$$ 
such that $\calG_0=\calG$. We denote the structural maps of $\calG_\e$ by $s_\e,\ t_\e,\ m_\e,\ i_\e,\ u_\e.$\\

The deformation $\tilde{\calG}$ of $\calG$ is called \textbf{strict} if the all fibers $\calG_\e$ and $M_\e$ are diffeomorphic to $\calG$ and $M$ in a smooth way, i.e., if there exist two diffeomorphisms $F:\tilde{\calG}\rightarrow\calG\times I$ and $f:\tilde{M}\rightarrow M\times I$ such that $pr_I\circ F=\pi\circ\tilde{s}$ and $pr_I\circ f=\pi$. In other words, essentially we only deform the structural maps of $\calG$: $(\tilde{\calG}\rightrightarrows\tilde{M}, \tilde{s}, \tilde{t})\cong(\calG\times I\rightrightarrows M\times I, s_\e\times\e, t_\e\times\e)$. In such a case, we can assume $\tilde{\calG}=\calG\times I$ and the deformation is said to be \textbf{$s$-constant} if $s_\e$ does not depend on $\e$. The (strict) deformation such that $\calG_\e=\calG$ as groupoids is called the \textbf{constant deformation} of $\calG$.\\
Two deformations $\tilde{\calG}=\{\calG_\e\rightrightarrows M_\e:\e\in I\}$ and $\tilde{\calG}'=\{\calG'_\e\rightrightarrows M'_\e:\e\in I'\}$ are \textit{locally equivalent} if there exist a family of isomorphisms of groupoids $F_\e:\calG_\e\longrightarrow\calG'_\e$, smoothly parametrized by $\e$ in a open interval containing zero (contained in $I\cap I'$), such that $F_0=Id_\calG$. 
\end{definition}

\begin{remark}
Consider two locally equivalent deformations $\tilde{\calG}$ and $\tilde{\calG}'$. For simplicity and because around $\calG_0$ the families $\tilde{\calG}$ and $\tilde{\calG}'$ are isomorphic, we will just say that $\tilde{\calG}$ and $\tilde{\calG}'$ are \textbf{equivalent deformations} of $\calG_0$ (even if $I\neq I'$).
\end{remark}
With the convention of the last remark, the deformation $\tilde{\calG}$ is called \textbf{trivial} if it is equivalent to the constant deformation.

\begin{remark}[Fibrations] \label{FibrationsGrpds}
Recall that a fibration between two Lie groupoids is a Lie groupoid morphism $\mathfrak{F}:\calG\to\calH$ such that the map $\mathfrak{F}^{!}:\calG\to \calH\times_{N}M$, $g\mapsto (\mathcal{F}(g),s_{\calG}(g))$, is a surjective submersion. As pointed out in \cite{MdHRF}, a deformation of $\calG$ also can be regarded in terms of fibrations of Lie groupoids.
The data $\tilde{\calG}\rightrightarrows\tilde{M}\stackrel{\pi}{\rightarrow}I$ involved in the definition of a deformation of $\calG$ can be expressed in the form
$$\mathfrak{F}:(\tilde{\calG}\rightrightarrows\tilde{M})\longrightarrow(I\rightrightarrows I),$$
where $\mathfrak{F}$ is a fibration of Lie groupoids. The family $\calG_\e$ of Lie groupoids corresponds to the fibers of the fibration. In this sense, a strict deformation can be thought of as a fibration $\mathfrak{F}$ where the maps between the arrows and the objects are locally trivial. In fact, two trivializations $F_1:\tilde{\calG}\longrightarrow\calG\times I$ and $F_0:\tilde{M}\longrightarrow M\times I$ induce a family of Lie groupoid structures $\calG_\e$ on the manifold $\calG=\tilde{\calG}_0$. For instance, the deformation of the source map is determined by $(s_\e(g),\e)=F_0\circ\tilde{s}\circ F_1^{-1}(g,\e)$ and so on.
\end{remark}

Examples of deformations of Lie groupoids are considered in (\cite{MdHRF}, p. 16). As a manner of illustration we sketch here some of them.

\begin{examples}
\begin{enumerate}
\item Let $G=\mathbb{R}^{2}$. Consider the family of Lie groups $G\times\mathbb{R}\longrightarrow\mathbb{R},\ (g,\e)\mapsto\e$ given by
$$(x_1,y_1)\cdot_\e(x_2,y_2):=(x_1+x_2,y_1+e^{x_{1}\e}y_2).$$
Due to the fact that for $\e\neq0$ the multiplication $\cdot_\e$ is non-abelian, this is a non-trivial deformation of $G$.

\item Consider the family of Lie group actions of $\mathbb{R}$ on $\mathbf{T}^{2}:=\mathbb{R}^{2}/\mathbb{Z}^{2}$, given by:
$$r\cdot_\e(x_1,x_2):=(x_1+r,x_2+\e r).$$
Thus, if $G:=\mathbb{R}\times\mathbf{T}^{2}$ then such a family of actions can be seen as a family of action groupoids $G\times\mathbb{R}\longrightarrow\mathbb{R}$. This is of course a non-trivial deformation of $G$ since the topology of the orbits varies with $\e$.
\end{enumerate}
\end{examples}

\subsubsection*{\textbf{Deformation cohomology of Lie groupoids}}

The fundamental fact of the deformation complex $(C^{*}_{def}(\calG),\delta_{\calG})$ of a Lie groupoid $\calG$ is that it governs deformations of $\calG$. Concretely, to every deformation of $\G$ one associates a cohomology class in $H^{2}_{def}(\calG)$, and this correspondence also shows a relation between the equivalence classes of deformations of $\calG$ and the classes of $H^{2}_{def}(\calG)$. The \textbf{deformation complex of $\calG$} is defined as follows.

For any $k\in\mathbb{N}$, consider $\calG^{(k)}=\left\{(g_{1},...,g_{k}): s(g_{i})=t(g_{i+1})\right\}$ the manifold of $k$-strings of composable arrows, and define $\calG^{(0)}=M$. The space of $k$-\textit{cochains} $C^{k}_{def}(\calG)$ is given by
$$C^{k}_{def}(\calG)=\left\{\left.c:\calG^{(k)}\rightarrow T\calG\right|\ c(g_{1},...,g_{k})\in T_{g_{1}}\calG \text{ and } c \text{ is } s\text{-projectable}\right\},$$
where $s$-projectable means that $ds\circ c(g_{1},...,g_{k})=:s_{c}(g_{2},...,g_{k})$ does not depend on $g_1$. The differential of $c$ is defined by
\begin{align*}
(\delta c)(g_1,...,g_{k+1}):&=-d\bar{m}(c(g_1g_2,g_3,...,g_{k+1}),c(g_2,...,g_{k+1}))+\\
&+\sum_{i=2}^{k}(-1)^i c(g_1,...g_ig_{i+1},...,g_{k+1})+(-1)^{k+1}c(g_1,...,g_k),
\end{align*}
where $\bar{m}:\calG_s\times_s\calG\longrightarrow\calG$, $\bar{m}(g,h)=gh^{-1}$ is the division map of $\calG$.\\
For $k=0$, $C^{0}_{def}(\calG):=\Gamma(A)$ with differential defined by
$$\delta\alpha=\overrightarrow{\alpha}+\overleftarrow{\alpha}\in C^{1}_{def}(\calG),$$
where $\overleftarrow{\alpha}$ is the \textit{left-invariant} vector field on $\calG$ associated to $\alpha$ defined by $\overleftarrow{\alpha}(g):=l_g(di(\alpha_{s(g)}))$. Note that a section of $A$ can be viewed as a map $c:\calG^{(0)}\longrightarrow T\calG$, with $c(1_x)\in T_{1_x}\calG$ such that $ds\circ c=0$.\\
This data in fact defines a cohomology ($\delta^{2}=0$) and $H^{*}_{def}(\calG)$ denotes the \textbf{deformation cohomology of} $\calG$.

In this way, one can describe explicitly the cohomology class $[\xi_0]\in H^{2}_{def}(\calG)$ associated to an $s$-constant deformation of $\calG$ by
$$\xi_0(g,h):=\left.\frac{d}{d\e}\right|_{\e=0}\bar{m}_{\e}(m_0(g,h),h),\ \ \xi_0\in Z^{2}_{def}(\calG),$$
where $m_0$ denotes the multiplication of $\calG=\calG_0$. The fact that $\xi_0$ is a cocycle is implied from applying $\left.\frac{d}{d\e}\right|_{\e=0}$ to the associativity property of $\bar{m}_\e$. The element $\xi_0$ is called the \textbf{deformation cocycle} of the deformation of $\calG$. For deformations which are not necessarily $s$-constant, a slightly different approach needs to be used yielding a non canonical 2-cocycle, however one does gets a canonical a 2-cohomology class for any deformation in the same equivalence class (see Section 5.4 in \cite{CMS}).

Of remarkable importance is the transgression of the 2-cocycle $\xi_0$; when it exists, it plays a key role in the stability under deformations problem of Lie groupoids, as we explain below.

\subsubsection*{\textbf{Moser's argument (towards stability under deformations)}}

One fundamental step to study the stability question for Lie groupoids is given by the following proposition, which uses the deformation complex to state a result in the same spirit as that of the classical Moser's theorem of symplectic geometry (see e.g. \cite{McS} p. 93).

\begin{proposition}\label{Moser-grpds}\cite{CMS}
Let $\tilde{\calG}=\left\{\calG_{\e}:\e\in I\right\}$ be an $s$-constant deformation of $\calG$. Consider the induced cocycles $\xi_{\e}\in C^{2}_{def}(\calG_{\e})$, at each time $\e$, defined in analogous way to $\xi_0$ above $(\xi_\e=\left.\frac{d}{d\l}\right|_{\l=0}\bar{m}_{\e+\l}(m_\e(g,h),h))$. Assume that for every $\e$ small enough there exists $X_{\e}\in C^{1}_{def}(\calG_{\e})$ such that
\begin{equation}\label{cohom eq}
\delta_\e(X_{\e})=\xi_{\e},
\end{equation}
and that the resulting time-dependent vector field $X:=\left\{X_{\e}\right\}$ on $\calG$ is smooth. Then, for $\e_1$ and $\e_2$ close to 0, the time-dependent flow $\psi^{\e_2,\e_1}_{X}$ is a locally defined morphism from $\calG_{\e_1}$ to $\calG_{\e_2}$ covering the time-dependent flow of $V:=\{V_{\e}:=ds(X_{\e})\}$ on $M$.\\
\noindent Additionally, if $\calG$ is proper, $\psi^{\e_2,\e_1}_{X}(g)$ is defined if and only if $\psi^{\e_2,\e_1}_{V}(s(g))$ and $\psi^{\e_2,\e_1}_{V}(t(g))$ are defined.
\end{proposition}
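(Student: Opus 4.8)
The plan is to run a Moser-type argument. I would integrate the (smooth, time-dependent) vector field $X=\{X_\e\}$ to the flow $\psi^{\e_2,\e_1}_X$ and show that the cohomological equation $\delta_\e X_\e=\xi_\e$ forces this flow to be a morphism. The underlying principle is that a $1$-cochain with $\delta_\e X_\e=0$ integrates to a (local) automorphism of $\calG_\e$, and here the nonzero right-hand side $\xi_\e$ is precisely the infinitesimal generator of the change of structure, so integrating $X$ transports the groupoid structure of $\calG_{\e_1}$ onto that of $\calG_{\e_2}$. I would first dispose of the covering claim: since the deformation is $s$-constant we have $s_\e\equiv s$, and $X_\e\in C^{1}_{def}(\calG_\e)$ being $s$-projectable means $ds\circ X_\e=V_\e\circ s$, i.e. $X_\e$ is $s$-related to $V_\e$; hence $s\circ\psi^{\e_2,\e_1}_X=\psi^{\e_2,\e_1}_V\circ s$ wherever defined.

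For multiplicativity I would work with the division map, which is the natural choice here because its domain $\calG_s\times_s\calG$ does not move with $\e$ (the source is fixed). Fix $(g,h)$ with $s(g)=s(h)$ and write $\phi^\e:=\psi^{\e,\e_1}_X$. Compare the curves
\[
G(\e):=\phi^\e\big(\bar{m}_{\e_1}(g,h)\big),\qquad F(\e):=\bar{m}_\e\big(\phi^\e(g),\phi^\e(h)\big),
\]
which agree at $\e=\e_1$. By construction $G$ is a flow line, so $\dot G=X_\e(G)$. Differentiating $F$, with $a:=\phi^\e(g)$, $b:=\phi^\e(h)$, gives $\dot F=\partial_\e\bar{m}_\e(a,b)+d\bar{m}_\e(X_\e(a),X_\e(b))$. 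The definition of $\xi_\e$ identifies the structural term $\partial_\e\bar{m}_\e(a,b)$ with $\xi_\e(c,b)$, where $c:=\bar{m}_\e(a,b)$ (so that $m_\e(c,b)=a$); and the hypothesis $\delta_\e X_\e=\xi_\e$, which reads $X_\e(c)-d\bar{m}_\e(X_\e(m_\e(c,b)),X_\e(b))=\xi_\e(c,b)$, rearranges exactly into $\dot F=X_\e(F)$. Thus $F$ and $G$ solve the same non-autonomous ODE with the same initial value, so $F\equiv G$ by uniqueness on the open set where the flows are defined. Since preserving $\bar{m}$ is equivalent to being a groupoid morphism, this shows $\psi^{\e_2,\e_1}_X$ is a local morphism $\calG_{\e_1}\to\calG_{\e_2}$; being a morphism covering $\psi_V$ on objects, it is moreover target-compatible: $t_\e\circ\phi^\e=\psi^{\e,\e_1}_V\circ t_{\e_1}$.

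It remains to treat the completeness statement under properness. The forward implication is immediate from the two compatibilities just established: if $\psi^{\e_2,\e_1}_X(g)$ is defined, then applying $s$ yields $\psi^{\e_2,\e_1}_V(s(g))$ and applying $t_\e$ yields $\psi^{\e_2,\e_1}_V(t(g))$. For the converse I would use an escape-lemma argument. Assume the base flow lines through $s(g)$ and $t(g)$ are defined on $[\e_1,\e_2]$; they trace compact curves in $M$. Since properness is an open condition, $\calG_\e$ is proper for $\e$ near $0$, so each $s\times t_\e$ is proper (better: pass to the total space and use that $\tilde s\times\tilde t$ is proper for the family near $\calG_0$). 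The flow line $\e\mapsto\phi^\e(g)$ projects under $s$ and $t_\e$ onto these compact base curves, hence stays inside a compact subset of $\calG$; by the escape lemma a flow line confined to a compact set extends over the whole parameter interval, so $\psi^{\e_2,\e_1}_X(g)$ is defined.

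I expect the genuine obstacle to be this last, geometric step rather than the infinitesimal computation. The difficulty is that the target map $t_\e$ itself deforms, so the ``graph'' map $s\times t_\e$ controlling confinement changes with $\e$; to make the compactness argument uniform I would carry it out on the total space $\tilde\calG\to I$, where $\tilde s\times\tilde t$ is a single proper map, and apply the escape lemma there to the suspended flow. A secondary, more routine subtlety is the careful handling of the shifting open domains on which the various local flows and the identity $F\equiv G$ are simultaneously valid; the $s$-constant hypothesis is precisely what keeps the domain of $\bar{m}_\e$ fixed and makes the ODE comparison clean.
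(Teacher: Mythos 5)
The paper does not reprove this proposition (it is recalled from \cite{CMS} as background), but your argument is correct and is essentially the proof given there: the $s$-relatedness of $X_\e$ to $V_\e$ handles the covering claim, the ODE-uniqueness comparison of $\e\mapsto\psi_X^{\e,\e_1}(\bar m_{\e_1}(g,h))$ with $\e\mapsto\bar m_\e(\psi_X^{\e,\e_1}(g),\psi_X^{\e,\e_1}(h))$ via $\delta_\e X_\e=\xi_\e$ gives multiplicativity, and the escape-lemma/properness argument on the total space gives the completeness statement. Your identification of the genuinely delicate points (the shifting domains, and making the properness argument uniform in $\e$ by working with $\tilde s\times\tilde t$ on $\tilde\calG$) matches how the cited proof handles them.
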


This proposition tells us the conditions under which one finds a flow compatible with the variations of the structural maps of $\calG$. However, by considering the structural maps of the total groupoid $\tilde{\calG}$, one can express the following equivalent version of the proposition.

\begin{proposition}\cite{CMS}
Consider an $s$-constant deformation as above. A one-parameter family $X_{\e}$ of vector fields on $\calG$ satisfies the cocycle equations (\ref{cohom eq}) if and only if the induced vector field on $\tilde{\calG}=\calG\times I$,
$$\tilde{X}(g,\e)=(X_{\e}(g),0)+\frac{\partial}{\partial\e}\in\mathfrak{X}(\calG\times I),$$
is multiplicative.
\end{proposition}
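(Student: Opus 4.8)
The plan is to reduce the statement to the standard identification, recalled in \cite{CMS}, between multiplicative vector fields and $1$-cocycles of the deformation complex: a vector field $Y$ on a Lie groupoid $\calH$ is multiplicative if and only if it is $s$-projectable and the corresponding $1$-cochain $Y\in C^{1}_{def}(\calH)$ is a cocycle, i.e. $\delta_{\calH}Y=0$. Applying this to the total groupoid $\tilde{\calG}=\calG\times I\rightrightarrows M\times I$ of the $s$-constant deformation, it suffices to show that $\tilde{X}$ is a well-defined element of $C^{1}_{def}(\tilde{\calG})$ and that its differential $\delta_{\tilde{\calG}}\tilde{X}$ encodes exactly the difference $\delta_\e X_\e-\xi_\e$.

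First I would check that $\tilde{X}$ lies in $C^{1}_{def}(\tilde{\calG})$, i.e. that it is $\tilde{s}$-projectable. Since the deformation is $s$-constant, $\tilde{s}(g,\e)=(s(g),\e)$, so $d\tilde{s}(\tilde{X}(g,\e))=(ds(X_\e(g)),\partial_\e)$; as each $X_\e\in C^{1}_{def}(\calG_\e)$ is $s$-projectable, $ds(X_\e(g))$ depends only on $s(g)$ and $\e$, hence only on $\tilde{s}(g,\e)$, giving $\tilde{s}$-projectability. (This is where $s$-constancy is used.)

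Then I would compute $\delta_{\tilde{\calG}}\tilde{X}$ on a composable pair $((g,\e),(h,\e))$, using the structural maps $\tilde{m}((g,\e),(h,\e))=(m_\e(g,h),\e)$ and $\bar{m}_{\tilde{\calG}}((a,\e),(b,\e))=(\bar{m}_\e(a,b),\e)$ together with $\tilde{X}(a,\e)=(X_\e(a),\partial_\e)$. The only delicate point is the differential of the division map. Because the $\e$-coordinate is shared across the two factors of the fibre product $\tilde{\calG}\times_{\tilde{s}}\tilde{\calG}\cong(\calG\times_{s}\calG)\times I$, the pair $(\tilde{X}(m_\e(g,h),\e),\tilde{X}(h,\e))$ is a single tangent vector with one $\partial_\e$-component, so $d\bar{m}_{\tilde{\calG}}$ splits into the fibrewise differential $d\bar{m}_\e$ at fixed $\e$ plus the parameter-derivative of the family $\bar{m}_\e$ with its arguments held fixed:
\begin{align*}
d\bar{m}_{\tilde{\calG}}\big(\tilde{X}(m_\e(g,h),\e),\tilde{X}(h,\e)\big)
&=\Big(d\bar{m}_\e\big(X_\e(m_\e(g,h)),X_\e(h)\big)+\tfrac{\partial}{\partial\e}\bar{m}_\e(m_\e(g,h),h),\ \partial_\e\Big).
\end{align*}
The key observation is that the parameter-derivative $\tfrac{\partial}{\partial\e}\bar{m}_\e(m_\e(g,h),h)$, taken with $m_\e(g,h)$ and $h$ frozen, is precisely $\xi_\e(g,h)$ by its very definition.

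Finally, substituting into $(\delta_{\tilde{\calG}}\tilde{X})((g,\e),(h,\e))=-d\bar{m}_{\tilde{\calG}}(\dots)+\tilde{X}(g,\e)$, the two $\partial_\e$-components cancel and the $\calG$-component becomes
\begin{equation*}
-d\bar{m}_\e\big(X_\e(m_\e(g,h)),X_\e(h)\big)+X_\e(g)-\xi_\e(g,h)=(\delta_\e X_\e)(g,h)-\xi_\e(g,h),
\end{equation*}
using the defining formula of $\delta_\e$ on $C^{1}_{def}(\calG_\e)$. Hence $\delta_{\tilde{\calG}}\tilde{X}=0$ if and only if $\delta_\e X_\e=\xi_\e$ for all $\e$, which by the identification above is equivalent to multiplicativity of $\tilde{X}$. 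The main obstacle is purely the bookkeeping in the third step: correctly separating the differential of the family of division maps into its horizontal (fixed-$\e$) and vertical ($\partial_\e$) parts, ensuring the shared $\e$-coordinate produces a single parameter-derivative, and matching that derivative to the cocycle $\xi_\e$; the vanishing of the $\partial_\e$-component then confirms that the output is indeed a legitimate deformation cochain of $\tilde{\calG}$.
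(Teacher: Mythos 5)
Your proof is correct, and since the paper only recalls this proposition from \cite{CMS} without reproducing a proof, there is nothing for it to diverge from: the argument you give (identify multiplicative vector fields with $s$-projectable deformation $1$-cocycles, then split $d\bar{m}_{\tilde{\calG}}$ on $(\calG\times_s\calG)\times I$ into the fixed-$\e$ part and the $\partial_\e$-derivative that produces $\xi_\e$) is exactly the standard computation behind the statement. The sign bookkeeping, the use of $s$-constancy to get $\tilde{s}$-projectability and the product decomposition of the fibred product, and the identification of the frozen-argument parameter derivative with $\xi_\e$ are all handled correctly.
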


In this way, one knows that the flow of $\tilde{X}$ (when uniformly defined) is given by automorphisms of $\tilde{\calG}$ (\cite{MX}, Prop. 3.5). And the stability under deformations question of Lie groupoids is essentially solved by finding a \textit{complete} vector field like $X$ (or $\tilde{X}$) above for every deformation of $\calG$ (\cite{CMS}, Theorem. 7.1).\\
Analogous concepts to those described in the three steps above (deformations, cohomology and Moser's trick) will be developed when working with the deformation theory of Lie groupoid morphisms in this paper.

We introduce now some notions of the theory of VB-groupoids which will serve us to obtain alternative descriptions and give a treatment of the deformation complex of morphisms which we will work with.

\newpage

\subsection{Interlude on VB-groupoids}\label{VB-grpds}
\subsubsection*{\textbf{VB-groupoids}}\label{InterludeVB-groupoids}
A VB-groupoid can be thought of as a groupoid object in the category of vector bundles. They provide alternative ways to look at the representation theory and the deformation theory of Lie groupoids. For instance, the deformation complex of Lie groupoids (Subsection \ref{DeformationLieGrpds}) can be seen as the VB-complex which is naturally associated to the cotangent groupoid when regarded as a VB-groupoid (See \cite{CMS}, \cite{dHO} and Remark \ref{deformation and vb-complexes} below). This point of view will be useful in the study of the deformation complexes defined in this paper. A more detailed description of the theory of VB-groupoids and VB-complexes can be found in \cite{M}, \cite{GrMe-grpds} and \cite{BCdH}.

\begin{definition}
 A $VB$-groupoid $(\Gamma, E, \calG, M)$ is a structure of two Lie groupoids and two vector bundles as in the diagram below
\begin{equation}\label{VB-grpd}
  \begin{tikzcd}[column sep=4em, row sep=10ex]
    \Gamma \arrow[yshift=2pt]{r}{\tilde{s}} \arrow[yshift=-2pt]{r}[swap]{\tilde{t}} \arrow{d}{\tilde{q}} & E \arrow{d}{q}\\
    \calG \arrow[yshift=2pt]{r}{s} \arrow[yshift=-2pt]{r}[swap]{t}    & M,
  \end{tikzcd}
\end{equation}
where the vertical directions are vector bundle structures and the horizontal ones are Lie groupoids, such that the structure maps of the groupoid $\Gamma$ (source, target, identity, multiplication, inversion) are vector bundle morphisms over the corresponding structure maps of the groupoid $\calG$. 
\end{definition}

\begin{remark}
Note that the multiplication $m_\Gamma:\Gamma^{(2)}\longrightarrow\Gamma$ makes sense as a vector bundle morphism when one considers the induced vector bundle structure of $\Gamma^{(2)}$ over $\calG^{(2)}$ (guaranteed from the fact that the \textit{`double source map'} $(\tilde{q},\tilde{s}):\Gamma\longrightarrow\calG\; _{s}\,\times E$ is a surjective submersion (appendix A in \cite{Li-Bland-Severa}).
\end{remark}

In this setting a \textbf{morphism of VB-groupoids} $(\Phi_\Gamma,\Phi_E,\Phi_\calG,\Phi_M):(\Gamma, E, \calG, M)\longrightarrow (\Gamma', E', \calG', M')$ is a morphism $(\Phi_\Gamma,\Phi_E)$ between the Lie groupoids $\Gamma\rightrightarrows E$ and $\Gamma'\rightrightarrows E'$ preserving the vector bundle structures, i.e, such that $\Phi_\Gamma$ and $\Phi_E$ are vector bundle morphisms covering the maps $\Phi_\calG:\calG\longrightarrow \calG'$ and $\Phi_M:M\longrightarrow M'$, respectively. Observe that, by restricting $\Phi_\Gamma$ to the zero section, $\Phi_\calG$ turns out to be a Lie groupoid morphism.

\begin{example}(Tangent VB-groupoid)
Given a Lie groupoid $\calG\rightrightarrows M$ with source, target and multiplication maps $s$, $t$ and $m$, by applying the tangent functor one gets the tangent groupoid $T\calG\rightrightarrows TM$ with structure maps $Ts$, $Tt$, $Tm$ and so on. This tangent groupoid is further a VB-groupoid over $\calG\rightrightarrows M$ (with respect to the tangent projections).
\end{example}

\begin{remark}
Note that in the previous example one has the following short exact sequences of vector bundles over $\calG$,
\begin{equation}\label{left}
s^{*}(A_\calG)\stackrel{-l\circ Ti}{\longrightarrow} T\calG\stackrel{(Tt)^{!}}{\longrightarrow}t^{*}(TM)
\end{equation} and
\begin{equation}\label{right}
t^{*}(A_\calG)\stackrel{r}{\longrightarrow} T\calG\stackrel{(Ts)^{!}}{\longrightarrow}s^{*}(TM)
\end{equation}
where $r$ and $l$ are the right and left multiplication on vectors tangent to the $s$-fibers and $t$-fibers of $\calG$, respectively; and $(Ts)^{!}$ and $(Tt)^{!}$ are the maps induced by $Ts$ and $Tt$ with image on the corresponding pullback bundles. 
\end{remark}

\begin{example}(Cotangent groupoid)
As noticed in \cite{CosteDazordWeins.}, given a Lie groupoid $\calG$ its cotangent bundle inherits a groupoid structure over the dual of the Lie algebroid of $\calG$,
$$T^{*}\calG\rightrightarrows A_\calG^{*},$$ 
with source and target maps induced, respectively, from the dual of the exact sequences (\ref{left}) and (\ref{right}). Explicitly, for $\alpha_g\in T^{*}_{g}\calG$ and $a\in\Gamma(A_\calG)$,
$$\left\langle \tilde{s}(\alpha_g),a_{s(g)}\right\rangle=-\left\langle \alpha_g, l_g\circ Ti(a)\right\rangle$$
and
$$\left\langle \tilde{t}(\alpha_g), a_{t(g)}\right\rangle=\left\langle \alpha_g, r_g(a)\right\rangle.$$
With multiplication determined by $$\left\langle \tilde{m}(\alpha_g,\beta_h),Tm(v_g,w_h)\right\rangle=\left\langle \alpha_g,v_g\right\rangle+\left\langle \beta_h,w_h\right\rangle,$$
for $(v_g.w_h)\in (T\calG)^{(2)}$.
\end{example}

There are two canonical bundles over $M$ associated to a VB-groupoid $\Gamma$, they are called the side and core bundles of $\Gamma$. The \emph{side bundle} is just the vector bundle $E$ over $M$. The \emph{core bundle} $C$ is determined by the restriction to $M$ of the kernel $K$ of the surjective map
$$\Gamma\stackrel{\tilde{s}^{!}}{\longrightarrow} s^{*}E;$$
explicitly, one defines $C$ as the restriction $\left.K\right|_{M}$ of $K$ to the units of $\calG$. The core bundle then can be thought of as the bundle in the complementar direction to that of the side bundle. These two bundles gives rise to the short exact sequence
\begin{equation}\label{coresequence}
0\rightarrow t^{*}C\stackrel{r}{\rightarrow}\Gamma\stackrel{\tilde{s}^{!}}{\rightarrow}s^{*}E\rightarrow0
\end{equation}
which is called the \textbf{core sequence} of $\Gamma$, where $r$ is the right multiplication by zero elements of $\Gamma$ on the vectors of $C$. Observe thus that the sequence \eqref{right} above is a particular case of this sequence.

The target map of $\Gamma$ determines a vector bundle map $\partial:C\rightarrow E$ between the core and side bundles of $\Gamma$. This map, which is called the \emph{core-anchor map}, coincides with the known anchor $\rho_\calG$ of $A_\calG$ when $\Gamma=T\calG$.

An splitting $\sigma:s^{*}E\to\Gamma$ of the exact sequence \eqref{coresequence} is called \textbf{unitary} if over the units $M$ of $\G$ it coincides with the unit map $u_\Gamma:E\to\Gamma$ of $\Gamma$. In the particular case of the tangent groupoid (i.e., $\Gamma=T\G$), such an unitary splitting is called an \textbf{Ehresmann connection} of $\G$ for the source $s$.

These horizontal lifts of the core sequence have a relevant role in the representation theory of Lie groupoids. They allow to define \emph{quasi-actions} $\Delta^{E}_g:E_{s(g)}\to E_{t(g)}$ and $\Delta^{C}_g:C_{s(g)}\to C_{t(g)}$ of $\G$ on the side and core bundles of $\Gamma$ which are key elements in the notion of 2-terms \emph{representations up to homotopy of Lie groupoids} \cite{GrMe-grpds}. Explicitly, they are given by

$$\Delta^{E}_g(e_{s(g)}):=t_{\Gamma}(\sigma_g(e_{s(g)})), \text{ and } \Delta^{C}_g(c_{s(g)}):=\sigma_{g}(\rho(c_{s(g)}))\cdot c_{s(g)}\cdot 0_{g}^{-1}.$$

\begin{remark}\label{rmk:adjoint action}
Notice that an interesting fact of these quasi-actions $\Delta^{C}$ and $\Delta^{E}$ is that they restrict to canonical actions of $\calG$ on the spaces $Ker\;\partial$ and $Coker\;\partial$. Explicitly, the action on $Ker\;\partial$ is given by
$$\Delta^{Ker\;\partial}_g(c_{s(g)}):=0_g\cdot c_{s(g)}\cdot 0_g^{-1}.$$
In particular, in the case of the tangent VB-groupoid $T\G$ of a groupoid $\G$, the action $\Delta^{\Ker(\rho)}$ is called the \emph{adjoint action} on the isotropy (possibly singular) bundle $\mathfrak{i}_\G$ of $\G$; and the action $\Delta^{Coker(\rho)}$ turns out to be the so-called action of $\G$ on the normal (possibly singular) bundle $\nu_\G$ whose fibers are the normal spaces to the orbits of $\G$.
\end{remark}

\subsubsection*{\textbf{VB-groupoid cohomology}}
VB-groupoids have a special cohomology induced from their own groupoid structure which, additionally, takes into account the linear structure of the vector bundle and aims to give a geometric interpretation of the 2-terms representations up to homotopy of Lie groupoids. Such a complex, which is called the \emph{VB-groupoid complex}, was defined by Gracia-Saz and Mehta in \cite{GrMe-grpds} and it turns out to be (canonically) isomorphic to the deformation complex of Lie groupoids when considering the cotangent groupoid (see Remark \ref{deformation and vb-complexes}), providing another interpretation of the deformation complex of a Lie groupoid. Its definition is as follows.

Let $\Gamma$ be a VB-groupoid. The differentiable complex of $\Gamma$ (as Lie groupoid) has a natural subcomplex $C^{*}_{lin}(\Gamma)$ given by the fiberwise linear cochains of $\Gamma$. The VB-groupoid complex $C^{*}_{VB}(\Gamma)$ of $\Gamma$ is the subcomplex of $C^{*}_{lin}(\Gamma)$ determined by the \textbf{left-projectable} elements of $C^{*}_{lin}(\Gamma)$, that is, the elements satisfying the following two conditions
\begin{enumerate}
	\item $c(0_{g_1},\gamma_{g_2},...,\gamma_{g_k})=0$,
	\item $c(0_g\cdot\gamma_{g_1},\gamma_{g_2},...,\gamma_{g_k})=c(\gamma_{g_1},\gamma_{g_2},...,\gamma_{g_k})$.
\end{enumerate}
This VB-complex turns out to be isomorphic to the one of 2-terms representation up to homotopy of $\G$ over the side and core bundles of the dual VB-groupoid, and thus allows us to think about the 2-terms representation theory in the geometric terms of VB-groupoids (\cite{GrMe-grpds}). In particular, it yields an interpretation of the adjoint representation of a Lie groupoid in terms of the VB-complex of the cotangent groupoid. In that way, having in mind the relation of the deformation complex of groupoids with the adjoint representation, then an expected but relevant fact of the VB-complex concerns its relation with the deformation complex of Lie groupoids:

\begin{remark}\label{deformation and vb-complexes}

A straightforward computation shows that the deformation complex of a Lie groupoid $\calG$ is isomorphic to the VB-groupoid complex $C^{*}_{VB}(T^{*}\calG)$ of its cotangent VB-groupoid (\cite{dHO}, Prop. 4.5). The isomorphism is given by $C^{*}_{def}(\calG)\longrightarrow C^{*}_{VB}(T^{*}\calG)$, $c\mapsto c'$ with
$$c'(\eta_{g_1},...,\eta_{g_k})=\left\langle\eta_{g_1}, c(g_1,...,g_k)\right\rangle.$$ 
\end{remark}

\subsubsection*{\textbf{VB-Morita maps}}

A VB-Morita map takes the so important notion of Morita maps of Lie groupoids to the level of VB-groupoids. A morphism of VB-groupoids $(\Phi,\phi):\Gamma\to\Gamma'$ is a \textbf{VB-Morita map} if $\Phi$ is a Morita morphism \cite{dHO}. In that sense, VB-Morita maps are supposed to play the same role as Morita maps for Lie groupoids. For instance, in \cite{dHO} the authors prove the VB-Morita invariance of the VB-cohomology. That is, if $\Phi:\Gamma\to\Gamma'$ is a VB-Morita map then the VB-cohomologies $H_{VB}^{\bullet}(\Gamma)$ and $H_{VB}^{\bullet}(\Gamma')$ are isomorphic.
In particular, since the tangent lift $T\Phi:T\G\to T\G'$ of a Morita morphism $\Phi:\G\to\G'$ is a VB-Morita map then, by using Remark \ref{deformation and vb-complexes} and the fact that VB-Morita maps are preserved by dualization, the authors give an alternative proof of the Morita invariance of the deformation cohomology of Lie groupoids, first proven in \cite{CMS}. We will also use the notion of VB-Morita morphisms to obtain a Morita invariance of the deformation cohomology of Lie groupoid morphisms. Such a result will make possible to connect the deformation cohomology of morphisms with maps of differentiable stacks (see Section \ref{Sec:Moritainv}).



\section{Gauge maps and Deformation of morphisms}\label{Section:Deform.morph.}


In this section we will define deformations of Lie groupoid morphisms.  We also explain two ways in which two such deformations can be considered as equivalent deformations: by using either \emph{gauge maps} or bisections. Each of these two ways turns out to be convenient depending on the examples and/or applications one has in mind, as we will see in the examples below. Let us first introduce the notion of \emph{gauge map}. 

A \emph{gauge map} arises naturally from the concept of \emph{natural transformations} between functors and allows us to relate two Lie groupoid morphisms when we regard them as functors between the groupoids. In that case, a \emph{natural transformation} yields an obvious relation between two functors, and a gauge map is just an abstraction of that idea in which we define it without making reference to the functors (as it is the case of natural transformations). Formally: let $\G\tto M$ be a Lie groupoid, $N$ a manifold and $\tau:N\to\calG$ be a smooth map. We will call $\tau$ a \textbf{gauge map covering} $f:=s_\calG\circ\tau:N\to M$. Thus indeed, given a gauge map $\tau$ covering $f$ and a Lie groupoid morphism $\Phi:\calH\to\calG$ with base map $f:N\to M$, then $\tau$ relates the morphism $\Phi$ to the morphism $\Phi'$ defined by
$$\Phi'(h)=\tau(t(h))\Phi(h)\tau(s(h))^{-1}.$$

Given a bisection $\sigma:M\to\calG$ of $\G$ and $f:N\to M$, one obtains a gauge map over $f$ by considering $\sigma\circ f$. But, there are in general more gauge maps than those obtained from bisections of $\G$. Hence, a bisection $\sigma$ yields another relation between two morphisms: it relates the morphism $\Phi$ to the morphism $\Phi''$ defined by
$$\Phi''(h)=\sigma(f(t(h)))\Phi(h)\sigma(f(s(h)))^{-1}.$$
Thus, the set of morphisms related by bisections to $\Phi$ is smaller than the set of morphisms related to $\Phi$ by gauge maps. These two manners of acting on a Lie groupoid morphism (either by gauge maps or bisections) are natural ways of generalizing the adjoint action of a Lie group on a Lie group homomorphism considered in \cite{CardS1} when working with deformations of Lie group homomorphisms.

We will consider below these two relations between morphisms at the level of \emph{deformations of morphisms}: Definitions \ref{Def:equivalent deformations} and \ref{Def:strongly equivalent}. However, as we will show in Proposition \ref{Prop:gaugefrombisections}, under some conditions, a gauge map can be obtained by considering \emph{local} bisections, and that fact can be used to prove that, at the level of deformations, such two relations can eventually be the same (see Proposition \ref{Prop:familybisectionsfromgauge} and Theorem \ref{Thm:equivalenttostronglyequivalent}).

\subsubsection*{\textbf{Deformations of Morphisms}}

Let $\calH \tto N$ and $\calG\rightrightarrows M$ be two Lie groupoids and let
\[\xymatrix{\calH \ar[r]^\Phi \ar@<0.25pc>[d] \ar@<-0.25pc>[d]& \calG \ar@<0.25pc>[d] \ar@<-0.25pc>[d]\\
N \ar[r]_{\phi} & M}\] 
be a Lie groupoid morphism. Let $I$ be an open interval containing $0$.

\begin{definition}
A \textbf{deformation of $\Phi$} is a pair of smooth maps $\tilde{\Phi}: \H \times I \to \G$, and $\tilde{\phi}: N \times I \to M$ such that $\tilde{\Phi}(\cdot , 0) = \Phi$, $\tilde{\phi}(\cdot , 0) = \phi$, and for each $\e \in I$ the map 
\[\xymatrix{\calH \ar[r]^{\Phi_\e} \ar@<0.25pc>[d] \ar@<-0.25pc>[d] &\calG \ar@<0.25pc>[d] \ar@<-0.25pc>[d]\\
N \ar[r]_{\phi_\e} & M}\] 
is a Lie groupoid morphism, where $\Phi_\e = \tilde{\Phi}(\cdot , \e)$ and similarly $\phi_\e = \tilde{\phi}(\cdot , \e)$. 
\end{definition}

\begin{remark}[Fibrations]\label{deform.morph.2}
A deformation of $\Phi_0:\calH\longrightarrow\calG$ can be equivalently described by a morphism $\tilde{\Phi}$ between the trivial fibrations $(\calH\times I\rightrightarrows N\times I\longrightarrow I\rightrightarrows I)$ and $(\calG\times I\rightrightarrows M\times I\longrightarrow I\rightrightarrows I)$ covering the identity such that restricted to the fiber over 0 is $\Phi_0$. The corresponding base-map of $\tilde{\Phi}$ between the units $N\times I$ and $M\times I$ will be $\tilde{\phi}$. 
\end{remark}

In what follows we will denote a deformation of $\Phi: \H \to \G$ by $\Phi_\e$.
 
We will consider smooth families of gauge maps covering smooth families of maps $f_\e: N \to M$. By a smooth family of maps from $N$ to $M$ we mean a smooth map $\tilde{f}: N \times I \to M$. Similarly, a smooth family of gauge maps over $f_\e$ is a smooth map $\tilde{\tau}: N \times I \to \G$ such that $\tau_\e = \tilde{\tau}(\cdot, \e)$ is a gauge map over $f_\e = \tilde{f}(\cdot, \e)$.

\begin{definition}\label{Def:equivalent deformations}
Two deformations $\Phi_\e$ and $\Phi'_\e$ of $\Phi$ are \textbf{equivalent} if there exists a smooth family $\tau_\e: N \to \G$ of gauge maps with $\tau_0=u_\G \circ \phi_0$, where $u_\G$ denotes the identity bisection of $\G$, and such that
\begin{equation}\label{eq: gauge}
\Phi'_\e(h)=\tau_\e(t(h))\Phi_\e(h)\tau_\e(s(h))^{-1},
\end{equation}
for all $\e$ in some open interval $I$ containing $0$, and all $h \in \H$.

A deformation will be called \textbf{trivial} if it is gauge equivalent to the constant deformation.
\end{definition}

\begin{remark}
The set $\G au(N,\G)$ of gauge maps $\tau:N\to\G$ is naturally a groupoid over $C^{\infty}(N,M)$ with structure determined pointwise by the groupoid structure of $\calG \tto M$. Moreover there is a natural (left) action of this groupoid on the set $Mor(\H,\G)$ of Lie groupoid morphisms from $\H$ to $\G$, where $\tau$ acts on $\Phi:\H\to\G$ if the base map $\phi$ of $\Phi$ is equal to $s\circ\tau$, i.e., the moment map of the action is the map which associates to $\Phi: \H \to \G$, its base map $\phi: N \to M$. The action is given by
\[(\tau \cdot \Phi) (h) = \tau(t(h))\Phi(h)\tau(s(h))^{-1}.\]
With this notation, expression \eqref{eq: gauge} becomes $\Phi'_\e=\tau_\e\cdot\Phi_\e$, for all $\e$ in some open interval $I$ containing $0$.
\end{remark}

\begin{remark}
We remark that expression \eqref{eq: gauge} only makes sense if $\tau_\e$ is a family of gauge maps over $\phi_\e$. Observe also that if we regard the maps $\Phi_\e$ and $\Phi'_\e$ as smooth families of functors, then the gauge maps $\tau_\e$ are simply a smooth family of natural isomorphisms between these functors.
\end{remark}

\begin{example}[Non-trivial Deformation]
Let $\Phi_\e:\mathbb{R}\longrightarrow S^{1}\times S^{1}\subset\mathbb{C}^{2}$ be the family of morphisms given by
$$\Phi_\e(r):=(e^{2\pi ir}, e^{2\pi ir(1+\e)}),$$ where we view the Lie groups $\Rr$ and $S^1\times S^1$ as Lie groupoids over a point. This is a nontrivial deformation because $\Phi = \Phi_0$ is not injective, but $\Phi_\e$ is injective for any $\e\in\Rr \setminus \mathbb{Q}$.
\end{example}

\begin{example}\label{Ex: flat principal connections}
Let $P = N \times G \to N$ be the trivial principal $G$-bundle over $N$. There is a one-to-one correspondence between flat principal connections $\omega$ on $P$ and Lie groupoid morphisms
\[\xymatrix{
  \Pi_1(N) \ar@<0.25pc>[d] \ar@<-0.25pc>[d]  \ar[rr]^{\Phi^\omega} &  & 
 G
\ar@<0.25pc>[d] \ar@<-0.25pc>[d]\\
 N \ar[rr] & &  \{*\}.}\]
 The correspondence is obtained by using parallel transport along curves on $N$ and the canonical identification of the fibers of $P$ with $G$, i.e., if $\gamma: [0,1]  \to N$ is a path, and $\tilde{\gamma}: [0,1] \to G$ is such that $\tilde{\gamma}(0) = e$, and $(\gamma, \tilde{\gamma})$ is horizontal with respect to $\omega$, then $\Phi^{\omega}([\gamma]) = \tilde{\gamma}(1)$.
 Under this correspondence, gauge equivalence of morphisms translates to gauge equivalence of the connections. In particular, a deformation of a morphism $\Phi^\omega: \Pi_1(N) \to G$ is trivial if and only if the corresponding deformation of $\omega$ is gauge trivial.
 
In example \ref{example:vbconnections} below, we deal also with flat vector bundle connections in the context of Lie groupoid morphisms.
\end{example}

In some situations the notion of equivalence between morphisms we have defined may be too wide. The following is an example of such situation.

\begin{example}
Let $\H = N \times N \tto N$ and $\G = M \times M \tto M$ be pair groupoids. Any morphism $(\Phi, \phi)$ from $\H$ to $\G$ is of the form
\[\Phi(x,y) = (\phi(x),\phi(y)),\]
and this determines a one to one correspondence between morphism for pair groupoids and smooth maps from $N$ to $M$. Note that any two morphisms are equivalent by a gauge transformation. In fact, if $\Phi = (\Phi,\phi)$ and $\Psi = (\Psi,\psi)$ are morphisms from $N \times N$ to  $M \times M$, then $\tau = (\phi, \psi)$ is a gauge transformation such that $\tau \cdot \Psi = \Phi$. 
\end{example}

The example above suggests looking at deformations of morphisms up to stronger equivalences. An instance of that is obtained by taking bisections of groupoids (instead of gauge maps) which, in the previous example, corresponds to diffeomorphisms of the base manifold.

A smooth family of bisections of $\G$ is a smooth map $\tilde{\sigma}: M \times I \to \G$ such that $\sigma_\e = \tilde{\sigma}(\cdot, \e): M \to \G$ is a bisection for all $\e \in I$. Such a family of bisections will be denoted be $\sigma_\e$.

\begin{definition}\label{Def:strongly equivalent}
Let $\Phi: \H \to \G$ be a morphism of Lie groupoids. Two deformations $\Phi_\e$ and $\Phi'_\e$ of $\Phi$ are \textbf{strongly equivalent} if there exist an open interval $I$ containing $0$, and a smooth family of bisections $\sigma_\e$ of $\G$ such that $\sigma_0 = u$ is the identity bisection, and
\[\Phi_\e (h) = \sigma_\e(\Phi'_\e(t(h)))\Phi'_ \e(h) \sigma_\e(\Phi'_\e(s(h)))^{-1},\]
for all $\e \in I$ and $h \in \H$.
\end{definition}

In many instances, we will denote the conjugation by $\sigma_\e$ by $I_{\sigma_\e}$, i.e.,
\[I_{\sigma_\e}(g) = \sigma_\e(t(g))g\sigma_e(s(g))^{-1},\]
so that $\Phi_\e$ is strongly equivalent to $\Phi'_\e$ if $\Phi_\e = I_{\sigma_\e} \circ \Phi'_\e$. A deformation $\Phi_\e$ will be called \textbf{strongly trivial} if $\Phi_\e$ is strongly equivalent to the constant deformation $\Phi'_\e = \Phi$ for all $\e$, or in other words, if there exists a smooth family of bisections $\sigma_\e$ such that $\sigma_0 = u$, and $\Phi_\e = I_{\sigma_\e}\circ \Phi$.

\begin{remark}
Note that if $\Phi_\e$ is a strongly trivial deformation of $\Phi = \Phi_0$, then $\Phi_\e$ also can be regarded as a strongly trivial deformation of $\Phi_\l$ for any $\l\in I$. Indeed, $\Phi_{\l+\e}=I_{\sigma_{\l+\e}\star\sigma_\l^{-1}}\circ\Phi_\l$, where $\star$ denotes the product in the group of bisections of $\calG$, which is given by $(\sigma\star\tau)(x)=\sigma(t(\tau(x)))\tau(x)$; for $\sigma$ and $\tau$ bisections of $\calG$ and $x\in M$. Notice that an analogous observation also holds for trivial deformations.
\end{remark}

\begin{example}\label{example:vbconnections}
Let $E\to M$ be a vector bundle over $M$. There is a one-to-one correspondence between flat connections $\nabla$ on $E$ and Lie groupoid morphisms
\[\xymatrix{
  \Pi_1(M) \ar@<0.25pc>[d] \ar@<-0.25pc>[d]  \ar[rr]^{\Phi^\nabla} &  & 
 \mathcal{G}L(E)
\ar@<0.25pc>[d] \ar@<-0.25pc>[d]\\
 M \ar[rr]^{=} & &  M.}\]
This correspondence is obtained by using the parallel transport along curves on $M$. Under this correspondence, two connections $\nabla_1$ and $\nabla_2$ related by a gauge transformation $F\in \mathrm{Aut}(E)$ induce strongly equivalent Lie groupoid morphisms $\Phi^{\nabla_1}$ and $\Phi^{\nabla_2}$. Explicitly, if $\nabla_2=F\circ\nabla_1\circ F^{-1}$ then $\Phi^{\nabla_2}=I_{F}\circ \Phi^{\nabla_1}$, where we see $F\in \mathrm{Aut}(E)$ as a bisection of the groupoid $\G L(E)$. We just remark that for a general bisection $b\in \mathrm{Bis}(\calG L(E))$, however, the morphism $I_b\circ\Phi^{\nabla}$ does not correspond to some connection of $E$ due to the fact that its base map $f:=t\circ b\in \mathrm{Diff}(M)$ can be different from the identity $id_M$, but it does corresponds to a connection of the pullback bundle $f^{*}E$.

\end{example}

\subsubsection*{\textbf{Equivalent vs Strongly Equivalent deformations}}

For each $\phi: N \to M$ a bisection $b$ of $\G$ induces  a gauge transformation $\tau_b = b \circ \phi: N \to \G$ covering $\phi$. It then follows that strongly equivalent deformations are equivalent. Theorem \ref{Thm:equivalenttostronglyequivalent} below will deal with the converse and more subtle statement due to the fact that not all the gauge maps are obtained from bisections of $\G$. Still, Proposition \ref{Prop:gaugefrombisections} below shows that there are some cases where the gauge map can be induced by a \emph{local bisection}. Recall that a \textbf{local bisection of $\G$} is a local section $\sigma_U:U\subset M\to\G$ of the source map such that $t\circ\sigma_U:U\to M$ is a diffeomorphism onto its image.

 \begin{proposition}\label{Prop:gaugefrombisections}
Let $\G$ be a Hausdorff Lie groupoid and $\tau : N \to \G$ be a gauge map covering an embedding, i.e., such that $\phi = s \circ \tau: N \to M$ is an embedding. Then there exists a local bisection $b: U \to \G$ defined on a neighbourhood $U$ of $\phi(N)$ such that $\tau = b \circ \phi$ if, and only if, $t \circ \tau$ is also an embedding.
\end{proposition}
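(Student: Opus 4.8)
The statement is a biconditional, which I treat as two implications. The ``only if'' direction is short: suppose a local bisection $b:U\to\G$ on a neighbourhood $U\supseteq\phi(N)$ satisfies $\tau=b\circ\phi$. Then $t\circ\tau=(t\circ b)\circ\phi$, and since $t\circ b:U\to M$ is by definition a diffeomorphism onto its image (hence an embedding) while $\phi$ is an embedding by hypothesis, the composite $t\circ\tau$ is an embedding. The substance is the converse, so assume from now on that $\psi:=t\circ\tau$ is also an embedding and let me build $b$.

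First I record that $\tau$ is itself an embedding: it is injective and immersive because $\phi=s\circ\tau$ is (so $ds\circ d\tau$ is injective, forcing $d\tau$ injective and $\tau$ injective), and it is a homeomorphism onto its image because $\tau^{-1}=\phi^{-1}\circ s$ on $\tau(N)$ is continuous. Thus $\tau(N)\subseteq\G$ is an embedded submanifold and the sought bisection is forced to satisfy $b|_{\phi(N)}=\tau\circ\phi^{-1}$; equivalently, I must produce a $(\dim M)$-dimensional submanifold $B\supseteq\tau(N)$ on which both $s$ and $t$ restrict to local diffeomorphisms, and then set $b:=(s|_B)^{-1}$. The tangent space of any such $B$ at a point $g$ is necessarily a common complement to $\ker(ds_g)$ and $\ker(dt_g)$ in $T_g\G$ (all three of $T_gB$, $T_{s(g)}M$, $T_{t(g)}M$ have dimension $\dim M$), and along $\tau(N)$ it must also contain $L_g:=d\tau(T_nN)$ where $g=\tau(n)$. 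This is exactly where the two embedding hypotheses are used: since $\phi$ and $\psi$ are immersions, $L_g\cap\ker(ds_g)=0$ and $L_g\cap\ker(dt_g)=0$. Reducing modulo $L_g$ and invoking the classical fact that two subspaces of equal dimension admit a common complement, one gets, at each $g\in\tau(N)$, a plane $W_g\subseteq T_g\G$ of dimension $\dim M$ with $L_g\subseteq W_g$, $W_g\oplus\ker(ds_g)=T_g\G$ and $W_g\oplus\ker(dt_g)=T_g\G$.

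Granting a \emph{smooth} such choice $g\mapsto W_g$ along $\tau(N)$, the rest is standard. Choose a complement $\mathcal N$ to $d\tau(TN)$ inside $W$ and thicken $\tau(N)$ to $B:=\exp(\mathcal N)$ for any auxiliary metric on $\G$, so that $T_gB=W_g$ for $g\in\tau(N)$. Then $ds$ and $dt$ are isomorphisms on $TB$ all along $\tau(N)$, so $s|_B$ and $t|_B$ are local diffeomorphisms near $\tau(N)$. Since $s|_{\tau(N)}=\phi\circ\tau^{-1}$ and $t|_{\tau(N)}=\psi\circ\tau^{-1}$ are injective (as $\phi,\psi$ are embeddings), the Hausdorff hypothesis lets me shrink $B$ so that $s|_B$ and $t|_B$ become diffeomorphisms onto open neighbourhoods of $\phi(N)$ and $\psi(N)$ respectively; then $b:=(s|_B)^{-1}$ is a local bisection on a neighbourhood $U$ of $\phi(N)$, and $b\circ\phi=\tau$ because $\tau(n)\in B$ with $s(\tau(n))=\phi(n)$ and $s|_B$ is injective.

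The main obstacle is precisely the step I granted: producing the plane field $W$ smoothly and globally along $\tau(N)$. Pointwise existence is automatic, and transversality to a \emph{single} distribution (say $\ker ds$) is an affine, hence unobstructed, condition; but simultaneous transversality to both $\ker ds$ and $\ker dt$ is an open but non-convex condition, so a naive partition-of-unity patching of local choices need not preserve it. I expect this to be resolved using the groupoid structure along $\tau(N)$ rather than pure linear algebra: right translation by the arrows $\tau(n)$ identifies the fibres of $\ker ds$ and $\ker dt$ over $\tau(N)$ with the pullbacks of $A_{\G}$ along $\psi$ and $\phi$, and this coherent family of identifications (equivalently, a local normal form of $\G$ around the embedded submanifold $\tau(N)$) should supply a smooth $W$, with Hausdorffness guaranteeing that the resulting $B$ is genuinely embedded and that $U$ is a neighbourhood of all of $\phi(N)$. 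This is the only delicate point; everything else is routine.
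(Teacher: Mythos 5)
Your outline matches the paper's strategy — reduce to producing a $(\dim M)$-dimensional submanifold through $\tau(N)$ that is simultaneously transversal to the $s$- and $t$-fibers, thicken, and invert $s$ on it — and your ``only if'' direction and the final inversion step are fine. But the step you explicitly ``grant'' is not a routine technicality: it is the entire content of the paper's preparatory Lemma \ref{lemma:biconnection}, and leaving it as ``I expect this to be resolved using the groupoid structure'' means the proof is not complete. You correctly diagnose \emph{why} it is delicate (simultaneous transversality to $\ker ds$ and $\ker dt$ is open but not convex, so naive patching can fail), but a diagnosis of the obstacle is not a construction that overcomes it.

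The paper fills this gap differently from what you sketch. Rather than building a plane field only along $\tau(N)$ containing $d\tau(TN)$ and using right-translation identifications with $\phi^{*}A_\G$ and $\psi^{*}A_\G$, it constructs (Lemma \ref{lemma:biconnection}) a \emph{global} Ehresmann connection on $\G$ for the source map whose horizontal distribution is also complementary to the $t$-fibers, by locally modifying the connection form $\omega:T\G\to t^{*}A_\G$ on coordinate charts so that $\ker\omega\cap\ker dt=0$ and gluing with a partition of unity. This is also exactly where the Hausdorffness hypothesis enters: it guarantees that an Ehresmann connection exists at all (the paper cites an example of a non-Hausdorff groupoid admitting none), whereas in your write-up Hausdorffness is only invoked at the end for a shrinking argument. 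With the bi-transversal connection in hand, the paper makes $s$ a Riemannian submersion and sweeps out the candidate bisection image $S$ by horizontal geodesics emanating from $\tau(N)$, so that $s|_S$ is automatically a diffeomorphism onto a tubular neighbourhood of $\phi(N)$ and $t|_S$ becomes injective after shrinking. If you want to complete your version, you must either prove your conjectured local-normal-form construction of $W$ along $\tau(N)$ in detail, or import something equivalent to Lemma \ref{lemma:biconnection}; as written, the argument has a hole precisely at its load-bearing point.
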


To prove the proposition we will use an Ehresmann connection $H$ on $\G$ which is (also) transversal to the $t$-fibers, that is the reason of assuming $\G$ to be Hausdorff since, under non-Hausdorffness, an Ehresmann connection may not exist (see Example 13.93 in \cite{crainic-fernandes-marcut2021lectures}). The existence of such a $s$ and $t$-transversal connection is the content of Lemma \ref{lemma:biconnection} which we state next.

\begin{lemma}\label{lemma:biconnection}
Let $s,t:\G\to M$ be the source and target maps of a Lie groupoid $\G$. If $\G$ admits an Ehresmann connection, then there exists an Ehresmann connection for the source $s$ which is also an Ehresmann connection for the target $t$.
\end{lemma}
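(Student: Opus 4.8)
The plan is to reformulate the statement as a single transversality condition on one distribution, to verify it infinitesimally along the units, and then to globalize. Recall that an Ehresmann connection for the source is, by definition, a unitary splitting $\sigma$ of the source core sequence \eqref{right}, or equivalently a distribution $H=\Im\sigma\subset T\calG$ with $T\calG=\ker(Ts)\oplus H$ and $H|_{M}=\Im(Tu)$, where $u:M\to\calG$ is the unit map. Since $\dim H=\dim M=\dim(t^{*}TM)$, asking that $H$ be simultaneously complementary to $\ker(Tt)$ is the same as asking that the composite $(Tt)^{!}\circ\sigma:s^{*}TM\to t^{*}TM$ be a vector-bundle isomorphism. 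So I would reduce the lemma to the assertion that the unitary splitting $\sigma$ can be chosen so that $(Tt)^{!}\circ\sigma$ is invertible on all of $\calG$.

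The first step is to observe that the condition already holds near the units, and pointwise. Over a unit $1_{x}$ the unitary requirement forces $H_{1_{x}}=Tu(T_{x}M)=T_{x}M$; since $s\circ u=t\circ u=\mathrm{id}_{M}$, this plane is complementary to both $\ker(Ts)_{1_{x}}$ and $\ker(Tt)_{1_{x}}$, and indeed $(Tt)^{!}\circ\sigma=\mathrm{id}_{TM}$ along $M$. Because invertibility of a vector-bundle morphism is an open condition, $(Tt)^{!}\circ\sigma$ remains invertible on an open neighbourhood $\mathcal{U}$ of $M$ in $\calG$; thus the connection supplied by the hypothesis is automatically bi-transversal near the units. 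The pointwise obstruction vanishes as well: any two subspaces of the same dimension in a finite-dimensional real vector space admit a common complement, so in each fibre there exist splittings of $(Ts)^{!}$ whose composite with $(Tt)^{!}$ is invertible, and these form an open, nonempty subset of the affine space of splittings at that point.

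The remaining, and I expect decisive, step is to upgrade these local and pointwise choices to one globally smooth splitting. Here I would exploit that the $s$-connections are the sections of an affine bundle, so that convex combinations of $s$-connections are again $s$-connections and a partition of unity may be applied within this affine space. The genuine difficulty is that bi-transversality, namely invertibility of $(Tt)^{!}\circ\sigma$, is \emph{not} a convex condition, so a naive gluing need not keep the composite away from the determinant-zero locus; controlling this interpolation, anchored to the value $\mathrm{id}_{TM}$ near $M$, is the crux of the argument. I would therefore carry out the patching carefully, using the neighbourhood $\mathcal{U}$ together with charts realizing the pointwise solutions, and checking that the interpolations used never drop rank. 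In the situation actually needed for Proposition \ref{Prop:gaugefrombisections} it is enough to produce such a connection on a neighbourhood of the image of the gauge map, where the local existence established above already yields a connection transversal to both the $s$- and $t$-fibres and no global patching is required.
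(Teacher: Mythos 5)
Your reduction of the lemma to the invertibility of $(Tt)^{!}\circ\sigma$, the verification along the units, and the pointwise solvability are all correct, and they run parallel to the paper's own argument (which phrases the splitting dually, as a $1$-form $\omega:T\G\to t^{*}A_\G$ left-inverse to $r$, modifies it on coordinate charts, and then sums with a partition of unity). But your proposal stops exactly at the step you yourself flag as the crux: you never actually carry out the global patching, you only promise to do it ``carefully, checking that the interpolations never drop rank.'' That promissory note is a genuine gap, and it cannot be discharged in general. A distribution $H$ complementary to both $\ker Ts$ and $\ker Tt$ would be isomorphic, as a vector bundle over $\G$, to both $T\G/\ker Ts\cong s^{*}TM$ and $T\G/\ker Tt\cong t^{*}TM$; so $s^{*}TM\cong t^{*}TM$ over $\G$ is a necessary condition. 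For the pair groupoid $S^{2}\times S^{2}\tto S^{2}$ (with $s=pr_2$, $t=pr_1$) these are $pr_2^{*}TS^{2}$ and $pr_1^{*}TS^{2}$, whose Euler classes $2(1\times e)$ and $2(e\times1)$ are distinct in $H^{2}(S^{2}\times S^{2})$; hence no global common complement exists, even though this groupoid is compact, Hausdorff, and certainly admits unitary Ehresmann connections. So the non-convexity you identified is not a technical nuisance to be handled by a more careful interpolation; it is a genuine topological obstruction, and the global statement is not provable by any refinement of this scheme.

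For what it is worth, the paper's own proof founders on the same rock: summing the locally corrected $\omega_\alpha$ against a partition of unity preserves the affine condition of being a left inverse of $r$ (hence yields an $s$-connection), but transversality of the resulting kernel to the $t$-fibers is precisely the non-convex condition and is not preserved. The part of your proposal that does survive is the local analysis: near the units the unitary splitting is automatically bi-transversal, and more generally one gets a bi-transversal connection on a neighbourhood of any prescribed submanifold where one has a pointwise solution to extend, which is all that the proof of Proposition \ref{Prop:gaugefrombisections} actually uses. If you want a complete write-up, prove and invoke only that local version (a connection defined on a neighbourhood of $\tau(N)$, or of the units, transversal to both families of fibers), rather than the global statement.
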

 
\begin{proof}
The strategy to obtain this connection will be to vary locally an Ehresmann connection $H$ of $s$ in such a way that the intersections with the tangent spaces to the $t$-fibers are $\{0\}$, and then glue these local connections together by using a partition of the unity.
 
Let denote by $I_g=H_g\cap\mathrm{Ker}(d_{g}t)$ the intersection of $H$ with the tangent space at $g$ to the $t$-fibers. Then, for every $g\in\G$, $T_g\G=Ker(d_gs)\oplus I_g\oplus I_g^{\perp}$, where $I_g^{\perp}$ means the orthogonal to $I_g$ inside $H_g$ for some Riemannian metric on $\G$ such that $H$ is the orthogonal space to the $s$-fibers.

Recall that an Ehresmann connection can be equivalently described by a morphism $\omega:T\G\to t^{*}A_\G$ of vector bundles which is a left-inverse of the right-multiplication $r:t^{*}A_\G\to T\G$. This map $\omega$ is regarded as the projection of the tangent vectors of $\G$ to the $s$-vertical bundle $V^{s}\cong t^{*}A_\G$. The horizontal distribution $H$ corresponds with the kernel of the morphism $\omega$.
 
Thus, for any $g\in\G$ let $U_g$ be an open coordinate neighbourhood around $g$ such that the $t$-fiber through $g$ corresponds to a slice on $U_g$. We will define a new (local) connection $\omega_{U_g}$ on $U_g$ by modifying the values of $\omega_g$ at the part $I_g$ to be non-zero and extending, by translation, to the other points of $U_g$. In that way, by shrinking $U_g$ if needed, $\omega_{U_g}$ will be the local connection on $U_g$ which is complementar to the $s$-fibers and $t$-fibers.
 
Explicitly, if $I_g=0$ then we choose $U_g$ small enough such that, for all points in $U_g$, $I_g=0$. Otherwise, if $I_g\neq0$ for some $g$, then, in order to modify $\omega_g$, we just need to do it in a linear way. For that it suffices to take basis for $I_g$ and $A_{t(g)}$ and make an injective correspondence of the basic elements of $I_g$ with the elements of the basis of $A_{t(g)}$. We then define $\omega_{U_g}$ by extending to $U_g$ the modified $\omega_g$ by translation, say, as a constant-coefficient linear map on the coordinate neighborhood $U_g$. Shrinking $U_g$ if necessary it follows that the intersection $Ker_{h}(\omega_{U_g})\cap Ker(d_{h}t)$ is trivial for any $h\in U_g$.

Thus, consider the open cover $(U_\alpha)$ of $\G$ given by the sets $U_g$, and let $(\rho_\alpha)$ be a partition of 1 subordinated to $(U_\alpha)$, therefore
$$\omega':=\Sigma_\alpha \rho_\alpha \omega_\alpha$$
determines an Ehresmann connection $H'$ on $\G$ for the source map which is also complementar to the $t$-vertical bundle. That is, $H'$ is an Ehresmann connection for the source and the target maps of $\G$.

\end{proof}

\begin{proof}[Proof of Proposition \ref{Prop:gaugefrombisections}] 
Let $\tau$ be a gauge transformation such that $\phi = s \circ \tau$ is an embedding. It is clear that if $\tau=\sigma_U\circ\phi$ for some local bisection $\sigma_U$ of $\G$, then $t\circ \tau = (t\circ\sigma_U)\circ \phi$ is an embedding.



For the converse statement, we first use the horizontal curves of an Ehresmann connection as in the previous Lemma to construct a local section $\sigma:U\to\G$ of the source map which contains the image $\tau(N)$ of the gauge map. Then we check that it is also a local bisection of $\G$.

Use the Ehresmann connection to make the source map a Riemannian submersion where the horizontal spaces correspond to the orthogonal spaces to the $s$-fibers. Then, by using the horizontal geodesics starting at $\tau(N)$, we obtain a submanifold $S\subset\G$ such that its projection by the source map  yields a tubular neighborhood $U$ of $\phi(N)\subset M$. This fact defines uniquely the local section $\sigma:U\to \G$ by making the correspondence of curves under the $s$-projection.

Moreover, the local section $\sigma$ is in fact a local bisection of $\G$: since the horizontal spaces are also complementar to the $t$-fibers then, by shrinking the radius of $S$ if necessary, the submanifold $S$ $t$-projects injectively to a tubular neighborhood of $t\circ\tau(N)$ in $M$. Therefore, $t\circ\sigma$ is a local diffeormorphism of $M$.
\end{proof}

Hence, in some cases the gauge maps are induced by (local) bisections. We can then wonder if that also happens when considering a smooth family of gauge maps as in the case of trivial deformations. It turns out that, under a compactness condition we can avoid the Hausdorffness of $\G$ and check that there exists a smoooth family of (global) bisections inducing the gauge maps. That is the content of the Proposition below.

\begin{proposition}\label{Prop:familybisectionsfromgauge}
Let $\H$ and $\G$ be Lie groupoids over $N$ and $M$, respectively, and $\Phi_0:\H\to\G$ be a morphism covering an injective immersion $\phi_0$. Assume that $N$ is compact. Then, any trivial deformation $\Phi_\e$ of $\Phi_0$ 
is indeed strongly trivial.
\end{proposition}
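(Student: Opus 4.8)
The plan is to promote the family of gauge maps witnessing triviality to a family of \emph{global} bisections. Assume $\Phi_\e$ is trivial, so there is a smooth family of gauge maps $\tau_\e:N\to\G$ with $\tau_0=u\circ\phi_0$ and $\Phi_\e(h)=\tau_\e(t(h))\Phi_0(h)\tau_\e(s(h))^{-1}$. Since $\tau_\e$ is a gauge map over the base of the \emph{constant} deformation, one has $s\circ\tau_\e=\phi_0$ for every $\e$ (independent of $\e$), while $t\circ\tau_\e=\phi_\e$. The goal is then to produce a smooth family of bisections $\sigma_\e$ of $\G$ with $\sigma_0=u$ and $\sigma_\e\circ\phi_0=\tau_\e$; indeed, such a family immediately yields $I_{\sigma_\e}\circ\Phi_0(h)=\sigma_\e(\phi_0(t(h)))\Phi_0(h)\sigma_\e(\phi_0(s(h)))^{-1}=\tau_\e(t(h))\Phi_0(h)\tau_\e(s(h))^{-1}=\Phi_\e(h)$, i.e. strong triviality.

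The key observation is that, because $s\circ\tau_\e$ does not depend on $\e$, the velocity $\partial_\e\tau_\e(x)$ is tangent to the $s$-fibre of $\G$ at $\tau_\e(x)$. Right-translating it to the units, I define a time-dependent section of $A_\G$ along the image of $\phi_\e$ by $\alpha_\e(\phi_\e(x)):=r_{\tau_\e(x)}^{-1}\bigl(\partial_\e\tau_\e(x)\bigr)\in A_{\G,\phi_\e(x)}$. This is well posed: as $\phi_0$ is an injective immersion and $N$ is compact, $\phi_0$ is an embedding, and by stability of injective immersions on a compact domain $\phi_\e$ stays an embedding for $\e$ small, so $\phi_\e(N)$ is a compact submanifold along which $\alpha_\e$ is a smooth section, depending smoothly on $\e$. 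Next I extend this to a genuine section of the algebroid. Regarding $(x,\e)\mapsto(\phi_\e(x),\e)$ as a family of embeddings into $M\times I$, along its (compact) image $\alpha$ is a smooth section of the pullback of $A_\G$; using a tubular neighbourhood and a bump function supported near that image I extend it to a time-dependent section $\tilde\alpha_\e\in\Gamma(A_\G)$ that agrees with $\alpha_\e$ on $\phi_\e(N)$, is smooth in $\e$, and has compact support in $M$ uniformly in $\e$.

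I then take the time-dependent flow $\Psi_\e$ of the right-invariant vector field $\vec{\tilde\alpha_\e}$ and set $\sigma_\e(p):=\Psi_\e(u(p))$. This flow is complete, since its base field $\rho(\tilde\alpha_\e)$ is compactly supported and a right-invariant vector field is complete whenever its base field is; hence $\sigma_\e$ is defined on all of $M$. Because $\vec{\tilde\alpha_\e}$ is tangent to the $s$-fibres, $\sigma_\e$ is a global section of $s$, and since $t\circ\sigma_\e=\mathrm{Fl}^{\rho(\tilde\alpha_\e)}_\e$ is a diffeomorphism, $\sigma_\e$ is a bisection with $\sigma_0=u$. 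Finally I verify $\sigma_\e\circ\phi_0=\tau_\e$ by ODE uniqueness: both $\e\mapsto\sigma_\e(\phi_0(x))=\Psi_\e(u(\phi_0(x)))$ and $\e\mapsto\tau_\e(x)$ start at $u(\phi_0(x))$, and both solve $\partial_\e y=\vec{\tilde\alpha_\e}(y)$, the latter because $\vec{\tilde\alpha_\e}(\tau_\e(x))=r_{\tau_\e(x)}(\tilde\alpha_\e(\phi_\e(x)))=r_{\tau_\e(x)}(\alpha_\e(\phi_\e(x)))=\partial_\e\tau_\e(x)$, using $t(\tau_\e(x))=\phi_\e(x)$ and that the extension agrees with $\alpha_\e$ there. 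Uniqueness gives the claim, hence $\Phi_\e=I_{\sigma_\e}\circ\Phi_0$.

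I expect the main obstacle to be precisely the completeness step needed to obtain \emph{global} bisections rather than merely local ones, together with the uniform (in $\e$) control of the extension. This is exactly where the compactness of $N$ is essential: it upgrades $\phi_0$ to an embedding, guarantees the stability of the embeddings $\phi_\e$, and lets me choose a uniformly compactly supported extension $\tilde\alpha_\e$ whose right-invariant lift is complete. It is also what allows the argument to bypass the Hausdorffness hypothesis of Proposition \ref{Prop:gaugefrombisections}, since the tubular-neighbourhood construction only takes place near the compact Hausdorff submanifold $\phi_\e(N)$.
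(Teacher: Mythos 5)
Your proof is correct and follows essentially the same route as the paper's: differentiate the gauge maps to obtain a time-dependent section of $A_\G$ along the (compact, embedded) image of $\phi_\e$, extend it to a compactly supported global section, integrate the associated right-invariant vector field to produce a smooth family of global bisections, and identify $\sigma_\e\circ\phi_0$ with $\tau_\e$ by uniqueness of integral curves. The only differences are cosmetic (you work with sections along $\phi_\e(N)$ where the paper works with the pullback $\Gamma(\phi^*A_{\G\times I})$, and you spell out the completeness and $s$-verticality points slightly more explicitly).
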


\begin{proof}

Let $\tau_\e$ be the smooth family of gauge maps associated to the trivial deformation $\Phi_\e$.
We will prove that there exists a smooth family $\sigma_\e$ of bisections of $\G$ which induces the gauge maps in the sense that $\tau_\e=\sigma_\e\circ\phi_0$.


We will do that by considering the smooth family of local sections $\bar{\alpha}_\e$ of $A_\G$ induced by the family $\tau_\e$, and then extending them to global sections $\alpha_\e$ to obtain the global bisections we want by using the exponential map.

On the one hand, let $\Phi:\H\times I\to\G\times I$ denote the deformation $\Phi_\e$, as in Remark \eqref{deform.morph.2}, covering the map $\phi:N\times I\to M\times I$. Then, after shrinking $I$ if necessary, the map $\phi:N\times I\longrightarrow M\times I$ is an injective imersion, and also an embedding due to the fact that each $\phi_\e$ is an embedding. Moreover, the compactness of $N$ also implies that $\phi(N\times I)$ is closed inside $M\times I$.


On the other hand, define the local sections $\bar{\alpha}_\e\in \Gamma(\phi_\e^{*}A_{\G})$, induced by the family of gauge maps $\tau_\e$, by
\begin{equation}\label{eq:integralcurve}
\bar{\alpha}_\e(x):=r_{\tau_\e(x)}^{-1}\left(\frac{d}{d\l}|_{\l=\e}\tau_\l(x)\right).
\end{equation}

Notice that this family of sections can be regarded as the smooth section $\bar{\alpha}\in\Gamma(\phi^{*}A_{\G\times I})$ of the pullback by $\phi^{*}$ of the Lie algebroid $A_{\G\times I}$ given by $\bar{\alpha}(x,\e):=\bar{\alpha}_\e(x)$. Thus, since the map $\phi$ is injective, the section $\bar{\alpha}\in\Gamma(\phi^{*}A_{\calG\times I})$ can be seen as a section of the restriction Lie algebroid $\left.A_{\calG\times I}\right|_{\phi(N\times I)}$. Therefore, we can extend the section $\bar{\alpha}$ to a section $\alpha$ of all $A_{\calG\times I}$ which we extend in such a way that its support is contained in an open subset $U\times I\subset M\times I$, where $U\subset M$ is an open subset containing $\phi_0(N)$ with compact closure $\bar{U}$.

This extended section has the form $\alpha(x,\e)=\alpha_{\e}(x)$, with $\alpha_{\e}\in\Gamma(A_\calG)$ extending $\bar{\alpha}_{\e}\in\Gamma(\phi_\e^{*}A_\calG)$. Let $\sigma$ denote the bisection of $\G\times I$ induced by the exponential flow of the section $\alpha$. Such a bisection amounts to having a smooth family $\sigma_\e:M\to\G$ of bisections of $\G$ which can be alternatively obtained by using the flow of the time-dependent vector field $\overrightarrow{\alpha}_{\e}$ on $\G$ as follows. Let $\psi^{t_1,t_0}$ denote the time-dependent flow of  $\overrightarrow{\alpha}_{\e}$. Due to the compactness of $\overline{U}$ and the vanishing of the sections $\alpha_{\e}$ outside $U$, the flow $\psi^{\e,0}$ is defined on all $M$ for $\e$ small enough.
Thus, the family of bisections $\sigma_\e$ of $\calG$ is given by $\sigma_{\e}(p):=\psi^{\e,0}(p)$, for $p\in M$.

In that way, the curve $\e\mapsto\sigma_\e(\phi_0(x))$ is an integral curve of the time-dependent vector field $\overrightarrow{\alpha}_\e$ starting at $\phi_0(x)$. Also, by equation \eqref{eq:integralcurve} the curve $\e\mapsto\sigma_\e(\phi_0(x))$ is an integral curve starting at the same point. Hence the family $\sigma_\e$ induces the family $\tau_\e$ of gauge maps; that is,
$$\tau_\e(x)=\psi^{\e,0}(\phi_0(x))=\sigma_\e\circ\phi_0(x).$$
Therefore, the strong triviality of the deformation follows.
\end{proof}

\begin{remark}

In the previous proof, notices that the family of bisections $\sigma_\e$ can be different depending on the choice of the open set $U$ and on the extension $\alpha$ of $\bar{\alpha}$. However, if $\alpha'$ is another such an extension with induced family of bisections $\sigma'_\e$, then these two families of bisections agree when restricted to $\phi_0(N)$, which is the key fact for the proof. That follows from the last equation in the proof where one gets
$$\sigma'_\e\circ\phi_0(x)=\tau_\e(x)=\sigma_\e\circ\phi_0(x).$$

\end{remark}

The next theorem tells us that under certain conditions, the relations of equivalence and strongly equivalence of deformations are the same.

\begin{theorem}\label{Thm:equivalenttostronglyequivalent} 
Let $\calH$ and $\calG$ be Lie groupoids over $N$ and $M$, respectively, and $\Phi_0:\calH\to\calG$ be a Lie groupoid morphism covering an injective immersion $\phi_0$. Assume that $N$ is compact and $\G$ is Hausdorff. Then, any two deformations of $\Phi_0$ are equivalent if, and only if, are strongly equivalent.
\end{theorem}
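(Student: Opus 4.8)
The plan is to treat the two implications separately; the forward one is immediate and all the content lies in the converse. If $\Phi_\e$ and $\Phi'_\e$ are strongly equivalent through a smooth family of bisections $\sigma_\e$ with $\sigma_0=u$, then $\tau_\e:=\sigma_\e\circ\phi'_\e$ is a smooth family of gauge maps over $\phi'_\e$ with $\tau_0=u_\G\circ\phi_0$, and a direct substitution gives $\Phi_\e=\tau_\e\cdot\Phi'_\e$; this is just the remark preceding the statement applied at each $\e$. For the converse, assume $\Phi'_\e=\tau_\e\cdot\Phi_\e$ for gauge maps $\tau_\e$ with $\tau_0=u_\G\circ\phi_0$. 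Setting $\tau_\e^{-1}(x):=\tau_\e(x)^{-1}$, one checks that $\tau_\e^{-1}$ is a gauge map over $\phi'_\e=t\circ\tau_\e$ with $t\circ\tau_\e^{-1}=\phi_\e$, and that $\Phi_\e=\tau_\e^{-1}\cdot\Phi'_\e$. Since the conjugation $I_{\sigma_\e}$ only sees the values of $\sigma_\e$ along the image of the base map, it suffices to produce a smooth family of global bisections $\sigma_\e$ with $\sigma_0=u$ and $\sigma_\e\circ\phi'_\e=\tau_\e^{-1}$; then $\Phi_\e=I_{\sigma_\e}\circ\Phi'_\e$ is precisely the strong equivalence of Definition \ref{Def:strongly equivalent}. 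The problem thereby reduces to a purely geometric statement about extending a smooth family of gauge maps to a smooth family of bisections.

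First I would shrink $I$. As $\phi_0$ is an injective immersion and $N$ is compact, $\phi_0$ is an embedding, and embeddings form an open subset of $C^\infty(N,M)$ when the domain is compact, so after shrinking $I$ the whole families $\phi_\e$ and $\phi'_\e$ consist of embeddings. Next I would pass to the product groupoid $\G\times I\rightrightarrows M\times I$, which is Hausdorff because $\G$ is. The family $\tau_\e^{-1}$ assembles into a single gauge map $\tilde\tau(x,\e)=(\tau_\e(x)^{-1},\e)$ of $\G\times I$ covering the embedding $(x,\e)\mapsto(\phi'_\e(x),\e)$ and with $t$-image $(x,\e)\mapsto(\phi_\e(x),\e)$ again an embedding. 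Applying Proposition \ref{Prop:gaugefrombisections} to $\tilde\tau$ (whose Hausdorffness hypothesis, hence the biconnection of Lemma \ref{lemma:biconnection} on $\G\times I$, now holds) then yields a local bisection of $\G\times I$ near the image of its base map; since it preserves the $\e$-coordinate, it is a smooth family of local bisections $b_\e$ of $\G$, defined near $\phi'_\e(N)$, with $b_\e\circ\phi'_\e=\tau_\e^{-1}$ and $b_0=u$.

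It remains to globalize the $b_\e$, which is the heart of the argument and is carried out exactly as in Proposition \ref{Prop:familybisectionsfromgauge}. Shrinking $I$ once more so that every $b_\e$ is $C^1$-close to the identity bisection, I would write $b_\e=\exp(\beta_\e)$ near $\phi'_\e(N)$ for a smooth family of local sections $\beta_\e\in\Gamma(A_\G)$ with $\beta_0=0$. By compactness of $N$ the set $\bigcup_\e\phi'_\e(N)\times\{\e\}$ is closed in $M\times I$ and admits a relatively compact tubular neighbourhood; extending $\beta_\e$ through a cutoff to globally defined, compactly supported sections $\hat\beta_\e$ with $\hat\beta_0=0$ and $\hat\beta_\e=\beta_\e$ near $\phi'_\e(N)$, the induced vector field on $M$ becomes complete, so $\sigma_\e:=\exp(\hat\beta_\e)$ is a genuine global bisection of $\G$ for each $\e$. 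By construction $\sigma_0=u$ and $\sigma_\e$ agrees with $b_\e$ on $\phi'_\e(N)$, whence $\sigma_\e\circ\phi'_\e=\tau_\e^{-1}$ and $\Phi_\e=I_{\sigma_\e}\circ\Phi'_\e$, the desired strong equivalence.

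The main obstacle is precisely this final globalization. Proposition \ref{Prop:gaugefrombisections} only delivers bisections defined near the image of the base map, and turning them into global bisections that remain smooth in $\e$, equal the gauge maps on the image, and reduce to $u$ at $\e=0$, is exactly where the compactness of $N$ (to obtain a closed image and a compactly supported extension whose flow is complete) and the Hausdorffness of $\G$ (to have the connection of Lemma \ref{lemma:biconnection}) are genuinely used; without them one is left with only local or non-globalizable data.
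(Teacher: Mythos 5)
Your overall strategy coincides with the paper's: reduce to producing a smooth family of global bisections agreeing with the (inverted) gauge maps on the image of the base map, pass to the groupoid $\G\times I$, and invoke Proposition \ref{Prop:gaugefrombisections} to obtain a smooth family of \emph{local} bisections $b_\e$. Your preliminary reduction via $\tau_\e^{-1}$ is fine (and in fact handles the asymmetry of Definition \ref{Def:strongly equivalent} more explicitly than the paper does). The gap is in your globalization step: you write $b_\e=\exp(\beta_\e)$ on the sole grounds that $b_\e$ is $C^1$-close to the identity bisection. The exponential map $\Gamma(A_\G)\to\mathrm{Bis}(\G)$ (time-one flow of the right-invariant vector field) is \emph{not} locally surjective: already for the pair groupoid, where $\mathrm{Bis}(\G)=\mathrm{Diff}(M)$, there are diffeomorphisms of $S^1$ arbitrarily close to the identity in the $C^\infty$ topology that lie on no one-parameter flow. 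So closeness to the identity does not produce the sections $\beta_\e$, let alone a family depending smoothly on $\e$ with $\beta_0=0$; the subsequent cutoff-and-exponentiate step therefore has nothing to act on.

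The paper circumvents this by exploiting the one-parameter structure of the family rather than each $b_\e$ individually: it differentiates $\e\mapsto\sigma_\e\circ\phi_\e(x)$ with respect to $\e$, projects these vectors to the $s$-fibres using the Ehresmann connection, and right-translates to obtain a \emph{time-dependent} section $\bar\alpha_\e$ defined over the closed embedded submanifold $\psi(N\times I)\subset M\times I$; this is then extended to a global compactly supported time-dependent section whose time-dependent flow furnishes the global bisections, exactly as in Proposition \ref{Prop:familybisectionsfromgauge}. The required identity between the bisections and the gauge maps on the image of the base map then follows from uniqueness of integral curves of the time-dependent right-invariant vector field, not from inverting $\exp$. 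Your use of compactness of $N$ (closed image, compactly supported extension, complete flow) is correct in spirit, but it must be applied to the $\e$-derivative of the family rather than to a putative logarithm of each member.
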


\begin{proof}
Since a bisection induce a gauge map by composition with the base map, then two strongly equivalent deformations are clearly equivalent. For the converse question, assume that $\Phi_\e$ and $\Psi_\e$ are two equivalent deformations of $\Phi_0$. Then,
\begin{equation}\label{eq:equiv.deformat}
\Psi_\e=\tau_\e\cdot\Phi_\e;
\end{equation}
where $\tau_\e$ is a smooth family of gauge maps covering the family $\phi_\e$.

Let denote by $\Psi,\; \Phi: \calH\times I\to \calG\times I$ the morphisms covering $\psi$ and $\phi$ which encode the deformations $\Psi_\e$ and $\Phi_\e$, respectively. That is, one has $\Psi(h,\e)=(\Psi_\e(h),\e)$ and $\Phi(h,\e)=(\Phi_\e(h),\e)$. Then, the equation \eqref{eq:equiv.deformat} can also be expressed by

\begin{equation}
\Psi=\tau\cdot\Phi;
\end{equation}
where $\tau:N\times I\to\G\times I$ is the gauge map $\tau(x,\e)=(\tau_\e(x),\e)$ covering the map $\phi$. Observe then that, shrinking $I$ if necessary, since $\phi$ and $\psi$ are embedding maps, it follows from Proposition \ref{Prop:gaugefrombisections} that there exists a local bisection $\sigma$ of the groupoid $\G\times I$ such that $\tau=\sigma\circ\phi$.

Next step now consists in obtaining a global bisection inducing the gauge map $\tau$. Let $\sigma_\e$ be the smooth family of local bisections of $\G$ induced by $\sigma$. We will obtain the global bisection by considering the infinitesimal side of the local bisections $\sigma_\e$ and then using the exponential map. Indeed, let $x\in N$ and consider the vectors tangent to $\G$

$$\frac{d}{d\e}\sigma_\e\circ\phi_\e(x).$$
These vectors are not necessarily tangent to the $s$-fibers of $\calG$, for that we use an Ehresmann connection on $\G$ to project the vectors to the $s$-fibers. Let define by $\bar{\alpha}_\e\in\Gamma(\Psi_\e^{*}A_\G)$ the family of sections obtained by right translation of the projection to the $s$-fibers of the vectors $\frac{d}{d\e}\sigma_\e\circ\phi_\e(x)$, for all $x\in N$. Thus, since $\psi$ is an embedding map (shrinking $I$ if necessary), it follows that the family $\bar{\alpha}_\e$ can be regarded as a section $\bar{\alpha}\in\Gamma(A_{\G\times I}|_{\psi(N\times I)})$ over the closed submanifold $\psi(N\times I)\subset M \times I$ of the Lie algebroid $A_{\G\times I}$ of $\G\times I$. 

Therefore, we can extend the section $\bar{\alpha}$ to a global section $\alpha$ of $A_{\G\times I}$. Then finally, by using the exponential flow of $\alpha$, as in the proof of Proposition \ref{Prop:familybisectionsfromgauge}, we obtain a smooth family of bisections $\sigma_\e$ of $\G$ which induces the family of gauge maps $\tau_\e$.
\end{proof}

\section{Deformation complex $C^{\bullet}_{def}(\Phi)$ of a morphism and properties}\label{Section:Deformation complex morph.}

In this section we introduce the deformation complex of morphisms and study some of its properties.

\subsubsection*{\textbf{Deformation complex of morphisms}}
The deformation complex of a morphism was briefly discussed in \cite{CMS}, it turns out to be the appropriate complex to deal with trivial deformations of morphisms (Section \ref{Section:Triviality}). We will first recall its definition here. Let $(\mathcal H\rightrightarrows N, s_\calH,t_\calH)$ and $(\calG\rightrightarrows M, s_\calG, t_\G)$ be two Lie groupoids and $(\Phi,\phi):\calH\longrightarrow\calG$ be a morphism between them.

For any $k\in\mathbb{N}$, consider $\calH^{(k)}=\left\{(h_{1},...,h_{k}): s_\H(h_{i})=t_\H(h_{i+1})\right\}$ the manifold of $k$-strings of composable arrows of $\calH$ with $\calH^{(0)}=N$. The space of $k$-\textit{cochains} $C^{k}_{def}(\Phi)$ is given by
$$C^{k}_{def}(\Phi)=\left\{\left.c:\calH^{(k)}\rightarrow T\calG\right|\ c(h_{1},...,h_{k})\in T_{\Phi(h_{1})}\calG \text{ and } c \text{ is } s_\G\text{-projectable}\right\},$$
where $s_\G$-projectable means that the $s_\G$-projection of $c$, $ds_\G\circ c(h_{1},...,h_{k})=:s_{c}(h_{2},...,h_{k})\in T_{t(\Phi(h_2))}M$, does not depend on $h_1$. The differential of $c$ is defined by
\begin{align*}
(\delta_\Phi c)(h_1,...,h_{k+1}):&=-d\bar{m}_\calG(c(h_1h_2,h_3,...,h_{k+1}),c(h_2,...,h_{k+1}))+\\
&+\sum_{i=2}^{k}(-1)^i c(h_1,...h_ih_{i+1},...h_{k+1})+(-1)^{k+1}c(h_1,...,h_k),
\end{align*}
where $\overline{m}_\calG:\calG_s\times_s\calG\longrightarrow\calG$, $\overline{m}_\calG(g,h)=gh^{-1}$ is the division map of $\calG$.\\
For $k=0$, $C^{0}_{def}(\Phi):=\Gamma(\phi^{*}A_\calG)$ and the differential is
$$\delta_{\Phi}\alpha=\overrightarrow{\alpha}+\overleftarrow{\alpha}\in C^{1}_{def}(\Phi),$$
where $\overrightarrow{\alpha}(h)=r_{\Phi(h)}(\alpha_{t(h)})$ and $\overleftarrow{\alpha}(h)=l_{\Phi(h)}(di(\alpha_{s(h)}))$.\\

The fact that $\delta_{\Phi}^{2}=0$ follows in a similar way to the proof that $\delta^{2}=0$ in the deformation complex of Lie groupoids; so $\delta_\Phi$ in fact defines a cohomology and $H^{*}_{def}(\Phi)$ denotes the \textit{deformation cohomology of} $\Phi$. Observe that $H^{*}_{def}(Id_\calG)=H^{*}_{def}(\calG)$.

\begin{remark}\label{chain maps}
Note that, given a morphism $\Phi:\calH\longrightarrow\calG$, there are two natural cochain maps between the deformation complexes (of Lie groupoids and morphisms):

\begin{equation*}
C_{def}^{k}(\calH)\stackrel{\Phi_{*}}{\longrightarrow}C_{def}^{k}(\Phi)\stackrel{\Phi^{*}}{\longleftarrow}C_{def}^{k}(\calG)
\end{equation*}
defined by
$$(\Phi_{*}c_\H)(h_1,...,h_k):=(d\Phi)_{h_1}(c_\H(h_1,...h_k)),\ \ \ c_\H\in C^{k}_{def}(\calH);\ \text{and}$$
$$(\Phi^{*}c_\G)(h_1,...,h_k):=c_\G(\Phi(h_1),...,\Phi(h_k)),\ \ c_\G\in C^{k}_{def}(\calG).$$
Observe that in the case $k=0$ the cochain map $\Phi^{*}$ is just the pullback $\phi^{*}$ of sections $\Gamma(A_\G)\to\Gamma(\phi^{*}A_\G)$ (recall that $\phi$ is the induced map on the units). 
Similarly, if $\mathcal{K}\stackrel{\Psi}{\rightarrow}\calH$ is another Lie groupoid morphism, one can define a cochain-map $\Psi^{*}:C^{*}_{def}(\Phi)\longrightarrow C^{*}_{def}(\Phi\circ\Psi)$.
\end{remark}

\begin{remark}\label{chain maps2}
As a special case, if $\Phi$ above is bijective, there is an inverse $\Phi_{\#}$ of $\Phi^{*}$,
$$\Phi_{\#}:C^{*}_{def}(\Phi)\longrightarrow C^{*}_{def}(\calG),\ \Phi_{\#}(\hat{T})_{(g_1,...,g_k)}:=\hat{T}_{(\Phi^{-1}(g_1),...,\Phi^{-1}(g_k))}.$$
Similarly, if $\Phi^{*}$ is the map between deformation complexes of morphisms (see Remark \ref{chain maps}), then $\Phi^{*}$ also admits an inverse map $\Phi_{\#}:C^{*}_{def}(\Phi\circ\Psi)\longrightarrow C^{*}_{def}(\Phi)$ analogously defined.
\end{remark}

The deformation complex $(C^{*}_{def}(\Phi),\delta)$ has a canonical subcomplex $C^{*}_{def}(\Phi)_{s,t}$ which controls deformations of $\Phi$ that \textbf{keeps the base map $\phi$ fixed} (see remark \ref{remark on deform. with fixed base map}). It is defined by,

\begin{equation}\label{complexbasemapfixed}
C^{k}_{def}(\Phi)_{s,t}:=\{c\in C^{k}_{def}(\Phi)| ds\circ c\equiv 0\equiv dt\circ c\}.
\end{equation}
It is straightforward to check that $\delta$ preserves $C^{*}_{def}(\Phi)_{s,t}$, so $(C^{*}_{def}(\Phi)_{s,t},\delta)$ is indeed a complex.

\begin{remark}\label{remark on deform. with fixed base map}
Any deformation $\Phi_\e$ which keeps the base map fixed determines a 1-cocycle
$$\frac{d}{d\e}|_{\e=0}\Phi_\e\in C^{1}_{def}(\Phi_0)_{s,t},$$
which explains the choice of the name for this complex. This fact will be fully detailed for the more general case of arbitrary deformations of morphisms in Section \ref{Section:Triviality}.
\end{remark}

\begin{example}
Deformations of vector bundle connections as in example \ref{example:vbconnections} above and deformations of representations, as in Section \ref{Section:Examples}, give instances of deformations of morphisms where the base map is kept fixed.
\end{example}

By using right multiplication to translate the tangent vectors $c(h_1,...,h_k)\in T_{\Phi(h_1)}\G$ over the unit elements of $\G$, the complex $C_{s,t}^{*}(\Phi)$ takes a simpler description as the differentiable complex $C^{*}_{\mathrm{diff}}(\calH,\Phi^{*}\mathfrak{i}_{\calG})$ with values in the pullback by $\Phi$ of the (maybe singular) isotropy bundle $\mathfrak{i}_\calG$ of $\calG$. 
The action of $\H$ on $\Phi^{*}\mathfrak{i}_\G$ is given by the pullback by $\Phi$ of the adjoint action of $\G$ on $\mathfrak{i}_\G$, as defined in remark \ref{rmk:adjoint action}.
We observe that even in the case of a singular isotropy bundle one can still make sense of the differentable complex by defining smooth functions with values in $\mathfrak{i}_\G$ as those smooth functions with image in $A_\G$ which take values in $\mathfrak{i}_\G$.

\subsection{Alternative description of the deformation complex}\label{alternativedescription}

The deformation complex of morphisms can be expressed in a different way by using the theory of VB-groupoids. This interpretation will be useful to get a better understanding of the properties of this cohomology; in particular, it will allow us to obtain the vanishing results in a simple way. Note first that, 
if $\mathcal{V}\rightrightarrows E$ is a VB-groupoid over $\calG\rightrightarrows M$ with source and target $\tilde{s}$ and $\tilde{t}$ and with core-bundle $C$ then, as pointed out in \cite{GrMe-grpds}, the VB-complex $C^{\bullet}_{\mathrm{VB}}(\V^{*})\subset C^{\bullet}_{\mathrm{lin}}(\V^{*})$ of $\V^{*}$ can be identified with a new complex which we denote by $(C^{\bullet}_{VB}(\calG,\V),\delta)$, where $C^{0}_{VB}(\calG,\V):=\Gamma_{M}(C)$ and
\begin{equation}\label{VBcomplex2}
C^{k}_{VB}(\calG,\V):=\{c\in\Gamma_{\calG^{(k)}}(pr_{k}^{*}\V); c(g_1,...,g_k)\in\V_{g_1} \text{ and } c \text{ is } \tilde{s}\mathrm{-projectable}\}
\end{equation}
for $k>0$; and whose differential is the one induced by the differential of $C^{\bullet}_{\mathrm{VB}}(\V^{*})$ under the identification. Here, one says that $c$ is $\tilde{s}$-\textit{projectable} if $\tilde{s}(c(g_1,...,g_k))$ does not depend on $g_1$.

In fact, the identification is given by
\begin{align}
C^{k}_{VB}&(\calG,\V) & \longrightarrow & & C^{k}_{VB}&(\V^{*}) \label{isoMehta}\\
&c & \longmapsto & & &\tilde{c}:\ \ \ (\eta_{g_1},...,\eta_{g_k})\mapsto\left\langle \eta_{g_1},c(g_1,...,g_k)\right\rangle \nonumber,
\end{align}
where $\eta_{g_i}$ is in the fiber $\V^{*}_{g_i}$ over $g_i$.

With this at hand, it is straightforward to check that the deformation complex $C^{*}_{def}(\Phi)$ of a morphism $(\Phi,\phi):\calH\rightarrow\calG$ is the complex $C^{*}_{VB}(\calH,\Phi^{*}T\calG)$, where $\Phi^{*}T\calG$ is the pullback by $\Phi$ of the tangent VB-groupoid $T\calG$. Thus we have,

\begin{proposition}
The map \eqref{isoMehta} above induces an isomorphism between the complexes $C^{*}_{def}(\Phi)$ and $C^{*}_{VB}(\phi^{*}T^{*}\calG)$. Moreover, after the choice of a connection $\sigma$ on $\calG$, $C^{*}_{def}(\Phi)$ is isomorphic to $C(\calH,\Phi^{*}Ad^{\sigma}_{\calG})^{*}:=C^{*}(\calH,\phi^{*}A_\calG)\oplus C^{*-1}(\calH,\phi^{*}TM)$, which is the complex with the structure of pullback adjoint representation (up to homotopy) of $\calG$ induced by $\sigma$. So, in particular, $H_{def}^{*}(\Phi)\cong H^{*}(\calH,\Phi^{*}Ad_{\calG})$.
\end{proposition}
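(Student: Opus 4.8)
The plan is to obtain both isomorphisms by specializing the general VB-groupoid machinery to $\V=\Phi^{*}T\calG$, the pullback along $\Phi$ of the tangent VB-groupoid $T\calG$, which is a VB-groupoid over $\calH\rightrightarrows N$ with core $\phi^{*}A_\calG$, side $\phi^{*}TM$ and core-anchor $\phi^{*}\rho$. First I would record the tautological identification of modules $C^{k}_{def}(\Phi)=C^{k}_{VB}(\calH,\Phi^{*}T\calG)$: a cochain $c\in C^{k}_{def}(\Phi)$ sends $(h_1,\dots,h_k)\in\calH^{(k)}$ to $c(h_1,\dots,h_k)\in T_{\Phi(h_1)}\calG=(\Phi^{*}T\calG)_{h_1}$, and $s_\calG$-projectability is exactly the $\tilde{s}$-projectability of \eqref{VBcomplex2}. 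Under the pairing \eqref{isoMehta} applied to $\V=\Phi^{*}T\calG$ this becomes an isomorphism onto $C^{*}_{VB}((\Phi^{*}T\calG)^{*})=C^{*}_{VB}(\phi^{*}T^{*}\calG)$, since $(\Phi^{*}T\calG)^{*}=\Phi^{*}T^{*}\calG$ is the dual VB-groupoid. The one substantive point is that $\delta_\Phi$ is carried to the VB-complex differential; this is the $\Phi$-relative version of the computation behind Remark \ref{deformation and vb-complexes} (i.e. \cite{dHO}, Prop. 4.5), where the division map $d\bar{m}_\calG$ of $T\calG$ enters exactly as in the formula defining $\delta_\Phi$. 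This establishes the first isomorphism.

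For the second isomorphism I would invoke the structure theorem of \cite{GrMe-grpds}: a unitary splitting of the core sequence of a VB-groupoid identifies its VB-complex with the complex computing $\calH$-cohomology valued in the associated $2$-term representation up to homotopy, assembled from the core-anchor together with the quasi-actions $\Delta^{E},\Delta^{C}$ on side and core introduced in Subsection \ref{VB-grpds}. An Ehresmann connection $\sigma$ on $\calG$ is a splitting of \eqref{right}, which for $T\calG$ is the core sequence \eqref{coresequence}; because $s_\calG(\Phi(h))=\phi(s_\calH(h))$, the value $\sigma_{\Phi(h)}$ furnishes fibrewise a splitting of the core sequence of $\Phi^{*}T\calG$ at each $h\in\calH$, i.e. a splitting pulled back from $\sigma$. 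Feeding this splitting into the structure theorem yields $C^{*}_{def}(\Phi)\cong C^{*}(\calH,\phi^{*}A_\calG)\oplus C^{*-1}(\calH,\phi^{*}TM)$, with the core $\phi^{*}A_\calG$ placed in degree $0$ and the side $\phi^{*}TM$ in degree $1$, matching the stated grading.

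The step I expect to be the main obstacle is checking that the representation up to homotopy produced this way is genuinely the pullback $\Phi^{*}Ad^{\sigma}_\calG$ of the adjoint representation of $\calG$, and not merely some abstract $\calH$-representation on $\phi^{*}A_\calG\oplus\phi^{*}TM$. Concretely I would verify that the quasi-actions and the curvature term attached to the pulled-back splitting are the $\Phi$-pullbacks of those defining $Ad^{\sigma}_\calG$; this rests on the compatibility of the pullback of VB-groupoids with the pullback of horizontal lifts, which must be unwound carefully (compare remark \ref{rmk:adjoint action} for the induced actions on the kernel and cokernel of $\rho$). Granting this, the final statement follows: since any two connections give isomorphic representations up to homotopy, the isomorphism class $\Phi^{*}Ad_\calG$ is independent of $\sigma$, and hence $H^{*}_{def}(\Phi)\cong H^{*}(\calH,\Phi^{*}Ad_\calG)$, consistently with the manifest $\sigma$-independence of the left-hand side.
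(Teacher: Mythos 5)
Your proposal is correct and follows essentially the same route as the paper: the tautological identification $C^{*}_{def}(\Phi)=C^{*}_{VB}(\calH,\Phi^{*}T\calG)$ combined with the pairing \eqref{isoMehta} for the first claim, and the Gracia-Saz--Mehta structure theorem applied to the splitting of the core sequence of $\Phi^{*}T\calG$ pulled back from the connection $\sigma$ for the second. The compatibility check you flag as the main obstacle (that the resulting representation up to homotopy is genuinely $\Phi^{*}Ad^{\sigma}_{\calG}$) is exactly the point the paper's sketch also leans on, and the paper additionally remarks that it can be verified by explicit computation as in Lemma 9.1 of \cite{CMS}.
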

\begin{sketchproof}
The proof of the second claim follows from the interpretation of VB-complexes as 2-terms representations up to homotopy of groupoids after the choice of a unitary splitting of the core sequence \eqref{coresequence} (see \cite{GrMe-grpds}). In particular, under this philosophy, the VB-complex $C^{\bullet}_{VB}(T^{*}\G)$ of $T^{*}\calG$ induces on $C(\calG,A_\calG\oplus TM)^{\bullet}:=C^{\bullet}(\calH,A_\calG)\oplus C^{\bullet-1}(\calH,TM)$ the structure of the adjoint representation of $\calG$; in this case an unitary splitting of the core sequence of $\calG$ amounts to a connection on $\calG$. Similarly, by using the connection $\sigma$, one can see the VB-complex $C^{*}_{VB}(\Phi^{*}T^{*}\calG)$ as the pullback $C(\calH,\phi^{*}A_\calG\oplus\phi^{*}TM)^{*}$ by $\Phi$ of the adjoint representation $Ad^{\sigma}_\calG$ of $\calG$. Thus, by the isomorphism \eqref{isoMehta} above, the deformation complex of morphisms is isomorphic to the complex representing the pullback of the adjoint representation of $\calG$.
\end{sketchproof}

\begin{remark}
One can check that a proof of the second claim in the previous Proposition can also be made by some explicit computations after the choice of an Ehresmann connection of $\calG$, in the same way as the proof of Lemma 9.1 in \cite{CMS}.
\end{remark}

\begin{remark}
We notice that combining the identification \eqref{isoMehta} $C^{*}_{def}(\Phi)\to C^{*}_{VB}(\Phi^{*}T^{*}\G)$ and the quasi-isomorphism \cite{CD} $C^{*}_{VB}(\Phi^{*}T^{*}\G)\hookrightarrow C^{*}_{lin}(\Phi^{*}(T^{*}\G))$ towards the linear complex $C^{*}_{lin}(\Phi^{*}(T^{*}\G))$, associated to the VB-groupoid $\Phi^{*}T^{*}\G$ , one gets the identification of the cohomologies
$$H^{*}_{def}(\Phi)\longrightarrow H^{*}_{lin}(\Phi^{*}T^{*}\G).$$
Explicitly, the composition of the maps
$$C^{*}_{def}(\Phi)\longrightarrow C^{*}_{VB}(\Phi^{*}T^{*}\G)\hookrightarrow C^{*}_{lin}(\Phi^{*}T^{*}\G)$$
induces a quasi-isomorphism of complexes.
\end{remark}

\subsection{Properties and Variation of the complex $C^{\bullet}_{def}(\Phi)$}\label{Subsection:Variation complex}

According to the terminology of equivalence of deformations of morphisms we can also say that two morphisms of Lie groupoids are \textbf{equivalent} if they are isomorphic when viewed as functors. The following result then tell us that the deformation cohomology of morphisms is an object associated to the equivalence classes of morphisms.

\begin{theorem}\label{Theor:IsomorpMorphisms-Cohomology}
Let $\Phi$, $\Psi:\calH\to\calG$ be two Lie groupoid morphisms. If $\Phi$ and $\Psi$ are equivalent then their deformation cohomologies $H^{*}_{def}(\Phi)$ and $H^{*}_{def}(\Psi)$ are isomorphic.
\end{theorem}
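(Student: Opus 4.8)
The plan is to realize the equivalence $\Phi\sim\Psi$ by a smooth gauge map and to turn it into an explicit isomorphism of cochain complexes $\Theta_\tau\colon C^{*}_{def}(\Phi)\to C^{*}_{def}(\Psi)$. By definition of equivalence there is a gauge map $\tau\colon N\to\calG$ with $s_\calG\circ\tau=\phi$ and $t_\calG\circ\tau=\psi$ such that $\Psi(h)=\tau(t(h))\Phi(h)\tau(s(h))^{-1}$ for all $h\in\calH$. In degree $0$, where $C^{0}_{def}(\Phi)=\Gamma(\phi^{*}A_\calG)$ and $C^{0}_{def}(\Psi)=\Gamma(\psi^{*}A_\calG)$, the natural candidate is the pointwise adjoint action $\Theta_\tau(\alpha)(x):=\Ad_{\tau(x)}(\alpha(x))\in A_{\psi(x)}$; the first thing I would check is that this intertwines $\delta_\Phi\alpha=\overrightarrow{\alpha}+\overleftarrow{\alpha}$ with $\delta_\Psi$, which reduces to the compatibility of $\Ad$ with the left and right translations.

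For $k>0$ a cochain $c$ assigns to $(h_1,\dots,h_k)$ a vector $c(h_1,\dots,h_k)\in T_{\Phi(h_1)}\calG$, and I want to conjugate it by the arrows $a:=\tau(t(h_1))$ and $b:=\tau(s(h_1))$ so that the result sits in $T_{\Psi(h_1)}\calG=T_{a\Phi(h_1)b^{-1}}\calG$. Since conjugation by fixed arrows only differentiates along the intersection of the source and target fibres, I would carry out the conjugation inside the tangent groupoid $T\calG\rightrightarrows TM$: fixing an Ehresmann connection on $\calG$ (equivalently, a unitary splitting of the core sequence, as in the Proposition identifying $C^{*}_{def}(\Phi)$ with the pullback adjoint representation up to homotopy), I lift the base components $dt_\calG(v)$ and $ds_\calG(v)$ of $v:=c(h_1,\dots,h_k)$ to horizontal vectors $A\in T_a\calG$ and $B\in T_b\calG$, and set $\Theta_\tau(c)(h_1,\dots,h_k):=A\cdot v\cdot B^{-1}$, the product being taken in $T\calG$. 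One then checks that $A\cdot v\cdot B^{-1}\in T_{\Psi(h_1)}\calG$ and that $s_\calG$-projectability of $c$ is preserved, so that $\Theta_\tau$ lands in $C^{k}_{def}(\Psi)$.

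The heart of the proof is that $\Theta_\tau$ commutes with the differentials, and here the key structural identity is the cancellation coming from the cocycle nature of $\tau$: using composability $s(h_1)=t(h_2)$ one has $\Psi(h_1h_2)=a\,\Phi(h_1h_2)\,c^{-1}$ and $\Psi(h_2)=b\,\Phi(h_2)\,c^{-1}$ with $c:=\tau(s(h_2))$, whence $\bar{m}_\calG(\Psi(h_1h_2),\Psi(h_2))=a\,\bar{m}_\calG(\Phi(h_1h_2),\Phi(h_2))\,b^{-1}$, the inner gauge element $c$ dropping out. Differentiating this identity, together with its analogues for the remaining face maps in the formula for $\delta_\Phi$, should yield $\Theta_\tau\circ\delta_\Phi=\delta_\Psi\circ\Theta_\tau$. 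I expect the main obstacle to be verifying this at the level of the horizontal lifts, namely that the connection terms in the $t$- and $s$-directions match on both sides; the cleanest way to organize the bookkeeping is through the isomorphism $C^{*}_{def}(\Phi)\cong C^{*}(\calH,\Phi^{*}Ad_\calG)$ of the earlier Proposition, under which $\Theta_\tau$ becomes precisely the intertwiner induced by $\tau$ between the two pullback adjoint representations of $\calH$, so that the commutation with $\delta$ is the familiar fact that a natural isomorphism of functors induces an isomorphism of pullback representations up to homotopy. Finally, applying the same construction to the inverse gauge map $x\mapsto\tau(x)^{-1}$ (which covers $\psi$, with $s_\calG\circ\tau^{-1}=\psi$ and $t_\calG\circ\tau^{-1}=\phi$) produces a two-sided inverse $\Theta_{\tau^{-1}}$, so that $\Theta_\tau$ is an isomorphism of complexes and induces the desired isomorphism $H^{*}_{def}(\Phi)\cong H^{*}_{def}(\Psi)$.
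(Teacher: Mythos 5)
Your route is genuinely different from the paper's: the paper realizes $\tau$ as a Lie groupoid morphism $\tilde{\tau}:\calH\to\calG^{I}$ into the groupoid of arrows, writes $\Phi=u\circ\tilde{\tau}$ and $\Psi=l\circ\tilde{\tau}$ with $u,l:\calG^{I}\to\calG$ Morita morphisms, and concludes from the Morita invariance $H^{*}_{def}(f)\cong H^{*}_{def}(F\circ f)$ of Proposition \ref{Mor2}, which is proved with VB-Morita machinery and never touches a connection. Your explicit conjugation $\Theta_\tau$ is a reasonable alternative and most of it does go through: the map $v\mapsto \mathrm{hor}_{a}(dt\,v)\cdot v\cdot \mathrm{hor}_{b}(ds\,v)^{-1}$ is linear on each fibre (multiplication and inversion in $T\calG$ are vector bundle maps), it preserves $s$-projectability, and the identity $\Theta_\tau\circ\delta_\Phi=\delta_\Psi\circ\Theta_\tau$ holds on the nose for any splitting, because in the division term the two inner lifts are taken at the same arrow $\tau(s(h_2))$ of the same base vector $s_c(h_3,\dots,h_{k+1})$ (this is exactly where $s$-projectability enters) and cancel, as you predicted. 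Two small corrections: in degree $0$ the formula is not $\Ad_{\tau(x)}$, which is only defined on the isotropy, but the connection-dependent quasi-action $\Delta^{C}_{\tau(x)}$ of Remark \ref{rmk:adjoint action} (which is what your uniform definition via $T\calG$ produces); and the appeal to "a natural isomorphism induces an isomorphism of representations up to homotopy" is not a substitute for the computation, since such induced morphisms are in general non-strict.

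The genuine gap is the final invertibility step. $\Theta_{\tau^{-1}}$ is not a two-sided inverse of $\Theta_\tau$: the horizontal lift of an inverse arrow is not the inverse of the horizontal lift, so $\Theta_{\tau^{-1}}\circ\Theta_\tau(v)-v$ is a curvature-type correction valued in $\phi^{*}A_\calG$ rather than zero. Worse, $\Theta_\tau$ itself can fail to be injective: if $w\in T_{\phi(x)}M$ is a nonzero vector with $dt\bigl(\mathrm{hor}_{\tau(x)}(w)\bigr)=0$ — which happens for a generic Ehresmann connection, whose horizontal spaces need not be transverse to the $t$-fibres — then $r_{\Phi(h)}\bigl(\mathrm{hor}_{\tau(x)}(w)^{-1}\cdot 0_{\tau(x)}\bigr)$ is a nonzero element of $\ker\Theta_\tau$ for $x=t(h)$. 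To repair this you must choose the splitting so that $Tt\circ\mathrm{hor}_{g}$ is injective, i.e.\ the $s$- and $t$-transversal connection of Lemma \ref{lemma:biconnection}, whose construction uses that $\calG$ is Hausdorff (the theorem carries no such hypothesis), and then argue fibrewise bijectivity of $\Theta_\tau$ directly rather than via $\Theta_{\tau^{-1}}$. Alternatively, drop the explicit conjugation and use the paper's connection-free argument through $\calG^{I}$.
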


For the proof of this Theorem we first recall the \textbf{groupoid of arrows} $\G^{I}=\G\times_M \calG\times_M \G$ of $\G$, where the objects of $\G^{I}$ are the arrows of $\G$ and the arrows of $\G^{I}$ are given by commutative squares of arrows in $\G$ or, in other words, by three arrows $(g,h,k)\in\G^{I}$ with common sources.
\[
\xymatrixrowsep{0.5in}
\xymatrixcolsep{0.5in}
\xymatrix{
  \cdot \ar[d] & \cdot \ar[l]_{g} \ar[ld]_{h} \ar[d]^{k}\\
  \cdot & \cdot \ar[l]}
   \]
  where the source, target and multiplication maps are respectively given by the right and left arrows of the commutative square, and by the concatenation of the diagramas. That is, $s(g,h,k)=k$, $t(g,h,k)=gh^{-1}$ and $(g',h',k')\cdot(g,h,k)=(g'g,h'g,k)$. With these notations, it is straightforward to see that the maps which take the upper and lower arrows of the square are Morita morphisms. Explicitly,

\begin{lemma}
Let $u$ and $l$ denote the maps $p_1,\; \overline{m}\circ p_{2,3}:\calG^{I}\to\calG$ given by
$$p_1:(g,h,k)\mapsto g,\ \ \text{and}$$
$$\overline{m}\circ p_{2,3}:(g,h,k)\mapsto hk^{-1}$$
which take the upper and lower arrows of a commutative square in $\calG^{I}$. Then, the maps $u$ and $l$ are Morita morphisms covering the source $s:\G\to M$ and target maps $t:\G\to M$ of $\G$, respectively. 
\end{lemma}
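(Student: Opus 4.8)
The plan is to check, for each of $u$ and $l$, the two conditions that characterise a Morita morphism of Lie groupoids: \emph{full faithfulness}, expressed as the statement that $\calG^{I}$ is the appropriate fibered product of its base with the target groupoid, and \emph{essential surjectivity}, expressed as a surjective-submersion condition. Because $u$ and $l$ are the two evaluations of a commutative square at its opposite horizontal edges and are interchanged by reflecting the square, it will be enough to treat $u$ in detail and then repeat the argument \emph{mutatis mutandis} for $l$. First I would record that $u$ is indeed a morphism of Lie groupoids covering $s$: on objects it sends an arrow $a$ of $\calG$ (an object of $\calG^{I}$) to $s(a)$, multiplicativity is immediate from the formula for the product of two squares, and the unit over $a$ maps to $1_{s(a)}$.

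The heart of the proof is full faithfulness, and the key geometric fact is that a commutative square in $\calG$ is determined by any three of its four edges, the fourth being forced by commutativity. Concretely, I would show that
\[
\calG^{I}\longrightarrow (\calG\times\calG)\times_{M\times M}\calG,\qquad (g,h,k)\longmapsto\bigl((s^{I}(g,h,k),\,t^{I}(g,h,k)),\,u(g,h,k)\bigr),
\]
is a diffeomorphism onto the fibered product taken along $s\times s$ on the left and $(s,t)$ on the right; here the right-hand side records the two vertical edges together with the selected horizontal edge, subject to the evident source/target matchings. The inverse map reconstructs the omitted horizontal edge by a single application of multiplication and inversion in $\calG$ (equivalently, of the division map $\overline{m}_{\calG}$), so it is manifestly smooth; this is precisely what makes the comparison square a genuine pullback and hence encodes the full faithfulness of $u$.

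For essential surjectivity I would verify that the structure map $\{(a,\gamma)\in\calG\times\calG : s(a)=s(\gamma)\}\to M$, $(a,\gamma)\mapsto t(\gamma)$, is a surjective submersion: surjectivity is witnessed by the pairs $(1_{y},1_{y})$ for $y\in M$, and the submersion property follows because this map factors as the projection of the fibered product onto its $\gamma$-factor---a pullback of the submersion $s$---followed by the submersion $t$. Combining the two conditions shows that $u$ is Morita, and the reflected argument gives the same for $l$. I expect the only genuinely delicate point to be the bookkeeping of the source/target matchings, so that the fibered products are formed along the correct maps and the reconstruction of the fourth edge is a well-defined, smooth two-sided inverse; once the pullback square is set up correctly, everything reduces to smoothness of the structure maps of $\calG$.
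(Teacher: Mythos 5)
Your proposal is correct. The paper offers no proof of this lemma at all (it is introduced with ``it is straightforward to see''), and your direct verification of the two Morita conditions --- the pullback/full-faithfulness square, with the inverse reconstructing the missing edge of the square by a single multiplication, and essential surjectivity via the factorization of $(a,\gamma)\mapsto t(\gamma)$ through a pullback of the submersion $s$ followed by $t$ --- is exactly the argument the paper leaves implicit; the only caveat, which you correctly flag as the delicate point, is the source/target bookkeeping, where one should note that the composability condition for $\calG^{I}$ forces the target of $(g,h,k)$ to be the left edge $hg^{-1}$ (the formula $gh^{-1}$ as printed does not match the stated multiplication), after which your reconstruction $h=bc$ from the triple $(k,hg^{-1},g)$ goes through verbatim.
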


\begin{proof}[Proof of Theorem \ref{Theor:IsomorpMorphisms-Cohomology}]

Let $\tau:\Phi\to\Psi$ be the gauge map which relates the morphism $\Phi$ to $\Psi$.

Notice that the gauge map $\tau$ can be seen as the Lie groupoid morphism
$$\tilde{\tau}:\calH\longrightarrow \calG^{I}$$
$$\tilde{\tau}(h):=(\Phi(h),\tau(t(h))\Phi(h),\tau(s(h))).$$
Such a morphism encodes the isomorphic morphisms $\Phi$ and $\Psi$ by composing with the \emph{upper} and \emph{lower morphisms} $u$ and $l$ of $\G^{I}$, indeed, $\Phi=u\circ\tilde{\tau}$ and $\Psi=l\circ\tilde{\tau}$.

We will now prove that the cohomologies $H^{*}_{def}(\Phi)$ and $H^{*}_{def}(\Psi)$ are isomorphic to $H^{*}_{def}(\tilde{\tau})$. Indeed, these isomorphisms follow from observing that if $f:\H\to\G$ is any morphism, then a Morita map $F:\calG\to\calK$ induces the quasi-isomorphism $F_{*}:C_{def}^{*}(f)\longrightarrow C_{def}^{*}(F\circ f)$. This is the content of Proposition \ref{Mor2} in Section \ref{Sec:Moritainv} where we prove the isomorphism between deformation cohomologies by using the notion of VB-Morita maps. Therefore, this fact along with the previous Lemma tell us that the upper and lower morphisms induce isomorphisms in the cohomologies, as we wanted.
\end{proof}


We describe now a variation of the deformation complex of morphisms which should be though of as the tangent space to the Moduli space of morphisms with the relation induced by bisections. Indeed, such a variation will be relevant for us in order to deal with the characterization of strongly trivial deformations. It just consists of changing the space $C^{0}_{def}(\Phi)=\Gamma(\phi^{*}A_\G)$ of sections of the pullback algebroid by the space of pullback sections $\phi^{*}(\Gamma(A_\G))$. We denote this complex by $\tilde{C}^{\bullet}_{def}(\Phi)$. Explicitly,
$$\tilde{C}^{0}_{def}(\Phi)=\phi^{*}(\Gamma(A_\G))$$ and
$$\tilde{C}^{k}_{def}(\Phi)=C^{k}_{def}(\Phi),\ \text{for } k>0.$$
Observe that the cohomology $\tilde{H}^{\bullet}_{def}(\Phi)$ is larger than the usual deformation cohomology $H^{\bullet}_{def}(\Phi)$. This fact is not a surprise in view that the space of trivial deformations is larger than the space of strongly trivial deformations.


\section{Examples}\label{Section:Examples}

Representations of Lie groupoids and flat connections can be viewed as morphisms of Lie groupoids. We study below their deformation cohomologies when regarded as morphisms.

Recall that a representation of a Lie groupoid $\G\tto M$ on a vector bundle $E\to M$ is a Lie groupoid morphism $\Phi:\calG\to\calG L(E)$, covering the identity, from $\G$ to the General Linear groupoid. One can naturally consider deformations of representations of Lie groupoids as a special instance of deformations of morphisms of Lie groupoids. We define a \textbf{deformation of a representation of a Lie groupoid} $\calG$ \textbf{on a vector bundle} $E\to M$ as a deformation $\Phi_\e$ of the morphism $\Phi:\calG\to\calG L(E)$ which keeps the base map fixed (which is the identity). Therefore, the complex $C^{\bullet}_{s,t}(\Phi)$ (see expression \eqref{complexbasemapfixed}) should control the deformations of the representation $\Phi$. 

The representation $\Phi$ induces a canonical action of $\calG$ on the vector bundle $End(E)$ of endomorphisms of $E$ as follows:  
\begin{equation}\label{Eq:Representations}
g\cdot F_{s(g)}:= \Phi(g)\circ F_{s(g)}\circ \Phi(g)^{-1},
\end{equation}
where $g:x\to y$ is an element of $\calG$ and $F_{x}$ denotes an element in the fiber $End(E)_x$ of $End(E)$ over $x$.
Thus, since the complex $C^{\bullet}_{s,t}(\Phi)$ can be described in terms of the isotropy bundle of $\G L(E)$ and $\mathfrak{i}_{\calG L(E)}=End(E)$, it follows that the complex controlling the deformations of the representation $\Phi$ of $\calG$ is $C^{\bullet}_{\mathrm{diff}}(\calG, End(E))$, where the differential is induced from the canonical action \eqref{Eq:Representations} defined above. By other side, one can check that the deformation cohomology $H^{\bullet}_{\mathrm{def}}(\Phi)$ agrees with such a $H^{\bullet}_{s,t}(\Phi)$.

\begin{proposition}[Representations of Lie groupoids]\label{Prop:Representations}
Let $\Phi:\calG\to\calG L(E)$ be a representation of $\calG\tto M$ on the vector bundle $E\to M$. Then $H_{\mathrm{def}}^{*}(\Phi)\cong H_{\mathrm{diff}}^{*}(\calG;\ End(E)).$
\end{proposition}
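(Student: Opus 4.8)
The plan is to reduce everything to the $VB$-groupoid machinery and then invoke the long exact sequence for regular $VB$-groupoids from Section~\ref{Section:Regularsetting}. Since the representation $\Phi:\calG\to\calG L(E)$ covers the identity of $M$, the alternative description of Section~\ref{alternativedescription} gives an isomorphism of complexes $C^{*}_{def}(\Phi)\cong C^{*}_{VB}(\calG,\Gamma)$, where $\Gamma:=\Phi^{*}T\calG L(E)$ is the pullback by $\Phi$ of the tangent $VB$-groupoid of $\calG L(E)$, regarded as a $VB$-groupoid over the domain $\calG\rightrightarrows M$. Hence it suffices to compute $H^{*}_{VB}(\calG,\Gamma)$ and to match it with $H^{*}_{\mathrm{diff}}(\calG,\End(E))$.

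First I would determine the core and side bundles of $\Gamma$ together with its core-anchor map. Because pulling back along a morphism covering the identity leaves the base $M$ and the bundles over $M$ unchanged, and because $T\calG L(E)$ has core $A_{\calG L(E)}$, side $TM$ and core-anchor equal to the anchor $\rho:A_{\calG L(E)}\to TM$, the $VB$-groupoid $\Gamma$ has core $A_{\calG L(E)}$, side $TM$ and core-anchor $\partial=\rho$. The one genuinely geometric input is that $\calG L(E)$ is transitive (its arrows are all the linear isomorphisms between fibers of $E$), hence regular with surjective anchor; in particular $\Gamma$ is a regular $VB$-groupoid, since $\partial=\rho$ has constant rank. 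Therefore $\frakl=\mathrm{Coker}\,\partial=0$, while $\frakk=\mathrm{Ker}\,\partial=\mathrm{Ker}\,\rho=\mathfrak{i}_{\calG L(E)}=\End(E)$. I would then check, via Remark~\ref{rmk:adjoint action}, that the canonical action of $\calG$ on $\frakk$ is the pullback by $\Phi$ of the adjoint (conjugation) action of $\calG L(E)$ on its isotropy bundle $\End(E)$, which is exactly the action \eqref{Eq:Representations}; consequently $H^{*}(\calG,\frakk)=H^{*}_{\mathrm{diff}}(\calG,\End(E))$ with this action.

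With these identifications in hand, I would feed $\Gamma$ into the long exact sequence for regular $VB$-groupoids of Section~\ref{Section:Regularsetting},
\begin{equation*}
\cdots\longrightarrow H^{k}(\calG,\frakk)\stackrel{r}{\longrightarrow} H^{k}_{VB}(\calG,\Gamma)\stackrel{\pi}{\longrightarrow} H^{k-1}(\calG,\frakl)\stackrel{K}{\longrightarrow} H^{k+1}(\calG,\frakk)\longrightarrow\cdots.
\end{equation*}
Since $\frakl=0$, every term $H^{\bullet}(\calG,\frakl)$ vanishes, so exactness forces $r:H^{k}(\calG,\frakk)\to H^{k}_{VB}(\calG,\Gamma)$ to be an isomorphism in each degree. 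Stringing the isomorphisms together yields
\begin{equation*}
H^{k}_{def}(\Phi)\cong H^{k}_{VB}(\calG,\Gamma)\cong H^{k}(\calG,\frakk)=H^{k}_{\mathrm{diff}}(\calG,\End(E)),
\end{equation*}
which is the claim. I would also point out that, under the right-translation identification of the subcomplex $C^{*}_{s,t}(\Phi)$ of \eqref{complexbasemapfixed} with $\frakk$-valued cochains, the map $r$ is induced by the inclusion $C^{*}_{s,t}(\Phi)\hookrightarrow C^{*}_{def}(\Phi)$ (the map $r$ of the core sequence \eqref{coresequence} restricted to $\frakk=\mathrm{Ker}\,\partial$), so this simultaneously recovers the announced identity $H^{*}_{def}(\Phi)\cong H^{*}_{s,t}(\Phi)$.

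The main obstacle is not conceptual but bookkeeping: I must apply the dualization built into the two-argument notation $C^{*}_{VB}(\calG,\V)\cong C^{*}_{VB}(\V^{*})$ consistently, confirm that the pullback $\Phi^{*}T\calG L(E)$ really inherits the core-anchor $\rho$ unchanged (so that $\partial=\rho$ and not a twisted map), and verify that the abstract $\frakk$-action of Remark~\ref{rmk:adjoint action} coincides on the nose with the conjugation \eqref{Eq:Representations}. The decisive geometric fact, from which the collapse of the long exact sequence follows, is merely the transitivity of $\calG L(E)$, which gives $\frakl=0$; every remaining step is formal.
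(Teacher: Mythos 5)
Your argument is correct, but it is not the proof the paper gives as its primary one: it is the ``alternative way'' the paper itself points to at the end of its proof (Example~\ref{Regular setting, morphisms}). The paper's own proof is a direct verification that the right-translation map $r: C_{\mathrm{diff}}^{\bullet}(\calG;\End(E))\to C_{def}^{\bullet}(\Phi)$ is a quasi-isomorphism, resting on exactly the two identifications you also make --- that $\End(E)$ is the isotropy bundle $\mathrm{Ker}\,\rho$ of $\mathfrak{gl}(E)=Lie(\calG L(E))$, and that the conjugation action \eqref{Eq:Representations} coincides with the canonical adjoint action of Remark~\ref{rmk:adjoint action}. Where you differ is in how surjectivity/injectivity of $r$ in cohomology is obtained: rather than checking it by hand, you pass through the identification $C^{*}_{def}(\Phi)\cong C^{*}_{VB}(\calG,\Phi^{*}T\calG L(E))$ of Section~\ref{alternativedescription}, observe that transitivity of $\calG L(E)$ forces $\frakl=\mathrm{Coker}\,\partial=0$ while $\frakk=\End(E)$, and let the long exact sequence \eqref{RegularVBmorphisms} for regular $VB$-groupoids collapse. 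This buys you a structural, computation-free conclusion (and makes transparent that transitivity of the target is the decisive hypothesis), at the cost of invoking the heavier machinery of Section~\ref{Section:Regularsetting}; the paper's direct check is more elementary but leaves the verification implicit. Your identification of the core, side and core-anchor of the pullback $VB$-groupoid, and of the induced action on $\frakk$, is accurate, so no gap remains.
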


\begin{proof}
This isomorphism in cohomology can be checked directly as the induced by the right translation $r: C_{\mathrm{diff}}^{\bullet}(\calG;\ End(E))\longrightarrow C_{def}^{\bullet}(\Phi)$ after notice two things: \emph{(i)} that $End(E)$ is the isotropy bundle of Lie algebras of the Lie algebroid $\mathfrak{gl}(E):=Lie(\calG L(E))$ and \emph{(ii)} that the action of $\calG$ on $End(E)$ defined above agrees with the canonical adjoint action described in Remark \ref{rmk:adjoint action}. In Section \ref{Section:Regularsetting} we give an alternative way to check this isomorphism (see example \ref{Regular setting, morphisms}).

\end{proof}

The previous Proposition tell us then that the usual deformation cohomology $H^{\bullet}_{\mathrm{def}}(\Phi)$ of $\Phi:\G\to \G L(E)$ is the one that controls deformations of the representation $E$ of $\G$.

\begin{remark}\label{rmk:transitivetargetgrpd}
Similarly one can see that if the target groupoid, for any morphism $\Phi:\calH\to\calG$, is transitive then all the information of a deformation of $\Phi$ is concentrated just on the cohomology of $\H$ with values in the isotropy bundle of $\calG$. In some sense that means that the non-trivial deformations of $\G$ will be determined by the isotropy directions of $\G$.
\end{remark}

\begin{remark}
In the special case of a representation of $\calG\tto M$ on $TM$, we can get an alternative view-point in terms of 1-jets: the groupoid $\calG L(TM)$ can be though as the Lie groupoid $J^{1}(M,M)$ of 1-jets of local diffeomorphisms of $M$, and its Lie algebroid corresponds to the set $J^{1}(TM)$ of 1-jets of vector fields on $M$. The vector bundle $End(TM,TM)$ translates to the set $J^{1}_{0}(TM):=\{j^{1}_{x}X; X \in \mathfrak{X}(M) \text{ and } X(x)=0\}$ of 1-jets of vector fields on its singular points. The action of $\calG$ on $J^{1}_{0}(TM)$ will be obtained by pulling back the canonical action of $J^{1}(M,M)$ on $J^{1}_{0}(TM)$ given by 
$$j^{1}_{x}\Phi\ast j^{1}_{x}X:=j^{1}_{\Phi(x)}(\Phi_{*}X).$$
\end{remark}


\begin{example}[Connections on vector bundles]
Example \ref{example:vbconnections} allows us to see any flat connection $\nabla$ on a vector bundle $E\to M$ as a Lie groupoid morphism $\Phi^{\nabla}:\Pi_1(M)\to \G L(E)$ covering the identity map $Id_M$. Equivalently, $\nabla$ is regarded as a representation of $\Pi_1(M)$.

Thus, we define a \textbf{deformation of $\nabla$ by flat connections} as a deformation of the associated morphism $\Phi^{\nabla}$ which keeps the base map fixed (which is the identity).

Hence, by Remark \ref{rmk:transitivetargetgrpd}, the deformation cohomology controlling deformations of $\nabla$ by flat connections should be
$$H^{\bullet}_{s,t}(\Phi^{\nabla})\cong H^{\bullet}_{\mathrm{diff}}(\Pi_1(M);End(E)).$$
In particular, when $M$ is a simply connected manifold, since $\Pi_1(M)$ will be proper, it follows that
$$H^{k}_{s,t}(\Phi^{\nabla})=0,\ \text{for all } k>0.$$
In that case, the Theorem \ref{gauge-triviality} tells us that the flat connections can be deformed just in a \textit{trivial} manner.
\end{example}

\begin{example}[Flat principal connections]
Example \ref{Ex: flat principal connections} regards any flat principal connection $\omega$ on the trivial $G$-principal bundle over $N$ as a Lie groupoid morphism $\Phi^{\omega}:\Pi_1(N)\to G$ uniquely. 

Therefore, by Remark \ref{rmk:transitivetargetgrpd}, the cohomology controlling deformations of $\omega$ by flat principal connections is 
$$H^{\bullet}_{s,t}(\Phi^{\omega})\cong H^{\bullet}_{\mathrm{diff}}(\Pi_1(N),\mathfrak{g}_N),$$
where $\mathfrak{g}_N$ is the trivial vector bundle with fiber $\mathfrak{g}=Lie(G)$ over $N$.
\end{example}

\begin{example}[Morse Lie groupoid morphisms]
The notion of Morse-Lie groupoid morphism has been defined recently in \cite{ortiz2022morse} as a morphism $(F,f):\calG\tto M\to\mathbb{R}\tto\mathbb{R}$ towards the unital groupoid over $\mathbb{R}$ such that every \emph{critical orbit} of $f$ is non-degenerate.
Indeed, the critical points of a Lie groupoid morphism towards the unital groupoid come in saturated subspaces, in that sense the non-degeneracy of a critical orbit is determined in terms of the non-degeneracy of the normal Hessian of $f$. Even though, a Morse-Lie groupoid morphism is in particular a morphism towards the unital groupoid over $\mathbb{R}$, which is a regular groupoid with no isotropy . Therefore its deformation cohomology groups can be computed according the following sequence induced from  \eqref{RegularVBmorphisms} for every $k>0$
$$0\to H^{k}_{def}(F)\to H^{k-1}(\calG, f^{*}\nu_{\mathbb{R}})\to 0.$$
The degree zero cohomology vanishes and is computed according Example \ref{cohomologydegreezero}.
More explicitly, the cohomology groups are given in terms of the differentiable cohomology
$$H^{k}_{def}(F)\cong H^{k-1}(\calG,\mathbb{R}).$$
Therefore, it turns out that the cohomology does not depend on the morphism $F$ but only on the groupoid $\calG$.

\end{example}


\section{Low degrees and vanishing}\label{section:lowdegrees}

In this section we will describe the deformation cohomology groups in low degrees. A central point here will be the description of the deformation complex in terms of VB-groupoids. The content of this section will be key to show the stability properties of morphisms.

Let $\V\rightrightarrows E$ be a VB-groupoid over $\calG\rightrightarrows M$, with core $C$ and core-anchor map $\partial: C\rightarrow E$. We consider the following (possibly singular) vector bundles
$$\mathfrak{k}=Ker\: \partial\ \ \ \text{and}\ \ \ \mathfrak{l}=Coker\: \partial$$
over $M$. It is a known fact that, even though $\frakk$ and $\frakl$ may be singular, $\V$ induces canonical actions of $\calG$ on them (see Section \ref{VB-grpds} or \cite{GrMe-grpds}). In particular they turn out to be actual representations if $\V$ is a regular VB-groupoid (see Section \ref{Section:Regularsetting}). Nevertheless, we can make sense of the cohomologies with values in $\frakk$ and $\frakl$ in the singular case, as we show below. We are mainly interested in the low degree cohomologies. Define the `smooth sections' of $\frakk$ and $\frakl$ by

$$\Gamma(\frakk):=C^{0}(\calG,\frakk)=\left\{\sigma\in\Gamma(C)| \sigma(x)\in\frakk_{x}\subset C_{x}\right\}$$
and
$$\Gamma(\frakl):=C^{0}(\calG,\frakl)=\frac{\Gamma(E)}{Im (\partial)},$$
where we are looking at $\partial$ as the induced map on sections $\Gamma(C)\rightarrow\Gamma(E)$.

\begin{remark}
Note that the definition of $C^{0}(\calG,\frakk)$ as the subspace of sections in $\Gamma_{M}(C)$ with values in $\frakk\subset C$ has the direct generalization to define $C^{k}(\calG,\frakk)$ as the subspace of sections in $\Gamma_{\calG^{(k)}}(t_k^{*}C)$ which take values in $\frakk$; where $t_k:\calG^{(k)}\rightarrow M$ is given by $t_k(g_1,...,g_k)=t(g_1)$. Also, the canonical action of $\calG$ on $\frakk$ allows us define a differential $\delta:C^{k}(\calG,\frakk)\rightarrow C^{k+1}(\calG,\frakk)$ with the same formula as that of the differentiable cohomology of $\calG$ with values in a representation. Such a differential makes $(C^{\bullet}(\calG,\frakk),\delta)$ a cochain complex, whose cohomology we denote by $H^{\bullet}(\calG,\frakk)$. Moreover, this complex with values in $\mathfrak{k}$ can be viewed as a subcomplex of the VB-complex $C^{\bullet}_{VB}(\calG,\Gamma)$ through the right multiplication by zero elements:
\begin{equation}\label{eq:r}
r(c)(g_1,...,g_k):=c(g_1,...,g_k)\cdot0_{g_1}\ \ \in\ \V_{g_1},\ \ c\in C^{k}(\calG,\frakk).
\end{equation}
\end{remark}

Following the idea of Lemma 4.5 and Definition 4.6 of \cite{CMS}, we can make sense of $H^{0}(\calG,\frakl)$ as being the invariant sections of $\frakl$.

\begin{definition}
Let $V\in \Gamma(E)$. We say that $[V]\in\Gamma(\frakl)$ is \textit{invariant} if there exists a section $X\in\Gamma_{\calG}(\V)$ which is both $\tilde{s}$-projectable and $\tilde{t}$-projectable to $V$. In other words, $X$ is an $(\tilde{s},\tilde{t})$-lift of $V$. We denote the space of invariant elements by
$$H^{0}(\calG,\frakl)=\Gamma_{M}(\frakl)^{inv}.$$
\end{definition}

Observe that when $\frakk$ and $\frakl$ are vector bundles, the previous definitions agree with the usual ones of cochains with values in a representation of $\calG$. With the general setting above, we obtain the following two propositions related to the low degree cohomologies.

\begin{proposition}
If $\V$ is a VB-groupoid over $\calG$ one has $H^{0}(\calG,\V)\cong H^{0}(\calG,\frakk)=\Gamma(\frakk)^{inv}$.
\end{proposition}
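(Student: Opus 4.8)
The plan is to compute directly the kernel of the degree-zero differential of the VB-complex $C^{\bullet}_{VB}(\calG,\V)$ and to recognize it as the space of invariant sections of $\frakk$. Recall that $C^{0}_{VB}(\calG,\V)=\Gamma_{M}(C)$, so what I must describe is the map $\delta:\Gamma_{M}(C)\to C^{1}_{VB}(\calG,\V)$. First I would fix a unitary splitting $\sigma$ of the core sequence \eqref{coresequence} and use the Gracia-Saz--Mehta identification \cite{GrMe-grpds} of $C^{\bullet}_{VB}(\calG,\V)$ with the complex of the $2$-term representation up to homotopy $C\stackrel{\partial}{\to}E$ determined by $\sigma$; concretely $C^{n}_{VB}(\calG,\V)\cong C^{n}(\calG,C)\oplus C^{n-1}(\calG,E)$, with structure data the quasi-actions $\Delta^{C},\Delta^{E}$ and the core-anchor $\partial$.

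Under this identification a degree-zero cochain is just a core section $c\in C^{0}(\calG,C)=\Gamma(C)$, and, since there is no side component in this degree to receive a curvature term, its differential takes the simple form
\begin{equation*}
\delta c=(\nabla^{C}c,\ \partial\circ c)\in C^{1}(\calG,C)\oplus C^{0}(\calG,E),
\end{equation*}
where $\nabla^{C}$ is the covariant differential of the quasi-action $\Delta^{C}$ and $\partial\circ c$ is the pointwise composition with the core-anchor. As the target is a direct sum, $\delta c=0$ holds if and only if both components vanish. The second condition, $\partial\circ c=0$, says exactly that $c$ takes values in $\frakk=\Ker\partial$, i.e. $c\in\Gamma(\frakk)$.

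It then remains to read off the first condition, and here I would invoke Remark \ref{rmk:adjoint action}: although $\Delta^{C}$ is only a quasi-action, it restricts to a \emph{genuine} action $\Delta^{\frakk}$ on $\frakk=\Ker\partial$. Consequently, once $c\in\Gamma(\frakk)$, the covariant differential $\nabla^{C}c$ coincides with the covariant differential of $c$ for this honest action, and its vanishing is precisely the invariance $\Delta^{\frakk}_{g}(c(s(g)))=c(t(g))$ for all $g\in\calG$, that is $c\in\Gamma(\frakk)^{inv}=H^{0}(\calG,\frakk)$. Conversely, any $c\in\Gamma(\frakk)^{inv}$ satisfies both $\partial\circ c=0$ and $\nabla^{C}c=0$, hence $\delta c=0$. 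This gives $H^{0}(\calG,\V)\cong\Gamma(\frakk)^{inv}$, and although the identification used a splitting $\sigma$, the two cocycle conditions are intrinsic, so the result does not depend on that choice. I expect the only real subtlety to be precisely this last step --- verifying that the quasi-action's covariant differential reduces to the genuine $\frakk$-action on the kernel --- which is exactly what Remark \ref{rmk:adjoint action} provides; the remainder is bookkeeping with the bidegree conventions.
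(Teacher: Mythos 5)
Your proof is correct, and it reaches the same two conditions as the paper's argument, but by a different decomposition of the differential. The paper computes $\delta$ on a $0$-cochain intrinsically, using the groupoid structure of $\V$ itself: $\delta(\alpha)(g)=\alpha_{t(g)}\cdot 0_{g}+0_{g}\cdot i_{\V}(\alpha_{s(g)})$, then reads off $\partial\circ\alpha=0$ from the $\tilde{t}$-projection of this element and the invariance condition $\alpha_{t(g)}=0_{g}\cdot\alpha_{s(g)}\cdot 0_{g^{-1}}$ from what remains; no splitting of the core sequence is ever chosen. You instead route the computation through a unitary splitting $\sigma$ and the Gracia-Saz--Mehta identification with the $2$-term representation up to homotopy, obtaining $\delta c=(\nabla^{C}c,\ \partial\circ c)$ and then observing that on $\Gamma(\frakk)$ the quasi-action $\Delta^{C}$ collapses to the genuine action $\Delta^{\Ker\partial}$ (since $\sigma_{g}(\partial c_{s(g)})=0_{g}$ when $\partial c_{s(g)}=0$). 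Both proofs hinge on exactly that last observation, which is the content of Remark \ref{rmk:adjoint action}. What the paper's version buys is economy: it never introduces the auxiliary choice of $\sigma$, so there is nothing to check about independence of that choice. What your version buys is that it sits inside the general representation-up-to-homotopy bookkeeping, so the same template extends mechanically to the low-degree sequence \eqref{singularsequence} and the regular case; the price is the extra (correctly handled) remark that the two cocycle conditions are intrinsic even though the identification is not.
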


\begin{proof}
It is clear that the differential of $C^{\bullet}(\calG,\V)$ on a 0-cochain $\alpha\in\Gamma(C)$ is given by $\delta(\alpha)(g)=\alpha_{t(g)}\cdot 0_{g}+0_{g}\cdot i_{\V}(\alpha_{s(g)})$. Thus, since $\tilde{t}(\delta(\alpha)(g))=\partial(\alpha(t(g)))$ then $\alpha$ is a 0-cocycle if and only if $\alpha\in\Gamma(\frakk)$ and $\alpha_{t(g)}=-0_{g}\cdot i_{\V}(\alpha_{s(g)})\cdot 0_{g}^{-1}=0_{g}\cdot\alpha_{s(g)}\cdot 0_{g^{-1}}$. That is, if and only if $\alpha\in\Gamma(\frakk)^{inv}$.
\end{proof}

Thus, in particular we obtain,

\begin{example}\label{cohomologydegreezero} 

For a morphism of Lie groupoids $(\Phi,\phi):\calG\rightarrow\calH$, $H^{0}_{def}(\Phi)\cong\Gamma(\phi^{*}\mathfrak{i}_{\calG})^{inv}$.

\end{example}

\begin{proposition} Let $\V$ be a VB-groupoid as above. Then we have the following exact sequence
\begin{equation}\label{singularsequence}
0\longrightarrow H^{1}(\calG, \mathfrak{k})\stackrel{r}{\longrightarrow} H^{1}_{VB}(\calG,\V)\stackrel{\pi}{\longrightarrow}\Gamma(\mathfrak{l})^{inv}\stackrel{K}{\longrightarrow} H^{2}(\calG, \mathfrak{k})\stackrel{r}{\longrightarrow} H^{2}_{VB}(\calG,\V)
\end{equation}
where the maps $r,\ \pi$ and $K$ are determined as follows:

the map $r$ is induced by the cochain map (denoted again by) $r$ defined in \eqref{eq:r}; $\pi$ is induced by the $\tilde{s}$-projection of the elements of $C^{1}_{VB}(\calG,\V)$ to the sections $\Gamma_{M}(E)$ of the side bundle; and $K$ takes an invariant element $[V]$ to $\delta(X)\in C^{2}(\calG,\frakk)\stackrel{r}{\hookrightarrow}C^{2}_{VB}(\calG,\V)$, where $X$ is any $(\tilde{s},\tilde{t})$-lift of $V$.
\end{proposition}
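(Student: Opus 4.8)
The plan is to realize all three maps at the cochain level through the core sequence \eqref{coresequence} and then run an elementary diagram chase in degrees $\le 2$, the only nonroutine input being a handful of projection identities for the VB-differential. Recall that a VB $1$-cochain $X$ is a section $g\mapsto X(g)\in\V_g$ whose $\tilde s$-image depends only on $s(g)$, so that $\pi(X):=\tilde s\circ X\in\Gamma(E)$ is well defined; that the differential reads $(\delta X)(g,h)=X(g)-X(gh)\cdot X(h)^{-1}$ (the product being taken in $\V\rightrightarrows E$); and that $\tilde t$ restricted to the core is precisely $\partial$. First I would record, by applying $\tilde s$ and $\tilde t$ to this formula and using $\tilde s(X(gh))=\tilde s(X(h))$, the identities
\[
\tilde s\big((\delta X)(g,h)\big)=\pi(X)(t(h))-\tilde t(X(h)),\qquad
\tilde t\big((\delta X)(g,h)\big)=\tilde t(X(g))-\tilde t(X(gh)),
\]
together with the degree-$0$ computation $\tilde s(\delta\beta(g))=\partial\beta(s(g))$ and $\tilde t(\delta\beta(g))=\partial\beta(t(g))$ for $\beta\in\Gamma(C)=C^{0}_{VB}(\calG,\V)$, the latter already appearing in the proof of the $H^{0}$-proposition above. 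These show at once that $r$ from \eqref{eq:r} lands in the cochains with vanishing $\tilde s$-projection, that $\pi\circ r=0$, and that $\pi(\delta\beta)=\partial\beta$, so that $\pi$ descends on cohomology to a map into $\Gamma(\frakl)=\Gamma(E)/\Im\partial$.

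Next I would construct the connecting map $K$. Observe that an $(\tilde s,\tilde t)$-lift $X$ of a representative $V$ of $[V]\in\Gamma(\frakl)^{inv}$ is exactly a VB $1$-cochain with $\pi(X)=V$ and $\tilde t\circ X=V\circ t$; the two identities above then force $\tilde s(\delta X)=0$ and $\tilde t(\delta X)=0$, so $\delta X$ is core-valued and annihilated by $\partial$, i.e.\ $\delta X=r(W)$ with $W\in C^{2}(\calG,\frakk)$. Since $r$ is an injective cochain map and $\delta^{2}X=0$, $W$ is a $\frakk$-cocycle, and I set $K([V]):=[W]$. Independence of the lift holds because two lifts of $V$ differ by a cochain valued in $\frakk$ (both projections agree), changing $W$ by a $\frakk$-coboundary; independence of the representative holds because $\delta\beta$ is itself an $(\tilde s,\tilde t)$-lift of $\partial\beta$ by the degree-$0$ identities, so replacing $X$ by $X+\delta\beta$ lifts $V+\partial\beta$ while leaving $\delta X$ unchanged.

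With the maps in hand, the four exactness checks are short chases. Injectivity of $r$: if $r(c)=\delta\beta$, applying $\pi$ gives $\partial\beta=0$, hence $\beta\in\Gamma(\frakk)$ and $c=\delta^{\frakk}\beta$ by injectivity of $r$. Exactness at $H^{1}_{VB}$: given a cocycle $c$ with $[\pi(c)]=0$, write $\pi(c)=\partial\beta$ and pass to $c-\delta\beta$, which now has zero $\tilde s$-projection and so equals $r(\tilde c)$; its being a cocycle forces $\partial\tilde c=0$ and $\delta^{\frakk}\tilde c=0$, whence $[c]=r[\tilde c]$. Exactness at $\Gamma(\frakl)^{inv}$: a cocycle $c$ is, by the first identity with $\delta c=0$, an $(\tilde s,\tilde t)$-lift of $\pi(c)$, so $K\circ\pi=0$; conversely $K([V])=0$ means $r(W)=\delta(rY)$ for a $\frakk$-cochain $Y$, and then $c:=X-rY$ is a VB-cocycle with $\pi(c)=V$. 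Exactness at $H^{2}(\calG,\frakk)$: $r\circ K=0$ since $r(W)=\delta X$ is a coboundary, while if $r(W)=\delta X$ then (as $r(W)$ has zero $\tilde s$-projection) the first identity shows $X$ is an $(\tilde s,\tilde t)$-lift of $V:=\pi(X)$, so $[V]$ is invariant and $K([V])=[W]$.

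The only genuinely delicate point is the bookkeeping forced by the singular setting: $\frakk$ and $\frakl$ need not be vector bundles, so $\Gamma(\frakl)^{inv}$ must be read through the \emph{existence of $(\tilde s,\tilde t)$-lifts} rather than as $H^{0}$ of a quotient complex, and I must argue \emph{pointwise} that the cochains produced land in $\Ker\partial$ (via $\tilde t(\,\cdot\,)=\partial(\,\cdot\,)=0$). The projection identities for $\tilde s(\delta X)$, $\tilde t(\delta X)$ and their degree-$0$ analogues are exactly what convert those pointwise statements into cochain-level equalities, after which the chase proceeds verbatim as in the nonsingular case. I therefore expect the careful verification of these projection formulas — in particular the composability of the products $X(gh)\cdot X(h)^{-1}$ in $\V\rightrightarrows E$ — to be the main technical step, with well-definedness of $K$ the secondary one.
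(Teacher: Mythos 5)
Your proposal is correct and follows essentially the same route as the paper, which simply declares the argument ``entirely analogous'' to Proposition 4.11 of \cite{CMS} and only spells out the two points you also isolate: that $\tilde{s}(\delta X)=0=\tilde{t}(\delta X)$ forces $\delta X$ to land in $r\bigl(C^{2}(\calG,\frakk)\bigr)$, and that $K$ is well defined independently of the lift and of the representative of $[V]$. Your projection identities for $\tilde{s}(\delta X)$, $\tilde{t}(\delta X)$ and their degree-zero analogues are exactly the computations underlying that cited diagram chase, so the written-out exactness checks are a faithful (and complete) expansion of the proof the paper outsources.
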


\begin{proof}
The proof of this sequence is entirely analogous to that of the sequence in Proposition 4.11 of \cite{CMS}. We just remark that $\delta(X)$ above in fact has image in $C^{2}(\calG,\frakk)$: it suffices to observe that $\tilde{s}(\delta(X))=0=\tilde{t}(\delta(X))$. The fact that $K$ is well-defined was already shown in Lemma 4.9 in \cite{CMS}.
\end{proof}

As particular cases of the previous sequence we obtain some key sequences in the context of deformation cohomologies. 

\begin{examples}\label{particularcases}\ 
\begin{enumerate}

	\item It is straightforward to see that if $\V=T\calG$ then the sequence above reproduces the sequence in Proposition 4.11 of \cite{CMS} for the deformation complex of Lie groupoids.
	
	\item Let $(\Phi,\phi):\calH\rightarrow\calG$ be a morphism of Lie groupoids. By taking the VB-groupoid $\V=\phi^{*}T\calG$ we obtain a sequence for the deformation complex of the morphism $\Phi$:
\begin{equation}\label{lowdegreemorphism}
0\rightarrow H^{1}(\calH, \phi^{*}\mathfrak{i}_\calG)\stackrel{r}{\rightarrow} H^{1}_{def}(\Phi)\stackrel{\tilde{s}}{\rightarrow}\Gamma(\phi^{*}\nu_\calG)^{inv}\stackrel{K}{\rightarrow} H^{2}(\calH, \phi^{*}\mathfrak{i}_\calG)\rightarrow H^{2}_{def}(\Phi),
\end{equation}
where $\i_\calG$ is the isotropy bundle of $\calG$ and $\nu_\calG$ is the normal bundle to the orbits of $\calG$.

\end{enumerate}
\end{examples}


\subsubsection*{\textbf{Vanishing of cohomologies}}
Here we state the vanishing results for the deformation cohomology. The proofs are straightforward applications of the VB-groupoid interpretation of the deformation complex, the vanishing result of the VB-cohomology \cite{CD} and of the sequence \eqref{lowdegreemorphism}.

\begin{proposition}
Let $\Phi:\calH\rightarrow\calG$ be a morphism of Lie groupoids. If $\calH$ is proper, then $H^{0}_{def}(\Phi)\cong\Gamma(\phi^{*}i_\calG)^{inv}$, $H^{1}_{def}(\Phi)\cong\Gamma(\phi^{*}\nu_\calG)^{inv}$ and $H^{k}_{def}(\Phi)=0$ for every $k\geq2$, where $\nu_\calG$ is the normal bundle to the orbits of $\calG$.
\end{proposition}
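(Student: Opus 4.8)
The plan is to transport everything into the language of VB-groupoids and then invoke the vanishing of the VB-cohomology over a proper base. By the VB-groupoid description of the deformation complex from Section~\ref{alternativedescription}, there is a canonical isomorphism $H^{*}_{def}(\Phi)\cong H^{*}_{VB}(\calH,\phi^{*}T\calG)$, where $\phi^{*}T\calG$ is the pullback of the tangent VB-groupoid. For this VB-groupoid the core-anchor is the pullback $\phi^{*}\rho$ of the anchor $\rho$ of $A_\calG$, so that $\frakk=\mathrm{Ker}(\phi^{*}\rho)=\phi^{*}\mathfrak{i}_\calG$ and $\frakl=\mathrm{Coker}(\phi^{*}\rho)=\phi^{*}\nu_\calG$. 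The degree-zero statement $H^{0}_{def}(\Phi)\cong\Gamma(\phi^{*}\mathfrak{i}_\calG)^{inv}$ is then exactly Example~\ref{cohomologydegreezero}, and requires no properness.

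For the vanishing in degrees $k\geq 2$ I would argue directly on the VB-complex. Since $\calH$ is proper, the vanishing result for the VB-cohomology of a proper Lie groupoid \cite{CD} applies to $\phi^{*}T\calG$ and yields $H^{k}_{VB}(\calH,\phi^{*}T\calG)=0$ for all $k\geq 2$; through the isomorphism above this gives $H^{k}_{def}(\Phi)=0$ for $k\geq 2$, as claimed. I emphasize that one cannot instead reduce this to the long exact sequence of the regular setting (Section~\ref{Section:Regularsetting}): properness of $\calH$ does not make $\phi^{*}T\calG$ regular, since regularity is governed by the orbit structure of $\calG$ rather than by $\calH$.

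It remains to identify $H^{1}_{def}(\Phi)$, and here I would feed the vanishing into the low-degree exact sequence \eqref{lowdegreemorphism},
\[
0\rightarrow H^{1}(\calH,\phi^{*}\mathfrak{i}_\calG)\stackrel{r}{\rightarrow} H^{1}_{def}(\Phi)\stackrel{\tilde{s}}{\rightarrow}\Gamma(\phi^{*}\nu_\calG)^{inv}\stackrel{K}{\rightarrow} H^{2}(\calH,\phi^{*}\mathfrak{i}_\calG)\rightarrow H^{2}_{def}(\Phi).
\]
Properness of $\calH$ forces the differentiable cohomologies with values in the (possibly singular) representation $\phi^{*}\mathfrak{i}_\calG$ to vanish, i.e. $H^{1}(\calH,\phi^{*}\mathfrak{i}_\calG)=0=H^{2}(\calH,\phi^{*}\mathfrak{i}_\calG)$. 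The vanishing of the first term makes $\tilde{s}$ injective, while the vanishing of the fourth term makes the connecting map $K$ zero and hence $\tilde{s}$ surjective; therefore $\tilde{s}$ is an isomorphism $H^{1}_{def}(\Phi)\cong\Gamma(\phi^{*}\nu_\calG)^{inv}$.

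The main obstacle is not formal but lies in justifying the two vanishing inputs in the possibly singular case: both $\frakk=\phi^{*}\mathfrak{i}_\calG$ and $\frakl=\phi^{*}\nu_\calG$ need not be smooth vector bundles, so the classical vanishing of the differentiable cohomology of a proper groupoid with values in a representation, together with its VB-analog, must be applied to these singular coefficient systems. This is precisely the content I would borrow from \cite{CD}; once it is available for $\phi^{*}T\calG$ and for the coefficients $\frakk$, the remainder of the argument is the purely diagrammatic bookkeeping above. As a sanity check, specializing to $\Phi=Id_\calG$ (so that $\calH=\calG$ and $H^{*}_{def}(\Phi)=H^{*}_{def}(\calG)$) recovers the known computation of the deformation cohomology of a proper groupoid from \cite{CMS}.
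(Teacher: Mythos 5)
Your proposal is correct and follows essentially the same route the paper indicates: the VB-groupoid identification $C^{*}_{def}(\Phi)\cong C^{*}_{VB}(\calH,\Phi^{*}T\calG)$, the vanishing of VB-cohomology over a proper base from \cite{CD} for degrees $k\geq 2$, and the low-degree exact sequence \eqref{lowdegreemorphism} for the degree-one identification (with degree zero coming from Example \ref{cohomologydegreezero} without properness). Your explicit flagging of the singular-coefficient issue for $\phi^{*}\mathfrak{i}_\calG$ and $\phi^{*}\nu_\calG$ is a point the paper glosses over, but it does not change the argument.
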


\begin{remark}
Alternatively, a direct proof of the vanishing of cohomology for morphisms also can be made in an analogous way to that of the vanishing of the deformation cohomology of proper Lie groupoids in \cite{CMS}.
\end{remark}

\section{Regular setting}\label{Section:Regularsetting}

In this section we show that the sequence \eqref{singularsequence} is just part of a long exact sequence when we impose some regularity conditions on the groupoids. Later we illustrate the long exact sequences which can be deduced from it. We say that a VB-groupoid $\V\rightrightarrows E$ is \textbf{regular} if its core-anchor map $\partial:C\to E$ has constant rank.

\begin{theorem}
Let $\calG\rightrightarrows M$ be a Lie groupoid and $\V\rightrightarrows E$ be a regular VB-groupoid over $\calG$. Then there exists a map $K:H^{\bullet}(\calG, \frakl)\longrightarrow H^{\bullet+2}(\calG, \frakk)$ such that the cohomology $H^{*}_{VB}(\calG,\V)$ associated to $\V$ fits into the long exact sequence
\begin{equation}
\cdots\longrightarrow H^{k}(\calG, \mathfrak{k})\stackrel{r}{\longrightarrow} H^{k}(\calG,\V)\stackrel{\pi}{\longrightarrow} H^{k-1}(\calG, \mathfrak{l})\stackrel{K}{\longrightarrow} H^{k+1}(\calG, \mathfrak{k})\longrightarrow\cdots,
\end{equation}
where $r$ and $\pi$ are the maps induced by the right multiplication by zero elements of $\V$ and the $\tilde{s}$-projection of elements in $\V$.
\end{theorem}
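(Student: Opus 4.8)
The plan is to exploit the identification, due to Gracia-Saz and Mehta \cite{GrMe-grpds} and already used above, of $C^{\bullet}_{VB}(\calG,\V)$ with the complex of a $2$-term representation up to homotopy of $\calG$ on the complex of vector bundles $\partial\colon C\to E$. Upon choosing a unitary splitting $\sigma$ of the core sequence \eqref{coresequence}, one obtains a graded isomorphism $C^{k}_{VB}(\calG,\V)\cong C^{k}(\calG,C)\oplus C^{k-1}(\calG,E)$ under which the VB-differential becomes a total differential built from the quasi-actions $\Delta^{C},\Delta^{E}$, the core-anchor $\partial$ (acting by post-composition), and a curvature term. The regularity hypothesis, namely that $\partial$ has constant rank, is exactly what turns $\frakk=\mathrm{Ker}\,\partial$ and $\frakl=\mathrm{Coker}\,\partial$ into genuine vector bundles carrying the honest $\calG$-representations of Remark \ref{rmk:adjoint action}, so that $C^{\bullet}(\calG,\frakk)$ and $C^{\bullet}(\calG,\frakl)$ become bona fide cochain complexes in all degrees.

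First I would prove a reduction (minimal-model) lemma: in the regular case the representation up to homotopy $\partial\colon C\to E$ is quasi-isomorphic, as a representation up to homotopy, to the minimal one $0\colon\frakk\to\frakl$ carrying the said honest actions together with a degree-$2$ curvature operation $K\in C^{2}(\calG,\hom(\frakl,\frakk))$. Concretely, choosing bundle splittings $C\cong\frakk\oplus\overline{E}$ and $E\cong\overline{E}\oplus\frakl$ with $\overline{E}=\mathrm{Im}\,\partial$, which are available precisely because $\partial$ has constant rank, the summand $\overline{E}$ appears identically in $C$ and $E$ and is cancelled by a homological-perturbation contraction. Transferring the total differential along this contraction yields a reduced complex with underlying graded space $C^{\bullet}(\calG,\frakk)\oplus C^{\bullet-1}(\calG,\frakl)$ and differential
\begin{equation*}
D(\omega,\eta)=\big(\delta\,\omega+K(\eta),\ \delta\,\eta\big),
\end{equation*}
in which the coupling $\partial$ has disappeared and only the curvature $K$ survives off the diagonal. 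The homological perturbation lemma guarantees that this transfer is a quasi-isomorphism, so that $H^{\bullet}_{VB}(\calG,\V)$ is computed by $\big(C^{\bullet}(\calG,\frakk)\oplus C^{\bullet-1}(\calG,\frakl),\,D\big)$.

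Granting the lemma, the theorem is immediate: the reduced complex is tautologically the mapping cone of the degree-raising cochain map $K\colon C^{\bullet-1}(\calG,\frakl)\to C^{\bullet+1}(\calG,\frakk)$. Indeed $\{(\omega,0)\}\cong C^{\bullet}(\calG,\frakk)$ is a subcomplex with quotient $\{(0,\eta)\}\cong C^{\bullet-1}(\calG,\frakl)$, giving the short exact sequence of complexes
\begin{equation*}
0\longrightarrow C^{\bullet}(\calG,\frakk)\stackrel{r}{\longrightarrow} \big(C^{\bullet}(\calG,\frakk)\oplus C^{\bullet-1}(\calG,\frakl)\big)\stackrel{\pi}{\longrightarrow} C^{\bullet-1}(\calG,\frakl)\longrightarrow 0.
\end{equation*}
Its associated long exact sequence in cohomology is exactly the asserted one, with $r$ and $\pi$ the maps induced by right multiplication by zero elements and by $\tilde{s}$-projection (followed by the projection $E\to\frakl$), and with connecting homomorphism induced by $K$. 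This simultaneously identifies the degree-$0$ instance of $K$ with the map of the same name in \eqref{singularsequence} and in Lemma 4.9 of \cite{CMS}.

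I expect the reduction lemma to be the main obstacle. The delicate points are checking that the perturbed off-diagonal term descends to a genuine cochain operation $K\colon C^{\bullet}(\calG,\frakl)\to C^{\bullet+2}(\calG,\frakk)$, that is, that it is expressed through the honest $\calG$-actions on $\frakk$ and $\frakl$ rather than only through the quasi-actions; and that the whole construction is independent of the auxiliary splittings up to the relevant isomorphisms, so that the resulting long exact sequence is canonical. A convenient cross-check is to verify directly, as in the proof of \eqref{singularsequence}, that $r$ is an injective cochain map, that $\pi$ is surjective, and that $\pi\circ r=0$, observing that the constant-rank hypothesis is precisely what makes $\pi$ surject onto all of $C^{\bullet-1}(\calG,\frakl)$ and the connecting map globally defined, thereby upgrading the partial sequence \eqref{singularsequence} to the full long exact sequence.
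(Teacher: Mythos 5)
Your proposal is correct in outline, but it follows a genuinely different route from the paper. The paper adapts Proposition 8.1 of \cite{CMS}: it interpolates two short exact sequences of complexes, $0\to C^{\bullet}(\calG,\frakk)\stackrel{r}{\to} C^{\bullet}(\calG,\V)\stackrel{R}{\to}\mathcal{C}^{\bullet}\to 0$ and $0\to\mathcal{C}^{\bullet}\to\mathcal{A}^{\bullet}\stackrel{S}{\to}C^{\bullet}(\calG,\frakl)\to 0$, where $\mathcal{A}^{\bullet}=C^{\bullet}(\calG,E)\oplus C^{\bullet-1}(\calG,E)$ is the (acyclic) mapping cone of the identity; the acyclicity of $\mathcal{A}$ converts the second sequence into an isomorphism $H^{\bullet-1}(\calG,\frakl)\cong H^{\bullet}(\mathcal{C})$, and the long exact sequence of the first then yields the claim, with a splitting of the core sequence used only to verify $\mathrm{Ker}\,S\subset\mathrm{Im}\,R$. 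You instead pass to the representation-up-to-homotopy picture and contract the acyclic summand $\mathrm{Im}\,\partial$ of the coefficient complex by homological perturbation, landing directly on the mapping cone of the transferred curvature $K$. Your route is more conceptual --- it exhibits $H^{\bullet}_{VB}(\calG,\V)$ as the cone of $K$ and identifies the connecting map intrinsically --- but it outsources the real work to the reduction lemma (transfer of the RUTH structure, honesty of the induced actions on $\frakk$ and $\frakl$ when the differential of the minimal model vanishes, well-definedness of $K$ as a $2$-cocycle valued in $\hom(\frakl,\frakk)$, and independence of the auxiliary splittings), which you correctly flag but do not carry out; these points are standard but not free. The paper's argument buys a self-contained, elementary verification that works directly on the VB-complex and sits closer to the partial sequence \eqref{singularsequence} valid in the singular case, at the price of introducing the auxiliary complex $\mathcal{A}$ whose role is less transparent (the paper itself adds a remark interpreting $\mathcal{A}$ as the VB-complex of a pullback of $\mathrm{Pair}(E)$). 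Both arguments need the constant-rank hypothesis at the same spot: it is what makes $\frakk$ and $\frakl$ genuine bundles, hence makes your contraction (respectively the surjectivity of $S$ and the inclusion $\mathrm{Ker}\,S\subset\mathrm{Im}\,R$) go through in all degrees.
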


\begin{proof}
The proof of this theorem is an adaptation of the one of Proposition 8.1 in \cite{CMS} for the deformation cohomology of regular Lie groupoids. Note that the regularity condition on the VB-groupoid $\Gamma$ tells us that the associated cohomology induced by the complex $C\stackrel{\partial}{\longrightarrow}E$ associated to $\Gamma$ is given by the cohomology bundles $\mathfrak{k}=\mathrm{Ker(\partial)}$ and $\mathfrak{l}=\mathrm{Coker}(\partial)$ over $M$, where $C$ and $E$ are the core and side bundles of $\Gamma$. With this setting we construct the complexes $\mathcal{C}^{\bullet}$ and $\mathcal{A}^{\bullet}$ which fit into the following exact sequences

\begin{equation}\label{first}
0\longrightarrow C^{\bullet}(\calG,\mathfrak{k})\stackrel{r}{\longrightarrow} C^{\bullet}(\calG,\Gamma)\stackrel{R}{\longrightarrow}\mathcal{C}^{\bullet}\longrightarrow 0
\end{equation}

\begin{equation}\label{second}
0\longrightarrow \mathcal{C}^{\bullet}\longrightarrow \mathcal{A}^{\bullet}\stackrel{S}{\longrightarrow} C^{\bullet}(\calG,\mathfrak{l})\longrightarrow 0,
\end{equation}
where $\mathcal{A}^{\bullet}$ is acyclic. Namely, $\mathcal{A}^{\bullet}$ is defined by
$$\mathcal{A}^{k}=C^{k}(\calG, E)\oplus C^{k-1}(\calG, E)$$
with differential $\delta(\omega,\eta)=(\delta'\omega, \omega-\delta'\eta)$, where $\delta':C^{\bullet}(\calG,E)\longrightarrow C^{\bullet+1}(\calG,E)$ is expressed by
$$\delta'(\omega)(g_1,...,g_{k+1})=\sum^{k}_{i=1}(-1)^{k+1}\omega(g_1,...,g_ig_{i+1},...,g_{k+1})+ (-1)^{k+1}\omega(g_1,...,g_k).$$
It is straightforward to check that $\mathcal{A}^{\bullet}$ is acyclic: it is the mapping cone of the identity $Id_{C^{*}(\calG,E)}$.

The map $S:\mathcal{A}^{\bullet}\longrightarrow C^{\bullet}(\calG,\mathfrak{l})$ is given by
$$S(\omega,\eta)(g_1,...,g_k)=[\omega(g_1,...,g_k)]-g_1\cdot[\eta(g_2,...,g_k)],$$
where $[V]\in\mathfrak{l}$ denotes the class of $V\in E$ in $\mathfrak{l}$ and `$g_1\cdot$' the action of $\calG$ induced by $\Gamma$ on $\mathfrak{l}$. This map $S$ is compatible with the differentials. The complex $\mathcal{C}^{\bullet}$ is taken as being the kernel of $S$, and $R$ takes a cochain $c\in C^{k}(\calG,\Gamma)$ to the pair
$$R(c)=(\omega_c,\eta_c)\in C^{k}(\calG, E)\oplus C^{k-1}(\calG, E),$$
where $\omega_c$ and $\eta_c$ are the $\tilde{t}$ and $\tilde{s}$-projection of $c$, respectively. Observe that the definition of the quasi-action of $\calG$ on $E$, induced by $\Gamma$, implies that $R$ in fact takes values in $\mathcal{C}^{\bullet}$ and hence $\mathrm{Im} R\subset\mathrm{Ker} S$. Again, $R$ is compatible with the differentials. By choosing a splitting $\sigma$ of the core-sequence
$$t^{*}C\stackrel{r}{\longrightarrow}\Gamma\stackrel{\tilde{s}}{\longrightarrow} s^{*}E$$
it is possible to show, analogous to the proof of the Proposition 8.1 in \cite{CMS}, that $\mathrm{Ker} S\subset\mathrm{Im} R$. It is also clear that $\mathrm{Im}\: r=\mathrm{Ker} R$. The surjectivity of $S$ follows from the surjectivity of the projection $E\stackrel{p}{\longrightarrow}\mathfrak{l}$; in particular, if $[\omega]\in C^{k}(\calG,\mathfrak{l})$ with $\omega\in C^{k}(\calG,E)$ then $S(\omega,0)=[\omega]$.

In that way, the long exact sequence induced by the sequence \eqref{first} is exactly the long sequence to be proved up to an isomorphism
$$\theta:H^{\bullet-1}(\calG,\mathfrak{l})\longrightarrow H^{\bullet}(\mathcal{C})$$
induced by the sequence \eqref{second}. Hence to complete the proof it suffices to show that $R=\theta\circ\pi$ in cohomology. Indeed, take $\gamma$ a cocycle in $C^{k}_{VB}(\calG,\Gamma)$, thus $\pi(\gamma)=[\tilde{s}(\gamma)]$. Therefore, $\theta\circ(\bar{\gamma})=\theta(\overline{[\tilde{s}(\gamma)]})=\overline{\delta(\tilde{s}(\gamma),0)}$; that is, $(\delta'(\tilde{s}(\gamma)),\tilde{s}(\gamma))$ represents de cohomology class $\theta(\overline{[\tilde{s}(\gamma)]})\in H^{k}(\mathcal{C}^{\bullet})$. On the other hand, since $R(\gamma)=(\tilde{t}(\gamma),\tilde{s}(\gamma))$ is a cocycle, then $(0,0)=(\delta'(\tilde{t}(\gamma)),\tilde{t}(\gamma)-\delta'(\tilde{s}(\gamma)))$, which implies $\delta'(\tilde{s}(\gamma))=\tilde{t}(\gamma)$. Hence, $(\tilde{t}(\gamma),\tilde{s}(\gamma))=R(\gamma)$ represents the cohomology class $\theta(\overline{[\tilde{s}(\gamma)]})$, which completes the proof.
\end{proof}

\begin{remark} The a priori arbitrary complex $\mathcal{A}$ defined in the previous proof is a key element of the argument. Nevertheless one can give a more geometric meaning of it in terms of VB-groupoids as follows. Consider the anchor morphism
\[\xymatrix{
  \V \ar[d]  \ar[rr]^{an_{\V}} &  & 
 Pair(E)
\ar[d]\\
 \G \ar[rr]^{an_{\G}} & &  Pair(M)}\]
 between VB-groupoids. Then, the complex $\mathcal{A}$ is just the complex $C^{\bullet}(\G, an^{*}_{\G}(Pair(E)))$ associated to the pullback VB-groupoid $an^{*}_{\G}(Pair(E))$ over $\G$. And thus the acyclicity of $\mathcal{A}$ follows directly from the vanishing of the VB-cohomology of the proper VB-groupoid $\mathrm{Pair}(E)$ and from the acyclicity of its associated 2-term complex $\partial=Id:E\to E$. 
\end{remark}

We can thus use the previous theorem and obtain a long exact sequence for the deformation cohomology.

\begin{example}\label{Regular setting, morphisms}
Let $\Phi:\calH\rightarrow\calG$ be a morphism of Lie groupoids and assume that $\calG$ is regular. Then there exists a map $K:H^{*}(\calH,\phi^{*}\nu_\calG)\rightarrow H^{*+2}(\calH,\phi^{*}\mathfrak{i}_\calG)$ such that $H^{*}_{def}(\Phi)$ fits into the long exact sequence:
\begin{equation}\label{RegularVBmorphisms}
\cdots\longrightarrow H^{k}(\calH, \phi^{*}\mathfrak{i}_\calG)\stackrel{r}{\longrightarrow} H^{k}_{def}(\Phi)\stackrel{\pi}{\longrightarrow} H^{k-1}(\calH, \phi^{*}\nu_{\calG})\stackrel{K}{\longrightarrow} H^{k+1}(\calH, \phi^{*}\mathfrak{i}_\calG)\longrightarrow\cdots,
\end{equation}
where $\nu_\calG$ is the normal bundle to the orbits of $\calG$.
\end{example}

\section{Triviality of deformations of morphisms}\label{Section:Triviality}

We discuss here characterizations of the several types of triviality of deformations of morphisms in terms of the deformation cohomology. The main results in this section can be regarded as a Moser type theorem in the context of morphisms of Lie groupoids.

\begin{proposition}\label{cociclomorph.}
Let $\Phi_{\e}$ be a deformation of $(\Phi_{0},\phi_0):(\calH\tto N)\longrightarrow(\calG\tto M)$. Then, for each $\lambda$ we obtain a 1-cocycle
\[X_\lambda(h) = \left.\frac{d}{d\e}\right|_{\e = \lambda}\Phi_\e(h)\]
in the deformation complex of $\Phi_\l$. Moreover, the corresponding cohomology class at time $\e=0$ in $H^{1}_{def}(\Phi_0)$ depends only on the equivalence class of the deformation.
\end{proposition}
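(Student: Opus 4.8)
The plan is to establish the two assertions separately. For the cocycle claim I would first check that $X_\lambda\in C^{1}_{def}(\Phi_\lambda)$: the velocity $X_\lambda(h)=\frac{d}{d\e}\big|_{\e=\lambda}\Phi_\e(h)$ of the curve $\e\mapsto\Phi_\e(h)$ lies in $T_{\Phi_\lambda(h)}\calG$, and $ds_\calG(X_\lambda(h))=\frac{d}{d\e}\big|_{\e=\lambda}\phi_\e(s(h))$ depends on $h$ only through $s(h)$, which is exactly $s_\calG$-projectability. To see that $\delta_{\Phi_\lambda}X_\lambda=0$ I would differentiate the morphism identity: since every $\Phi_\e$ is a morphism, $\bar m_\calG(\Phi_\e(h_1h_2),\Phi_\e(h_2))=\Phi_\e(h_1h_2)\Phi_\e(h_2)^{-1}=\Phi_\e(h_1)$ for all $\e$, and applying $\frac{d}{d\e}\big|_{\e=\lambda}$ together with the chain rule gives $d\bar m_\calG(X_\lambda(h_1h_2),X_\lambda(h_2))=X_\lambda(h_1)$. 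By the $k=1$ case of the differential this is precisely $(\delta_{\Phi_\lambda}X_\lambda)(h_1,h_2)=0$; it is the exact analogue of the argument that $\xi_0$ is a cocycle for $C^{*}_{def}(\calG)$.

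For the invariance of $[X_0]$, suppose $\Phi_\e$ and $\Phi'_\e$ are equivalent, so that $\Phi'_\e(h)=\tau_\e(t(h))\Phi_\e(h)\tau_\e(s(h))^{-1}$ for a smooth family of gauge maps with $\tau_0=u_\calG\circ\phi_0$. In particular $\Phi'_0=\Phi_0$, so both $X_0$ and $X'_0$ are cochains in the same complex $C^{1}_{def}(\Phi_0)$. I would set $\alpha(x):=\frac{d}{d\e}\big|_{\e=0}\tau_\e(x)\in T_{u(\phi_0(x))}\calG$ and isolate its Lie-algebroid part $\alpha'(x):=\alpha(x)-du(ds_\calG\,\alpha(x))\in A_{\calG,\phi_0(x)}$, so that $\alpha'\in\Gamma(\phi_0^{*}A_\calG)=C^{0}_{def}(\Phi_0)$. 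The target is the identity $X'_0-X_0=\delta_{\Phi_0}\alpha'=\overrightarrow{\alpha'}+\overleftarrow{\alpha'}$, which at once yields $[X'_0]=[X_0]$ in $H^{1}_{def}(\Phi_0)$.

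The core step is to differentiate the conjugation at $\e=0$. Grouping the product as $\Phi'_\e(h)=m\big(m(\tau_\e(t(h)),\Phi_\e(h)),i(\tau_\e(s(h)))\big)$ and applying the chain rule for the tangent map $dm$ of the multiplication of $\calG$, one gets $X'_0(h)=dm\big(dm(\alpha(t(h)),X_0(h)),di(\alpha(s(h)))\big)$, every pairing being composable because source and target match identically in $\e$. I would then simplify using four elementary identities for $dm$ at the relevant units: $dm(a,0_g)=r_g(a)$ for $a\in A_{\calG,t(g)}$; $dm(0_g,w)=l_g(w)$ for $w$ tangent to the $t$-fibre over $s(g)$; and the triviality of unit multiplication $dm(du(dt\,w),w)=w$ and $dm(w,du(ds\,w))=w$. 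Writing $\alpha=\alpha'+du(V)$ with $V=ds_\calG\,\alpha$ and $di(\alpha)=di(\alpha')+du(V)$, the inner product collapses to $\overrightarrow{\alpha'}(h)+X_0(h)$, and the outer product reproduces this term while adjoining $\overleftarrow{\alpha'}(h)$, giving exactly $X_0(h)+(\delta_{\Phi_0}\alpha')(h)$.

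I expect the genuine obstacle to be this last differentiation rather than the cocycle identity. The three factors $\tau_\e(t(h))$, $\Phi_\e(h)$ and $\tau_\e(s(h))^{-1}$ are coupled through the composability constraints, so one cannot vary a single factor while freezing the others without destroying well-definedness of the product; the variations must be carried out through $dm$ on genuinely composable tangent pairs. The subtle point is that $\alpha(x)$ fails to be tangent to an $s$-fibre exactly when the base map $\phi_\e$ is allowed to move, so the naive identification of $\alpha$ with an algebroid section is invalid. Extracting the honest algebroid part $\alpha'$ and verifying that the leftover unit-direction contributions $du(V)$ are absorbed by the triviality of unit multiplication is what makes the argument go through, and is precisely what guarantees that $X'_0-X_0=\delta_{\Phi_0}\alpha'$ continues to hold for deformations that do not preserve the base map.
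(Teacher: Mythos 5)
Your proposal is correct and follows essentially the same route as the paper: the cocycle identity comes from differentiating the compatibility of $\Phi_\e$ with the division map, and the gauge-invariance of the class comes from differentiating the conjugation formula through the tangent multiplication on composable pairs, with your primitive $\alpha'$ being exactly the paper's $\bar\alpha_0$ (the $A_\calG$-component of $\left.\tfrac{d}{d\e}\right|_{\e=0}\tau_\e$ obtained by subtracting the unit-direction part coming from $\left.\tfrac{d}{d\e}\right|_{\e=0}\phi_\e$). The only cosmetic difference is that the paper evaluates $dm$ via Mackenzie's local-bisection formula, producing an intermediate identity valid for all $\e$ before specializing to $\e=0$, whereas you split the composable tangent pairs directly at $\e=0$; both computations land on $X_0'-X_0=\delta_{\Phi_0}\alpha'$.
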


\begin{proof}

The first part follows from taking derivative at $\e=\l$ of the morphism condition $\Phi_\e(\bar{m}_{\calH}(gh,h))=\bar{m}_{\calG}(\Phi_\e(gh),\Phi_\e(h))$ satisfied by every $\Phi_\e$. In fact, we get
\begin{equation*}
X_{\l}(g)-d\bar{m}_{\calG}(X_{\l}(m_{\calH}(g,h)),X_{\l}(h))=0,
\end{equation*} which says that $X_\l$ is a 1-cocycle in $C^{\bullet}_{def}(\Phi_\l)$.

Now suppose that $\Psi_\e$ is a deformation of $\Phi_0$ which is equivalent to $\Phi_\e$. Then $\Psi_\e=\tau_\e\cdot\Phi_\e$ for a smooth family $\tau_\e$ of gauge maps over $\phi_0$ with $\tau_0=1_\calG\circ\phi_0$. Denote by $X'_0$ the associated 1-cocycle at time zero. Heuristically, the exactness of $X'_0-X_0$ comes from taking derivatives at $\e=0$ of the equivalence condition $\Psi_\e=\tau_\e\cdot\Phi_\e$. However, notice that since $\tau_\e$ acts merely on the elements of $Im(\Phi_\e)$ then we can not use directly the chain rule to differentiate the expression with respect to $\e$. Therefore, consider the maps $\tau:M\times I\to\calG$, $\tau(x,\e):=\tau_\e(x)$, and $\Phi:\calH\times I\to\calG$, $\Phi(h,\e)=\Phi_\e(h)$, which codify the families of gauge maps and morphisms $\Phi_\e$. Thus the map $\tau\cdot\Phi(\cdot,\e)=(\tau_\e\cdot\Phi_\e(\cdot),\e)$ contains all the information of the expression we want to differentiate. In order to differentiate this map, we will write it now in an equivalent way. Indeed, let $\bar{\calH}\tto\bar{N}$ be the Lie groupoid which is the cartesian product of $\calH$ with $I$, $\calH\times I\tto N\times I$, and $\theta_1, \theta_2:\bar{\calH}\to\calG$ denote the maps $$\theta_1(p):=m_\G(\tau(t(p)),\Phi(p))\ \ \text{ and }\ \  \theta_2(p):=\tau(s(p))^{-1},$$ for $p\in\bar{\calH}$. Thus, we get that
$$\tau\cdot\Phi=m_\G\circ(\theta_1\times\theta_2).$$ Therefore, we get the derivative with respect to $\e$ by applying the differential of this map to the vector field $\partial/\partial\e\in\mathfrak{X}(\bar{\calH})$. That is,
$$d(\tau\cdot\Phi)\partial/\partial\e=dm\left(dm(d\tau(dt(\partial/\partial\e)), d\Phi(dF(\partial/\partial\e))),di(d\tau(ds(\partial/\partial\e)))\right),$$
which, after a straightforward computation (see Theorem 1.4.14 in \cite{M}), can be written as
\begin{equation}\label{eq:infinitesimally equiv.deform}
\begin{split}
\frac{d}{d\e}\Psi_\e(p)=&\; l_{\sigma_1\star\sigma_2}\left(di\left(\frac{d}{d\e}\tau_\e(s(p))\right)\right)+ r_{\sigma_3}l_{\sigma_1}\left(\frac{d}{d\e}\Phi_\e(p)\right)- l_{\sigma_1\star\sigma_2}r_{\sigma_3}(ds(\partial_\e(\Phi_\e(p))))+\\
&+r_{\sigma_2\star\sigma_3}\left(\frac{d}{d\e}\tau_\e(t(p))\right)-l_{\sigma_1}r_{\sigma_2\star\sigma_3}(dt(\partial_\e(\Phi_\e(p)))),
\end{split}
\end{equation}
where $p\in\bar{\calH}$ and $\sigma_1$, $\sigma_2$ and $\sigma_3$ are local bisections of $\G$ through $\tau(t(p))$, $\Phi(p)$ and $\tau(s(p))^{-1}$.

Therefore, for $p=(h,0)\in\calH\times\{0\}$ we can choose $\sigma_1=u_\G=\sigma_3$, and the previous equation becomes
\begin{equation}
\begin{split}
X'_0(h)-X_0(h)&=r_{\Phi_0(h)}\left(\left.\frac{d}{d\e}\right|_{\e=0}\tau_\e(t(h))-dt(\partial_\e\Phi_\e(h))\right)+\\
&\ \ \ \ +l_{\Phi_0(h)}\left(di\left(\left.\frac{d}{d\e}\right|_{\e=0}\tau_\e(s(h)\right)-ds(\partial_\e\Phi_\e(h)))\right)\\
&=\delta_{\Phi_0}(\bar{\alpha}_0)(h),
\end{split}
\end{equation}
where $\bar{\alpha}_0\in\Gamma(\phi_0^{*}A_\G)$ is given by
$$\bar{\alpha}_0(x)=\left.\frac{d}{d\e}\right|_{\e=0}\tau_\e(x)-\left.\frac{d}{d\e}\right|_{\e=0}\phi_\e(x),\ \ \text{for } x\in N.$$
That is, $X'_0$ and $X_0$ are in the same cohomology class, as we wanted to prove.
\end{proof}

The 1-cocycle $X_0$ is also called the \textbf{infinitesimal deformation} associated to the deformation $\Phi_\e$, and $X_\e$ will be called the \textbf{deformation cocycle}.

\begin{remark}\label{exactcocyclesmorph.}
Notice that if $(\Psi_\e,\psi_\e)$ is a trivial deformation of $ \Phi_0$, then equation \eqref{eq:infinitesimally equiv.deform} shows that every 1-cocycle $X'_\e$ is exact. Alternatively, by using the rule chain, this can be checked by a direct computation as below.

\begin{align*}
\left.\frac{d}{d\e}\right|_{\e=\l}(\tau_\e\cdot\Phi_0)(h)&=\left.\frac{d}{d\e}\right|_{\e=\l}m\left(\tau_{\e}(t(h))\!\cdot\!\Phi_0(h),\tau_{\l}(s(h))^{-1}\right)\\
&+\left.\frac{d}{d\e}\right|_{\e=\l}m\left(\tau_{\l}(t(h))\!\cdot\!\Phi_0(h),\tau_{\e}(s(h))^{-1}\right)\\
&=\left.\frac{d}{d\e}\right|_{\e=\l}R_{\Phi_0(h)\tau_{\l}(s(h))^{-1}}(\tau_{\e}(t(h)))\\
&+\left.\frac{d}{d\e}\right|_{\e=\l}L_{\tau_\l(t(h))\Phi_0(h)}(\tau_{\e}(s(h))^{-1})\\
&=r_{\tau_{\l}\cdot\Phi_0(h)}\left(\left.\frac{d}{d\e}\right|_{\e=\l}(\tau_\e(t(h))\cdot\tau_\l(t(h))^{-1})\right)\\
&+ l_{\tau_{\l}\cdot\Phi_0(h)}\left[di\left(\left.\frac{d}{d\e}\right|_{\e=\l}(\tau_\e(s(h))\cdot\tau_\l(s(h))^{-1})\right)\right].
\end{align*}

Define the family of sections $\bar{\alpha}_\e\in\Gamma(\psi_\e^{*}A_{\calG})$ by
\begin{equation}\label{sections-gauge}
\bar{\alpha}_{\l}(x):=\left.\frac{d}{d\e}\right|_{\e=\l}\left(\tau_{\e}(x)\tau_{\l}(x)^{-1}\right),\ \ \text{for each } \l.
\end{equation}

Therefore we get the exactness of the cocycles,
$$\frac{d}{d\e}\Psi_\e(h)=\delta_{\tau_{\e}\cdot\Phi_0}(\bar{\alpha}_{\e})(h).$$
\end{remark}

\begin{remark}\label{rmk:sections-bisections}
Since two deformations which are strongly equivalent are, in particular, equivalent then they determine the same cohomology class in $H^{1}_{def}(\Phi_0)$. Moreover, in a totally analogous way, one can prove that they determine the same cohomology class in the variation cohomology $\tilde{H}^{1}_{def}(\Phi_0)$. In fact, it suffices considering a smooth family $\sigma_\e$ of bisections of $\G$ instead of the family $\tau_\e$ of gauge maps of the previous proof and make $\tau_\e=\sigma_\e\circ\phi_\e$. Furthermore, for a smooth family $\sigma_\e$ of bisections one can define the family $\alpha_\e\in\Gamma(A_\G)$ of sections of $A_\G$
$$\alpha_\l(\varphi_\l(x))=\left.\frac{d}{d\e}\right|_{\e=\l}\left(\sigma_{\e}(x)\sigma_{\l}(x)^{-1}\right),$$
for each $\l$, where $\varphi_\l=t\circ\sigma_\l$, and obtain, for a strongly trivial deformation, that
$$\left.\frac{d}{d\e}\right|_{\e=\l}(I_{\sigma_\e}\circ\Phi_0)(h)=\delta_{I_{\sigma_{\l}}\circ\Phi_0}((\varphi_\l\circ\phi_{0})^{*}\alpha_{\l})(h).$$
We remark this result in the following proposition.
\end{remark}

\begin{proposition}\label{prop:exactcocyles variatedcomplex} Let $\Phi_\e$ be a deformation of $\Phi_0:\H\to\G$. Then, the corresponding cohomology class $[X_0]$ at time $\e=0$ in $\tilde{H}^{1}_{def}(\Phi_0)$ depends only on the strong equivalence class of the deformation. Moreover, a strongly trivial deformation has exact cocycles $X_\e$ in the variation complex $\tilde{C}_{def}(\Phi_e)$.
\end{proposition}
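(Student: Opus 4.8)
The plan is to follow the same differentiation scheme used in Proposition~\ref{cociclomorph.} and Remark~\ref{exactcocyclesmorph.}, the only genuinely new input being the verification that the primitives produced from \emph{bisections} are honest pullback sections, hence cochains of the variation complex $\tilde{C}^{0}_{def}(\Phi)=\phi^{*}(\Gamma(A_\G))$ rather than arbitrary sections of $\Gamma(\phi^{*}A_\G)$. Both assertions then reduce to the two computations already recorded in Remarks~\ref{exactcocyclesmorph.} and \ref{rmk:sections-bisections}; the content of the proposition is precisely that these primitives land in the smaller space.

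For the first assertion I would begin with two strongly equivalent deformations $\Phi_\e$ and $\Psi_\e$ of $\Phi_0$, so that $\Psi_\e=I_{\sigma_\e}\circ\Phi_\e$ for a smooth family of bisections with $\sigma_0=u$. Composing with the base maps gives the gauge maps $\tau_\e=\sigma_\e\circ\phi_\e$, which exhibit the strong equivalence as an ordinary equivalence, and Proposition~\ref{cociclomorph.} then yields $X'_0-X_0=\delta_{\Phi_0}(\bar\alpha_0)$ with $\bar\alpha_0(x)=\frac{d}{d\e}|_{\e=0}\tau_\e(x)-\frac{d}{d\e}|_{\e=0}\phi_\e(x)$. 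The decisive step is to differentiate $\tau_\e(x)=\sigma_\e(\phi_\e(x))$ by the chain rule: since $\sigma_0=u$ is the unit bisection, the contribution of the moving base is exactly $du\big(\frac{d}{d\e}|_{\e=0}\phi_\e(x)\big)$, which cancels the subtracted term $\frac{d}{d\e}|_{\e=0}\phi_\e(x)$ under the identification of base directions with unit directions in $T\G$. What survives is $\bar\alpha_0=\phi_0^{*}a_0$, the pullback of the honest section $a_0=\frac{d}{d\e}|_{\e=0}\sigma_\e\in\Gamma(A_\G)$ (note $ds\circ\frac{d}{d\e}|_{\e=0}\sigma_\e=0$). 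Hence $\bar\alpha_0\in\tilde{C}^{0}_{def}(\Phi_0)$ and $[X_0]=[X'_0]$ already in $\tilde{H}^{1}_{def}(\Phi_0)$.

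For the second assertion I would invoke the final identity of Remark~\ref{rmk:sections-bisections} directly: for a strongly trivial deformation $\Phi_\e=I_{\sigma_\e}\circ\Phi_0$ one has, for every $\l$, $X_\l=\delta_{\Phi_\l}\big((\varphi_\l\circ\phi_0)^{*}\alpha_\l\big)$ with $\varphi_\l=t\circ\sigma_\l$ and $\alpha_\l\in\Gamma(A_\G)$. Since the base map of $\Phi_\l=I_{\sigma_\l}\circ\Phi_0$ is precisely $\phi_\l=\varphi_\l\circ\phi_0$, the primitive $(\varphi_\l\circ\phi_0)^{*}\alpha_\l=\phi_\l^{*}\alpha_\l$ is a pullback section, i.e.\ an element of $\tilde{C}^{0}_{def}(\Phi_\l)$, so each $X_\l$ is exact in the variation complex.

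The main obstacle is conceptual rather than computational: in the general gauge-map setting the primitive lives only in $\Gamma(\phi^{*}A_\G)$, and the improvement to a pullback section is exactly what separates $\tilde{H}^{\bullet}_{def}$ from $H^{\bullet}_{def}$. Securing it relies on the global nature of bisections — they are sections of the source map over all of $M$, so their $\e$-derivatives are genuine sections of $A_\G$ — and on being careful with the unit identification of $TM$-directions inside $T\G$ when matching the two terms of $\bar\alpha_0$, as well as with the fact that the correct base map for the pullback at time $\l$ is $\varphi_\l\circ\phi_0$ rather than $\phi_0$.
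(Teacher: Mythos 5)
Your proposal is correct and follows essentially the same route as the paper, whose proof of this proposition is exactly the content of Remark~\ref{rmk:sections-bisections}: rerun the computations of Proposition~\ref{cociclomorph.} and Remark~\ref{exactcocyclesmorph.} with $\tau_\e=\sigma_\e\circ\phi_\e$ and observe that the resulting primitives are pullback sections $\phi_\l^{*}\alpha_\l$, hence lie in $\tilde{C}^{0}_{def}$. Your explicit chain-rule cancellation showing $\bar\alpha_0=\phi_0^{*}a_0$ is a detail the paper leaves implicit, but it is the same argument.
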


In order to prove the triviality of a deformation $\Phi_\e$ of $\Phi_0$ by means of a Moser type argument, we will need not only that the cohomology class of the deformation cocycle vanishes, but that it vanishes in a smooth manner as we now explain.

Recall that a deformation $\Phi_\e$ of $\Phi_0$ is a smooth map
\[\xymatrix{\calH\times I \ar[r]^\Phi \ar@<0.25pc>[d] \ar@<-0.25pc>[d]& \calG\times I \ar@<0.25pc>[d] \ar@<-0.25pc>[d]\\
N\times I \ar[r]_{\phi} & M\times I}\]

\begin{definition}\label{def:smoothfamilycochains}
A family of cocycles $X_\e\in C^{1}_{def}(\Phi_\e)$ is \textbf{smoothly exact} if there exists a smooth section $\bar{\alpha}\in\Gamma_{N\times I}(\phi^{*}A_{\G\times I})$ such that for each $\e\in I$, $\bar{\alpha}_\e=\bar{\alpha}(\cdot,\e)\in\Gamma_N(\phi^{*}_\e A_\G)=C^{0}_{def}(\Phi_\e)$ and
$$\delta_{\Phi_\e}(\bar{\alpha}_\e)=X_\e.$$
A family $\bar{\alpha}_\e\in\Gamma_N(\phi^{*}_\e A_\G)$ will be \textbf{smooth} if it can be encoded in a smooth section $\bar{\alpha}\in\Gamma_{N\times I}(\phi^{*}A_{\G\times I})$ as above.

\end{definition}

Equivalently, defining the morphism $(\tilde{\Phi},\tilde{\phi}):\calH\times I\to\calG$, which is the projection to $\calG$ of $\Phi$, the family $\bar{\alpha}_{\e}$ is smooth if the section $\tilde{\alpha}\in\Gamma(\tilde{\phi}^{*}A_\calG)$ given by $\tilde{\alpha}(x,\e):=\bar{\alpha}_{\e}(x)$ is smooth. 

\begin{theorem}\label{gauge-triviality}
Let $(\Phi_\e,\phi_\e)$ be a deformation of the morphism $(\Phi_0,\phi_0):(\calH\rightrightarrows N)\longrightarrow (\calG\rightrightarrows M)$. 
Then, $\Phi_\e$ is trivial if and only if the family $X_\e$ of cocycles is smoothly exact in $C^{\bullet}_{def}(\Phi_0)$.
\end{theorem}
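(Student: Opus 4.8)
The plan is to read the theorem as a Moser-type statement and to treat the two implications separately: the forward one is essentially smoothness bookkeeping, while the backward one is the genuine Moser step. For the forward implication, suppose $\Phi_\e$ is trivial, so there is a smooth family of gauge maps $\tau_\e$ over $\phi_0$ with $\tau_0=u_\G\circ\phi_0$ and $\Phi_\e=\tau_\e\cdot\Phi_0$. Remark \ref{exactcocyclesmorph.} already computes the deformation cocycle of such a family as $X_\e=\delta_{\Phi_\e}(\bar\alpha_\e)$, where $\bar\alpha_\e\in\Gamma(\phi_\e^{*}A_\G)$ is the section given by \eqref{sections-gauge}. The only point left to check is that, since $\tau_\e$ is smooth jointly in $(x,\e)$, formula \eqref{sections-gauge} assembles the $\bar\alpha_\e$ into a single smooth section $\tilde\alpha\in\Gamma(\tilde\phi^{*}A_{\G})$ over $N\times I$; by Definition \ref{def:smoothfamilycochains} this is precisely smooth exactness of $X_\e$.

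For the backward implication I would first integrate the smoothly exact primitive $\bar\alpha_\e$ into a trivialising family of gauge maps. For each $x\in N$, define $\tau_\e(x)$ as the solution of the $\e$-dependent ODE $\tfrac{d}{d\e}\tau_\e(x)=r_{\tau_\e(x)}(\bar\alpha_\e(x))$ with $\tau_0(x)=u_\G(\phi_0(x))$. Since $r_{\tau_\e(x)}(\bar\alpha_\e(x))$ equals the value at $\tau_\e(x)$ of the right-invariant vector field generated by any local section of $A_\G$ through $\bar\alpha_\e(x)$, this is the flow of a (time-dependent) right-invariant field and its solution is independent of the chosen extension. Right-invariance makes the flow $s$-vertical, so $s\circ\tau_\e=\phi_0$ and $\tau_\e$ is a genuine family of gauge maps over $\phi_0$. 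Moreover, projecting the identity $\delta_{\Phi_\e}(\bar\alpha_\e)=X_\e$ by $ds$ gives $\rho(\bar\alpha_\e)=\tfrac{d}{d\e}\phi_\e$, and since $\tfrac{d}{d\e}t(\tau_\e(x))=\rho(\bar\alpha_\e(x))$ with the common initial value $\phi_0(x)$, one gets $t\circ\tau_\e=\phi_\e$. A direct computation using $r_{\tau_\l(x)^{-1}}\circ r_{\tau_\l(x)}=\mathrm{id}$ then shows that the section \eqref{sections-gauge} associated to this $\tau_\e$ is again $\bar\alpha_\e$.

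It remains to prove $\Phi_\e=\tau_\e\cdot\Phi_0=:\hat\Phi_\e$, and here I would argue by uniqueness of integral curves rather than by differentiating the conjugation directly. Fix $h\in\calH$. By Remark \ref{exactcocyclesmorph.} applied to $\hat\Phi_\e$, whose associated section we have just identified with $\bar\alpha_\e$, one has $\tfrac{d}{d\e}\hat\Phi_\e(h)=\delta_{\hat\Phi_\e}(\bar\alpha_\e)(h)=r_{\hat\Phi_\e(h)}(\bar\alpha_\e(t(h)))+l_{\hat\Phi_\e(h)}(di(\bar\alpha_\e(s(h))))$, while by hypothesis $\tfrac{d}{d\e}\Phi_\e(h)=\delta_{\Phi_\e}(\bar\alpha_\e)(h)$ has the identical expression with $\Phi_\e(h)$ in place of $\hat\Phi_\e(h)$. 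Choosing sections $a_\e,b_\e\in\Gamma(A_\G)$ extending the two fixed time-dependent vectors $\bar\alpha_\e(t(h))$ and $\bar\alpha_\e(s(h))$ produces a genuine time-dependent vector field $F_\e=\vec{a_\e}+\overleftarrow{b_\e}$ on $\G$; because both $\Phi_\e(h)$ and $\hat\Phi_\e(h)$ cover $\phi_\e$, both curves $\e\mapsto\Phi_\e(h)$ and $\e\mapsto\hat\Phi_\e(h)$ are integral curves of $F_\e$ with the same initial value $\Phi_0(h)$, hence coincide for all $\e$. As $h$ is arbitrary, $\Phi_\e=\tau_\e\cdot\Phi_0$ and the deformation is trivial.

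I expect the main obstacle to be making the construction of $\tau_\e$ and of the field $F_\e$ rigorous, since no compactness or injectivity of $\phi_\e$ is assumed here (in contrast with Proposition \ref{Prop:familybisectionsfromgauge}): the primitive $\bar\alpha_\e$ lives only along the possibly singular image $\phi_\e(N)$, so the naive right-translated field is not globally defined. The resolution is exactly the extension-independence of the right- and left-invariant flows used above, applied both to integrate $\bar\alpha_\e$ into $\tau_\e$ and to realise $F_\e$ as a bona fide vector field, together with shrinking $I$ so that the relevant flows remain defined on the whole parameter interval (which is permitted, as triviality is only required on some interval containing $0$).
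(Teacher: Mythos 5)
Your proposal is correct in substance and follows the same Moser-type strategy as the paper: the forward implication via Remark \ref{exactcocyclesmorph.}, and the converse by integrating the primitive $\bar\alpha_\e$ into a family of gauge maps and then identifying $\Phi_\e$ with $\tau_\e\cdot\Phi_0$ through uniqueness of integral curves. The technical packaging differs. The paper realises your pointwise ODE $\tfrac{d}{d\e}\tau_\e(x)=r_{\tau_\e(x)}(\bar\alpha_\e(x))$ as the flow of a single vector field $V$ on the fibered product $\G\,_t\!\times_{\tilde\phi}(N\times I)$, which makes the well-posedness issue (that $r_{\tau_\e(x)}$ only applies to vectors over $t(\tau_\e(x))$) automatic rather than something to be argued a posteriori; and for the final step it builds one vector field $Z$ on a fibered product over the orbit $\calO$ whose integral curves are simultaneously all the curves $\e\mapsto\Phi_\e(h)$ and $\e\mapsto\tau_\e\cdot\Phi_0(h)$, whereas you fix $h$ and extend the two vectors $\bar\alpha_\e(t(h))$, $\bar\alpha_\e(s(h))$ to genuine time-dependent sections $a_\e,b_\e\in\Gamma(A_\G)$. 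Your per-$h$ argument works, because both curves have $s$- and $t$-projections equal to $\phi_\e(s(h))$ and $\phi_\e(t(h))$, so the value of $\vec{a_\e}+\overleftarrow{b_\e}$ along them is independent of the choice of extension; it is more elementary but requires this extension to be done for each $h$, while the paper's fibered-product field does it once.

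One point needs repair: you propose to handle completeness of the flows by ``shrinking $I$.'' Since no compactness of $N$ is assumed, there is in general no single shrunken interval on which the flow at every $x\in N$ is defined, so this hedge does not suffice. The correct fix — which is what the paper's Lemma \ref{lemma:completeflow} supplies — is that the right-invariant flow defining $\tau_\e(x)$ projects under the target map onto the curve $\e\mapsto\phi_\e(x)$ (because $\rho(\bar\alpha_\e)=\tfrac{d}{d\e}\phi_\e$), and by the standard completeness result for invariant flows over base flows (Theorem 3.6.4 in \cite{M}) it is therefore defined for exactly as long as that base curve, i.e.\ for all $\e\in I$. Since your construction already goes through right-invariant fields, this is a one-line substitution, but it is the substitution you need rather than shrinking the interval.
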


\begin{remark}
The smooth exactness condition of the family $X_\e$ is just another way to say that each 1-cocycle $X_\e=\frac{d}{d\e}\Phi_{\e} \in C_{def}^{1}(\Phi_\e)$ is equal to $\delta_{\Phi_\e}(\bar{\alpha}^{\e})$, where $\bar{\alpha}^{\e}$ is a \textit{smooth} family of 0-cochains in the sense of Definition \ref{def:smoothfamilycochains}.
\end{remark}

\begin{proof}
The smooth exactness of the cocycles $X_\e$ was already verified in Remark \ref{exactcocyclesmorph.}. We prove now the converse statement, where the goal is finding a smooth family of gauge-maps which verifies the triviality of the deformation $\Phi_\e$. Assume that, for every $\e\in I$, $X_\e=\delta_{\Phi_\e}(\bar{\alpha}_\e)$, for $\bar{\alpha}_\e\in\Gamma(\phi_\e^{*}A_\calG)$ such that $\tilde{\alpha}(x,\e):=\bar{\alpha}_\e(x)$ is a smooth section in $\Gamma_{N\times I}(\tilde{\phi}^{*}A_\calG)$ (see Definition \ref{def:smoothfamilycochains} for the notations). We will define the family of gauge maps in terms of the flow of an appropriate vector field determined by the sections $\bar{\alpha}_\e$.

Consider the vector field
\begin{equation}\label{eq: vectorfield V}
V(g,y,\e):=(r_g(\tilde{\alpha}(y,\e)),0_y,\partial/\partial\e),
\end{equation}
defined on the fibered-product $\tilde{\phi}^{*}\calG:=\calG\;_t\!\times_{\tilde{\phi}}(N\times I)$ of the target map $t$ with the base map $\tilde{\phi}$, as in the diagram below,


\begin{equation}\label{fiberedproduct}
  \begin{tikzcd}[column sep=4em, row sep=10ex]
    \calG\;_t\!\times_{\tilde{\phi}}(N\times I)\arrow{r}{\mathrm{pr}_\calG} \arrow{d}{} & \calG \arrow{d}{t}\\
    N\times I \arrow{r}{\tilde{\phi}}   & M.
  \end{tikzcd}
\end{equation}

This vector field is indeed well-defined since $\frac{d}{d\e}\Phi_\e=\delta_{\Phi_\e}(\bar{\alpha}_\e)$, and therefore $\frac{d}{d\e}\phi_\e=\rho(\bar{\alpha}_\e)$.

Lemma \ref{lemma:completeflow} below guarantees that the flow $\psi_\e$ of $V$ is defined for all $\e\in I$ over the points of $M\;_t\!\times_{\tilde{\phi}}N\times\{0\}\subset\tilde{\phi}^{*}{\calG}.$
Thus $\psi_\e$ defines the family of gauge maps (which projects to $\phi_0$)
$$\tau_\e:N\longrightarrow\calG;\ \ x\longmapsto\mathrm{pr}_\calG(\psi_\e(\phi_0(x),x,0)),$$
where $\mathrm{pr}_\calG:\tilde{\phi}^{*}\calG\longrightarrow\calG$ is the natural projection to $\calG$ as in diagram \eqref{fiberedproduct} above. That is, the flow of $V$ over $\phi_0(N)\times N\times\{0\}$ can be written as $\psi_\e(\phi_0(x),x,0)=(\tau_\e(x),x,\e)$.\\

\textbf{Claim:} The family $\tau_\e$ proves the triviality of $\Phi_\e$, i.e., it holds $\Phi_\e=\tau_\e\cdot\Phi_0$.

We will prove this claim by showing that $\Phi_\e$ and $\tau_\e\cdot\Phi_0$ determine the same integral curves of a vector field $Z$ defined below. For that, notice first that since $\frac{d}{d\e}\phi_\e=\rho(\bar{\alpha}_\e)$, then the curve $\phi_\e(x)$ belongs to a unique orbit of $\calG$. Thus, for $h\in\calH_\calO$, the curve $\e\to\Phi_\e(h)$ lies inside the restriction groupoid $\calG_{\calO_{\phi_0(s(h))}}$. That is, for any $h\in\calH_\calO$, both curves $\e\mapsto\Phi_\e(h)$ and $\e\mapsto I_{\tau_\e}\circ\Phi_0(h)$ are inside the restriction $\calG_{\calO'}$; where $\calO':=\calO_{\phi_0(s(h))}$. Therefore, we can consider the following fibered product

\begin{equation}\label{fiberedproduct2}
  \begin{tikzcd}[column sep=4em, row sep=10ex]
    B \arrow{r}{} \arrow{d}{} & \calG\ _t\!\times_{\tilde{\phi}}(\calO\times I) \arrow{d}{s_\calG\times pr_I}\\
    \calO\times I \arrow{r}{\phi}   & \calO'\times I,
  \end{tikzcd}
\end{equation}
which, by transversality of the maps involved in the diagram, will be well-defined. Let $Z$ denote the vector field on $B$ defined by
$$Z(g,y,x,\e):=(r_g(\tilde{\alpha}(y,\e))+l_g(di(\tilde{\alpha}(x,\e))),0_y,0_x,\partial/\partial\e).$$
Then, for every $h\in\calH_\calO$, $\frac{d}{d\e}\Phi_\e(h)=\delta_{\Phi_\e}(\tilde{\alpha}(-,\e))(h)$ and $\frac{d}{d\e}\tau_\e\cdot\Phi_0(h)=\delta_{\tau_\e\cdot\Phi_0}(\tilde{\alpha}(-,\e))(h)$. It follows that $(\Phi_\e(h), t(h), s(h), \e)$ and $(\tau_\e\cdot\Phi_0(h), t(h), s(h), \e)$ are integral curves of $Z$ starting at the same point. Therefore, $\Phi_\e(h)=\tau_\e\cdot\Phi_0(h)$, for every $h\in\calH_\calO$. Since $h\in\calH$ is arbitrary, it follows that $\Phi_\e=\tau_\e\cdot\Phi_0$.

\end{proof}

\begin{lemma}\label{lemma:completeflow}
Let $V$ be the vector field defined by equation \eqref{eq: vectorfield V} above. The flow $\psi^{\e}$ of $V$ is defined for all $\e\in I$ over $M\;_t\!\times_{\tilde{\phi}}N\times\{0\}\subset\tilde{\phi}^{*}{\calG}.$
\end{lemma}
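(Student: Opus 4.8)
The plan is to first pin down the integral curves of $V$ issuing from $M\;_t\!\times_{\tilde{\phi}}N\times\{0\}$ and then prove they survive for all $\e\in I$ by a right-invariance (concatenation) argument that bypasses any properness hypothesis. Since the $N$-component of $V$ is $0_y$ and its $I$-component is $\partial/\partial\e$, an integral curve starting at $(1_{\phi_0(y)},y,0)$ must keep $y$ fixed and have $\e$ as its time parameter; hence it has the form $(\gamma_y(\e),y,\e)$ with
$$\frac{d}{d\e}\gamma_y(\e)=r_{\gamma_y(\e)}(\bar{\alpha}_\e(y)),\qquad \gamma_y(0)=1_{\phi_0(y)}.$$
First I would record two elementary constraints on $\gamma_y$: applying $ds$ gives $ds(r_{\gamma_y(\e)}(\bar{\alpha}_\e(y)))=0$, so the source $s(\gamma_y(\e))\equiv\phi_0(y)$ is constant; and since $V$ is tangent to $\tilde{\phi}^{*}\calG$ (because $dt(r_g(a))=\rho(a)$ for $a\in A_{t(g)}$ and $\frac{d}{d\e}\phi_\e=\rho(\bar{\alpha}_\e)$, as already used to see that $V$ is well defined), we get $t(\gamma_y(\e))=\phi_\e(y)$. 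Thus the whole curve lives in the single $s$-fibre $s^{-1}(\phi_0(y))$ and projects under $t$ onto the base curve $\e\mapsto\phi_\e(y)$, which is defined on all of $I$ because it is part of the given smooth datum $\tilde{\phi}\colon N\times I\to M$.

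Next I would reduce the problem to a uniform short-time existence statement for the flow issued from units. Fixing $y$ and a compact subinterval $[0,b]\subset I$, the graph $\e\mapsto(\phi_\e(y),\e)$ is an embedded arc in $M\times I$, so I can choose a smooth time-dependent section $\beta_\e\in\Gamma(A_\calG)$ with $\beta_\e(\phi_\e(y))=\bar{\alpha}_\e(y)$ on $[0,b]$. By uniqueness of integral curves together with $\rho(\beta_\e(\phi_\e(y)))=\frac{d}{d\e}\phi_\e(y)$, the $t$-projection of the flow of the right-invariant field $\overrightarrow{\beta_\e}$ from $1_{\phi_0(y)}$ is exactly $\phi_\e(y)$; hence along that base curve $\beta_\e$ agrees with $\bar{\alpha}_\e(y)$, and the $\overrightarrow{\beta_\e}$-flow from the unit coincides with $\gamma_y$. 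The gain from this replacement is that $\overrightarrow{\beta_\e}$ is a genuine time-dependent right-invariant field on $\calG$, so its flow $\psi^{\e_1,\e_0}$ commutes with right translations, $\psi^{\e_1,\e_0}(hg)=\psi^{\e_1,\e_0}(h)\,g$. Combining this with $1_{\phi_\e(y)}\cdot\gamma_y(\e)=\gamma_y(\e)$ yields the concatenation formula
$$\gamma_y(\e+\lambda)=\psi^{\e+\lambda,\e}(\gamma_y(\e))=\psi^{\e+\lambda,\e}(1_{\phi_\e(y)})\cdot\gamma_y(\e),$$
so extending $\gamma_y$ past a time $\e$ only requires the short-time flow of the unit $1_{\phi_\e(y)}$.

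Finally I would run an open–closed argument. Let $b=\sup\{\e\in I\cap[0,\infty): \gamma_y\text{ is defined on }[0,\e]\}$ and suppose $b<\sup I$. Then $[0,b]\subset I$, the arc $\{1_{\phi_\e(y)}:\e\in[0,b]\}$ is compact in $\calG$, and continuous dependence of solutions on initial conditions provides a uniform $\delta>0$ such that $\psi^{\e+\lambda,\e}(1_{\phi_\e(y)})$ exists for all $\e\in[0,b]$ and $|\lambda|<\delta$. Choosing $\e_1\in[0,b)$ with $b-\e_1<\delta$, the concatenation formula defines $\gamma_y$ on $[0,\e_1+\delta)$ with $\e_1+\delta>b$, contradicting maximality; the same reasoning handles $\e<0$. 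Hence $\gamma_y$, and therefore the flow $\psi^{\e}$ of $V$, is defined for every $\e\in I$ over $M\;_t\!\times_{\tilde{\phi}}N\times\{0\}$. The hard part is precisely this completeness issue: the $s$-fibre $s^{-1}(\phi_0(y))$ is generally non-compact and $\calG$ need not be proper, so a priori $\gamma_y$ could escape to infinity while its $t$-projection stays bounded. What rescues the statement is that the curve starts at a unit, which makes the flow right-invariant and reduces global existence to the uniform local existence over the compact base trajectory, itself guaranteed by $\tilde{\phi}$ being defined on all of $N\times I$.
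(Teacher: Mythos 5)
Your proof is correct and is essentially the paper's argument written out in full: the paper projects $V$ by the target map to the base vector field $\bar{a}$, observes its integral curves are the (everywhere-defined) base curves $\e\mapsto\phi_\e(y)$, and then cites the argument of Theorem 3.6.4 in \cite{M} — which is exactly the right-invariance/concatenation plus uniform-local-existence-over-a-compact-arc mechanism you spell out. Your version is simply more self-contained, making explicit the extension of $\bar{\alpha}_\e(y)$ to a time-dependent section $\beta_\e\in\Gamma(A_\G)$ and the open–closed maximality step that the paper leaves to the reference.
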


\begin{proof}
Note that the vector field $V$ projects by the target map to a vector field
$\bar{a}\in\mathfrak{X}(M\;_t\times_{\tilde{\phi}}(N\times I))$ $\cong\mathfrak{X}(N\times I),$ given by
$$\bar{a}(y,\e)=(\rho(\bar{\alpha}(y,\e)),0_y,\partial/\partial\e)\in TM\;_{Id_{TM}}\!\times_{d\tilde{\phi}}T(N\times I).$$
It follows from the fact that $\frac{d}{d\e}\phi_\e=\rho(\bar{\alpha}_\e)$, that the integral curves of $\bar{a}$ are determined by the smooth family of base maps $\phi_\e$. 
Therefore they are defined for all $\e\in I$ when starting at points of $M\;_t\!\times_{\tilde{\phi}}N\times\{0\}\subset\tilde{\phi}^{*}{\calG}$. Thus, the proof now follows by an argument completely analogous to that of Theorem 3.6.4 in \cite{M}, which allows us to check that the flow $\psi^\e$ over $M\;_t\!\times_{\tilde{\phi}}N\times\{0\}\subset\tilde{\phi}^{*}{\calG}$ is defined for the same time as the flow of $\bar{a}$; i.e., for all $\e\in I$.\\

\end{proof}

\begin{remark}
It is straightforward to see that the previous result generalizes that of \cite{CardS1} concerning the triviality of deformations of Lie group homomorphisms.
\end{remark}

Notice that one can use Theorem \ref{gauge-triviality} and Proposition \ref{Prop:familybisectionsfromgauge} to obtain as a direct consequence the following kind of characterization of strongly trivial deformations.

\begin{theorem}\label{thm:homostronglytrivial}

Let $(\Phi_0,\phi_0):(\calH\rightrightarrows N)\longrightarrow (\calG\rightrightarrows M)$ be a Lie groupoid morphism and $(\Phi_\e,\phi_\e)$ be a deformation of $\Phi_0$. Assume that $\phi_0$ is an injective immersion and $N$ is compact. Then, the deformation $\Phi_\e$ is strongly trivial if and only if the family of 1-cocycles $X_\e=\frac{d}{d\e}\Phi_{\e}$ is smoothly exact in $C^{\bullet}_{def}(\Phi_0)$.
\end{theorem}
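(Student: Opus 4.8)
The plan is to obtain this theorem as a formal consequence of Theorem \ref{gauge-triviality} together with Proposition \ref{Prop:familybisectionsfromgauge}: under the present hypotheses the three conditions \emph{strongly trivial}, \emph{trivial}, and \emph{$X_\e$ smoothly exact} all coincide, so one only has to arrange the implications into a cycle.

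First I would dispose of the forward direction. Suppose $\Phi_\e$ is strongly trivial, so there is a smooth family of bisections $\sigma_\e$ of $\G$ with $\sigma_0 = u$ and $\Phi_\e = I_{\sigma_\e}\circ\Phi_0$. Setting $\tau_\e := \sigma_\e\circ\phi_0$ yields a smooth family of gauge maps covering $\phi_0$, with $\tau_0 = u_\G\circ\phi_0$; a direct check using $s\circ\sigma_\e = \mathrm{id}$ and $t(\Phi_0(h)) = \phi_0(t(h))$ shows that $\Phi_\e = \tau_\e\cdot\Phi_0$, so $\Phi_\e$ is trivial in the sense of Definition \ref{Def:equivalent deformations}. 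Theorem \ref{gauge-triviality} then applies verbatim and gives that the family $X_\e = \frac{d}{d\e}\Phi_\e$ is smoothly exact in $C^{\bullet}_{def}(\Phi_0)$.

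For the converse I would run the two cited results in sequence. If the family $X_\e$ is smoothly exact, Theorem \ref{gauge-triviality} produces a smooth family of gauge maps $\tau_\e$ with $\tau_0 = u_\G\circ\phi_0$ realizing $\Phi_\e = \tau_\e\cdot\Phi_0$, i.e. $\Phi_\e$ is trivial. It is precisely here that the two extra hypotheses enter: with $\phi_0$ an injective immersion and $N$ compact, Proposition \ref{Prop:familybisectionsfromgauge} upgrades this trivial deformation to a strongly trivial one, producing the required smooth family of global bisections. Combining the two directions closes the equivalence.

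The only genuine content of the argument lies inside Proposition \ref{Prop:familybisectionsfromgauge}, and that is where I expect the real difficulty to be concentrated: a general gauge map need not come from a bisection, so one cannot merely reinterpret the gauge maps as bisections. The mechanism there is to pass to the infinitesimal data, encoding the family $\tau_\e$ as a time-dependent section of $A_\G$ defined along $\phi_0(N)$, to extend it to a compactly supported section of $A_\G$ using that $\phi_0(N)$ lies in an open set with compact closure (this is where compactness of $N$ is used), and then to integrate by the exponential flow; completeness of the resulting time-dependent flow, again guaranteed by compactness, yields honest global bisections $\sigma_\e$ whose restriction to $\phi_0(N)$ recovers $\tau_\e$. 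Since all of this is already established in the cited proposition, the present statement follows without further work.
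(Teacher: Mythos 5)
Your proposal is correct and follows exactly the route the paper intends: the paper states this theorem as a direct consequence of Theorem \ref{gauge-triviality} and Proposition \ref{Prop:familybisectionsfromgauge}, which is precisely the cycle of implications you set up (strongly trivial $\Rightarrow$ trivial $\Rightarrow$ smoothly exact, and conversely smoothly exact $\Rightarrow$ trivial $\Rightarrow$ strongly trivial under the injective-immersion and compactness hypotheses). Your description of where the hypotheses enter and of the extension-plus-exponential-flow mechanism inside Proposition \ref{Prop:familybisectionsfromgauge} also matches the paper.
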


The following theorem shows that with the help of the subcomplex $\tilde{C}^{\bullet}_{def}(\Phi_0)$ is posible to obtain a cleaner characterization of the strongly trivial deformations.

\begin{theorem}\label{thm:strongtriviality}
Let $(\Phi_0,\phi_0):(\calH\rightrightarrows N)\longrightarrow (\calG\rightrightarrows M)$ be a Lie groupoid morphism and $(\Phi_\e,\phi_\e)$ be a deformation of $\Phi_0$. Assume that $N$ is compact.
Then, the deformation $\Phi_\e$ is strongly trivial if and only if the family of 1-cocycles $X_\e=\frac{d}{d\e}\Phi_{\e}$ is smoothly exact in the subcomplex $\tilde{C}^{\bullet}_{def}(\Phi)$.
\end{theorem}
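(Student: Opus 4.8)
The plan is to prove the two implications separately, obtaining the "only if" direction almost immediately from the results already in hand and concentrating the genuine work on producing honest \emph{global} bisections in the "if" direction, where the compactness of $N$ will be the decisive ingredient.

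For the forward direction, suppose $\Phi_\e=I_{\sigma_\e}\circ\Phi_0$ is strongly trivial for a smooth family of bisections $\sigma_\e$ with $\sigma_0=u$. This is exactly the setting of Proposition \ref{prop:exactcocyles variatedcomplex} (compare Remark \ref{rmk:sections-bisections}): defining $\alpha_\l\in\Gamma(A_\G)$ by $\alpha_\l(\varphi_\l(x))=\left.\frac{d}{d\e}\right|_{\e=\l}(\sigma_\e(x)\sigma_\l(x)^{-1})$ with $\varphi_\l=t\circ\sigma_\l$, one gets $X_\l=\delta_{\Phi_\l}((\varphi_\l\circ\phi_0)^{*}\alpha_\l)$. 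The point to record is that each exact-making $0$-cochain $(\varphi_\l\circ\phi_0)^{*}\alpha_\l$ is the pullback of a \emph{global} section of $A_\G$, hence lies in $\tilde{C}^{0}_{def}(\Phi_\l)$, and the family is smooth since $\sigma_\e$ is; thus $X_\e$ is smoothly exact in $\tilde{C}^{\bullet}_{def}(\Phi)$.

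For the converse, assume $X_\e=\delta_{\Phi_\e}(\phi_\e^{*}\alpha_\e)$ for a smooth family $\alpha_\e\in\Gamma(A_\G)$. Since $\tilde{C}^{0}_{def}(\Phi)\subset C^{0}_{def}(\Phi)$ and the two complexes coincide in positive degrees, $X_\e$ is in particular smoothly exact in $C^{\bullet}_{def}(\Phi)$, so Theorem \ref{gauge-triviality} already yields a \emph{trivial} deformation via gauge maps $\tau_\e$. Tracing that construction with $\tilde{\alpha}(y,\e)=\alpha_\e(\phi_\e(y))$, the $\G$-component of the relevant integral curve satisfies $\dot g=r_g(\alpha_\e(\phi_\e(y)))$ with $t(g)=\phi_\e(y)$, i.e. $\dot g=\overrightarrow{\alpha_\e}(g)$; hence $\tau_\e(x)=\Psi^{\e,0}(u(\phi_0(x)))$, where $\Psi^{\e,0}$ is the time-dependent flow of the right-invariant vector field $\overrightarrow{\alpha_\e}$. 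I would then upgrade these gauge maps to bisections by setting $\sigma_\e(p):=\Psi^{\e,0}(u(p))$ for all $p\in M$. This globalization is the heart of the argument and the step I expect to be the main obstacle, since a priori $\overrightarrow{\alpha_\e}$ need not be complete; it is resolved exactly as in Proposition \ref{Prop:familybisectionsfromgauge} using compactness of $N$. Concretely, $K:=\overline{\bigcup_{\e\in[0,\e_0]}\phi_\e(N)}$ is compact, and multiplying by a bump function $\chi$ equal to $1$ on a neighborhood of $K$ replaces $\alpha_\e$ by a compactly supported family $\alpha'_\e:=\chi\cdot\alpha_\e$; because $\phi_\e(N)\subset K$ one has $\phi_\e^{*}\alpha'_\e=\phi_\e^{*}\alpha_\e$, so the exactness and the maps $\tau_\e$ are unchanged. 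The compact support forces $\overrightarrow{\alpha'_\e}$ to vanish on units outside the support and to have a flow defined on the compact set $u(\overline{\operatorname{supp}\chi})$ for $\e$ small, so $\Psi^{\e,0}$ is defined on all of $u(M)$ for $\e$ in a small interval, giving a smooth family of bisections $\sigma_\e$ with $\sigma_0=u$ and $\tau_\e=\sigma_\e\circ\phi_0$.

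Finally, I would conclude by identifying the gauge action of $\sigma_\e\circ\phi_0$ with conjugation $I_{\sigma_\e}$: using the morphism identities $\phi_0\circ s_\H=s_\G\circ\Phi_0$ and $\phi_0\circ t_\H=t_\G\circ\Phi_0$, one computes $(\sigma_\e\circ\phi_0)(t(h))\,\Phi_0(h)\,(\sigma_\e\circ\phi_0)(s(h))^{-1}=\sigma_\e(t(\Phi_0(h)))\,\Phi_0(h)\,\sigma_\e(s(\Phi_0(h)))^{-1}=I_{\sigma_\e}(\Phi_0(h))$. Combining this with the gauge triviality $\Phi_\e=\tau_\e\cdot\Phi_0$ from Theorem \ref{gauge-triviality} gives $\Phi_\e=I_{\sigma_\e}\circ\Phi_0$, which is precisely strong triviality. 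The novelty over Proposition \ref{Prop:familybisectionsfromgauge} is that no injective-immersion hypothesis on $\phi_0$ is needed here, since working in the subcomplex $\tilde{C}^{\bullet}_{def}(\Phi)$ supplies the exact $0$-cochains already as pullbacks of global sections of $A_\G$, which is exactly the data that integrates to bisections.
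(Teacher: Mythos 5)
Your proof is correct and follows essentially the same route as the paper's: the forward direction via Remark \ref{rmk:sections-bisections}, and the converse by using compactness of $N$ to replace the global sections $\alpha_\e$ with a compactly supported family agreeing with them on $\bigcup_\e\phi_\e(N)$, exponentiating to a smooth family of global bisections, and concluding by uniqueness of integral curves. The only organizational difference is that you first invoke Theorem \ref{gauge-triviality} to produce the gauge maps $\tau_\e$ and then identify $\tau_\e=\sigma_\e\circ\phi_0$, whereas the paper compares the curves $\e\mapsto\Phi_\e(h)$ and $\e\mapsto I_{\sigma_\e}\circ\Phi_0(h)$ directly as integral curves of the time-dependent vector field $\delta_\G(\alpha_\e)$ on $\G$; both arguments are valid and rest on the same ingredients.
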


\begin{proof}
If $\Phi_\e$ is strongly trivial, the smooth exactness of the family of 1-cocycles $X_\e$ was already proved in Remark \ref{rmk:sections-bisections}. Conversely, let $X_\e=\delta_{\Phi_\e}(\phi_\e^{*}\alpha'_\e)$, where $\alpha'_\e$ is a family of sections of $A_\G$, such that $\phi_\e^{*}\alpha'_\e$ is a smooth family of sections in the sense of Definition \ref{def:smoothfamilycochains}.

Since $N$ is compact, $\phi(N\times I)$ is a closed subset inside $M\times I$. Shrinking the interval $I$ if necessary, let $V\subset U\subset M$ be open subsets with compact closure such that $\phi(N\times I)\subset V\times I$. Then we can extend the smooth family $\alpha'_\e|_U$ of restriction sections to a smooth family $\alpha_\e$ of sections supported on $U$ such that $\alpha_\e|_V=\alpha'_\e|_V$.

Then, on the one hand, since $\alpha_\e=\alpha'_\e$ over $V$ it follows that every cocycle $X_\e$ is the pullback by $\Phi_\e$ of the vector field $\delta_\e(\alpha_\e)\in\mathfrak{X}(\G)$. Indeed, 
\begin{equation}\label{eq:1}
\begin{split}
X_{\e}(h)&=\delta_{\calG}(\alpha'_{\e})(\Phi_{\e}(h))\\
&=r_{\Phi_\e(h)}\left(\alpha'_{\e}(t\circ\Phi_{\e}(h))\right)+l_{\Phi_\e(h)}\left(di(\alpha'_{\e}(s\circ\Phi_{\e}(h)))\right)\\
&=\delta_{\calG}(\alpha_{\e})(\Phi_{\e}(h)).
\end{split}
\end{equation}

On the other hand, let $\sigma_\e$ be the smooth family of bisections of $\G$ induced, by the exponential flow, by the family of sections $\alpha_\e$ as in the proof of Proposition \ref{Prop:familybisectionsfromgauge}. Note that such a family $\sigma_\e$ is defined for all $\e$ small enough. Thus, by Remark \ref{rmk:sections-bisections},
\begin{equation}\label{eq:2}
\frac{d}{d\l}|_{\l=\e} I_{\sigma_\l}\circ\Phi_0(h)=\delta_\G(\alpha_\e)(I_{\sigma_\e}\circ\Phi_0)(h).
\end{equation}

Therefore, in other words, by equations \eqref{eq:1} and \eqref{eq:2} one has that $\e\mapsto\Phi_\e(h)$ and $\e\mapsto I_{\sigma_\e}\circ\Phi_0(h)$ are integral curves of the time-dependent vector field $\delta_\G(\alpha_\e)$ passing through $\Phi_0(h)\in\G$ at time $\e=0$. That is, $\Phi_\e(h)=I_{\sigma_\e}\circ\Phi_0(h)$ for all $\e$ small enough.

\end{proof}


The triviality of the following special type of deformations is not hard to prove directly without using the cohomological tools of this section, nevertheless we will verify it as a consequence of the Theorem \ref{gauge-triviality} above.

\begin{example}
Let $\pi:M\longrightarrow M'$ be a surjective submersion. Assume that $(F,f)$ is a Lie groupoid morphism between $\calH$ and the submersion groupoid $\calG=M\times_{M'}M$ and that $(F_\e,f_\e)$ is a deformation of $F$. Then, $F_\e$ is trivial if and only if $\pi\circ f_\e=\pi\circ f_0$.

In fact, on the one hand, notice first that, since the Lie algebroid $A_{M\times_{M'}M}$ of the submersion groupoid consists of the vertical vectors $T^{\pi}M$ of $TM$, then the family of elements $\bar{\alpha}_\e$ given by $\bar{\alpha}_\e(n):=(\frac{d}{d\e}f_\e(n),0_{f_\e(n)})$ is a family of 0-cochains in $C^{0}_{def}(F_\e)$ if and only if $\pi\circ f_\e=\pi\circ f_0$ for all $\e$.

On the other hand, it is straightforward to check that every morphism $(F_\e,f_\e)$ is of the form $F_\e=(f_\e\circ t_\calH,f_\e\circ s_\calH)$. Therefore, if for every $h\in\calH$ and all $\e$, $f_\e(h)$ and $f_0(h)$ are in the same $\pi$-fiber, then the family of 1-cocycles $X_\e=\frac{d}{d\e}F_\e$ is smoothly transgressed by the family of 0-cochains $\bar{\alpha}_\e$. And conversely, if $X_\e=\frac{d}{d\e}F_\e$ is smoothly transgressed, then $\frac{d}{d\e}f_\e(n)$ must be a vector tangent to the $\pi$-fibers, for all $n\in N$.

\end{example}

The previous example tells us that the family $F_\e$ is trivial if and only if it preserves the correspondence, determined by $F_0$, between the leaves $\H_\calO$ of the foliation by orbit-groupoids of $\H$ and the leaves $\G_\calO$ of $\G$. That is, if for every $h\in\H$, the curve $\e\mapsto F_\e(h)$ lies inside the restriction groupoid $\G_{\calO_{s(F_0(h))}}$. The following proposition explores this idea for any Lie groupoid $\G$ (not only the submersion groupoid).

\begin{proposition}\label{prop:exactcocyles}
Let $\Phi_\e:\H\to\G$ be a deformation of $\Phi_0$ and assume that $\H$ is proper. The deformation $\Phi_\e$ is trivial if, and only if, the curves $\e\mapsto\Phi_\e(h)$, $h\in\H$, determined by $\Phi_\e$ lie inside the leaves of the foliation $\{\G_{\calO_x}\}_{x\in M}$ by orbit-groupoids of $\G$.
\end{proposition}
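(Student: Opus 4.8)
The plan is to deduce the equivalence from the triviality criterion of Theorem \ref{gauge-triviality} by repackaging the whole deformation as a single Lie groupoid morphism and recognising \emph{smooth exactness} (Definition \ref{def:smoothfamilycochains}) as honest exactness of one cocycle in its deformation complex. Concretely, I would assemble the deformation into the morphism $\bar\Phi\colon\H\times I\to\G$, $(h,\e)\mapsto\Phi_\e(h)$, covering $\bar\phi\colon N\times I\to M$, $(x,\e)\mapsto\phi_\e(x)$; multiplicativity of $\bar\Phi$ is immediate, since composition in $\H\times I$ keeps $\e$ fixed and each $\Phi_\e$ is a morphism. The deformation cocycles of Proposition \ref{cociclomorph.} then glue into a single cochain $\hat X\in C^1_{def}(\bar\Phi)$, $\hat X(h,\e):=\tfrac{d}{d\e}\Phi_\e(h)\in T_{\Phi_\e(h)}\G$. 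One checks that $\hat X$ is $s_\G$-projectable (its $s_\G$-projection is $\tfrac{d}{d\e}\phi_\e(s_\H(h))$, depending only on the source object $(s_\H(h),\e)$) and that it is a cocycle, the cocycle equation at fixed $\e$ being exactly the one of Proposition \ref{cociclomorph.}.

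The decisive observation is that a primitive $\bar\alpha\in C^0_{def}(\bar\Phi)=\Gamma_{N\times I}(\bar\phi^*A_\G)$ with $\delta_{\bar\Phi}\bar\alpha=\hat X$ restricts on each slice $\e=\mathrm{const}$ to a smooth family $\bar\alpha_\e$ satisfying $\delta_{\Phi_\e}\bar\alpha_\e=X_\e$; that is, $\hat X$ is exact in $C^\bullet_{def}(\bar\Phi)$ if and only if the family $X_\e$ is smoothly exact in the sense of Definition \ref{def:smoothfamilycochains}. Combining this with Theorem \ref{gauge-triviality}, the deformation is trivial if and only if $[\hat X]=0$ in $H^1_{def}(\bar\Phi)$.

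It remains to identify $[\hat X]=0$ with the orbit condition, and here properness enters. Since $\H$ is proper and compacts of $N\times I$ have bounded $\e$-coordinate, the product $\H\times I$ is again proper; the vanishing result for morphisms with proper domain (Section \ref{section:lowdegrees}) then identifies $H^1_{def}(\bar\Phi)$ with $\Gamma(\bar\phi^*\nu_\G)^{inv}$ via the $s_\G$-projection $\tilde s$. In particular $\tilde s$ is injective, so $[\hat X]=0$ if and only if $\tilde s[\hat X]=\big[\tfrac{d}{d\e}\phi_\e\big]=0$ in the normal bundle $\nu_\G=TM/\Im\rho$, that is, if and only if $\tfrac{d}{d\e}\phi_\e(x)\in\Im\rho$ for all $x$ and $\e$.

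Finally I translate between this tangency and the orbit condition. For the implication orbit $\Rightarrow$ trivial it suffices that orbit $\Rightarrow$ tangency: if each $\e\mapsto\Phi_\e(h)$ lies in a single orbit-groupoid $\G_\calO$, then $\e\mapsto\phi_\e(s(h))=s(\Phi_\e(h))$ lies in the orbit $\calO$, and since orbits are initial submanifolds its velocity lies in $T\calO=\Im\rho$; letting $h$ run over the units of $\H$ yields $\tfrac{d}{d\e}\phi_\e(x)\in\Im\rho$ for every $x$, hence $[\hat X]=0$ and triviality by the chain above. For the reverse implication trivial $\Rightarrow$ orbit I argue directly and without properness: writing $\Phi_\e=\tau_\e\cdot\Phi_0$ for a smooth family of gauge maps with $\tau_0=u_\G\circ\phi_0$ and $s\circ\tau_\e=\phi_0$, each $\tau_\e(x)$ is an arrow $\phi_0(x)\to\phi_\e(x)$, so $\phi_\e(x)$ remains in the orbit of $\phi_0(x)$; since $\Phi_\e(h)=\tau_\e(t(h))\Phi_0(h)\tau_\e(s(h))^{-1}$ has source $\phi_\e(s(h))$ and target $\phi_\e(t(h))$, both endpoints stay in the common orbit $\calO=\calO_{\phi_0(s(h))}$, whence $\Phi_\e(h)\in\G_\calO$ for all $\e$. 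I expect the crux to be the identification of smooth exactness with exactness of the single cocycle $\hat X$ in $C^\bullet_{def}(\bar\Phi)$ together with the properness of $\H\times I$: this is precisely what upgrades the $\e$-wise cohomological vanishing into the globally smooth primitive that Theorem \ref{gauge-triviality} demands.
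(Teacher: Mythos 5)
Your proof is correct and follows essentially the same route as the paper's: both encode the family $X_\e$ as a single cocycle for the morphism $\H\times I\to\G$, use properness of the domain together with the low-degree exact sequence \eqref{lowdegreemorphism} to make the $\tilde s$-projection injective on $H^1_{def}$, observe that tangency to the orbit foliation kills that projection, and conclude via Theorem \ref{gauge-triviality}. The only differences are that you spell out a few steps the paper leaves implicit (properness of $\H\times I$, the equivalence of exactness of $\hat X$ with smooth exactness of the family, and the orbit-versus-tangency translation), which is harmless.
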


\begin{proof}
If $\Phi_\e$ is a trivial deformation, then obviously the curve $\e\mapsto\Phi_\e(h)$ lies inside a unique leaf because $\Phi_\e(h)=\tau_\e(t(h))\Phi_0(h)\tau_\e(s(h))^{-1}$, for a smooth family $\tau_\e$ of gauge maps covering $\phi_0$. Conversely, let $\tilde{\Phi}:\H\times I\to\G$ be the morphism such that restricted to $\H\times\{\e\}$ is $\Phi_\e$, as in Definition \ref{def:smoothfamilycochains}. Then, the family of 1-cocycles $X_\e=\frac{d}{d\e}\Phi_\e\in C^{1}_{def}(\Phi_\e)$ can be encoded in a unique 1-cocycle $\tilde{X}\in C^{1}_{def}(\tilde{\Phi})$ given by
\begin{align*}
\tilde{X}(h,\e)&:=\tilde{\Phi}_{*}(\partial/\partial\e|_{(h,\e)})\\
&=X_\e(h).
\end{align*}
And, $\tilde{X}$ turns out to be exact if and only if the family $X_\e$ is smoothly exact.

Now, since $\H$ is a proper groupoid, it follows that $H^{1}(\H,\tilde{\phi}^{*}\i_\G)$ is trivial and the sequence \eqref{lowdegreemorphism} above for the morphism $\tilde{\Phi}$ becomes

\begin{equation}
0\rightarrow 0\stackrel{r}{\longrightarrow} H^{1}_{def}(\tilde{\Phi})\stackrel{\tilde{s}}{\longrightarrow}\Gamma(\tilde{\phi}^{*}\nu_\calG)^{inv}\stackrel{K}{\longrightarrow}\cdots.
\end{equation}
Moreover, due to the fact that $\tilde{X}$ is always tangent to the orbit groupoids of $\G$ then $\tilde{X}$ lies in the kernel of $\tilde{s}$. Therefore $\tilde{X}$ is an exact cocycle which, by Theorem \ref{gauge-triviality}, says that $\Phi_\e$ is a trivial deformation.
\end{proof}

We can also apply our methods to study deformations which are \emph{trivial up to automorphisms of $\G$}, that is, those which consider also the group of outer automorphisms of a Lie groupoid. Additionally, in Subsection \ref{Moreontriviality} below, we sketch other types of equivalences between deformations of morphisms which arise naturally.

\begin{definition}\label{Def:weaklytrivial}
A deformation $\Phi_\e$ of a Lie groupoid morphism $\Phi_0: \calH \to \calG$ is said to be \textbf{trivial up to automorphisms of $\G$} if there exist an open interval $I$ containing 0 and smooth families $F_\e:\calG\longrightarrow\calG$ of Lie groupoid automorphisms and $\tau_\e:N\to\G$ of gauge maps over $\phi_0$ such that $F_0 = Id_\calG$, $\tau_0=\phi_0$ and $\Phi_\e = F_\e\circ(\tau_\e\cdot\Phi_0)$ for all $\e\in I$. Analogously we say that $\Phi_\e$ is \textbf{strongly trivial up to automorphisms of} $\calG$ if there exists a smooth family $\sigma_\e:M\longrightarrow \calG$ of bisections of $\calG$ such that $\sigma_0=u_\G$ and $\Phi_\e=F_\e\circ I_{\sigma_\e}\circ\Phi_0$ for all $\e\in I$.
\end{definition}

\begin{remark}
Note that considering only outer automorphisms produces \textbf{weakly trivial} deformations $\Phi_\e=F_\e\circ\Phi_0$ whose study is already include in the strongly trivial up to automorphisms deformations.
\end{remark}

Recall that a Lie groupoid morphism $\Phi: \calH \to \calG$ induces a pull-back map $\mathrm{\Phi^*:H^k_{def}(\calG) \to H^k_{def}(\Phi)}$. The key to characterizing the previous types of deformations lies in studying the pre-image of the deformation cocycle of a deformation $\Phi_\e$ through the pull-back map $\Phi_\e^{*}$, when it exists.

\begin{definition}
We will say that a family $[X_\e] \in H_{def}^1(\Phi_\e)$ has a \textbf{smooth pre-image in $H^1_{def}(\calG)$} if there exist smooth families of cocycles $Z_\e \in C^1_{def}(\calG)$ and of cochains $\bar{\alpha}^\e \in C^{0}_{def}(\Phi_\e)$ such that
\[\Phi_\e^*(Z_\e) = X_\e +\delta_{\Phi_\e}(\bar{\alpha}^\e).\] Analogously, considering the variated cohomology, we say that $[X_\e] \in \tilde{H}_{def}^1(\Phi_\e)$ has a \textbf{smooth pre-image in $H^{1}_{def}(\calG)$} if the sections $\bar{\alpha}_\e$ are of the form $\phi^{*}_{\e}\alpha_\e$, for $\alpha_\e\in\Gamma(A_\G)$.
\end{definition}

The statements of the following two theorems concerning the two types of deformations in definition \ref{Def:weaklytrivial} are analogous to the statements of the Theorems \ref{thm:homostronglytrivial} and \ref{gauge-triviality} concerning strongly trivial and trivial deformations.

\begin{theorem}\label{thm:preimage pullback t.u.t.a.r.}
Let $(\Phi_\e,\phi_\e)$ be a deformation of the morphism $(\Phi_0,\phi_0):(\calH\rightrightarrows N)\longrightarrow (\calG\rightrightarrows M)$. Assume that $\G$ is compact. Then, $\Phi_\e$ is trivial up to automorphisms of $\calG$ if and only if $[X_\e]$ has a smooth preimage by $\Phi_\e^{*}$ in $H^{\bullet}_{def}(\calG)$ for all $\e\in I$, where $I$ is some interval containing the zero.
\end{theorem}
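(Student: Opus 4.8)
The plan is to match the two ingredients of Definition \ref{Def:weaklytrivial}---the automorphism family $F_\e$ and the gauge family $\tau_\e$---with the two pieces appearing in the notion of a smooth preimage: the automorphism part will correspond to a smooth family of multiplicative vector fields, i.e. cocycles $Z_\e\in C^1_{def}(\G)$, while the gauge part will correspond to the coboundary $\delta_{\Phi_\e}(\bar\alpha^\e)$. Throughout I will use that an automorphism $F\colon\G\to\G$, being a Morita (indeed iso-)morphism, induces a chain map $F_*\colon C^*_{def}(f)\to C^*_{def}(F\circ f)$ for any morphism $f$ (Remark \ref{chain maps} and the pushforward used in the proof of Theorem \ref{Theor:IsomorpMorphisms-Cohomology}), and that the flow of a time-dependent multiplicative vector field is by automorphisms (\cite{MX}, Prop.~3.5).

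For the forward implication, suppose $\Phi_\e=F_\e\circ(\tau_\e\cdot\Phi_0)$ and write $\Psi_\e:=\tau_\e\cdot\Phi_0$. Differentiating $\Phi_\e(h)=F_\e(\Psi_\e(h))$ by a two-parameter chain rule (varying the automorphism and its argument separately) gives
\[X_\e(h)=\frac{d}{d\e}\Phi_\e(h)=Z_\e(\Phi_\e(h))+(dF_\e)_{\Psi_\e(h)}\Big(\frac{d}{d\e}\Psi_\e(h)\Big),\]
where $Z_\e:=\dot F_\e\circ F_\e^{-1}$ is the multiplicative vector field generating $F_\e$, hence a smooth family of cocycles in $C^1_{def}(\G)$, and the first term is exactly $(\Phi_\e^*Z_\e)(h)$. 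By Remark \ref{exactcocyclesmorph.} the cocycle of the trivial deformation $\Psi_\e$ is smoothly exact, $\tfrac{d}{d\e}\Psi_\e=\delta_{\Psi_\e}(\bar\beta_\e)$, so applying the chain map $(F_\e)_*$ turns the second term into $\delta_{\Phi_\e}((F_\e)_*\bar\beta_\e)$. Thus $\Phi_\e^*Z_\e=X_\e-\delta_{\Phi_\e}(\bar\alpha^\e)$ with $\bar\alpha^\e:=(F_\e)_*\bar\beta_\e$, which (after a harmless change of sign of $\bar\alpha^\e$) exhibits a smooth preimage of $[X_\e]$ in $H^1_{def}(\G)$.

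For the converse, given smooth families $Z_\e\in C^1_{def}(\G)$ and $\bar\alpha^\e$ with $\Phi_\e^*(Z_\e)=X_\e+\delta_{\Phi_\e}(\bar\alpha^\e)$, I first integrate the time-dependent multiplicative vector field $Z_\e$ to a smooth family of automorphisms $F_\e$ of $\G$ with $F_0=\mathrm{Id}_\G$; this is where compactness of $\G$ is essential, as it secures completeness of the flow (for $\e$ in some interval about $0$) and hence that each $F_\e$ is a genuine global automorphism. I then set $\Psi_\e:=F_\e^{-1}\circ\Phi_\e$, a deformation of $\Psi_0=\Phi_0$, and compute its deformation cocycle by the same two-parameter chain rule: the generator of $F_\e^{-1}$ contributes $-(dF_\e^{-1})(Z_\e(\Phi_\e))$ and the argument contributes $(dF_\e^{-1})(X_\e)$, so that $\tfrac{d}{d\e}\Psi_\e=(F_\e^{-1})_*(X_\e-\Phi_\e^*Z_\e)=-(F_\e^{-1})_*\,\delta_{\Phi_\e}(\bar\alpha^\e)=\delta_{\Psi_\e}\big(-(F_\e^{-1})_*\bar\alpha^\e\big)$, using again that $(F_\e^{-1})_*$ is a chain map. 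Hence the cocycles of $\Psi_\e$ are smoothly exact, so by Theorem \ref{gauge-triviality} the deformation $\Psi_\e$ is trivial, i.e. $\Psi_\e=\tau_\e\cdot\Phi_0$ for a smooth family of gauge maps $\tau_\e$ over $\phi_0$ with $\tau_0=\phi_0$. Therefore $\Phi_\e=F_\e\circ\Psi_\e=F_\e\circ(\tau_\e\cdot\Phi_0)$, which is precisely triviality up to automorphisms of $\G$.

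I expect the main obstacle to lie in the converse: establishing that the family $Z_\e$ integrates to automorphisms (the completeness step, secured by compactness of $\G$) and then verifying, through the chain-map identity $(F_\e^{-1})_*\circ\delta_{\Phi_\e}=\delta_{\Psi_\e}\circ(F_\e^{-1})_*$, that the residual cocycle of $\Psi_\e$ is not merely exact at each fixed $\e$ but \emph{smoothly} exact in the sense of Definition \ref{def:smoothfamilycochains}, which is exactly the hypothesis needed to invoke Theorem \ref{gauge-triviality}.
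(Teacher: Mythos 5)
Your proposal is correct and follows essentially the same route as the paper's proof: differentiating $\Phi_\e=F_\e\circ(\tau_\e\cdot\Phi_0)$ and using the chain map $(F_\e)_*$ for the forward direction, and for the converse integrating $Z_\e$ to automorphisms $F_\e$, showing the deformation cocycles of $F_\e^{-1}\circ\Phi_\e$ are smoothly exact, and invoking Theorem \ref{gauge-triviality}. The only cosmetic difference is a sign convention on $\bar\alpha^\e$, which is harmless.
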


\begin{proof}
Let $\Phi_\e$ be a gauge trivial deformation up to automorphisms of $\calG$, that is, $\Phi_\e=F_\e\circ(\tau_\e\cdot\Phi_0)$, for smooth families $F_\e$ of automorphisms of $\G$, with $F_0=Id_\G$, and $\tau_\e$ of gauge maps with base $\phi_0$ such that $\tau_0=\phi_0$. By applying $\frac{d}{d\e}$ to both sides of the equation we obtain

\begin{align*}
X_\e&=\Phi_\e^{*}Z_\e+(F_\e)_{*}\delta_{\tau_\e\cdot\Phi_0}(\bar{\alpha}^{\e})\\
&=\Phi_\e^{*}Z_\e+\delta_{\Phi_\e}(\tilde{\alpha}^{\e})
\end{align*}
where $\tilde{\alpha}^{\e}=(F_\e)_{*}(\bar{\alpha}^{\e})\in C^{0}_\mathrm{def}(\Phi_\e)$ and $Z_\e:=\frac{dF_\e}{d\e}\circ F^{-1}_\e$ is a smooth family of 1-cocycles in $C^{1}_\mathrm{def}(\calG)$. It follows that $[X_\e]=\Phi_\e^{*}[Z_\e]$ for all $\e$.

Conversely, assume that
\begin{equation}\label{preimage}
X_\e=\Phi_\e^{*}Z_\e+\delta_{\Phi_\e}(\bar{\alpha}^\e),
\end{equation}
for $Z_\e$ and $\bar{\alpha}^\e$ smooth families of elements in $Z^{1}_{def}(\calG)$ and $C^{0}_{def}(\Phi_\e)$, respectively. Let $F_\e=\Phi^{\e, 0}$ be the flow from time $0$ to $\e$ of the time dependent vector field $Z_\e$ on $\calG$. Recall that every $F_\e$ will be an automorphism of $\calG$. We claim that $\Phi'_\e=F_\e^{-1}\circ\Phi_\e$ is a trivial deformation of $\Phi_0$. That is, we will obtain a smooth family $\tau_\e$ of gauge maps with base $\phi_0$, starting at $\phi_0$, such that $F_\e^{-1}\circ\Phi_\e = I_{\tau_\e}\circ\Phi_0$ for $\e$ small enough, concluding the proof.

On the one hand, taking $\tilde{\alpha}^\e\in C^{0}_{def}(F_\e^{-1}\circ\Phi_\e)$ to be such that $\bar{\alpha}_\e=(F_\e)_{*}(\tilde{\alpha}^\e)$, equation \eqref{preimage} becomes
\begin{equation}\label{preimage2}
\begin{split}
X_\e&=\Phi_\e^{*}(Z_\e)+\delta_{\Phi_\e}((F_\e)_{*}(\tilde{\alpha}^\e))\\
&=\Phi_\e^{*}(Z_\e)+(F_\e)_{*}(\delta_{F_{\e}^{-1}\Phi_\e}(\tilde{\alpha}^\e)).
\end{split}
\end{equation}

On the other hand, we set $X'_\e \in C^{1}_{def}(\Phi'_\e)$ to be the family of deformation cocycles associated to the deformation $\Phi'_\e = F_\e^{-1}\Phi_\e$ of $\Phi_0$. We will check that $X'_\e=\delta_{\Phi'_\e}(\tilde{\alpha}^\e)$, i.e., $X'_\e$ is smoothly exact. In fact,
\begin{equation*}
\left.\frac{d}{d\e}\right|_{\e=\l}\Phi'_\e(h)=\left.\frac{d}{d\e}\right|_{\e=\l}F_\e^{-1}(\Phi_\l(h))+dF_\l^{-1}(\left.\frac{d}{d\e}\right|_{\e=\l}\Phi_\e(h)),
\end{equation*}
from where it follows that
\begin{align*}
X'_\l(h)&=-dF_\l^{-1}(Z_\l(\Phi_\l(h)))+dF_\l^{-1}(X_\l(h))\\
&=-dF_\l^{-1}(\Phi_\l^{*}Z_\l)(h)+dF_\l^{-1}(X_\l)(h)\\
&=\delta_{\Phi'_\l}(\tilde{\alpha}^\l)(h),
\end{align*}
where the last equality follows from equation \eqref{preimage2}. It then follows from Theorem \ref{gauge-triviality} that $\Phi'_\e$ is trivial concluding the proof of the theorem.
\end{proof}

Analogously, adding the condition of $\Phi_0$ to be an injective inmersion, the previous proof and Theorem \ref{thm:homostronglytrivial} prove the following.

\begin{theorem}\label{weak-triviality}
Let $(\Phi_\e,\phi_\e)$ be a deformation of the morphism $(\Phi_0,\phi_0):(\calH\rightrightarrows N)\longrightarrow (\calG\rightrightarrows M)$. Assume that $\phi_0$ is an injective immersion and that $N$ and $\calG$ are compact. Then, $\Phi_\e$ is strongly trivial up to automorphisms of $\G$ if and only if the family $[X_\e]$ of cohomology classes has a smooth preimage by $\Phi_\e^{*}$ in $H^{\bullet}_{def}(\calG)$.
\end{theorem}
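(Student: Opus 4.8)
The plan is to follow the structure of the proof of Theorem~\ref{thm:preimage pullback t.u.t.a.r.} almost verbatim, replacing triviality by strong triviality and the appeal to Theorem~\ref{gauge-triviality} by an appeal to Theorem~\ref{thm:homostronglytrivial}; this last substitution is exactly where the extra hypotheses that $\phi_0$ be an injective immersion and that $N$ be compact enter. For the forward implication, suppose $\Phi_\e = F_\e\circ I_{\sigma_\e}\circ\Phi_0$ with $F_\e$ a smooth family of automorphisms of $\G$ satisfying $F_0 = Id_\G$ and $\sigma_\e$ a smooth family of bisections with $\sigma_0 = u_\G$. Since each bisection induces a gauge map $\tau_\e = \sigma_\e\circ\phi_0$ over $\phi_0$, we have $I_{\sigma_\e}\circ\Phi_0 = \tau_\e\cdot\Phi_0$, so $\Phi_\e = F_\e\circ(\tau_\e\cdot\Phi_0)$ is in particular trivial up to automorphisms. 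The computation in the forward part of Theorem~\ref{thm:preimage pullback t.u.t.a.r.} then applies unchanged: differentiating and setting $Z_\e := \frac{dF_\e}{d\e}\circ F_\e^{-1}\in Z^1_{def}(\G)$ and $\tilde{\alpha}^\e := (F_\e)_*(\bar{\alpha}^\e)$ yields $X_\e = \Phi_\e^*(Z_\e) + \delta_{\Phi_\e}(\tilde{\alpha}^\e)$, that is, $[X_\e]$ has a smooth preimage by $\Phi_\e^*$ in $H^\bullet_{def}(\G)$.

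For the converse, I would start exactly as in Theorem~\ref{thm:preimage pullback t.u.t.a.r.}: assume $X_\e = \Phi_\e^* Z_\e + \delta_{\Phi_\e}(\bar{\alpha}^\e)$ for smooth families $Z_\e\in Z^1_{def}(\G)$ and $\bar{\alpha}^\e\in C^0_{def}(\Phi_\e)$, and let $F_\e = \Phi^{\e,0}$ be the flow from time $0$ to $\e$ of the time-dependent vector field $Z_\e$ on $\G$. The compactness of $\G$ guarantees that this flow is complete, so that each $F_\e$ is a globally defined automorphism of $\G$ with $F_0 = Id_\G$. Setting $\Phi'_\e := F_\e^{-1}\circ\Phi_\e$, the same manipulation as in the cited proof (choosing $\tilde{\alpha}^\e$ with $\bar{\alpha}^\e = (F_\e)_*(\tilde{\alpha}^\e)$ and differentiating $\Phi'_\e$) shows that the deformation cocycle of $\Phi'_\e$ equals $X'_\e = \delta_{\Phi'_\e}(\tilde{\alpha}^\e)$, so that the family $X'_\e$ is smoothly exact in $C^\bullet_{def}(\Phi'_0)$.

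The only change from the earlier argument is the conclusion drawn from smooth exactness. Because $F_\e$ covers a diffeomorphism $f_\e$ of $M$ with $F_0 = Id_\G$, the deformation $\Phi'_\e$ covers the base family $\phi'_\e = f_\e^{-1}\circ\phi_\e$ with $\phi'_0 = \phi_0$, which is an injective immersion, and $N$ is compact by hypothesis. These are precisely the standing assumptions of Theorem~\ref{thm:homostronglytrivial}, which therefore applies to $\Phi'_\e$ and produces a smooth family of bisections $\sigma_\e$ of $\G$, with $\sigma_0 = u_\G$, such that $\Phi'_\e = I_{\sigma_\e}\circ\Phi_0$. Composing back with $F_\e$ gives $\Phi_\e = F_\e\circ\Phi'_\e = F_\e\circ I_{\sigma_\e}\circ\Phi_0$, which is exactly the definition of a deformation that is strongly trivial up to automorphisms of $\G$. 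I expect the main obstacle to be the same one that makes Theorem~\ref{thm:homostronglytrivial} more delicate than Theorem~\ref{gauge-triviality}, namely upgrading the gauge map furnished by smooth exactness to a genuine smooth family of \emph{global} bisections; this is where the compactness of $N$ and the injective-immersion hypothesis on $\phi_0$ are indispensable, via Proposition~\ref{Prop:familybisectionsfromgauge}. One should also check that the flow producing $F_\e$ and the bisections $\sigma_\e$ can be arranged over a common open interval $I$ containing $0$.
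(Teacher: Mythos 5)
Your proposal is correct and follows exactly the route the paper intends: the paper's "proof" of this theorem is literally the remark that the argument of Theorem \ref{thm:preimage pullback t.u.t.a.r.} goes through verbatim, with Theorem \ref{thm:homostronglytrivial} (whose hypotheses are precisely the injective immersion condition on $\phi_0$ and the compactness of $N$) replacing Theorem \ref{gauge-triviality} in the final step. Your identification of where each hypothesis enters, and your check that $\Phi'_\e = F_\e^{-1}\circ\Phi_\e$ still satisfies the standing assumptions of Theorem \ref{thm:homostronglytrivial} at $\e=0$, match the paper's argument.
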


\begin{remark}
In particular, if the family $X_\e$ of the previous Theorem is smoothly exact, then Theorem \ref{thm:homostronglytrivial} shows that the family of automorphisms of $\G$ can be taken as a family of inner automorphisms.
\end{remark}

With a very similar proof to that of Theorem \ref{thm:preimage pullback t.u.t.a.r.}, we can consider the variated complex $\tilde{C}_{def}^{*}(\Phi_\e)$ and prove 

\begin{theorem}\label{thm:preimage pullback strong t.u.t.a.r., variated complex}
Let $(\Phi_\e,\phi_\e)$ be a deformation of the morphism $(\Phi_0,\phi_0):(\calH\rightrightarrows N)\longrightarrow (\calG\rightrightarrows M)$. Assume that $\G$ is compact. Then, $\Phi_\e$ is strongly trivial up to automorphisms of $\calG$ if and only if $[X_\e]\in\tilde{H}^{\bullet}_{def}(\Phi_\e)$ has a smooth preimage by $\Phi_\e^{*}$ in $H^{\bullet}_{def}(\calG)$ for all $\e\in I$, where $I$ is some interval containing the zero.
\end{theorem}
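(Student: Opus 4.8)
The plan is to imitate the proof of Theorem \ref{thm:preimage pullback t.u.t.a.r.} line by line, but with the variated complex $\tilde{C}^{\bullet}_{def}$ in place of $C^{\bullet}_{def}$, so that every transgressing $0$-cochain is now a pullback $\phi_\e^{*}\alpha_\e$ of a genuine section $\alpha_\e\in\Gamma(A_\G)$. Two structural substitutions make this work. First, in the forward direction I differentiate the \emph{inner} factor $I_{\sigma_\e}\circ\Phi_0$ using Remark \ref{rmk:sections-bisections} (bisections) rather than an arbitrary gauge map, which is precisely what forces the transgressing cochain into $\tilde{C}^{0}_{def}$. Second, in the converse the triviality step is run through a strong-triviality criterion in the spirit of Theorem \ref{thm:strongtriviality}, and the hypothesis that $\G$ (rather than $N$) is compact is what guarantees completeness of the relevant flows and the existence of the integrating family of bisections.

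For the forward direction, suppose $\Phi_\e=F_\e\circ\Psi_\e$ with $\Psi_\e:=I_{\sigma_\e}\circ\Phi_0$, where $F_\e$ is a smooth family of automorphisms of $\G$ with $F_0=\mathrm{Id}_\G$ and $\sigma_\e$ a smooth family of bisections with $\sigma_0=u_\G$. By Remark \ref{rmk:sections-bisections}, $\tfrac{d}{d\e}\Psi_\e=\delta_{\Psi_\e}(\beta^\e)$ with $\beta^\e=(\varphi_\e\circ\phi_0)^{*}\alpha_\e$, $\varphi_\e=t\circ\sigma_\e$ and $\alpha_\e\in\Gamma(A_\G)$ the infinitesimal generator of $\sigma_\e$. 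Differentiating $\Phi_\e=F_\e\circ\Psi_\e$ exactly as in Theorem \ref{thm:preimage pullback t.u.t.a.r.} yields
\[X_\e=\Phi_\e^{*}Z_\e+\delta_{\Phi_\e}\big((F_\e)_{*}\beta^\e\big),\qquad Z_\e:=\tfrac{dF_\e}{d\e}\circ F_\e^{-1}\in Z^{1}_{def}(\G).\]
Since $(F_\e)_{*}$ is induced by the Lie algebroid automorphism $dF_\e$ covering the base diffeomorphism $f_\e$ of $F_\e$, the pushforward of a pullback section is again a pullback section; hence $\bar\alpha^\e:=(F_\e)_{*}\beta^\e\in\tilde{C}^{0}_{def}(\Phi_\e)$, and $[X_\e]=\Phi_\e^{*}[Z_\e]$ has a smooth preimage in $H^{\bullet}_{def}(\G)$ in the variated sense.

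For the converse, assume $X_\e=\Phi_\e^{*}Z_\e+\delta_{\Phi_\e}(\phi_\e^{*}\alpha_\e)$ with $Z_\e\in Z^{1}_{def}(\G)$ and $\alpha_\e\in\Gamma(A_\G)$ smooth families. As $\G$ is compact, the time-dependent vector field $Z_\e$ on $\G$ is complete, so its flow $F_\e:=\Phi^{\e,0}$ is a smooth family of automorphisms of $\G$ with $F_0=\mathrm{Id}_\G$. Put $\Phi'_\e:=F_\e^{-1}\circ\Phi_\e$ and set $\tilde\alpha^\e:=(F_\e^{-1})_{*}(\phi_\e^{*}\alpha_\e)$. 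Repeating the computation of Theorem \ref{thm:preimage pullback t.u.t.a.r.} gives $X'_\e=\delta_{\Phi'_\e}(\tilde\alpha^\e)$. The point to check is that $\tilde\alpha^\e$ is of variated type: writing $\phi'_\e=f_\e^{-1}\circ\phi_\e$ and $\alpha'_\e:=(F_\e^{-1})_{*}\alpha_\e\in\Gamma(A_\G)$, one has $(F_\e^{-1})_{*}(\phi_\e^{*}\alpha_\e)=(\phi'_\e)^{*}\alpha'_\e$, a pullback of a global section. Thus $X'_\e$ is smoothly exact in the variated complex $\tilde{C}^{\bullet}_{def}(\Phi'_\e)$, whence $\Phi'_\e$ is strongly trivial and $\Phi_\e=F_\e\circ\Phi'_\e$ is strongly trivial up to automorphisms of $\G$.

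The main obstacle is the final step, where I must extract a smooth family of bisections $\sigma_\e$ of $\G$ from the global section $\alpha'_\e$ so as to realize $\Phi'_\e=I_{\sigma_\e}\circ\Phi_0$. Theorem \ref{thm:strongtriviality} does this under compactness of $N$, using a compactly supported extension of the section off $\phi(N\times I)$ to ensure completeness of the flow (as in Proposition \ref{Prop:familybisectionsfromgauge}). Here no extension is needed, since $\alpha'_\e$ is already a global section of $A_\G$; instead I use that $\G$ is compact, so the right-invariant vector field $\overrightarrow{\alpha'_\e}$ has a complete flow whose exponential produces the desired family $\sigma_\e$ with $\sigma_0=u_\G$. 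Matching the integral curves of the time-dependent vector field $\delta_\G(\alpha'_\e)$ through $\Phi_0(h)$, exactly as in the converse of Theorem \ref{thm:strongtriviality}, then gives $\Phi'_\e(h)=I_{\sigma_\e}\circ\Phi_0(h)$ for all $h$ and all small $\e$, completing the argument.
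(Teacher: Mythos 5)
Your proposal is correct and follows essentially the route the paper intends: the paper states this theorem with the remark that it has ``a very similar proof to that of Theorem \ref{thm:preimage pullback t.u.t.a.r.}'' using the variated complex, and your argument is precisely that adaptation, with the two details that genuinely need checking supplied correctly --- namely that $(F_\e)_{*}$ and $(F_\e^{-1})_{*}$ carry pullback sections $\phi_\e^{*}\alpha_\e$ to pullback sections (since $dF_\e$ is an algebroid automorphism covering $f_\e$), and that compactness of $\G$ replaces the compactly supported extension of Theorem \ref{thm:strongtriviality} in producing a complete flow and hence a global family of bisections.
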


As a final result of this subsection we sketch an alternative characterization of these deformations under the weaker condition of smooth exactness on the cokernel complexes $\mathrm{Coker}(\Phi_\e^{*})$ of the pullback maps $\Phi_\e^{*}$. Given a deformation $\Phi_\e$, let $\bar{X}_\e\in\mathrm{Coker}(\Phi_\e^{*})$ denote the image of the cocycles $X_\e$ in the cokernel complex. The smooth exactness of the cocycles $\bar{X}_\e$ is defined following the philosophy that all the elements involved in the transgression of the family $\bar{X}_\e$ form smooth families.

\begin{theorem}\label{thm:trivial utar, surbmersion}
Let $(\Phi_\e,\phi_\e)$ be a deformation of the morphism $\Phi_0:\calH\longrightarrow\calG$. Assume that $\Phi_0$ is a surjective submersion and that $\calH$ and $\calG$ are compact and connected. Then, the deformation $\Phi_\e$ is trivial up to automorphisms of $\G$ if and only if the family of cocycles $\bar{X}_\e$ in $\mathrm{Coker}(\Phi_\e^{*})$ is smoothly exact.
\end{theorem}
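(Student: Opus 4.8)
The plan is to deduce this statement from Theorem~\ref{thm:preimage pullback t.u.t.a.r.}, by showing that, under the present hypotheses, smooth exactness of $\bar{X}_\e$ in $\mathrm{Coker}(\Phi_\e^{*})$ is equivalent to $[X_\e]$ having a smooth preimage by $\Phi_\e^{*}$ in $H^{\bullet}_{def}(\calG)$. The bridge between the two conditions is the injectivity of the cochain map $\Phi_\e^{*}\colon C^{\bullet}_{def}(\calG)\to C^{\bullet}_{def}(\Phi_\e)$, which is what forces the cochain produced by a cokernel primitive to be an honest cocycle downstairs.

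First I would secure that $\Phi_\e$ remains a surjective submersion for all $\e$ in a small interval around $0$. Being a submersion is an open condition, so by compactness of $\calH$ the map $\Phi_\e$ is a submersion on all of $\calH$ once $\e$ is close enough to $0$; then $\Phi_\e(\calH)$ is open in $\calG$, while it is also compact, hence closed, and nonempty, so connectedness of $\calG$ gives $\Phi_\e(\calH)=\calG$. Thus each $\Phi_\e$ (for $\e$ small) is again a surjective submersion morphism. From this I would derive that the induced maps on nerves $\Phi_\e^{(k)}\colon\calH^{(k)}\to\calG^{(k)}$ are surjective, which is exactly the property needed to conclude that $\Phi_\e^{*}$ is injective in each degree: if $\Phi_\e^{*}c$ vanishes on every composable string of $\calH$, then $c$ vanishes on every composable string of $\calG$.

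With injectivity in hand, $\Phi_\e^{*}$ yields a short exact sequence of complexes $0\to C^{\bullet}_{def}(\calG)\to C^{\bullet}_{def}(\Phi_\e)\to\mathrm{Coker}(\Phi_\e^{*})\to 0$, smoothly in $\e$. Since $X_\e$ is a cocycle, its image $\bar{X}_\e$ is automatically a cocycle in the cokernel, and I would read off from this sequence the chain of equivalences: $\bar{X}_\e$ is (smoothly) exact in $\mathrm{Coker}(\Phi_\e^{*})$ if and only if $X_\e$ differs from an element of $\mathrm{Im}(\Phi_\e^{*})$ by a coboundary $\delta_{\Phi_\e}(\bar{\alpha}^{\e})$, if and only if $[X_\e]$ admits a smooth preimage by $\Phi_\e^{*}$ in $H^{\bullet}_{def}(\calG)$. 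Concretely, lifting a smooth cokernel primitive $\bar{\beta}_\e$ to $\beta_\e\in C^{0}_{def}(\Phi_\e)$ gives $X_\e-\delta_{\Phi_\e}\beta_\e=\Phi_\e^{*}(Z_\e)$ for a unique $Z_\e\in C^{1}_{def}(\calG)$, and injectivity together with $\delta X_\e=0$ forces $\delta Z_\e=0$, so $Z_\e$ is the desired smooth family of cocycles; the converse direction is immediate by projecting to the cokernel. Finally, since $\calG$ is compact, Theorem~\ref{thm:preimage pullback t.u.t.a.r.} identifies the smooth-preimage condition with $\Phi_\e$ being trivial up to automorphisms of $\calG$, which completes the argument.

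The main obstacle I anticipate is the step asserting surjectivity of the nerve maps $\Phi_\e^{(k)}$: lifting a composable string from $\calG$ to $\calH$ requires lifting each arrow subject to the composability constraints inherited from the previous ones, which is a star-surjectivity (fibration-type) property rather than a consequence of plain surjectivity of $\Phi_\e$ on arrows. Making this precise, and doing so uniformly in $\e$ so that all the families remain smooth, is the technical heart of the proof; the remaining homological bookkeeping and the appeal to Theorem~\ref{thm:preimage pullback t.u.t.a.r.} are routine.
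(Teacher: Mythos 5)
Your proposal is correct and follows essentially the same route as the paper's own (sketched) proof: both arguments reduce to Theorem \ref{thm:preimage pullback t.u.t.a.r.} by taking the cochain-level identity $X_\e=\Phi_\e^{*}Z_\e+\delta_{\Phi_\e}(\bar{\alpha}^{\e})$ coming from a smooth cokernel primitive, applying $\delta_{\Phi_\e}$ to get $\Phi_\e^{*}(\delta_{\calG}Z_\e)=0$, and using surjectivity of $\Phi_\e$ (preserved for small $\e$ by the same compactness-plus-connectedness openness argument) to conclude that $Z_\e$ is an honest cocycle. The nerve-surjectivity issue you single out as the technical heart is exactly the step the paper's sketch also passes over with the phrase that $Z_\e$ is ``multiplicative on the image of $\Phi_\e$ \ldots\ that is, multiplicative on $\calG$,'' so your explicit flagging of it is, if anything, more careful than the source.
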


\begin{proof}[Sketch of proof]

The exactness condition amounts to have

\begin{equation}\label{eq:exactness in cokernel}
X_\e=\Phi_\e^{*}Z_\e+\delta_{\Phi_\e}(\bar{\alpha}^\e),
\end{equation}
for $Z_\e$ and $\bar{\alpha}^\e$ smooth families of elements in $C^{1}_{def}(\calG)$ and $C^{0}_{def}(\Phi_\e)$, respectively. Let $F_\e=\Phi^{\e, 0}$ denote the flow from time $0$ to $\e$ of the time dependent vector field $Z_\e$ on $\calG$. A priori, every $F_\e$ will be a diffeomorphism of $\calG$. However we \emph{claim} that every $F_\e$ is a morphism for all $\e$ small enough.

Indeed, in order to prove the claim note that by applying $\delta_{\Phi_\e}$ to equation \eqref{eq:exactness in cokernel} we get
$$\delta_{\Phi_\e}(\Phi_\e^{*}Z_\e)=0,$$
which evaluating in arrows $(g,h)\in\calH^{(2)}$ shows us that $Z_\e$ is multiplicative on the image of $\Phi_\e$. Thus, since the morphism $\Phi_0$ is assumed to be a surjective submersion then, by the compactness of $\calH$ and the connectedness of the groupoids, $\Phi_\e$ will be a surjective submersion for all $\e$ small enough. That is, every $Z_\e$ is multiplicative on $\G$. Therefore one can follows now the proof of Theorem \ref{thm:preimage pullback t.u.t.a.r.} to check that the deformation $\Phi_\e$ is trivial up to automorphisms of $\G$. The converse statement follows as in Theorem \ref{thm:preimage pullback t.u.t.a.r.}.

\end{proof}

\subsection{Additional remarks on triviality}\label{Moreontriviality}

The previous results take account of four notions of equivalences defined for deformations of morphisms. However, there are some other very natural types of deformations that can be considered. Here we briefly explain these notions and show how they fit in the framework of deformation complexes.

\begin{definition}\label{Def:strongly trivial up to automorp}
We say that a deformation $\Phi_\e$ of $\Phi_0:\calH\to\calG$ is \textbf{strongly trivial up to automorphisms of $\calH$} or \textbf{strongly trivial up to automorphisms on the left} if there exist an open interval $I$ around 0 and a smooth family $F_\e:\calH\to\calH$ of automorphisms of $\calH$ with $F_0=Id_\calH$ and a smooth family $\sigma_\e$ of bisections of $\calG$ with $\sigma_0=1_\calG$ such that $\Phi_\e\circ F_\e=I_{\sigma_\e}\circ\Phi_0$, for all $\e\in I$. Analogously a deformation is called \textbf{trivial up to automorphisms of $\calH$} if there exists a smooth family $F_\e:\calH\to\calH$ of automorphisms of $\calH$ with $F_0=Id_\calH$ and a smooth family $\tau_\e:N\to\G$ of gauge maps over $\phi_0$ such that $\tau_0=1_\G\circ\phi_0$ and $\Phi_\e\circ F_\e=I_{\tau_\e}\circ\Phi_0$, for all $\e\in I$.
\end{definition}

\begin{remark}
Deformations which are strongly trivial up to automorphisms on left are very much related to what should be called \emph{trivial deformations of Lie subgroupoids} where it is required to deform the groupoid on left as well, and the family $F_\e$ defines the type of deformations allowed. These deformations are studied in detail in \cite{CardS-Subgroupoids}.
\end{remark}

The arguments used in the proof of Theorem \ref{gauge-triviality} can also be applied to get results concerning the two types of deformations defined above. The following two results give conditions to characterize deformations which are strongly trivial up to automorphisms on left by using the usual deformation complex $C^{*}_{def}(\Phi_\e)$ and its variation $\tilde{C}^{*}_{def}(\Phi_\e)$. In order to illustrate that, recall first that a Lie groupoid morphism $\Phi:\calH\to\calG$ induces a \textit{push-forward} map $(\Phi_\e)_{*}:H^{1}_{def}(\calH)\to H^{1}_{def}(\Phi)$.

\begin{definition}
We will say that a family $[X_\e] \in H_{def}^1(\Phi_\e)$ has a \textbf{smooth pre-image in $H^1_{def}(\calH)$} if there exist smooth families of cocycles $Y_\e \in C^1_{def}(\calH)$ and $\bar{\alpha}^\e \in C^{0}_{def}(\Phi_\e)$ such that
\[X_\e = (\Phi_\e)_{*}(Y_\e)+\delta_{\Phi_\e}(\bar{\alpha}^\e).\] Analogously, considering the variation $\tilde{C}_{def}^{*}(\Phi_\e)$, since $\tilde{C}^{1}_{def}(\Phi_\e)=C^{1}_{def}(\Phi_\e)$, we say that a family $[X_\e] \in \tilde{H}_{def}^1(\Phi_\e)$ has a \textbf{smooth pre-image in $H^1_{def}(\calH)$} if there exist smooth families of cocycles $Y_\e \in C^1_{def}(\calH)$ and sections $\alpha'_{\e} \in \Gamma(A_\G)$ such that
\begin{equation}\label{eq:leftstronglytrivial}
X_\e = (\Phi_\e)_{*}(Y_\e)+\delta_{\Phi_\e}(\phi_\e^{*}\alpha_\e).
\end{equation}
\end{definition}

\begin{theorem}\label{almost-triviality}
Let $(\Phi_\e,\phi_\e)$ be a deformation of $(\Phi_0,\phi_0):(\calH\rightrightarrows N)\longrightarrow(\calG\rightrightarrows M)$. Assume that $\calH$ is compact and $\phi_0$ is an injective immersion. Then $\Phi_\e$ is strongly trivial up to automorphisms of $\calH$ if and only if $[X_\e]\in H^{1}_{def}(\Phi_\e)$ has a smooth pre-image in $H^{1}_{def}(\calH)$ by $(\Phi_\e)_{*}$ for $\e\in I$, where $I$ is an open interval containing 0.
\end{theorem}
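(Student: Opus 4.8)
The plan is to mirror the scheme of Theorem \ref{thm:preimage pullback t.u.t.a.r.}, but to transfer everything from the codomain $\calG$ to the domain $\calH$: in place of the pull-back $\Phi_\e^{*}$ and flows of multiplicative vector fields on $\calG$, I will use the push-forward $(\Phi_\e)_{*}$ and flows of multiplicative vector fields on $\calH$. The end reduction will be to the characterization of strongly trivial deformations in Theorem \ref{thm:homostronglytrivial}, which is available precisely because $\phi_0$ is an injective immersion and $N$ (being a closed submanifold of the compact $\calH$) is compact.

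For the forward implication I would suppose $\Phi_\e\circ F_\e=I_{\sigma_\e}\circ\Phi_0=:\Psi_\e$, with $F_\e$ a smooth family of automorphisms of $\calH$, $F_0=\mathrm{Id}_\calH$, and $\sigma_\e$ a smooth family of bisections with $\sigma_0=u_\G$. Differentiating the path of automorphisms produces a smooth time-dependent multiplicative vector field $Y_\lambda$ on $\calH$, that is a smooth family of $1$-cocycles in $C^1_{def}(\calH)$, determined by $\tfrac{d}{d\e}|_{\e=\lambda}F_\e(h)=Y_\lambda(F_\lambda(h))$. Differentiating $\Psi_\e=\Phi_\e\circ F_\e$ at $\e=\lambda$ and using that $\Psi_\e$ is a strongly trivial deformation of $\Phi_0$, so that $\tfrac{d}{d\e}|_{\e=\lambda}\Psi_\e=\delta_{\Psi_\lambda}(\beta_\lambda)$ for a smooth family $\beta_\lambda$ by Remark \ref{rmk:sections-bisections}, I would obtain, after the substitution $g=F_\lambda(h)$ and applying the inverse cochain map $(F_\lambda)_{\#}$ of Remark \ref{chain maps2} (which commutes with $\delta$),
\[
X_\lambda=(\Phi_\lambda)_{*}(-Y_\lambda)+\delta_{\Phi_\lambda}\big((F_\lambda)_{\#}\beta_\lambda\big).
\]
Since $-Y_\lambda$ and $(F_\lambda)_{\#}\beta_\lambda$ are smooth families, this exhibits a smooth pre-image of $[X_\e]$ in $H^1_{def}(\calH)$ under $(\Phi_\e)_{*}$. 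This direction uses neither compactness nor the immersion hypothesis.

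For the converse I would assume $X_\e=(\Phi_\e)_{*}(Y_\e)+\delta_{\Phi_\e}(\bar\alpha^\e)$ with $Y_\e\in C^1_{def}(\calH)$ a smooth family of cocycles and $\bar\alpha^\e\in C^0_{def}(\Phi_\e)$ smooth. Because each $Y_\e$ is a $1$-cocycle it is a multiplicative vector field, and because $\calH$ is compact the time-dependent flow $F_\e$ of $-Y_\e$ is complete and consists of Lie groupoid automorphisms of $\calH$ with $F_0=\mathrm{Id}_\calH$. Setting $\Psi_\e:=\Phi_\e\circ F_\e$, which is a deformation of $\Phi_0$ since $\Psi_0=\Phi_0$, a direct differentiation shows that the push-forward term cancels exactly because the flow is that of $-Y_\e$, leaving
\[
\tfrac{d}{d\e}\big|_{\e=\lambda}\Psi_\e=\delta_{\Psi_\lambda}\big(F_\lambda^{*}\bar\alpha^\lambda\big),
\]
where $F_\lambda^{*}$ is the cochain map of Remark \ref{chain maps}; thus the deformation cocycles of $\Psi_\e$ are smoothly exact in the sense of Definition \ref{def:smoothfamilycochains}. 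Applying Theorem \ref{thm:homostronglytrivial} to $\Psi_\e$ would yield a smooth family of bisections $\sigma_\e$ with $\sigma_0=u_\G$ and $\Psi_\e=I_{\sigma_\e}\circ\Phi_0$, that is $\Phi_\e\circ F_\e=I_{\sigma_\e}\circ\Phi_0$, which is precisely strong triviality up to automorphisms of $\calH$.

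The main obstacle is the converse: one must verify that the flow of the time-dependent cocycle $-Y_\e$ genuinely produces automorphisms of $\calH$ (multiplicativity of $Y_\e$) defined on the whole interval (completeness from compactness of $\calH$), and one must track the sign so that the push-forward term $(\Phi_\e)_{*}(Y_\e)$ is annihilated rather than doubled. The remaining bookkeeping, namely that $F_\lambda^{*}$ and $(F_\lambda)_{\#}$ are mutually inverse cochain maps commuting with $\delta$, and that all the constructed families depend smoothly on $\e$, is routine given Remarks \ref{chain maps} and \ref{chain maps2}.
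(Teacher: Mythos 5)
Your proof is correct and follows essentially the same route as the paper: differentiate the strong-triviality-up-to-automorphisms identity for the forward direction, and for the converse integrate the multiplicative family $Y_\e$ to automorphisms $F_\e$ of the compact $\calH$ and show that the deformation cocycles of $\Phi_\e\circ F_\e$ become smoothly exact. The only (harmless) difference is that you finish by invoking Theorem \ref{thm:homostronglytrivial} for $\Psi_\e=\Phi_\e\circ F_\e$, whereas the paper re-runs that theorem's ingredients inline (extending $\bar\alpha_\e$ to a global section of $A_\calG$ via the injective immersion and compactness, then comparing integral curves of $\delta(\alpha_\e)$); the hypotheses you need for that reduction are indeed available.
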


\begin{proof} Since the proof of this theorem is completely analogous to the proof of the previous theorems, we will only sketch the main ingredients. Assume that
\begin{equation}\label{pre-image3}
X_\e-\delta_{\Phi_\e}(\bar{\alpha}_\e)=-(\Phi_\e)_{*}Y_\e,\ \ \text{for all } \e,
\end{equation}
for smooth families $\bar{\alpha}_\e\in C^{0}_{def}(\Phi_\e)$ and $Y_\e\in Z^{1}_{def}(\calH)$.
Let $F_\e$ denote the flow from time 0 to $\e$ of the time dependent vector field $Y_\e$ on $\calH$. Recall that since every $Y_\e$ is a 1-cocycle it follows that $F_\e$ is a family of automorphisms of $\calH$.
Next we enunciate the key steps which prove the statement.

\begin{itemize}

\item Shrinking $I$ if necessary, use the injective immersive condition of $\phi_0$ to extend the sections $\bar{\alpha}_\e\in\Gamma(\phi_\e^{*}A_\calG)$ to a smooth time dependent section $\alpha_\e$ in $\Gamma(A_\calG)$ which vanishes outside a compact set containing $\bigcup_{\e\in I}\phi_\e(N)\subset M$. That is, $\alpha_\e$ satisfies $\bar{\alpha_\e}=\Phi_\e^{*}\alpha_\e$.

\item Consider the smooth family $\sigma_\e$ of bisections of $\calG$ induced by the flow of the time dependent right-invariant vector field $\overrightarrow{\alpha}_\e$ on $\calG$, and check that $\Phi_\e\circ F_\e(h)=I_{\sigma_\e}\circ \Phi_0(h)$, for all $h\in\calH$. This last part can be checked by showing that both sides are integral curves of the time dependent vector field $\delta(\alpha_\e)$ on $\calG$ starting at the same point at the same time. Indeed, on the one hand, the derivative $\frac{d}{d\e}I_{\sigma_\e}\circ \Phi_0(h)$ follows from Proposition \ref{cociclomorph.}; and on the other hand, the derivative $\frac{d}{d\e}\Phi_\e\circ F_\e(h)$ is
\begin{align*}
X_\e(F_\e(h))+d\Phi_\e(Y_\e(F_\e(h)))&=\delta_{\Phi_\e}(\bar{\alpha}_\e)(F_\e(h))\\
&=\Phi_\e^{*}(\delta(\alpha_\e))(F_\e(h))\\
&=\delta(\alpha_\e)(\Phi_\e\circ F_\e(h)).
\end{align*}
\end{itemize}

The proof of the converse statement is a direct computation after applying $\frac{d}{d\e}$ to the left strongly trivial expression $\Phi_\e\circ F_\e=I_{\sigma_\e}\circ \Phi_0$.
\end{proof}

Analogously, we can avoid the injective immersive condition in the previous Theorem by considering the variation complex $\tilde{C}^{*}_{def}(\Phi_\e)$.

\begin{theorem}\label{thm:left-strongtriviality}
Let $(\Phi_\e,\phi_\e)$ be a deformation of $(\Phi_0,\phi_0):(\calH\rightrightarrows N)\longrightarrow(\calG\rightrightarrows M)$. Assume that $\calH$ is compact. Then $\Phi_\e$ is strongly trivial up to automorphisms of $\calH$ if and only if $[X_\e]\in\tilde{H}^{1}_{def}(\Phi_\e)$ has a smooth pre-image in $H^{1}_{def}(\calH)$ by $(\Phi_\e)_{*}$ for all $\e\in I$, where $I$ is an open interval containing 0.
\end{theorem}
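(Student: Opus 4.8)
The plan is to mirror the proof of Theorem~\ref{almost-triviality}, replacing the complex $C^{\bullet}_{def}(\Phi_\e)$ by its variation $\tilde{C}^{\bullet}_{def}(\Phi_\e)$. The decisive gain is that, by definition of $\tilde{C}^{0}_{def}(\Phi_\e)=\phi_\e^{*}(\Gamma(A_\G))$, a smooth pre-image in the variation sense already provides a \emph{global} section $\alpha_\e\in\Gamma(A_\G)$ with $X_\e=(\Phi_\e)_{*}(Y_\e)+\delta_{\Phi_\e}(\phi_\e^{*}\alpha_\e)$; hence the extension step that forced the injective-immersion hypothesis in Theorem~\ref{almost-triviality} is no longer needed, and compactness of $\calH$ will be used only to guarantee completeness of the relevant flows. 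Note that, since $\calH$ is compact, its unit manifold $N$ (a closed embedded submanifold of $\calH$) is compact as well.

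For the forward implication I would assume $\Phi_\e\circ F_\e=I_{\sigma_\e}\circ\Phi_0$ with $F_\e$ a smooth family of automorphisms of $\calH$ ($F_0=Id_\calH$) and $\sigma_\e$ a smooth family of bisections of $\G$ ($\sigma_0=1_\G$), and then differentiate this identity at $\e=\l$. Writing $Y_\l\in C^{1}_{def}(\calH)$ for the deformation cocycle of the family $F_\e$ (a cocycle by Proposition~\ref{cociclomorph.}) and $\alpha_\l\in\Gamma(A_\G)$ for the family of sections attached to $\sigma_\e$ as in Remark~\ref{rmk:sections-bisections}, honest application of the chain rule to the composition $\Phi_\e\circ F_\e$ gives on the left $X_\l(F_\l(h))+(\Phi_\l)_{*}(Y_\l)(F_\l(h))$, while equation~\eqref{eq:2} together with the identity $\delta_{\Phi_\e}(\phi_\e^{*}\alpha_\e)(h')=\delta_\G(\alpha_\e)(\Phi_\e(h'))$ of~\eqref{eq:1} gives on the right $\delta_{\Phi_\l}(\phi_\l^{*}\alpha_\l)(F_\l(h))$. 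Since $F_\l$ is a bijection of $\calH$, running $F_\l(h)$ over all of $\calH$ yields $X_\l=(\Phi_\l)_{*}(-Y_\l)+\delta_{\Phi_\l}(\phi_\l^{*}\alpha_\l)$, which is exactly a smooth pre-image of $[X_\l]$ in $H^{1}_{def}(\calH)$ in the variation sense.

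For the converse I would start from $X_\e=(\Phi_\e)_{*}(Y_\e)+\delta_{\Phi_\e}(\phi_\e^{*}\alpha_\e)$ with $Y_\e\in Z^{1}_{def}(\calH)$ and $\alpha_\e\in\Gamma(A_\G)$ smooth families, and let $F_\e$ be the flow from $0$ to $\e$ of the time-dependent vector field $Y_\e$ on $\calH$; by compactness of $\calH$ this flow is complete, and since each $Y_\e$ is a $1$-cocycle, every $F_\e$ is an automorphism of $\calH$ with $F_0=Id_\calH$. The key observation is that $\delta_{\Phi_\e}(\phi_\e^{*}\alpha_\e)(h)=\delta_\G(\alpha_\e)(\Phi_\e(h))$ depends on $\alpha_\e$ only through its restriction to $\phi_\e(N)$; as $N$ is compact, $\bigcup_{\e\in\bar{I}}\phi_\e(N)$ is relatively compact, so I may replace $\alpha_\e$ by a smooth family $\tilde{\alpha}_\e\in\Gamma(A_\G)$ agreeing with $\alpha_\e$ on a neighbourhood of this set and vanishing outside a compact set, without altering the equation. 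Then $\overrightarrow{\tilde{\alpha}}_\e$ is complete for $\e$ small, and its flow produces a smooth family of bisections $\sigma_\e$ of $\G$ with $\sigma_0=1_\G$, exactly as in the proof of Proposition~\ref{Prop:familybisectionsfromgauge}. Finally, Proposition~\ref{cociclomorph.} and the computation of Remark~\ref{rmk:sections-bisections} show that both $\e\mapsto\Phi_\e\circ F_\e(h)$ and $\e\mapsto I_{\sigma_\e}\circ\Phi_0(h)$ are integral curves of the time-dependent vector field $\delta_\G(\alpha_\e)$ on $\G$ starting at $\Phi_0(h)$; uniqueness of integral curves gives $\Phi_\e\circ F_\e=I_{\sigma_\e}\circ\Phi_0$, i.e.\ strong triviality up to automorphisms of $\calH$.

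The main obstacle is the completeness issue in the converse: whereas Theorem~\ref{almost-triviality} arranged compact support of the extended section through the injective immersion, here one must instead work with the \emph{global} section $\alpha_\e$ supplied by the variation complex and guarantee that its right-invariant flow is defined. This is precisely resolved by the cut-off above, whose legitimacy rests on the fact that $\delta_{\Phi_\e}(\phi_\e^{*}\alpha_\e)$ only sees $\alpha_\e$ along the compact set $\phi_\e(N)$ --- the structural feature of $\tilde{C}^{\bullet}_{def}(\Phi_\e)$ that lets us dispense with the injective-immersion hypothesis required in Theorem~\ref{almost-triviality}.
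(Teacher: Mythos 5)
Your proposal is correct and follows essentially the same route as the paper, whose proof of this theorem is only a two-line sketch pointing to Theorem \ref{almost-triviality} for the overall scheme and to the extension/cut-off argument of Theorem \ref{thm:strongtriviality} (compactness of $N$, replacing $\alpha_\e$ by a compactly supported family agreeing with it near $\bigcup_\e\phi_\e(N)$, which is harmless because $\delta_{\Phi_\e}(\phi_\e^{*}\alpha_\e)$ only sees $\alpha_\e$ along $\phi_\e(N)$) for producing a complete right-invariant flow and hence global bisections --- exactly the two ingredients you combine. The only slip is a sign: with your normalization $X_\e=(\Phi_\e)_{*}(Y_\e)+\delta_{\Phi_\e}(\phi_\e^{*}\alpha_\e)$, and consistently with your own forward computation which outputs $-Y_\l$, the automorphisms $F_\e$ in the converse must be the flow of $-Y_\e$, so that $\frac{d}{d\e}\Phi_\e(F_\e(h))=\delta_{\G}(\tilde{\alpha}_\e)(\Phi_\e(F_\e(h)))$ rather than picking up an extra $2(\Phi_\e)_{*}(Y_\e)$ term.
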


\begin{proof}
The proof of this theorem follows the idea similar to that of the previous Theorem, however we now use the argument of extension of sections of the Theorem \ref{thm:strongtriviality} to obtain the smooth family of global sections which give rise to the family of bisections of $\G$.
\end{proof}

We remark that considering the cokernel complex $\mathrm{Coker}((\Phi_\e)_{*})$ we get an equivalent result to that of Theorem \ref{almost-triviality}. Indeed, let $\bar{X}_\e\in\mathrm{Coker}((\Phi_\e)_{*})$ denote the image in the cokernel complex of the cocycles $X_\e$ associated to the deformation $\Phi_\e$.

\begin{theorem}\label{thm:strongtriv utal, exactness coker}
Let $(\Phi_\e,\phi_\e)$ be a deformation of $(\Phi_0,\phi_0):(\calH\rightrightarrows N)\longrightarrow(\calG\rightrightarrows M)$. Assume that $\calH$ is compact and $\phi_0$ is an injective immersion. Then $\Phi_\e$ is strongly trivial up to automorphisms of $\calH$ if and only if $\bar{X}_\e$ is smoothly exact for $\e\in I$, where $I$ is an open interval containing 0.
\end{theorem}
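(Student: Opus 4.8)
The plan is to reduce the statement to Theorem~\ref{almost-triviality} by showing that the smooth exactness of $\bar{X}_\e$ in $\mathrm{Coker}((\Phi_\e)_*)$ is equivalent to the existence of a smooth pre-image of $[X_\e]$ in $H^1_{def}(\calH)$ by $(\Phi_\e)_*$; since the hypotheses here ($\calH$ compact, $\phi_0$ an injective immersion) are precisely those of Theorem~\ref{almost-triviality}, that equivalence yields the result. For the bookkeeping of smoothness in $\e$ I would work, as in the previous proofs, with the morphism $\tilde{\Phi}:\calH\times I\to\calG$ over the trivial fibration, so that the families $\bar{\beta}_\e$, $W_\e$, $Y_\e$ below are encoded as single sections over $\calH\times I$ and $N\times I$.

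The forward implication is immediate. If $\Phi_\e$ is strongly trivial up to automorphisms of $\calH$, Theorem~\ref{almost-triviality} produces smooth families $Y_\e\in Z^1_{def}(\calH)$ and $\bar{\alpha}^\e\in C^0_{def}(\Phi_\e)$ with $X_\e=(\Phi_\e)_*(Y_\e)+\delta_{\Phi_\e}(\bar{\alpha}^\e)$. Projecting this identity to the cokernel complex annihilates the term $(\Phi_\e)_*(Y_\e)\in\mathrm{Im}((\Phi_\e)_*)$, leaving $\bar{X}_\e=\bar{\delta}(\overline{\bar{\alpha}^\e})$; hence $\bar{X}_\e$ is smoothly exact.

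For the converse I would unwind the exactness of $\bar{X}_\e$: choosing smooth lifts one obtains smooth families $\bar{\beta}_\e\in C^0_{def}(\Phi_\e)$ and $W_\e\in C^1_{def}(\calH)$ with
\[X_\e=\delta_{\Phi_\e}(\bar{\beta}_\e)+(\Phi_\e)_*(W_\e).\]
Applying $\delta_{\Phi_\e}$ and using that $X_\e$ is a cocycle together with the chain-map identity $\delta_{\Phi_\e}\circ(\Phi_\e)_*=(\Phi_\e)_*\circ\delta_\calH$ yields $(\Phi_\e)_*(\delta_\calH W_\e)=0$. The decisive step is then to replace $W_\e$ by a genuine cocycle without altering its image: I would seek a smooth family $U_\e\in C^1_{def}(\calH)$ with $(\Phi_\e)_*(U_\e)=0$ and $\delta_\calH U_\e=\delta_\calH W_\e$, so that $Y_\e:=W_\e-U_\e$ is a smooth family of cocycles with $(\Phi_\e)_*(Y_\e)=(\Phi_\e)_*(W_\e)$. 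Feeding $X_\e=\delta_{\Phi_\e}(\bar{\beta}_\e)+(\Phi_\e)_*(Y_\e)$ into Theorem~\ref{almost-triviality} then exhibits $\Phi_\e$ as strongly trivial up to automorphisms of $\calH$.

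The main obstacle is exactly this upgrade of $W_\e$ to a cocycle, that is, the control of the subcomplex $\mathrm{Ker}((\Phi_\e)_*)\subset C^\bullet_{def}(\calH)$: the element $\delta_\calH W_\e$ is a coboundary in $C^\bullet_{def}(\calH)$ that already lies in $\mathrm{Ker}((\Phi_\e)_*)$, and the required $U_\e$ exists precisely when its class vanishes in $H^2(\mathrm{Ker}((\Phi_\e)_*))$. Phrased through the long exact sequence of $0\to\mathrm{Ker}((\Phi_\e)_*)\to C^\bullet_{def}(\calH)\to\mathrm{Im}((\Phi_\e)_*)\to 0$, this is the vanishing of the connecting homomorphism on the class of the cocycle $(\Phi_\e)_*(W_\e)$, which I expect to establish from the standing hypotheses in the same spirit as Theorem~\ref{thm:trivial utar, surbmersion}, where compactness and the submersion/immersion hypothesis are used to promote data defined only up to the kernel (there, cochains multiplicative merely on the image) to honest global cocycles. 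Once the smoothness of $U_\e$ in $\e$ is arranged through the $\calH\times I$ picture, the proof closes by citing Theorem~\ref{almost-triviality}.
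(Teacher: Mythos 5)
Your reduction to Theorem~\ref{almost-triviality} and your forward implication are both correct, and you have isolated the right crux: after unwinding the exactness of $\bar{X}_\e$ to $X_\e=\delta_{\Phi_\e}(\bar{\beta}_\e)+(\Phi_\e)_*(W_\e)$, one must show that $W_\e$ may be replaced by a cocycle with the same image. But you leave that crux unproven. The existence of the correcting family $U_\e\in\mathrm{Ker}((\Phi_\e)_*)$ with $\delta_\calH U_\e=\delta_\calH W_\e$ is only asserted to be establishable \emph{in the same spirit as} Theorem~\ref{thm:trivial utar, surbmersion}, with no argument supplied, and it is not clear how the mechanism there (promoting cochains that are multiplicative on the image of a surjective submersion to globally multiplicative ones) would transfer to controlling $H^2(\mathrm{Ker}((\Phi_\e)_*))$. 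As written, the converse direction is incomplete at its decisive step.

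What you are missing is that the hypothesis that $\phi_0$ is an injective immersion is in the statement precisely to make the cochain maps $(\Phi_\e)_*$ injective, and this kills the obstruction you are trying to control. The paper's proof is your computation, one step shorter: from $X_\e=\delta_{\Phi_\e}(\bar{\beta}_\e)+(\Phi_\e)_*(W_\e)$ and the chain-map identity one gets $(\Phi_\e)_*(\delta_\calH W_\e)=0$, and injectivity of $(\Phi_\e)_*$ then gives $\delta_\calH W_\e=0$ outright; so $W_\e$ is already a cocycle, and no $U_\e$, no short exact sequence $0\to\mathrm{Ker}((\Phi_\e)_*)\to C^{\bullet}_{def}(\calH)\to\mathrm{Im}((\Phi_\e)_*)\to0$, and no connecting homomorphism are needed. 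In your own language, $\mathrm{Ker}((\Phi_\e)_*)=0$, so $H^{2}(\mathrm{Ker}((\Phi_\e)_*))=0$ and the obstruction vanishes for trivial reasons. If you insert this observation (together with a justification of why the injective-immersion hypothesis forces $(\Phi_\e)_*$ to be injective on cochains), your argument closes and coincides with the paper's.
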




\begin{proof}
The proof of this theorem follows directly from the fact that, under the injectivity of the cochain maps $(\Phi_\e)_{*}$, the exactnees condition of the family of cocycles $\bar{X}_\e$ is equivalent to the existence of a smooth pre-image in $H^{1}_{def}(\calH)$ by $(\Phi_\e)_{*}$ of the classes of $X_\e$. Thus, we will be able to use the same proof of Theorem \ref{almost-triviality}.
\end{proof}

\begin{remark}\label{rmk:strongly triv utal, variated complex}
In contrast, we can modify the cokernel complex $Coker((\Phi_{\e})_{*})$ in zero degree changing it by the 0-cochains $\widetilde{C}_{def}^{0}(\Phi_\e)=\Phi_\e^{*}(\Gamma(A_\calG))$ of the variation complex, and the exactness of the deformation cocycles in this modified cokernel complex will allow us to define a trivial deformation different from that of Theorem \ref{thm:left-strongtriviality}. Indeed, in this case, the exactness of the deformation cocycles in the complex is equivalent to get a \textbf{strongly trivial up to diffeomorphisms on the left} deformation. These deformations are totally analogous to the ones which are strongly trivial up to automorphisms on the left, the only difference lies in taking a family of diffeomorphisms $F_\e$ instead of a family of automorphisms of the Lie groupoid as in Definition \ref{Def:strongly trivial up to automorp}.
\end{remark}

As an application of the Theorem \ref{thm:left-strongtriviality} we deduce now the particular case of 1-deformations of smooth maps of the Thom-Levine's Theorem (see \cite{guillemin-golubitsky}, p. 124) regarding the characterization of trivial $k$-deformations of smooth functions between manifolds. In Section \ref{Section:Thom-Levine} we will get the full Thom-Levine's Theorem as an instance of the deformation theory developed in this Section.

\begin{example}\label{thm:T-L thm 1-deform}[Thom-Levine's Theorem for 1-deformations]
Note that smooth functions between manifolds are in 1-1 correspondence with morphisms between the associated pair groupoids. Indeed, these morphisms are of the form $f\times f:\textrm{Pair}(M)\to\textrm{Pair}(N)$, where $f:M\to N$ is a smooth function between manifolds. From this one also gets that an automorphism of a pair groupoid is totally determined by the corresponding diffeomorphism on the base manifold.

With this setting, one checks that the characterizing equation \eqref{eq:leftstronglytrivial} of strongly trivial up to automorphisms deformations of morphisms between pair groupoids translates exactly to the condition which characterizes trivial deformations of smooth functions from the Thom-Levine Theorem. Indeed, if $F:M\times I\to N\times I$ is a deformation of the smooth function $f$, the characterizing condition given by Thom-Levine's Theorem is described by the equation

\begin{equation}\label{Eq:condition 1-deform T-L.}
F_{*}(\partial/\partial t)=F_{*}(\zeta)+ F^{*}(\eta)+F^{*}(\partial/\partial t),
\end{equation}
where $\zeta$ and $\eta$ are time-dependent vector fields on the source and target manifolds, respectively.
The equivalence between equations \eqref{eq:leftstronglytrivial} and \eqref{Eq:condition 1-deform T-L.} follows directly from the fact that 1-cocycles $Y\in C^{*}_{def}(\calG)$ (i.e. multiplicative vector fields) on a pair groupoid $\calG$ are of the form
$$Y(p,q)=(\zeta(p),\zeta(q)),$$
where $\zeta$ is an usual vector field on the base of the pair groupoid.
\end{example}

\begin{theorem}\label{thm:trivial utal, pre-image}
Let $(\Phi_\e,\phi_\e)$ be a deformation of $(\Phi_0,\phi_0):(\calH\rightrightarrows N)\longrightarrow(\calG\rightrightarrows M)$. Assume that $\calH$ is compact. Then $\Phi_\e$ is trivial up to automorphisms of $\calH$ if and only if $[X_\e]\in H^{1}_{def}(\Phi_\e)$ has a smooth pre-image in $H^{1}_{def}(\calH)$ by $(\Phi_\e)_{*}$ for all $\e\in I$, where $I$ is an open interval containing 0.
\end{theorem}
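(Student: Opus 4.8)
The plan is to prove both implications by reducing to the gauge-triviality criterion of Theorem \ref{gauge-triviality}, exploiting the fact that conjugating a deformation on the left by a smooth family of automorphisms of $\calH$ changes its deformation cocycle only by a pushforward term $(\Phi_\e)_{*}(Y_\e)$. Since Theorem \ref{gauge-triviality} requires only smoothness, and not the injective-immersion hypothesis that was needed in Theorems \ref{almost-triviality} and \ref{thm:strongtriviality} to manufacture global bisections, this route delivers the statement under the sole hypothesis that $\calH$ is compact.

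For the nontrivial (backward) implication I would start from a smooth pre-image $X_\e=(\Phi_\e)_{*}(Y_\e)+\delta_{\Phi_\e}(\bar\alpha^\e)$, with $Y_\e\in Z^{1}_{def}(\calH)$ and $\bar\alpha^\e\in C^{0}_{def}(\Phi_\e)$ smooth families. Let $F_\e$ be the flow from time $0$ to $\e$ of the time-dependent vector field $-Y_\e$ on $\calH$. Because each $-Y_\e$ is a multiplicative vector field (a $1$-cocycle of the deformation complex of $\calH$), each $F_\e$ is a Lie groupoid automorphism with $F_0=Id_\calH$, and the compactness of $\calH$ is exactly what guarantees that the flow is complete, so that $F_\e$ is defined for all $\e$ in an interval around $0$; this is the step where the hypothesis is essential.

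Then I would consider the deformation $\Phi'_\e:=\Phi_\e\circ F_\e$ of $\Phi_0$ and compute its deformation cocycle $X'_\e$. Using the chain rule together with $\frac{d}{d\e}F_\e=-Y_\e\circ F_\e$, and then substituting the pre-image identity, the pushforward terms cancel:
\begin{align*}
X'_\l(h) &= X_\l(F_\l(h)) - (\Phi_\l)_{*}(Y_\l)(F_\l(h))\\
&= \delta_{\Phi_\l}(\bar\alpha^\l)(F_\l(h))\\
&= \delta_{\Phi'_\l}\big(F_\l^{*}\bar\alpha^\l\big)(h),
\end{align*}
where the last equality uses that the cochain map $F_\l^{*}:C^{*}_{def}(\Phi_\l)\to C^{*}_{def}(\Phi_\l\circ F_\l)$ of Remark \ref{chain maps} intertwines the differentials. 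Since $F_\l^{*}\bar\alpha^\l$ is again a smooth family of $0$-cochains, $X'_\e$ is smoothly exact, so Theorem \ref{gauge-triviality} yields a smooth family of gauge maps $\tau_\e$ over $\phi_0$ with $\tau_0=u_\G\circ\phi_0$ and $\Phi'_\e=\tau_\e\cdot\Phi_0$; that is, $\Phi_\e\circ F_\e=I_{\tau_\e}\circ\Phi_0$, which is precisely triviality up to automorphisms of $\calH$.

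The forward implication is a direct differentiation, entirely analogous to the corresponding direction of Theorem \ref{thm:preimage pullback t.u.t.a.r.}. Differentiating $\Phi_\e\circ F_\e=\tau_\e\cdot\Phi_0$ at $\e=\l$ and using Remark \ref{exactcocyclesmorph.} to write $\frac{d}{d\e}(\tau_\e\cdot\Phi_0)$ as $\delta_{\Phi'_\l}(\bar\alpha_\l)$ gives $X_\l(F_\l(h))+(\Phi_\l)_{*}(Y_\l)(F_\l(h))=\delta_{\Phi'_\l}(\bar\alpha_\l)(h)$, where $Y_\l$ generates the family $F_\e$. Applying the invertible chain map $(F_\l^{-1})^{*}$ (recall $F_\l$ is an automorphism, so $F_\l^{*}$ is invertible, cf. Remark \ref{chain maps2}) and using $\Phi'_\l\circ F_\l^{-1}=\Phi_\l$ turns this into $X_\l=(\Phi_\l)_{*}(-Y_\l)+\delta_{\Phi_\l}\big((F_\l^{-1})^{*}\bar\alpha_\l\big)$, which exhibits the required smooth pre-image in $H^{1}_{def}(\calH)$. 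The main obstacle, beyond the bookkeeping of signs and base maps, is precisely the completeness of the flow $F_\e$, secured by compactness of $\calH$; the conceptual content is the elementary but decisive observation that passing from $\Phi_\e$ to $\Phi_\e\circ F_\e$ cancels the pushforward term and thereby reduces the problem to the already-established gauge case.
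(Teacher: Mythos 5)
Your proposal is correct and follows essentially the same route as the paper: pass to $\Phi'_\e=\Phi_\e\circ F_\e$ with $F_\e$ the flow of the multiplicative time-dependent vector field supplied by the pre-image identity, observe that the pushforward terms cancel so that the cocycles of $\Phi'_\e$ are smoothly exact, and invoke Theorem \ref{gauge-triviality}; the converse is the same direct differentiation. Your write-up is in fact more explicit than the paper's (which delegates the cancellation to ``follows from equation \eqref{pre-image3}''), and you correctly identify compactness of $\calH$ as what guarantees the completeness of the flow $F_\e$.
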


\begin{proof}
Assume that $[X_\e]$ has smooth preimage by $(\Phi_\e)_{*}$ for all $\e\in I$. That is, equation \eqref{pre-image3} is satisfied. Then, following the notations of the proof of the Theorem above, define the deformation $\Phi'_\e=\Phi_\e\circ F_\e$ of $\Phi_0$. The proof is completed after checking that $\Phi'_\e$ is a trivial deformation. In fact, observe first that the cocycles $\frac{d}{d\e}\Phi'_\e \in C^{1}_{def}(\Phi'_\e)$ are smoothly exact, and this smooth exactness follows from equation \eqref{pre-image3}. Therefore, the triviality of $\Phi'_\e$ is a consequence of Theorem \ref{gauge-triviality}.
\end{proof}

\begin{remark}\label{rmk:trivial utdl}
Notice that, by considering the cokernel complex $C^{*}_{def}(\Phi_\e)$, one similarly checks that the smooth exactness of the cocycles, for all $\e$ small enough, is equivalent to get a deformation which is \textbf{trivial up to diffeomorphisms of $\calH$}.
\end{remark}

As a final remark of this section, observe that all the notions of triviality defined here arise from the complexes and maps involved in the following diagram of exact sequences.\\


\[\xymatrix{ & C_{def}^{*}(\calH) \ar[d]^{\Phi_{*}} & \\
C_{def}^{*}(\calG) \ar[r]^{\Phi^{*}} & C_{def}^{*}(\Phi) \ar[r]^{\pi_1}   \ar[d]^{\pi_2} & \mathrm{Coker(\Phi^{*})}\\
 & \mathrm{Coker(\Phi_{*})} & \\
 }
\] 

Indeed we can summarizes the results as follows. 
On the vertical direction: under the injective immersive condition of the morphisms, the requirements on the deformation cocycles, regarding either existence of smooth pre-images or exactness on the cokernel complexes, turn out to be equivalent to characterize strongly trivial u.t.a.l. (up to automorphisms on the left) deformations (Theorems \ref{almost-triviality}, \ref{thm:strongtriv utal, exactness coker}). Under the non-injective immersive condition, the existence of smooth pre-image requirement gives us triviality up to automorphisms on the left (triviality u.t.a.l.), see Theorem \ref{thm:trivial utal, pre-image}. However the exactness of the cocycles on the cokernel complex, gives us triviality u.t.d.l. (up to diffeomorphisms on the left), see Remark \ref{rmk:trivial utdl}.

Analogously, considering the variated complex $\tilde{C}^{*}_{def}(\Phi)$ we can avoid the injective imersive requirement on the morphism and show that the existence of smooth pre-images condition gives us deformations which are strongly trivial u.t.a.l. (Theorem \ref{thm:left-strongtriviality}) and the exactness on a variation of the cokernel complex (using the 0-degree cochains $\widetilde{C}^{0}_{def}(\Phi_\e)$) gives us strongly trivial u.t.d.l. deformations (Remark \ref{rmk:strongly triv utal, variated complex}).


On the horizontal direction, pre-image by the pullback gives deformations which are trivial up to automorphisms on the right (u.t.a.r.), see Theorem \ref{thm:preimage pullback t.u.t.a.r.}. By adding the injective immersive condition on the morphism, we obtain strongly trivial u.t.a.r. (see Theorem \ref{weak-triviality}). 
Similarly, the weaker condition of exactness on the cokernel complex gives us deformations which are trivial u.t.a.r. after assuming surjectivity and submersion conditions on the morphism (Theorem \ref{thm:trivial utar, surbmersion}).

Finally, regarding the variated complexes $\tilde{C}_{def}^{*}(\Phi_\e)$, assuming the existence of smooth pre-images for the cocycles, we do not need to require the injective immersive condition on the morphism to prove the strong triviality u.t.a.r. of the deformation (see Theorem \ref{thm:preimage pullback strong t.u.t.a.r., variated complex}).


\section{$k$-deformations and Thom-Levine's Theorem}\label{Section:Thom-Levine}
In the previous sections we have studied 1-parameter deformations of morphisms or, in other words, paths of morphisms. The main point of this section is sketching the behaviour of deformations with $k$ parameters, that is, deformations depending on $I^{k}=I\times\cdots\times I$ instead of just $I$, and to get as an application the Thom-Levine's Theorem regarding triviality of $k$-deformations of differentiable maps.


\begin{definition}
Given a morphism $\Phi_0:\calH\to\calG$ and an interval $I$ containing zero, a Lie groupoid morphism $\Phi:\calH\times I^{k}\to\calG$ such that $\Phi(\cdot,0,...,0)=\Phi_0$ will be called a \textbf{$k$-deformation} of $\Phi_0$. For every $(\e_1,...,\e_k)\in I^{k}$ we denote by $\Phi_{\e_1,...,\e_k}$ the morphism $\Phi(\cdot,\e_1,...,\e_k):\calH\to\calG$.
\end{definition}

The notions of triviality, strong triviality and all the other equivalences between deformations of the previous section are defined by taking $k$-parameter families instead of 1-parameter families of the elements involved in the definitions. For instance, a $k$-deformation $\Phi$ of $\Phi_0$ is said to be \textbf{trivial} if there exists a smooth $k$-parameters family $\tau_{\e_1,...,\e_k}:N\to\G$ of gauge maps covering $\phi_0:N\to M$ such that $\tau_{0,...,0}=\phi_0$ and
$$\Phi_{\e_1,...,\e_k}=\tau_{\e_1,...,\e_k}\cdot\Phi_0.$$
Similarly a $k$-deformation is called \textbf{strongly trivial up to automorphisms on the left} if there exist smooth $k$-parameters families $F_{\e_1,...,\e_k}:\calH\to\calH$ and $\sigma_{\e_1,...,\e_k}\in\mathrm{Bis}(\G)$ of automorphisms of $\calH$ and bisections of $\G$ such that $F_{0,...,0}=Id_\calH$, $\sigma_{0,...,0}=u_\G$ and
$$\Phi_{\e_1,...,\e_k}\circ F_{\e_1,...,\e_k}=I_{\sigma_{\e_1,...,\e_k}}\circ\Phi_0,$$
for all $\e_1,...,\e_k$ small enough.

By fixing the first $(k-1)$ components of $I^{k}$, say in $a=(a_1,...,a_{k-1})\in I^{k-1}$, in a $k$-deformation $\Phi:\calH\times I^{k}\to\calG$, one gets a 1-parameter family $\Phi^{a,k}_\e$ which we call a \textbf{canonical 1-deformation in $\Phi$ along the (canonical) direction $e_k$} of $I^{k}$. Similarly one defines the \textbf{canonical 1-deformations in $\Phi$ along the other canonical directions} $e_i$ of $I^{k}$, for $i=1,...,k-1$ and all $\e_1,...,\e_k$ small enough.

\begin{remark}
Notice that all the canonical 1-deformations in a trivial $k$-deformation $\Phi$ are trivial but, on the other hand, all the canonical 1-deformations in $\Phi$ along a specific direction, say $e_1$, might be trivial without implying that the whole $k$-deformation $\Phi$ is trivial. In fact, as Theorem below shows, a necessary and sufficient condition for the triviality of $\Phi$ is that all the canonical 1-deformations in $\Phi$ are trivial.
Note also that, by Theorem \ref{gauge-triviality}, the triviality of all the canonical 1-deformations (in $\Phi$) along $e_1$ amounts to the existence of a section $\bar{\alpha}^{1}\in\Gamma(\phi^{*}A_\G)$ (i.e., a $k$-parameter family of sections $\bar{\alpha}^{1}_{\e_1,...,\e_k}\in\Gamma(\phi_{\e_1,...,\e_k}^{*}(A_\G))$) such that
$$\frac{\partial\Phi}{\partial e^{1}}=\delta_{\Phi}(\bar{\alpha}^{1}),$$
or equivalently
$$\frac{\partial\Phi_{\e_1,...,\e_k}}{\partial\e_{1}}=\delta_{\Phi_{\e_1,...,\e_k}}(\bar{\alpha}^{1}_{\e_1,...,\e_k}),$$
for all $(\e_1,...,\e_k)\in I^{k}$.
\end{remark}

\begin{theorem}
Let $\Phi:\calH\times I^{k}\to\calG$ be a $k$-deformation of $\Phi_0:\calH\to\calG$. Then the deformation $\Phi$ is trivial if and only if the canonical 1-deformations in $\Phi$ are trivial. That is, if and only if there exist $k$ $k$-families $\bar{\alpha}^{1},...,\bar{\alpha}^{k}\in\Gamma(\phi^{*}A_\G)$ of sections, such that
\begin{equation}
\frac{\partial\Phi}{\partial e^{i}}=\delta_{\Phi}(\bar{\alpha}^{i}),
\end{equation}
for $i=1,...,k$ and all $\e_1,...,\e_k$ small enough.
\end{theorem}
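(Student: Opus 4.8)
The plan is to establish the nontrivial implication by induction on the number $k$ of parameters, using the one-parameter result (Theorem \ref{gauge-triviality}) both as the base case and as the engine of the inductive step. The forward implication is immediate: if $\Phi$ is trivial, say $\Phi_{\e_1,\ldots,\e_k}=\tau_{\e_1,\ldots,\e_k}\cdot\Phi_0$ for a smooth $k$-family of gauge maps over $\phi_0$, then freezing all coordinates but the $i$-th exhibits each canonical $1$-deformation along $e_i$ as a trivial $1$-deformation; differentiating as in Remark \ref{exactcocyclesmorph.} produces sections $\bar\alpha^i$ (built from $\tau$ exactly as in \eqref{sections-gauge}) with $\partial\Phi/\partial e^i=\delta_\Phi(\bar\alpha^i)$. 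The equivalence between ``all canonical $1$-deformations are trivial'' and ``the sections $\bar\alpha^i$ exist'' is precisely Theorem \ref{gauge-triviality} applied direction by direction, so the real content is the converse, which I address next. The base case $k=1$ is Theorem \ref{gauge-triviality} itself.

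For the inductive step I assume the statement for $(k-1)$-deformations and suppose $\partial\Phi/\partial e^i=\delta_\Phi(\bar\alpha^i)$ for $i=1,\ldots,k$. First I apply Theorem \ref{gauge-triviality} in the single direction $e_k$, treating $(\e_1,\ldots,\e_{k-1})$ as frozen parameters: the family $\e_k\mapsto\Phi_{\e_1,\ldots,\e_k}$ is a trivial $1$-deformation of $\Phi_{\e_1,\ldots,\e_{k-1},0}$ transgressed by $\bar\alpha^k$, so the flow of the vector field \eqref{eq: vectorfield V} attached to $\bar\alpha^k$ yields a gauge map $\tau^{(k)}$ with $\Phi_{\e_1,\ldots,\e_k}=\tau^{(k)}_{\e_1,\ldots,\e_k}\cdot\Phi_{\e_1,\ldots,\e_{k-1},0}$. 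Since $\bar\alpha^k$ is a smooth $k$-family, this vector field, and hence its flow, depends smoothly on $(\e_1,\ldots,\e_{k-1})$ as extra parameters, so $\tau^{(k)}$ is smooth in all $k$ variables. Next I set $\Psi_{\e_1,\ldots,\e_{k-1}}:=\Phi_{\e_1,\ldots,\e_{k-1},0}$, a $(k-1)$-deformation of $\Phi_0$; restricting the hypothesis to $\e_k=0$ gives $\partial\Psi/\partial e^i=\delta_\Psi(\bar\alpha^i|_{\e_k=0})$ for $i=1,\ldots,k-1$, so by the inductive hypothesis $\Psi$ is trivial, $\Psi=\tau^{(<k)}\cdot\Phi_0$ for a smooth $(k-1)$-family $\tau^{(<k)}$ of gauge maps over $\phi_0$.

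It remains to compose the two families. The base map of $\Psi$ is $\phi_{\e_1,\ldots,\e_{k-1},0}$, which is simultaneously the source of $\tau^{(k)}$ and the target of $\tau^{(<k)}$; hence the pointwise product $\tau_{\e_1,\ldots,\e_k}(x):=\tau^{(k)}_{\e_1,\ldots,\e_k}(x)\,\tau^{(<k)}_{\e_1,\ldots,\e_{k-1}}(x)$ in $\G$ is defined, is a smooth $k$-family of gauge maps over $\phi_0$ with $\tau_{0,\ldots,0}=u_\G\circ\phi_0$, and satisfies $\Phi_{\e_1,\ldots,\e_k}=\tau^{(k)}\cdot(\tau^{(<k)}\cdot\Phi_0)=\tau_{\e_1,\ldots,\e_k}\cdot\Phi_0$, proving triviality. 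The main obstacle is the joint-smoothness and completeness bookkeeping: one must verify that the flow producing $\tau^{(k)}$ is defined for all relevant parameter values at once (shrinking $I$ if needed, as in Lemma \ref{lemma:completeflow}, using that the projected base dynamics is governed by the globally defined maps $\phi_{\e_1,\ldots,\e_k}$) and that the gauge-groupoid product preserves smoothness. The non-commutativity of the flows along different directions causes no difficulty, precisely because the argument handles one direction at a time and absorbs the discrepancy into the product of gauge maps.
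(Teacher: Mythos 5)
Your proposal is correct and follows essentially the same route as the paper: the paper's proof is exactly the "zig-zag" peeling of one canonical direction at a time, writing $\Phi_{\e_1,\ldots,\e_k}=\tau^{k}_{\e_1,\ldots,\e_k}\cdot\tau^{k-1}_{\e_1,\ldots,\e_{k-1},0}\cdots\tau^{1}_{\e_1,0,\ldots,0}\cdot\Phi_0$ as an explicit telescoping product, which is just the unrolled form of your induction on $k$. Your added bookkeeping on joint smoothness of the flows in the frozen parameters and on the composability of the gauge maps is a welcome elaboration of details the paper leaves implicit, but it is not a different argument.
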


\begin{proof}

Assume that the canonical 1-deformations are trivial, and that $\tau^{1}_{\e_1,...,\e_k}$,..., $\tau^{k}_{\e_1,...,\e_k}$ are the families of gauge maps, induced by the families of sections $\bar{\alpha}^{1}_{\e_1,...,\e_k}$,..., $\bar{\alpha}^{k}_{\e_1,...,\e_k}$, which make the triviality along the canonical directions, that is, they hold
$$\Phi_{\e_1,...,\e_k}=\tau^{i}_{\e_1,...,\e_k}\cdot\Phi_{\e_1,...,\e_{i-1},0,\e_{i+1},...,\e_k},$$
for every $i=1,...,k$. Then, it follows that
\begin{equation}
\begin{split}
\Phi_{\e_1,...,\e_k}&=\tau^{k}_{\e_1,...,\e_k}\cdot\Phi_{\e_1,...,\e_{k-1},0}\\
&=\tau^{k}_{\e_1,...,\e_k}\cdot\tau^{k-1}_{\e_1,...,\e_{k-1},0}\cdots\tau^{2}_{\e_1,\e_2,0,...,0}\cdot\tau^{1}_{\e_1,0,...,0}\cdot\Phi_0,
\end{split}
\end{equation}
proving that the $k$-deformation $\Phi$ is trivial. The converse statement follows easily.
\end{proof}

Analogous to Theorem \ref{thm:left-strongtriviality}, to characterize $k$-deformations which are strongly trivial up to automorphisms on the left we consider the pre-images by $(\Phi_{\e_1,...,\e_k})_{*}$ of the deformation classes in $\tilde{H}_{def}^{*}(\Phi_{\e_1,...,\e_k})$ of the canonical 1-deformations in $\Phi$.

\begin{theorem}\label{thm:k-left-strongtriviality}
Let $\Phi:\calH\times I^{k}\to\calG$ be a $k$-deformation of $\Phi_0:\calH\to\calG$, and assume that $\calH$ is compact. Then the deformation $\Phi$ is strongly trivial up to automorphisms on the left if and only if the canonical 1-deformations in $\Phi$ are strongly trivial up to automorphisms on the left. That is, if and only if there exist $k$ $k$-families $\alpha^{1},...,\alpha^{k}\in\Gamma_{M\times I^{k}}(A_\G)$ and $Y^{1}$,..., $Y^{k}\in\Gamma_{\calH\times I^{k}}(T\calH)$ of sections of $A_\G$ and multiplicative vector fields on $\calH$, such that
\begin{equation}\label{eqs:towards Thom-Levine}
\frac{\partial\Phi_{\e_1,...,\e_k}}{\partial\e^{i}}=(\Phi_{\e_1,...,\e_k})_{*}Y^{i}_{\e_1,...,\e_k}+\delta_{\Phi_{\e_1,...,\e_k}}((\Phi_{\e_1,...,\e_k})^{*}\alpha^{i}_{\e_1,...,\e_k}),
\end{equation}
for $i=1,...,k$, and $\e_1,...,\e_k$ small enough.
\end{theorem}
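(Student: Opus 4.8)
The plan is to reduce the statement to the one-parameter Theorem \ref{thm:left-strongtriviality} applied separately along each of the $k$ canonical directions of $I^{k}$, and then to assemble the resulting one-parameter trivializations into a single global one by composing automorphisms of $\calH$ and taking $\star$-products of bisections of $\calG$, exactly in the spirit of the preceding $k$-triviality theorem. The genuine content beyond bookkeeping is that these compositions again produce automorphisms and bisections starting at the identity, which is what lets us invoke Theorem \ref{thm:left-strongtriviality} in reverse.

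First I would observe that, for each fixed $i$, equation \eqref{eqs:towards Thom-Levine} with index $i$ is nothing but the smooth pre-image condition of Theorem \ref{thm:left-strongtriviality} for the canonical $1$-deformation of $\Phi_{\e_1,\dots,\e_{i-1},0,\e_{i+1},\dots,\e_k}$ obtained by freezing all parameters except $\e_i$; here the families $\alpha^{i}$ and $Y^{i}$ restrict to smooth one-parameter families of sections of $A_\calG$ and of multiplicative vector fields on $\calH$. Since $\calH$ is compact, Theorem \ref{thm:left-strongtriviality} applies and shows that equation \eqref{eqs:towards Thom-Levine} for index $i$ holds for all the canonical $1$-deformations along $e_i$ if and only if each such $1$-deformation is strongly trivial up to automorphisms on the left. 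This immediately yields the equivalence asserted in the ``that is'' clause, so it remains to connect the strong triviality of all canonical $1$-deformations with the strong triviality of the full $k$-deformation $\Phi$.

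For the substantial implication, assume each canonical $1$-deformation along $e_i$ is strongly trivial up to automorphisms on the left; Theorem \ref{thm:left-strongtriviality} then furnishes, for every $i$, a smooth $k$-parameter family of automorphisms $F^{i}_{\e_1,\dots,\e_k}$ of $\calH$ and a smooth family of bisections $\sigma^{i}_{\e_1,\dots,\e_k}$ of $\calG$, both reducing to the identity when $\e_i=0$, with
\begin{equation*}
\Phi_{\e_1,\dots,\e_k}\circ F^{i}_{\e_1,\dots,\e_k}=I_{\sigma^{i}_{\e_1,\dots,\e_k}}\circ\Phi_{\e_1,\dots,\e_{i-1},0,\e_{i+1},\dots,\e_k}.
\end{equation*}
I would then telescope through the directions $e_k,e_{k-1},\dots,e_1$: at each stage, rewriting the base morphism $\Phi_{\dots,0}$ by means of the next direction's relation introduces an $I_{\sigma^{i}}$ on the left and an $(F^{i})^{-1}$ on the right (with $F^{i},\sigma^{i}$ evaluated at $(\e_1,\dots,\e_i,0,\dots,0)$); the factor $(F^{i})^{-1}$ is absorbed by right-composing with the automorphism $F^{i}$, while the conjugations accumulate on the left through the identity $I_{\sigma}\circ I_{\sigma'}=I_{\sigma\star\sigma'}$. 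After all $k$ steps this collapses to
\begin{equation*}
\Phi_{\e_1,\dots,\e_k}\circ\big(F^{k}\circ F^{k-1}\circ\cdots\circ F^{1}\big)=I_{\sigma^{k}\star\sigma^{k-1}\star\cdots\star\sigma^{1}}\circ\Phi_0 .
\end{equation*}
Setting $F_{\e_1,\dots,\e_k}:=F^{k}\circ\cdots\circ F^{1}$ and $\sigma_{\e_1,\dots,\e_k}:=\sigma^{k}\star\cdots\star\sigma^{1}$ gives families equal to $Id_\calH$ and $u_\G$ at the origin (since each factor is), so $\Phi$ is strongly trivial up to automorphisms on the left.

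The converse is routine: given a global trivialization $\Phi_{\e_1,\dots,\e_k}\circ F_{\e_1,\dots,\e_k}=I_{\sigma_{\e_1,\dots,\e_k}}\circ\Phi_0$, freezing all parameters but $\e_i$ and cancelling the value at $\e_i=0$ exhibits each canonical $1$-deformation as strongly trivial up to automorphisms on the left, with trivializing families $F_{\e_1,\dots,\e_k}\circ F_{\e_1,\dots,0,\dots,\e_k}^{-1}$ and $\sigma_{\e_1,\dots,\e_k}\star\sigma_{\e_1,\dots,0,\dots,\e_k}^{-1}$, both reducing to the identity at $\e_i=0$; Theorem \ref{thm:left-strongtriviality} then returns equation \eqref{eqs:towards Thom-Levine}. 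I expect the main obstacle to be purely organizational: verifying the telescoping substitution at the level of maps, confirming that the automorphism property of each $F^{i}$ is what makes the right-hand cancellations legitimate, and checking that smoothness of the assembled $F$ and $\sigma$ follows from smoothness of the factors together with the smoothness of composition in $\Aut(\calH)$ and of the $\star$-operation in $\mathrm{Bis}(\calG)$.
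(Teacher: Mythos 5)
Your proposal is correct and follows essentially the same route as the paper: apply the one-parameter Theorem \ref{thm:left-strongtriviality} in each canonical direction and then telescope the resulting relations through $e_k,\dots,e_1$, composing the automorphisms of $\calH$ and $\star$-multiplying the bisections of $\calG$ (with each factor evaluated at the partially-zeroed parameters), exactly as in the paper's zig-zag argument. Your converse via freezing parameters and cancelling the value at $\e_i=0$ is also the intended one, which the paper leaves as ``follows directly.''
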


\begin{proof}
Assume that equations \eqref{eqs:towards Thom-Levine} are satisfied by the $k$-deformation $\Phi$ then, by Theorem \ref{thm:left-strongtriviality}, every canonical 1-deformation in $\Phi$ is strongly trivial up to automorphisms of $\calH$. Let
$$\sigma^{1}_{\e_1,...,\e_k},..., \sigma^{k}_{\e_1,...,\e_k}\ \ \text{ and}\ \ \ F^{1}_{\e_1,...,\e_k},..., F^{k}_{\e_1,...,\e_k}$$
denote the families of bisections of $\G$ and automorphisms of $\calH$ induced, respectively, by the families of sections $\alpha^{1}_{\e_1,...,\e_k}$,..., $\alpha^{k}_{\e_1,...,\e_k}$ and vector fields $Y^{1}_{\e_1,...,\e_k}$,..., $Y^{k}_{\e_1,...,\e_k}$. Then, they hold
$$\Phi_{\e_1,...,\e_k}\circ F^{i}_{\e_1,...,\e_k}=I_{\sigma^{i}_{\e_1,...,\e_k}}\circ\Phi_{\e_1,...,\e_{i-1},0,\e_{i+1},...,\e_k},$$
for every $i=1,...,k$.
Hence, it follows that
\begin{equation}
\begin{split}
\Phi_{\e_1,...,\e_k}&=I_{\sigma^{k}_{\e_1,...,\e_k}}\circ\Phi_{\e_1,...,\e_{k-1},0}\circ\left(F^{k}_{\e_1,...,\e_k}\right)^{-1}\\
&=I_{\sigma^{k}_{\e_1,...,\e_k}}\circ\cdots\circ I_{\sigma^{1}_{\e_1,0,...,0}}\circ\Phi_{(0,...,0)}\circ\left(F^{k}_{\e_1,...,\e_k}\circ\cdots\circ F^{1}_{\e_1,0,...,0}\right)^{-1},
\end{split}
\end{equation}
which proves the strong triviality u.t.a.l. of $\Phi$. The converse statement follows directly in a similar way to that of the triviality case in the previous theorem.
\end{proof}

\begin{remark}
Note that the proofs of the previous two theorems can be seen as an application of a \textbf{zig-zag principle} where we go through each of the canonical directions checking the triviality one by one.
\end{remark}

\begin{remark}
Notice that, using this zig-zag principle, we can state analogous results for $k$-deformations considering all the other type of "trivial" deformations defined in the previous section. In fact, as in the previous two theorems, which imitate the corresponding comological equations of Theorems \ref{gauge-triviality} and \ref{thm:left-strongtriviality} in the Section above, we get then a set of $k$ cohomological equations imitating the respective 1-parameter ones which characterize each type of deformation.
\end{remark}

We next consider the notion of $k$-deformations of smooth maps necessary for the statement of the Thom-Levine Theorem which we establish below as well. After that we will see that the Thom-Levine Theorem is just a particular case of the Theorem \ref{thm:k-left-strongtriviality} above.


Let $f_0:M\to N$ be a smooth map between the manifolds $M$ and $N$. The smooth $k$-family $\mathrm{f:M\times I^{k}\to N}$ of maps between the manifolds is called a \textbf{$k$-deformation of} $f_0$ if $f(\cdot,(0,...,0))=f_0$. We will also refer to the $k$-family $f$ by its restrictions $f_{\e_1,...,\e_k}:=f(\cdot,\e_1,...,\e_k)$. The deformation $f$ is said to be \textbf{trivial} if there exist $k$-deformations $F_{\e_1,...,\e_k}:M\to M$ and $G_{\e_1,...,\e_k}:N\to N$ of the identity maps $Id_M$ and $Id_N$ such that
\begin{equation}\label{def:triv defor of map}
f_{\e_1,...,\e_k}\circ F_{\e_1,...,\e_k}=G_{\e_1,...,\e_k}\circ f_0,
\end{equation}
for all $\e_1,...,\e_k$ small enough. The following version of Thom-Levine's Theorem we take from (\cite{guillemin-golubitsky}, p.124).

\begin{theorem}[Thom-Levine's Theorem]
Let $f:M\times I^{k}\to N$ be a $k$-deformation of $f_0:M\to N$, and assume that $M$ is compact. Then $f$ is trivial if and only if there exist smooth families $\zeta^{i}_{\e_1,...,\e_k}\in\mathfrak{X}(M)$ and $\eta^{i}_{\e_1,...,\e_k}\in\mathfrak{X}(N)$ of vector fields on $M$ and $N$ (for $i=1,...,k$) such that

\begin{equation}\label{Eq:condition k-deform T-L.}
\frac{\partial f_{\e_1,...,\e_k}}{\partial\e_i}=(df_{\e_1,...,\e_k})(\zeta^{i}_{\e_1,...,\e_k})+f^{*}_{\e_1,...,\e_k}(\eta^{i}_{\e_1,...,\e_k}),
\end{equation}
for $i=1,...,k$.
\end{theorem}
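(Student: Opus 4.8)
The plan is to deduce this statement directly from Theorem \ref{thm:k-left-strongtriviality} through the dictionary between smooth maps and morphisms of pair groupoids already exploited in Example \ref{thm:T-L thm 1-deform}, now extended from $1$-deformations to $k$-deformations. First I would send the $k$-deformation $f$ of $f_0$ to the $k$-deformation $\Phi := f\times f$ of the morphism $\Phi_0 = f_0\times f_0\colon \mathrm{Pair}(M)\to\mathrm{Pair}(N)$, with fibers $\Phi_{\e_1,\dots,\e_k} = f_{\e_1,\dots,\e_k}\times f_{\e_1,\dots,\e_k}$. The base map of $\Phi$ is $f$ itself, and since $M$ is compact so is $\mathcal{H} := \mathrm{Pair}(M) = M\times M$, which is precisely the hypothesis required to invoke Theorem \ref{thm:k-left-strongtriviality}.

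The second step is to match the two notions of triviality. Every automorphism of $\mathrm{Pair}(M)$ is of the form $F\times F$ for a diffeomorphism $F$ of $M$, and every bisection $\sigma$ of $\mathrm{Pair}(N)$ is the graph $x\mapsto(G(x),x)$ of a diffeomorphism $G$ of $N$; a short computation with the pair-groupoid multiplication gives $I_\sigma = G\times G$. Hence strong triviality up to automorphisms on the left of $\Phi$, namely $\Phi_{\e_1,\dots,\e_k}\circ(F_{\e_1,\dots,\e_k}\times F_{\e_1,\dots,\e_k}) = I_{\sigma_{\e_1,\dots,\e_k}}\circ\Phi_0$, holds if and only if $f_{\e_1,\dots,\e_k}\circ F_{\e_1,\dots,\e_k} = G_{\e_1,\dots,\e_k}\circ f_0$, which is exactly the defining equation \eqref{def:triv defor of map} of triviality of the $k$-deformation $f$ (the normalizations at the origin correspond under this dictionary, $\mathcal{F}_{(0,\dots,0)}=\mathrm{Id}$ to $F_{(0,\dots,0)}=\mathrm{Id}_M$ and $\sigma_{(0,\dots,0)}=u_{\mathcal{G}}$ to $G_{(0,\dots,0)}=\mathrm{Id}_N$).

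It then remains to identify the cohomological condition \eqref{eqs:towards Thom-Levine} of Theorem \ref{thm:k-left-strongtriviality} with the Thom--Levine condition \eqref{Eq:condition k-deform T-L.}. Here I would use that a multiplicative vector field (a $1$-cocycle $Y^i\in C^1_{def}(\mathrm{Pair}(M))$) is necessarily of the form $Y^i(p,q) = (\zeta^i(p),\zeta^i(q))$ for $\zeta^i\in\mathfrak{X}(M)$, and that $A_{\mathrm{Pair}(N)} \cong TN$ with identity anchor, so that a section $\alpha^i\in\Gamma(A_{\mathrm{Pair}(N)})$ is a vector field $\eta^i\in\mathfrak{X}(N)$. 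Evaluating the three terms of \eqref{eqs:towards Thom-Levine} at an arrow $(p,q)\in M\times M$ and using the explicit right- and left-invariant vector fields on $\mathrm{Pair}(N)$, one finds $\frac{\partial\Phi}{\partial\e_i}(p,q) = \bigl(\frac{\partial f}{\partial\e_i}(p),\frac{\partial f}{\partial\e_i}(q)\bigr)$, $((\Phi_{\e})_*Y^i)(p,q) = \bigl(df_\e(\zeta^i(p)),df_\e(\zeta^i(q))\bigr)$, and $\delta_{\Phi_\e}(\phi_\e^*\eta^i)(p,q) = \bigl(\eta^i(f_\e(p)),\eta^i(f_\e(q))\bigr)$. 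Thus the $C^1_{def}(\Phi_\e)$-equation over $M\times M$ holds if and only if its common component $\frac{\partial f_\e}{\partial\e_i} = df_\e(\zeta^i) + f_\e^*\eta^i$ holds over $M$, which is exactly \eqref{Eq:condition k-deform T-L.}. Applying Theorem \ref{thm:k-left-strongtriviality} to $\Phi$ then yields the claim.

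I expect the only genuine work — and the step most prone to sign and bookkeeping errors — to be this final identification of the coboundary term $\delta_{\Phi_\e}(\phi_\e^*\eta^i)$ and the pushforward $(\Phi_\e)_*Y^i$ with the two summands $f_\e^*\eta^i$ and $df_\e(\zeta^i)$ of the Thom--Levine equation; everything else is a transcription of Example \ref{thm:T-L thm 1-deform} with $k$ parameters in place of one. The observation that makes this clean is that equality of two cochains in $C^1_{def}(\Phi_\e)$ over the pair groupoid $M\times M$ \emph{decouples} into a single equation over $M$, because all three relevant cochains are built componentwise from the factor maps.
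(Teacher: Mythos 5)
Your proposal is correct and is exactly the paper's route: the paper also obtains Thom--Levine by applying Theorem \ref{thm:k-left-strongtriviality} to the morphism $f\times f$ between pair groupoids, matching strong triviality up to automorphisms on the left with the triviality condition \eqref{def:triv defor of map} and using that multiplicative vector fields on a pair groupoid have the form $Z(p,q)=(\zeta(p),\zeta(q))$ to identify \eqref{eqs:towards Thom-Levine} with \eqref{Eq:condition k-deform T-L.}. Your write-up in fact supplies a bit more detail (the computation $I_\sigma=G\times G$ and the componentwise decoupling) than the paper's remark-level argument.
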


\begin{remark}
Notice that when considering 1-deformations, as in Example \ref{thm:T-L thm 1-deform}, equations \eqref{Eq:condition k-deform T-L.} are just another way to express equation \eqref{Eq:condition 1-deform T-L.}.
\end{remark}

\begin{remark}
We remark that Theorem \ref{thm:k-left-strongtriviality} turns out to be a generalization to Lie groupoids of the Thom-Levine Theorem, more precisely, one checks that the Thom-Levine Theorem identifies exactly with the Theorem \ref{thm:k-left-strongtriviality} applied to morphisms between pair groupoids. Indeed, similar to Example \ref{thm:T-L thm 1-deform}, which considers 1-deformations, a $k$-deformation of a morphism between two pair groupoids amounts to a $k$-deformation of smooth maps and, also, a strongly trivial up to automorphisms on the left deformation of a morphism between pair groupoids is equivalent to a trivial $k$-deformation of the corresponding smooth map between the base manifolds of the pair groupoids. And additionally, the equations \eqref{eqs:towards Thom-Levine} translate exactly to the equations \eqref{Eq:condition k-deform T-L.}, where this matching follows directly from the fact that 1-cocycles $Z\in C^{*}_{def}(\calG)$ (i.e. multiplicative vector fields) on a pair groupoid $\calG$ are of the form
$$Z(p,q)=(\zeta(p),\zeta(q)),$$
where $\zeta$ is an usual vector field on the base of the pair groupoid.
\end{remark}

\begin{remark}
The Thom-Levine Theorem is a supporting step in order to prove the equivalence between the \emph{stability} and the \emph{infinitesimal stability} of smooth maps (\cite{guillemin-golubitsky}, p. 127). Heuristically, the original proof of Thom-Levine Theorem and ours are very similar, consisting in integrate certain vector fields to get the triviality of the deformation out of these integrations, however our proof using Lie groupoids turns out to be more straightforward and geometrical, no needing so many technical details and also obtaining the necessity condition for triviality directly (and with a fast computation) as a velocity interpretation of the deformation.
In summary, to get the Thom-Levine Theorem, we have directly used Theorems \ref{thm:k-left-strongtriviality}, \ref{thm:left-strongtriviality}, \ref{thm:strongtriviality} and Proposition \ref{prop:exactcocyles variatedcomplex}.
\end{remark}



\section{Stability of morphisms}\label{Section:Rigidity}


In this section we apply the results obtained in the Section \ref{Section:Triviality} in order to obtain stability properties of Lie groupoid morphisms under deformations. The key fact in the proof of the results below will be to combine the vanishing results (to get smooth transgressions of the deformation cocycles) with the Moser type arguments explored in Section \ref{Section:Triviality}.

\begin{theorem}\label{thm:gauge-rigidity}
Let $\Phi_0:\calH\longrightarrow\calG$ be a Lie groupoid morphism. Assume that $\calG$ is transitive. If either the groupoid $\calH$ is proper or $\G$ has trivial isotropy (i.e. $\G$ is a pair groupoid!), then any deformation of $\Phi_0$ is trivial.
\end{theorem}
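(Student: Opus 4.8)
The plan is to reduce the statement, through Theorem \ref{gauge-triviality}, to the vanishing of a single deformation cohomology group, and then to read that vanishing off from the low-degree exact sequence \eqref{lowdegreemorphism} together with the hypotheses on $\calG$. First I would package the given deformation $\Phi_\e$ into the total morphism $\tilde\Phi:\calH\times I\to\calG$ covering $\tilde\phi:N\times I\to M$, as in Remark \ref{deform.morph.2}, and assemble the family of deformation cocycles $X_\e=\frac{d}{d\e}\Phi_\e$ into the single cocycle $\tilde X=\tilde\Phi_{*}(\partial/\partial\e)\in C^{1}_{def}(\tilde\Phi)$, exactly as in the proof of Proposition \ref{prop:exactcocyles}. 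The crucial feature of that assembly, which does not require any properness, is that $\tilde X$ is exact in $C^{1}_{def}(\tilde\Phi)$ if and only if the family $X_\e$ is smoothly exact in the sense of Definition \ref{def:smoothfamilycochains}. By Theorem \ref{gauge-triviality} it therefore suffices to show $[\tilde X]=0$ in $H^{1}_{def}(\tilde\Phi)$, and I would in fact prove the stronger statement $H^{1}_{def}(\tilde\Phi)=0$.

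To obtain this vanishing I would feed $\tilde\Phi$ into the low-degree sequence \eqref{lowdegreemorphism}, giving
\begin{equation*}
0\rightarrow H^{1}(\calH\times I,\tilde\phi^{*}\mathfrak{i}_\calG)\stackrel{r}{\rightarrow} H^{1}_{def}(\tilde\Phi)\stackrel{\tilde s}{\rightarrow}\Gamma(\tilde\phi^{*}\nu_\calG)^{inv}\stackrel{K}{\rightarrow}\cdots.
\end{equation*}
Here transitivity of $\calG$ enters: for a transitive groupoid the anchor $\rho:A_\calG\to TM$ is surjective, so the normal bundle to the orbits vanishes, $\nu_\calG=0$, whence $\Gamma(\tilde\phi^{*}\nu_\calG)^{inv}=0$ and the sequence collapses to an isomorphism $H^{1}_{def}(\tilde\Phi)\cong H^{1}(\calH\times I,\tilde\phi^{*}\mathfrak{i}_\calG)$. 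It then remains only to kill this differentiable cohomology group, and this is the single point at which the two alternative hypotheses diverge. If $\calH$ is proper, then $\calH\times I$ is proper as well (properness of the product reduces to that of $\calH$, the $I$-factor being a unit groupoid), so the vanishing of the differentiable cohomology of proper groupoids with values in a representation gives $H^{1}(\calH\times I,\tilde\phi^{*}\mathfrak{i}_\calG)=0$. If instead $\calG$ has trivial isotropy, then $\mathfrak{i}_\calG=\mathrm{Ker}\,\rho=0$ already as a bundle over $M$, so its pullback is zero and the same conclusion holds. In both cases $H^{1}_{def}(\tilde\Phi)=0$, hence $[\tilde X]=0$, the family $X_\e$ is smoothly exact, and the deformation is trivial by Theorem \ref{gauge-triviality}.

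I expect the genuine obstacle to be the uniformity in $\e$ rather than the cohomological vanishing itself: for each fixed $\e$ the group $H^{1}_{def}(\Phi_\e)$ vanishes by the same transitivity argument, but pointwise exactness of the $X_\e$ is strictly weaker than the smooth exactness that Theorem \ref{gauge-triviality} demands, so one cannot simply invoke the $\e$-by-$\e$ vanishing. The device that resolves this is precisely the passage to the total morphism $\tilde\Phi$ over $\calH\times I$, where a single global cohomology class $[\tilde X]$ encodes the whole family and its vanishing delivers the required smooth primitive at once. A minor technical point worth flagging is the possible singularity of $\mathfrak{i}_\calG$ and $\nu_\calG$ when $\calG$ is not regular; transitivity makes $\nu_\calG=0$ unconditionally, and in the proper case the singular differentiable cohomology is covered by the conventions of Section \ref{section:lowdegrees}, so no regularity assumption on $\calG$ is needed.
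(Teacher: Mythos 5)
Your proposal is correct and follows essentially the same route as the paper's own proof: both pass to the total morphism $\tilde\Phi:\calH\times I\to\calG$ with cocycle $\tilde X=\tilde\Phi_{*}(\partial/\partial\e)$, use transitivity to kill $\nu_\calG$ in the sequence \eqref{lowdegreemorphism}, kill $H^{1}(\calH\times I,\tilde\phi^{*}\mathfrak{i}_\calG)$ by either properness of $\calH$ or triviality of the isotropy, and conclude via Theorem \ref{gauge-triviality}. Your closing remark correctly identifies why the passage to $\tilde\Phi$ is the essential device for obtaining smooth (rather than merely pointwise) exactness.
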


\begin{proof}
Following the notations of Definition \ref{def:smoothfamilycochains}, let $(\tilde{\Phi},\tilde{\phi}):\calH\times I\to\calG$ be a deformation of $\Phi_0$. On the one hand, since the groupoid $\calG$ is transitive, it follows that the normal bundle $\nu_{\calG}$ becomes the null bundle over $M$. That fact implies that the 1-cocycle $\tilde{X}:=\tilde{\Phi}_{*}(\partial/\partial\e)\in C^{1}_{def}(\tilde{\Phi})$ (see the proof of Proposition \ref{prop:exactcocyles}, where this cocycle is defined), corresponding to the deformation $\tilde{\Phi}$, lies in the kernel $H^{1}(\calH\times I, \tilde{\phi}^{*}\mathfrak{i}_\calG)$ of the map $\tilde{s}:H^{1}_{def}(\tilde{\Phi})\to\Gamma(\tilde{\phi}^{*}\nu_{\calG})^{\mathrm{inv}}$ of the sequence \eqref{lowdegreemorphism} in Section \ref{section:lowdegrees}.
On the other hand, the condition of either trivial isotropy of $\G$ or properness of $\calH$ implies that $H^{1}(\calH\times I, \tilde{\phi}^{*}\mathfrak{i}_\calG)$ vanishes. Hence, the 1-cocycle $\tilde{X}\in C^{1}_{def}(\tilde{\Phi})$ is exact, which amounts to the smooth exactness of the family of 1-cocycles $X_\e=\frac{d}{d\e}\Phi_\e$ associated to the deformation. Thus, the Theorem \ref{gauge-triviality} concludes the proof.
\end{proof}

Additionally, by using either the Proposition \ref{Prop:familybisectionsfromgauge} or Theorem \ref{thm:homostronglytrivial}, we then obtain:

\begin{corollary}\label{Corol 1, stability}
Let $\Phi_0:\calH\longrightarrow\calG$ be a Lie groupoid morphism whose base map is an injective immersion. Assume that $\G$ is transitive. If the groupoid $\calH$ is compact, then any deformation of $\Phi_0$ is strongly trivial.
\end{corollary}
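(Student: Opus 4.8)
The plan is to obtain strong triviality in two stages: first deduce ordinary triviality from the transitivity hypothesis, and then upgrade it to strong triviality using the compactness and the injective-immersion assumption on the base map. Both stages are direct invocations of results already in place, so the corollary is essentially a matter of checking that the hypotheses line up.

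First I would observe that compactness of $\calH$ already supplies the two ingredients needed downstream. On one hand, a compact Lie groupoid is automatically proper, since $(s_\calH,t_\calH):\calH\to N\times N$ has compact domain and $N\times N$ is Hausdorff, so this map is proper. On the other hand, the unit embedding $N\hookrightarrow\calH$ identifies $N$ with a closed submanifold of the compact manifold $\calH$, whence $N$ is compact as well. Thus the hypotheses of Theorem \ref{thm:gauge-rigidity} (properness of $\calH$, transitivity of $\calG$) and the compactness hypothesis of Proposition \ref{Prop:familybisectionsfromgauge} are simultaneously available.

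Next, given any deformation $\Phi_\e$ of $\Phi_0$, Theorem \ref{thm:gauge-rigidity} applies with $\calG$ transitive and $\calH$ proper and tells us that $\Phi_\e$ is trivial; that is, there is a smooth family $\tau_\e$ of gauge maps over $\phi_0$ with $\tau_0=u_\calG\circ\phi_0$ and $\Phi_\e=\tau_\e\cdot\Phi_0$. Finally, since $\phi_0$ is an injective immersion and $N$ is compact, Proposition \ref{Prop:familybisectionsfromgauge} states precisely that any such trivial deformation is in fact strongly trivial, producing a smooth family of global bisections $\sigma_\e$ with $\sigma_0=u$ and $\tau_\e=\sigma_\e\circ\phi_0$, so that $\Phi_\e=I_{\sigma_\e}\circ\Phi_0$. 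This completes the argument.

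Alternatively, I could route the last step through Theorem \ref{thm:homostronglytrivial}: the triviality obtained from Theorem \ref{thm:gauge-rigidity} is equivalent, by Theorem \ref{gauge-triviality}, to the smooth exactness of the family of cocycles $X_\e=\frac{d}{d\e}\Phi_\e$ in $C^{\bullet}_{def}(\Phi_0)$, and then Theorem \ref{thm:homostronglytrivial} (again using that $\phi_0$ is an injective immersion and $N$ is compact) converts this smooth exactness directly into strong triviality. The only point requiring care — and the main, if minor, obstacle — is verifying that compactness of $\calH$ yields both the properness needed for the triviality step and the compactness of $N$ needed to pass from gauge maps to honest global bisections; once these are in hand, everything else is a direct citation of the results already established.
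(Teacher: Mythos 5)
Your argument is correct and is exactly the route the paper takes: it derives triviality from Theorem \ref{thm:gauge-rigidity} (compactness of $\calH$ giving properness) and then upgrades to strong triviality via Proposition \ref{Prop:familybisectionsfromgauge} (or equivalently Theorem \ref{thm:homostronglytrivial}), which is precisely what the paper indicates just before stating the corollary. The preliminary observations that compactness of $\calH$ yields both properness and compactness of $N$ are the right glue and are handled correctly.
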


\begin{corollary}\label{Corol 2, stability}
Let $\Phi_0:\calH\longrightarrow\calG$ be a Lie groupoid morphism whose base map is an injective immersion. Assume that $\G$ is transitive and that the base $N$ of $\H$ is compact. If the groupoid $\G$ has trivial isotropy, then any deformation of $\Phi_0$ is strongly trivial.
\end{corollary}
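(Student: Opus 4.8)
The plan is to obtain strong triviality in two steps: first establish ordinary (gauge) triviality of an arbitrary deformation $\Phi_\e$ of $\Phi_0$, and then upgrade this to strong triviality using the compactness of $N$ together with the injective immersivity of $\phi_0$.

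For the first step, I would invoke Theorem \ref{thm:gauge-rigidity} directly. Its hypotheses require $\calG$ to be transitive together with one of two alternatives: either $\calH$ is proper, or $\calG$ has trivial isotropy. Under the present assumptions $\calG$ is transitive and has trivial isotropy, so we are exactly in the second (pair groupoid) branch. Hence Theorem \ref{thm:gauge-rigidity} applies and guarantees that every deformation $\Phi_\e$ of $\Phi_0$ is trivial; that is, there is a smooth family $\tau_\e$ of gauge maps covering $\phi_0$ with $\tau_0 = u_\G\circ\phi_0$ and $\Phi_\e = \tau_\e\cdot\Phi_0$.

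For the second step, I would apply Proposition \ref{Prop:familybisectionsfromgauge}. Since $\Phi_0$ covers the injective immersion $\phi_0$ and the base $N$ of $\calH$ is compact, that proposition says precisely that any trivial deformation of $\Phi_0$ is in fact strongly trivial: the family of gauge maps $\tau_\e$ is induced by a smooth family $\sigma_\e$ of global bisections of $\calG$ via $\tau_\e = \sigma_\e\circ\phi_0$. Combining the two steps shows that the arbitrary deformation $\Phi_\e$ is strongly trivial, which is the claim. Alternatively, one may run the second step through Theorem \ref{thm:homostronglytrivial}, using that triviality forces the deformation cocycles $X_\e = \tfrac{d}{d\e}\Phi_\e$ to be smoothly exact, which under the same hypotheses on $\phi_0$ and $N$ is equivalent to strong triviality.

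The argument is essentially a matching of hypotheses, so I expect no substantial new obstacle beyond the two cited results; the only points requiring care are that the compactness needed for the upgrade is that of the base $N$ (and not of $\calH$, as in Corollary \ref{Corol 1, stability}), which is exactly what is assumed here, and that the combination "transitive and trivial isotropy" places us in the pair-groupoid branch of Theorem \ref{thm:gauge-rigidity}.
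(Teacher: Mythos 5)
Your proposal is correct and follows exactly the route the paper intends: Theorem \ref{thm:gauge-rigidity} (in its trivial-isotropy branch, since $\G$ is transitive with trivial isotropy) gives triviality of any deformation, and Proposition \ref{Prop:familybisectionsfromgauge} (or equivalently Theorem \ref{thm:homostronglytrivial}), whose hypotheses are precisely that $\phi_0$ is an injective immersion and $N$ is compact, upgrades this to strong triviality. Your remark that the relevant compactness here is that of $N$ rather than of $\calH$ is also the correct reading of the difference from Corollary \ref{Corol 1, stability}.
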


Thus, Theorem \ref{thm:gauge-rigidity} says that under properness and transitivity of $\H$ and $\G$, respectively, any curve $\e\mapsto\Phi_\e$, with $\Phi_0=\Phi$, is constant when viewed in the category of Lie groupoids and isomorphism classes of morphisms. Moreover, such conditions of compactness and transitivity assumed in the results are fundamental: for instance, any curve passing through more than one orbit of $\G$ determines 
a non-trivial deformation for a constant morphism $\Phi_0$. 
It also follows that if we additionally take $\calH$ as being a compact Lie group, then the base map of $\Phi_0$ is an injective immersion but the deformation will not be strongly trivial; thus the transitivity of $\G$ is also necessary in the corollaries. Also for $\calH$ non-compact in Corollary \ref{Corol 1, stability}, one can check that if $\G=\mathbb{R}^{2}$ and $\H=\mathbb{R}$ viewed as Lie groups, then the inclusions $i_{\e}(x)=(x,(1-\e)x)$ of the linear spaces in $\mathbb{R}^{2}$ yield a non-strongly trivial deformation even if $\G$ is transitive. Considering now the last corollary, this latter counterexample also verifies that the trivial isotropy condition can not be removed in the statement. Additionally, assuming that $N$ is non-compact, take $\G=Pair(\mathbb{R})=\calH$ the pair groupoids over $\mathbb{R}$, since the identity map over $\mathbb{R}$ can be deformed, by using a bump function, to not be a diffeomorphism for any $\e$ small, it follows that it induces a non-strongly trivial deformation of the identity morphism.




\section{Simultaneous deformations}\label{Section:Simultaneous}

In this section we put together the deformation theory of  both Lie groupoids and Lie groupoid morphisms in order to study the most general deformation problem: the simultaneous deformation of the triple given by $\H\stackrel{\Phi}{\longrightarrow}\G$, where $\Phi$ is a morphism of Lie groupoids.

\begin{definition}
Let $\Phi_0:\H\to\G$ be a Lie groupoid morphism. A \textbf{deformation of the triple} $(\H,\Phi_0,\G)$ is a deformation $\tilde{\H}$ of $\H$ and a deformation $\tilde{\G}$ of $\G$ over a common open interval $I$ containing 0, and a morphism $\tilde{\Phi}:\tilde{\H}\to\tilde{\G}$ which deforms $\Phi_0$ in a compatible manner, in the sense that $\tilde{\Phi}|_{\tilde{\H}_0}=\Phi_0$ and, for each $\e\in I$, $\Phi_\e:=\tilde{\Phi}|_{\tilde{\H}_\e}:\tilde{\H}_\e\to\tilde{\G}_\e$ is a morphism of Lie groupoids. We will denote a deformation of the triple $(\H,\Phi_0,\G)$ by $(\H_\e,\Phi_\e,\G_\e)$.
\end{definition}

\begin{definition}
Let $(\H_\e,\Phi_\e,\G_\e)$ and $(\H'_\e,\Phi'_\e,\G'_\e)$ be two deformations of the triple $(\H,\Phi,\G)$. We will say that both deformations are \textbf{equivalent} if there exist an open interval $I$ containing 0 and isomorphisms of Lie groupoids $\tilde{F}:\tilde{\H}\to\tilde{\H}'$, $\tilde{G}:\tilde{\G}\to\tilde{\G}'$ and a gauge map $\tau:\tilde{N}\to\tilde{\G}$ covering $\phi:\tilde{N}\to\tilde{M}$ defined over $I$ such that
\begin{equation}\label{eq:equivalencetriple}
G^{-1}\circ\Phi'\circ F=\tau\cdot\Phi.
\end{equation}
\end{definition}
\noindent We will also say that a deformation $(\tilde{\H},\Phi,\tilde{\G})$ of $(\H,\Phi_0,\G)$ is \textbf{trivial} if it is equivalent to the \textbf{constant deformation} $(\H\times I,\Phi_0\times Id,\G\times I)$.

\begin{remark}
On rigidity of triples it is straighforward checking, from the rigidity of compact Lie groupoids and Lie groupoid morphisms from Section \ref{Section:Rigidity}, that if $\H$ and $\G$ are compact Lie groupoids, and moreover $\G$ is transitive then any deformation $\tilde{\Phi}:\tilde{\H}\to\tilde{\G}$ of the triple $\Phi:\H\to\G$ is trivial. 
\end{remark}

\begin{remark}
For deformations of the triple $(\H,\Phi_0,\G)$ we can also define \textbf{strongly equivalent} deformations. This equivalence relation corresponds to the special case in which the gauge map $\tau$ of equation \eqref{eq:equivalencetriple} is induced by a bisection $\sigma$ of $\tilde{\G}$. That is, when $\tau=\sigma\circ\phi$, for $\sigma\in\mathrm{Bis}(\tilde{\G})$. A deformation will be called \textbf{strongly trivial} if it is strongly equivalent to the constant deformation.
\end{remark}

\subsection{Deformation complex and triviality of simultaneous deformations}

Consider the diagram of cochain maps explained in Remark \ref{chain maps}
\begin{equation*}
C_{def}^{k}(\calH)\stackrel{\Phi_{*}}{\longrightarrow}C_{def}^{k}(\Phi)\stackrel{\Phi^{*}}{\longleftarrow}C_{def}^{k}(\calG).
\end{equation*}
We construct the complex which controls the deformations of the triple $\calH\stackrel{\Phi}{\longrightarrow}\calG$ by taking the cone of this diagram as follows. Take the mapping-cone complex associated to the cochain map $\Phi_{*}$,

\begin{equation}\label{MC(phi)}
C^{*}(\Phi_{*})=C^{*+1}_{def}(\calH)\oplus C^{*}_{def}(\Phi) 
\end{equation}
with differential $\delta_{C(\Phi_{*})}(c,Y)=\left(\delta_{\calH}c,\ \Phi_{*}c-\delta_{\Phi}Y\right)$. Notice that the map $\Phi^{*}$ above induces a cochain map $\tilde{\Phi}^{*}:C^{*}_{def}(\calG)\longrightarrow C^{*}(\Phi_{*})$ putting zero on the first component, $\tilde{\Phi}^{*}:c\longmapsto (0,(-1)^{deg(c)}c)$. Take now the mapping-cone associated to $\tilde{\Phi}^{*}$, getting the \textbf{deformation complex of the triple},

\begin{equation*}
\begin{split}
C^{*+1}_{def}(\calH,\Phi,\calG)&:=\mathrm{Mapping}(\tilde{\Phi}^{*})=C^{*}(\Phi_{*})\oplus C^{*+1}_{def}(\calG)\\
&=C^{*+1}_{def}(\calH)\oplus C^{*}_{def}(\Phi)\oplus C^{*+1}_{def}(\calG),
\end{split}
\end{equation*}
with differential $\delta(c,X,\bar{c})=\left(-\delta_{\calH}c, \delta_{\Phi}(X)-\Phi_{*}c+(-1)^{deg(\bar{c})}\Phi^{*}\bar{c},\delta_{\calG}\bar{c}\right)$.

In this way, given a $s$-constant deformation $(\calH_\e\stackrel{\Phi_\e}{\longrightarrow}\calG_\e)$ of $(\calH,\Phi,\calG)$ (i.e., those where $\H_\e$ and $\G_\e$ are $s$-constant deformations), by computations similar to those in Proposition \ref{cociclomorph.}, 
we get that $(\xi_\calH,-X,\xi_\calG)$ is a 2-cocycle in $C^{*}_{def}(\calH,\Phi,\calG)$, where $\xi_\calH$ and $\xi_\calG$ are the respective deformation cocycles for $\calH$ and $\calG$, and $X:=\left.\frac{d}{d\e}\right|_{\e=0}\Phi_\e$ is the usual cochain (Section \ref{Section:Triviality}) associated to a deformation of morphisms. Indeed, the fact that $(\xi_\calH,-X,\xi_\calG)$ is a cocycle follows by applying $\left.\frac{d}{d\e}\right|_{\e=0}$ to the compatibility of the deformation $(\calH_\e,\Phi_\e,\calG_\e)$:

\begin{align*}
& &\left.\frac{d}{d\e}\right|_{\e=0}\Phi_\e(\bar{m}_{\calH_\e}(gh,h))&=\left.\frac{d}{d\e}\right|_{\e=0}\bar{m}_{\calG_\e}(\Phi_\e(gh),\Phi_\e(h))\\
&\Longleftrightarrow&  X(g)+\Phi_{*}\xi_{\calH}(g,h)&=\xi_{\calG}(\Phi(g),\Phi(h))+d\bar{m}_{\calG}(X(gh),X(h))\\
&\Longleftrightarrow&  \delta_{\Phi}(X)+\Phi_{*}\xi_{\calH}-\Phi^{*}\xi_{\calG}&=0.
\end{align*}
The fact that the corresponding cohomology class of $(\xi_\calH,-X,\xi_\calG)$ only depends on the equivalence class of the deformation is also an analogous computation.

\begin{remark}
One can also consider a non $s$-constant deformation $(\tilde{\H},\tilde{\Phi},\tilde{\G})$ of the triple $(\H,\Phi,\G)$ and obtain its associated deformation cohomology class. Indeed, if $\tilde{X}_\H\in\mathfrak{X}(\widetilde{\H})$ and $\tilde{X}_\G\in\mathfrak{X}(\widetilde{\G})$ are transversal vector fields (i.e., vector fields which project to $\partial/\partial\e\in\mathfrak{X}(I)$) with $\xi_\H\in C^{2}_{def}(\H)$ and $\xi_\G\in C^{2}_{def}(\G)$ being the corresponding deformation cocycles, then $(\xi_\H,X,\xi_\G)\in C^{2}_{def}(\H,\Phi,\G)$ is the associated deformation cocycle, where $X:=\tilde{\Phi}_{*}\tilde{X}_\H|_{\H}-\tilde{\Phi}^{*}\tilde{X}_\G|_{\H}\in C^{1}_{def}(\Phi)$.
\end{remark}
We pass now to establish the main results concerning simultaneous $s$-constant deformations.

\begin{theorem}
Let $(\calH_\e,(\Phi_{\e},\phi_\e),\calG_\e)$ be a deformation of $(\calH\stackrel{\Phi}{\longrightarrow}\calG)$, with $\calH$ and $\calG$ compact. If the family of associated cocycles $(\xi_{\calH_\e},-X_\e,\xi_{\calG_\e})\in C_{def}^{2}(\calH_\e, \Phi_\e, \calG_\e)$ is transgressed by a smooth family of cochains $(Y_\e, \tilde{\alpha}_\e, Z_\e)\in C_{def}^{1}(\calH_\e, \Phi_\e, \calG_\e)$, then the deformation $(\calH_\e,\Phi_{\e},\calG_\e)$ is trivial.
\end{theorem}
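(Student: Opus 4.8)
The plan is to read off from the transgression equation $\delta(Y_\e,\tilde{\alpha}_\e,Z_\e)=(\xi_{\calH_\e},-X_\e,\xi_{\calG_\e})$ its three components and then to trivialize the deformation in three stages: first the codomain $\calG$, then the domain $\calH$, and finally the morphism itself. Writing out the differential of the triple complex on a degree-$1$ cochain gives the system
\begin{align*}
\delta_{\calH_\e}(-Y_\e)&=\xi_{\calH_\e},\\
(\Phi_\e)_*Y_\e+\Phi_\e^*Z_\e-\delta_{\Phi_\e}\tilde{\alpha}_\e&=X_\e,\\
\delta_{\calG_\e}Z_\e&=\xi_{\calG_\e}.
\end{align*}
The outer two equations are exactly the cohomological equations of the Moser argument for the groupoid deformations $\tilde{\calG}$ and $\tilde{\calH}$, while the middle equation is what will force the residual deformation of the morphism to be gauge trivial.

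First I would treat $\tilde{\calG}$. By the multiplicative-vector-field reformulation of Proposition \ref{Moser-grpds}, the equation $\delta_{\calG_\e}Z_\e=\xi_{\calG_\e}$ says that the induced field $(Z_\e,0)+\partial/\partial\e$ on $\tilde{\calG}=\calG\times I$ is multiplicative; since $\calG$ is compact this field is complete, so its flow is defined for all $\e$ in $I$ (shrinking $I$ if necessary) and consists of Lie groupoid isomorphisms $G_\e:\calG\to\calG_\e$ with $G_0=\mathrm{Id}$, assembling into a trivialization $\tilde{G}:\calG\times I\to\tilde{\calG}$. Identically, the equation $\delta_{\calH_\e}(-Y_\e)=\xi_{\calH_\e}$ together with the compactness of $\calH$ produces a complete multiplicative field on $\tilde{\calH}$ whose flow gives isomorphisms $F_\e:\calH\to\calH_\e$, $F_0=\mathrm{Id}$, and a trivialization $\tilde{F}$ of $\tilde{\calH}$.

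With $\tilde{F}$ and $\tilde{G}$ in hand the triple deformation is reduced to a deformation of the single morphism $\Psi_\e:=G_\e^{-1}\circ\Phi_\e\circ F_\e:\calH\to\calG$ of the fixed groupoids, with $\Psi_0=\Phi_0$. The heart of the argument is to differentiate $\Psi_\e$. Letting $h'=F_\e(h)$, the chain rule applied to the three factors of $G_\e^{-1}\circ\Phi_\e\circ F_\e$, with $-Y_\e$ and $Z_\e$ entering as the infinitesimal generators of $F_\e$ and $G_\e$, yields
$$X'_\e(h)=dG_\e^{-1}\Big(X_\e(h')-(\Phi_\e)_*Y_\e(h')-\Phi_\e^*Z_\e(h')\Big).$$
Substituting the middle equation of the system, the bracket collapses to $-\delta_{\Phi_\e}\tilde{\alpha}_\e(h')$, and by the naturality of $\delta$ under composition with the isomorphisms $F_\e$, $G_\e$ (Remarks \ref{chain maps}, \ref{chain maps2}) this equals $-\delta_{\Psi_\e}(\bar{\beta}_\e)$, where $\bar{\beta}_\e$ is the smooth family obtained by transporting $\tilde{\alpha}_\e$ through $G_\e^{-1}$ and $F_\e$. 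Thus the deformation cocycles $X'_\e$ of $\Psi_\e$ are smoothly exact, and Theorem \ref{gauge-triviality} gives a smooth family of gauge maps $\tau_\e$ over $\phi_0$ with $\tau_0=u_\calG\circ\phi_0$ and $\Psi_\e=\tau_\e\cdot\Phi_0$. Assembling $\tilde{F}$, $\tilde{G}$ and $\tau$ then verifies equation \eqref{eq:equivalencetriple} against the constant deformation, so the deformation of the triple is trivial.

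The main obstacle is the middle step: one must justify the chain-rule computation for $X'_\e$ — because $\Psi_\e$ is conjugated through $\Phi_\e$, its derivative is of the gauge type handled in Proposition \ref{cociclomorph.} and cannot be differentiated naively — and then check that the transported cochains $\bar{\beta}_\e$ form a genuinely smooth family over a common interval, so that one obtains \emph{smooth} exactness rather than merely pointwise exactness. The compactness of $\calH$ and $\calG$ is used precisely to secure completeness of the two Moser flows, hence global trivializations over a uniform $I$, while the transgression hypothesis supplies the three pieces of data as a single smooth family, which is what permits the three stages to be carried out coherently.
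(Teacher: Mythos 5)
Your proposal is correct and follows essentially the same route as the paper's proof: extract the three transgression equations from the mapping-cone differential, integrate $-Y_\e$ and $Z_\e$ (using compactness for completeness) to trivialize $\tilde{\calH}$ and $\tilde{\calG}$, and then show the conjugated family $G_\e^{-1}\circ\Phi_\e\circ F_\e$ has smoothly exact deformation cocycles via the middle equation, so Theorem \ref{gauge-triviality} applies. Your chain-rule computation and the final transport of $\tilde{\alpha}_\e$ match the paper's calculation line for line.
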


\begin{proof}
Exactness of the family of cocycles amounts to

\begin{equation}\label{eq:transgresionequations}
\begin{split}
\xi_{\calH_\e}&=-\delta_{\calH_\e}Y_\e;\\
-X_\e&=\delta_{\Phi_\e}(\tilde{\alpha}_\e)-\Phi_{*}Y_\e-\Phi^{*}Z_\e;\\
\xi_{\calG_\e}&=\delta_{\calG_\e}Z_\e.\\
\end{split}
\end{equation}
By the first and third equation, if $\varphi_\e$ and $\psi_\e$ are the time-dependent flows starting at zero of the vector fields $\{-Y_\e\}$ and $\{Z_\e\}$ respectively, then they define the equivalences with the corresponding constant deformations of Lie groupoids. We claim that these equivalences can be used to prove the triviality of $(\calH_\e,\Phi_{\e},\calG_\e)$. In fact, by using Theorem \ref{gauge-triviality}, we will show that the family of morphisms $\{f_\e:=\psi_{\e}^{-1}\circ\Phi_\e\circ\varphi_\e\}$ is a trivial deformation in the sense of the definition at the beginning of Section \ref{Section:Deform.morph.}. Such a family satisfies,

\begin{equation*}
\begin{split}
\frac{d}{d\e}f_\e(h)&=\left(\frac{d}{d\e}\psi_{\e}^{-1}\right)(\Phi_\e\circ\varphi_\e(h))+d\psi_{\e}^{-1}(X_\e(\varphi_\e(h)))-d(\psi_{\e}^{-1}\circ\Phi_\e)\left(Y_\e(\varphi_\e(h))\right)\\
&=-d\psi_{\e}^{-1}\left(Z_\e(\Phi_\e\circ\varphi_\e(h))\right)+d\psi_{\e}^{-1}\left(X_\e(\varphi_\e(h))\right)-d\psi_{\e}^{-1}\left[d\Phi_\e\left(Y_\e(\varphi_\e(h))\right)\right]\\
&=d\psi_{\e}^{-1}\left[X_\e(\varphi_\e(h))-((\Phi_\e)_{*}Y_\e)_{(\varphi_\e(h))}-(\Phi_{\e}^{*}Z_\e)_{(\varphi_\e(h))}\right]\\
&=-d\psi_{\e}^{-1}\left(\delta_{\Phi_\e}(\tilde{\alpha}_\e)(\varphi_\e(h))\right),\ \ \ \ \text{by equations } \eqref{eq:transgresionequations}\\
&=-d\psi_{\e}^{-1}\left(\delta_{\Phi_\e\circ\varphi_\e}(\bar{\alpha}_\e)(h)\right)\ \ \ (\text{by }\bar{\alpha}_\e=\varphi_\e^{*}\tilde{\alpha}_\e)\\
&=-\delta_{\psi_{\e}^{-1}\circ\Phi_\e\circ\varphi_\e}(\alpha_\e)(h)\ \ \ (\text{by }\alpha_\e=-(\psi^{-1}_\e)_{*}\bar{\alpha}_\e)\\
&=\delta_{f_\e}(\alpha_\e)(h),
\end{split}
\end{equation*}
where the second equality follows from the fact that $$(\frac{d}{d\e}\psi_\e^{-1})(\psi_\e(h))+d\psi_\e^{-1}(Z_\e(\psi_\e(h)))=0,$$ which is obtained by applying $\frac{d}{d\e}$ to $\psi_\e^{-1}\circ\psi_\e=Id$. Therefore, by Theorem \ref{gauge-triviality}, $\psi_{\e}^{-1}\circ\Phi_\e\circ\varphi_\e=\tau_\e\cdot\Phi$, as claimed.
\end{proof}

Analogously, one can check the following

\begin{theorem}\label{Deformtriples}

Let $(\calH_\e,(\Phi_{\e},\phi_\e),\calG_\e)$ be a deformation of $(\calH\stackrel{\Phi}{\longrightarrow}\calG)$, with $\calH$ and $\calG$ compact, and $\phi_0$ an injective immersion. If the family of associated cocycles $(\xi_{\calH_\e},-X_\e,\xi_{\calG_\e})\in C_{def}^{2}(\calH_\e, \Phi_\e, \calG_\e)$ is transgressed by a smooth family of cochains $(Y_\e, \tilde{\alpha}_\e, Z_\e)\in C_{def}^{1}(\calH_\e, \Phi_\e, \calG_\e)$, then the deformation $(\calH_\e,\Phi_{\e},\calG_\e)$ is strongly trivial.
\end{theorem}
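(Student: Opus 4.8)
The plan is to follow the proof of the preceding theorem (the trivial case) essentially verbatim, and to replace only the final appeal to Theorem \ref{gauge-triviality} by the stronger Theorem \ref{thm:homostronglytrivial}, whose extra hypotheses are exactly the injective immersion of $\phi_0$ and compactness. First I would unpack the smooth exactness of the family $(\xi_{\calH_\e},-X_\e,\xi_{\calG_\e})$ by $(Y_\e,\tilde\alpha_\e,Z_\e)$ into the same three transgression equations \eqref{eq:transgresionequations}. The first and third say that $\xi_{\calH_\e}$ and $\xi_{\calG_\e}$ are smoothly exact in the deformation complexes of $\calH$ and $\calG$; by the groupoid Moser argument (Proposition \ref{Moser-grpds}) the time-dependent flows $\varphi_\e$ of $\{-Y_\e\}$ and $\psi_\e$ of $\{Z_\e\}$ are defined for all small $\e$ (this is where compactness of $\calH$ and $\calG$ is used, to guarantee completeness) and provide isomorphisms $\varphi_\e:\calH\to\calH_\e$ and $\psi_\e:\calG\to\calG_\e$ trivializing the deformations of the domain and codomain.

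Next I would introduce the one-parameter family of morphisms $f_\e:=\psi_\e^{-1}\circ\Phi_\e\circ\varphi_\e:\calH\to\calG$, a deformation of $f_0=\Phi_0$. The computation of $\frac{d}{d\e}f_\e$ is identical to the one in the previous theorem: using the second transgression equation together with the substitutions $\bar\alpha_\e=\varphi_\e^{*}\tilde\alpha_\e$ and $\alpha_\e=-(\psi_\e^{-1})_{*}\bar\alpha_\e$, one arrives at $\frac{d}{d\e}f_\e=\delta_{f_\e}(\alpha_\e)$. Hence the family of deformation cocycles of $f_\e$ is smoothly exact in $C^{\bullet}_{def}(\Phi_0)$. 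I would not reproduce this calculation in full, since it is word-for-word the one already carried out.

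At this point the two proofs diverge. Because $\calH$ is compact, its unit manifold $N$ is compact, and the base map of $f_0=\Phi_0$ is $\phi_0$, which is an injective immersion by hypothesis. These are precisely the hypotheses of Theorem \ref{thm:homostronglytrivial}, so the smooth exactness of the cocycles of $f_\e$ upgrades from triviality to \emph{strong} triviality: there is a smooth family of bisections $\sigma_\e$ of $\calG$ with $\sigma_0=u_\calG$ and $f_\e=I_{\sigma_\e}\circ\Phi_0$.

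Finally I would translate this into strong triviality of the triple. Unwinding $\psi_\e^{-1}\circ\Phi_\e\circ\varphi_\e=I_{\sigma_\e}\circ\Phi_0$, and using that $\psi_\e$ is a groupoid isomorphism so that $\psi_\e\circ I_{\sigma_\e}=I_{\psi_\e(\sigma_\e)}\circ\psi_\e$ for the pushforward bisection $\psi_\e(\sigma_\e)$ of $\calG_\e$, yields $\Phi_\e=I_{\psi_\e(\sigma_\e)}\circ(\psi_\e\circ\Phi_0\circ\varphi_\e^{-1})$. Assembling $\varphi_\e$ and $\psi_\e$ into isomorphisms of the total groupoids and $\psi_\e(\sigma_\e)$ into a single bisection $\sigma$ of $\tilde{\calG}$, this is exactly the equivalence \eqref{eq:equivalencetriple} with the constant deformation in which the gauge map $\tau=\sigma\circ\phi$ is induced by a bisection, i.e.\ strong triviality of the triple. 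I expect the main (and only genuinely new) point requiring care to be this last translation: verifying that the family $\sigma_\e$ of bisections of $\calG$ pushes forward along $\psi_\e$ and assembles into a bona fide bisection of the total groupoid $\tilde{\calG}$, and that conjugation by it is compatible with the isomorphisms $\varphi_\e,\psi_\e$. Everything preceding this step is a verbatim repetition of the proof of the previous theorem.
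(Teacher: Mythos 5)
Your proposal is correct and is exactly what the paper intends: the paper gives no separate argument for Theorem \ref{Deformtriples}, merely stating that it follows ``analogously,'' and the intended modification is precisely yours --- repeat the proof of the preceding theorem to show the cocycles of $f_\e=\psi_\e^{-1}\circ\Phi_\e\circ\varphi_\e$ are smoothly exact, then invoke Theorem \ref{thm:homostronglytrivial} (whose hypotheses are supplied by the compactness of $\calH$ and the injective immersivity of $\phi_0$) in place of Theorem \ref{gauge-triviality}. Your closing remark about reassembling $\sigma_\e$ into a bisection of the total groupoid is the right bookkeeping, though note that the equivalence \eqref{eq:equivalencetriple} is already in the form $G^{-1}\circ\Phi'\circ F=\tau\cdot\Phi$, so $f_\e=I_{\sigma_\e}\circ\Phi_0$ gives strong triviality directly with $\tau=\sigma\circ(\phi_0\times\mathrm{Id})$, without needing to push $\sigma_\e$ forward along $\psi_\e$.
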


\subsection{Particular cases and relations between (sub)complexes}

In view that the complex $C^{*}_{def}(\calH,\Phi,\calG)$ controls the most general type of deformations of the three structures $(\calH,\Phi,\calG)$, in this section we consider particular cases of deformations of the triple and their relation with some subcomplexes of $C^{*}_{def}(\calH,\Phi,\calG)$. We begin with the simplest case.

\subsubsection*{\textbf{$\calH$ and $\calG$ are fixed}}

In this case, we get a deformation $(\H,\Phi_\e,\G)$ of a Lie groupoid morphism. This fact is expressed, in cohomological terms, by the injection $C^{*}_{def}(\Phi)\longrightarrow C^{*}_{def}(\calH,\Phi,\calG)$
$$X\mapsto(0,-X,0).$$

Moreover, this map takes the infinitesimal cocycle of $\Phi_\e$ to the infinitesimal cocycle of $(\calH,\Phi_\e,\calG)$. Therefore, in this case, the relevant subcomplex controlling deformations of this type is given by $\{0\}\oplus C^{*}_{def}(\Phi)\oplus\{0\}$ as expected.

\subsubsection*{\textbf{$\calG$ is fixed}}

In this case, the relevant subcomplex is $C^{*+1}_{def}(\calH)\oplus C^{*}_{def}(\Phi)\oplus \{0\}$. In fact, it is not hard to see that a deformation of the form $(\calH_\e,\Phi_\e,\calG)$ is governed by the mapping-cone complex $C^{*}((\Phi)_{*})$ (see \eqref{MC(phi)}), where one associates the cocycle $(\xi_{\calH},-X)\in C^{2}_{def}(\calH)\oplus C^{1}_{def}(\Phi)$ to the deformation. Thus the (injective) chain map $C^{*}(\Phi_{*})\longrightarrow C^{*}_{def}(\calH,\Phi,\calG)$
\begin{equation}\label{injection}
(c,X)\mapsto (-1)^{deg(c)}(c,X,0)
\end{equation}
shows that the subcomplex $C^{*+1}_{def}(\calH)\oplus C^{*}_{def}(\Phi)\oplus \{0\}$ controls the deformations of the triple when we fix the groupoid $\calG$.

This kind of $\G$-fixed deformations is quite related to what is called \textit{deformations of Lie subgroupoids}. Indeed, it can be checked that the subcomplex $C^{*+1}_{def}(\calH)\oplus C^{*}_{def}(\Phi)\oplus \{0\}$ can be viewed as the complex which controls such a deformations. The details of that will be developed in the future work \cite{CardS-Subgroupoids}.


\section{Morita invariance and Deformation cohomology of generalized morphisms}\label{Sec:Moritainv}
We now investigate the behaviour of the deformation cohomology under Morita maps of Lie groupoids, show its invariance by Morita morphisms and use it to define a deformation cohomology for generalized morphisms between Lie groupoids.
The proof of the invariance results here are just applications of the recent developed concept of VB-Morita maps between VB-groupoids \cite{dHO}.

\begin{proposition}\label{Mor1}
Let $\Phi:\calH\longrightarrow\calG$ be a Lie groupoid morphism. Assume $F:\calH'\longrightarrow\calH$ is a Morita map. Then $H^{\bullet}_{def}(\Phi)\cong H^{\bullet}_{def}(\Phi\circ F)$.
\end{proposition}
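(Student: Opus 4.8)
The plan is to realise both deformation complexes as $VB$-groupoid complexes and then to reduce the statement to the $VB$-Morita invariance of $VB$-cohomology established in \cite{dHO}. Using the alternative description from Subsection \ref{alternativedescription}, I would first identify $C^{*}_{def}(\Phi)$ with the $VB$-complex $C^{*}_{VB}(\calH,\Phi^{*}T\calG)$ of the pullback tangent $VB$-groupoid $\Phi^{*}T\calG$ over $\calH$, and likewise $C^{*}_{def}(\Phi\circ F)$ with $C^{*}_{VB}(\calH',(\Phi\circ F)^{*}T\calG)$. The crucial observation is the canonical identity of $VB$-groupoids $(\Phi\circ F)^{*}T\calG=F^{*}(\Phi^{*}T\calG)$, so that the two complexes are the $VB$-cohomologies of the fixed $VB$-groupoid $\Phi^{*}T\calG$ over $\calH$ and of its pullback along $F$ over $\calH'$.

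Next I would show that the canonical morphism of $VB$-groupoids $\pi_{F}\colon F^{*}(\Phi^{*}T\calG)\to\Phi^{*}T\calG$ covering $F$ is a $VB$-Morita map. By definition this amounts to checking that the total Lie groupoid morphism $F^{*}(\Phi^{*}T\calG)\to\Phi^{*}T\calG$ is Morita. Since $F\colon\calH'\to\calH$ is Morita by hypothesis and $F^{*}(\Phi^{*}T\calG)$ is, by construction, the fibered product of $\Phi^{*}T\calG$ with $\calH'$ over $\calH$, this would follow from the stability of $VB$-Morita maps under pullback proved in \cite{dHO}. Because the identification of Subsection \ref{alternativedescription} passes through the dual $VB$-groupoid $\Phi^{*}T^{*}\calG$, I would then invoke the fact, recalled in Subsection \ref{VB-grpds}, that $VB$-Morita maps are preserved by dualization, so that the induced map on $\Phi^{*}T^{*}\calG$ is $VB$-Morita as well.

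Finally I would apply the $VB$-Morita invariance of $VB$-cohomology from \cite{dHO}: a $VB$-Morita map induces an isomorphism on $VB$-cohomology. Applied to $\pi_{F}$ (after dualization) this gives $H^{*}_{VB}(\Phi^{*}T^{*}\calG)\cong H^{*}_{VB}(F^{*}\Phi^{*}T^{*}\calG)$, which under the identifications above is precisely $H^{*}_{def}(\Phi)\cong H^{*}_{def}(\Phi\circ F)$. To upgrade this from an abstract isomorphism to a statement about the natural comparison map, I would check that it is induced by the pullback cochain map $F^{*}\colon C^{*}_{def}(\Phi)\to C^{*}_{def}(\Phi\circ F)$ of Remark \ref{chain maps}, i.e. that $F^{*}$ corresponds under the identification \eqref{isoMehta} to the pullback of $VB$-cochains along $\pi_{F}$.

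The main obstacle I anticipate is the second step: verifying carefully that $\pi_{F}$ is genuinely $VB$-Morita, i.e. that Morita-ness of the base map $F$ really does lift to Morita-ness of the total groupoid morphism $F^{*}(\Phi^{*}T\calG)\to\Phi^{*}T\calG$ compatibly with the vertical vector bundle structure. This is exactly the point where the pullback/base-change stability of $VB$-Morita maps is needed, and it is the only genuinely nontrivial input; once it is in place, the remaining identifications (the $VB$-description of $C^{*}_{def}$, dualization, and the invariance theorem) are formal. An alternative, more hands-on route that avoids citing pullback stability would be to mimic the explicit quasi-isomorphism argument used for $VB$-complexes in \cite{GrMe-grpds} and \cite{CMS}, building an inverse up to homotopy for $F^{*}$ directly from the essential surjectivity and full faithfulness encoded in the Morita property of $F$; I would keep this as a fallback in case the pullback-stability statement of \cite{dHO} does not apply verbatim in the present generality.
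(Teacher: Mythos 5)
Your proposal is correct and follows essentially the same route as the paper: identify the deformation complexes with the $VB$-complexes of the pullback $VB$-groupoids $\Phi^{*}T^{*}\calG$ and $F^{*}\Phi^{*}T^{*}\calG$, note that the canonical map between them is $VB$-Morita because $F$ is Morita (the paper cites Corollary 3.7 of \cite{dHO} for exactly the pullback-stability point you flag as the crux), and conclude by $VB$-Morita invariance of $VB$-cohomology. The only cosmetic difference is that you pass through the tangent $VB$-groupoid and then dualize, while the paper works with the cotangent version directly.
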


\begin{proof}
Recall that the deformation complexes of $\Phi$ and $\Phi\circ F$ are respectively isomorphic to the VB-complexes of the pullback VB-groupoids $\Phi^{*}T^{*}\calG$ and $F^{*}\Phi^{*}T^{*}\calG$. Then, since $F$ is a Morita map, then the canonical vector bundle map $F^{*}\Phi^{*}T^{*}\calG\longrightarrow\Phi^{*}T^{*}\calG$ insures that such VB-groupoids are VB-Morita equivalent (see Corollary 3.7 of \cite{dHO}) and thus, by the VB-Morita invariance of the VB-cohomology of \cite{dHO}, we have that $H^{\bullet}_{def}(\Phi)\cong H^{\bullet}_{VB}(\Phi^{*}T^{*}\calG)\cong H^{\bullet}_{VB}(F^{*}\Phi^{*}T^{*}\calG)\cong H^{\bullet}_{def}(\Phi\circ F)$, as claimed.

\end{proof}

\begin{proposition}\label{Mor2}
Let $\Phi:\calH\longrightarrow\calG$ be a Lie groupoid morphism. If $F:\calG\longrightarrow\calG'$ is a Morita map, then $H^{\bullet}_{def}(\Phi)\cong H^{\bullet}_{def}(F\circ\Phi)$.
\end{proposition}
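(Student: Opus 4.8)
The plan is to mirror the strategy of Proposition \ref{Mor1}, transporting everything to the level of VB-groupoids and their VB-cohomology, but now exploiting the behaviour of the \emph{tangent lift} of a Morita map under dualization rather than a pullback along a Morita map. First I would recall, from the alternative description in Subsection \ref{alternativedescription} (as already used in Proposition \ref{Mor1}), that $C^{\bullet}_{def}(\Phi)\cong C^{\bullet}_{VB}(\Phi^{*}T^{*}\calG)$ and, by functoriality of the pullback, $C^{\bullet}_{def}(F\circ\Phi)\cong C^{\bullet}_{VB}((F\circ\Phi)^{*}T^{*}\calG')\cong C^{\bullet}_{VB}(\Phi^{*}F^{*}T^{*}\calG')$. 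Thus it suffices to produce a VB-Morita map between the VB-groupoids $\Phi^{*}T^{*}\calG$ and $\Phi^{*}F^{*}T^{*}\calG'$ over $\calH$ and then invoke the VB-Morita invariance of VB-cohomology \cite{dHO}.

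The core of the argument is to first compare $T^{*}\calG$ and $F^{*}T^{*}\calG'$ over $\calG$. Since $F$ is a Morita map, its tangent lift $TF:T\calG\to T\calG'$ is VB-Morita, and it factors as $T\calG\stackrel{p}{\to}F^{*}T\calG'\stackrel{q}{\to}T\calG'$, where $q$ is the canonical projection and $p(v)=(g,TF(v))$ for $v\in T_g\calG$. The projection $q$ is the pullback of the VB-groupoid $T\calG'$ along the Morita map $F$, hence VB-Morita by the same Corollary 3.7 of \cite{dHO} used in the proof of Proposition \ref{Mor1}. By the two-out-of-three property of Morita maps applied to $TF=q\circ p$, the map $p:T\calG\to F^{*}T\calG'$ is then VB-Morita over $\mathrm{id}_\calG$. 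Dualizing, and using that VB-Morita maps are preserved by dualization \cite{dHO}, I obtain a VB-Morita map $p^{*}:F^{*}T^{*}\calG'\to T^{*}\calG$ over $\mathrm{id}_\calG$ (here I use the canonical identification $(F^{*}T\calG')^{*}\cong F^{*}T^{*}\calG'$).

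It remains to pull this comparison back along $\Phi$. Since $p^{*}$ covers the identity on $\calG$, its pullback $\Phi^{*}p^{*}:\Phi^{*}F^{*}T^{*}\calG'\to\Phi^{*}T^{*}\calG$ covers the identity on $\calH$, and I would argue that it is again VB-Morita. For this I pass to the description of VB-cohomology in terms of $2$-term representations up to homotopy: a VB-Morita map over the identity is exactly a quasi-isomorphism of the associated $2$-term representations (inducing isomorphisms on $\mathrm{Ker}\,\partial$ and $\mathrm{Coker}\,\partial$), and such a quasi-isomorphism is preserved under pulling back the representation along $\Phi$, since pullback acts fiberwise on the complex $C\stackrel{\partial}{\to}E$. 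Hence $\Phi^{*}p^{*}$ is VB-Morita over $\mathrm{id}_\calH$, and the VB-Morita invariance of VB-cohomology \cite{dHO} yields
\[
H^{\bullet}_{def}(\Phi)\cong H^{\bullet}_{VB}(\Phi^{*}T^{*}\calG)\cong H^{\bullet}_{VB}(\Phi^{*}F^{*}T^{*}\calG')\cong H^{\bullet}_{def}(F\circ\Phi),
\]
as claimed.

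The main obstacle I anticipate is precisely this last step: justifying that pullback along the arbitrary (non-Morita) morphism $\Phi$ preserves the VB-Morita property of $p^{*}$. Unlike the situation in Proposition \ref{Mor1}, where the pullback is taken along a Morita map and one may cite Corollary 3.7 of \cite{dHO} directly, here the relevant input is a base-change stability of VB-Morita maps \emph{over the identity}. I expect this to follow cleanly from the representation-up-to-homotopy characterization sketched above, but it should be stated carefully when $\calG$ is not regular and $\mathrm{Ker}\,\partial$, $\mathrm{Coker}\,\partial$ are only singular bundles, in which case the quasi-isomorphism condition must be read fiberwise.
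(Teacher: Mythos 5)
Your proposal is correct and follows essentially the same route as the paper: identify both deformation complexes with VB-complexes of $\Phi^{*}T^{*}\calG$ and $\Phi^{*}F^{*}T^{*}\calG'$, show $(TF)^{!}:T\calG\to F^{*}T\calG'$ is VB-Morita, dualize, and pull back along $\Phi$. The only difference is that you explicitly justify the final step (that pulling back a VB-Morita map over the identity along the arbitrary morphism $\Phi$ stays VB-Morita, via the fiberwise quasi-isomorphism criterion), which the paper simply asserts; this is a worthwhile elaboration but not a different argument.
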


\begin{proof}

Recall that the complexes computing the deformation cohomology of $\Phi$ and $F\circ\Phi$ are isomorphic, respectively, to the VB-complexes of the two VB-groupoids $\Phi^{*}T^{*}\calG$ and $\Phi^{*}F^{*}T^{*}\calG'$, thus we shall prove that the cohomologies of these VB-complexes are isomorphic. 

Since $F$ is a Morita map, it follows that the differential $TF:T\calG\to T\calG'$ and the canonical bundle map $F^{*}T\calG'\longrightarrow T\calG'$ are VB-Morita maps. Thus, the induced map $(TF)^{!}:T\calG\longrightarrow F^{*}T\calG'$ of the VB-groupoids (over $\G$) turns out to be also a VB-Morita map.

Therefore, Corollary 3.9 in \cite{dHO} ensures that its dual map
$$\Psi:=((TF)^{!})^{*}:F^{*}T^{*}\calG'\longrightarrow T^{*}\calG$$ 
is a VB-Morita map. Hence, finally by taking the pullback by $\Phi$ of these VB-groupoids, one gets the induced VB-Morita map 
$\Phi^{*}\Psi:\Phi^{*}F^{*}T^{*}\calG'\longrightarrow \Phi^{*}T^{*}\calG$ which, by the VB-Morita invariance of the VB-cohomology \cite{dHO}, then induces the isomorphism of the indicated VB-cohomologies. 
\end{proof}

We can use now the results in this Section to define a deformation cohomology for \emph{generalized maps} which are regarded as the morphisms in the category of \emph{differentiable stacks}.

\subsection{A deformation complex for fractions}

In the setting of the theory of \emph{localization} of categories and calculus of fractions \cite{Kashiwara2006categoriesandsheaves}, given two Lie groupoids $\calH$ and $\calG$, a \textbf{fraction} $\Psi/\Phi:\calH\longrightarrow\calG$ is defined by two maps $\Phi:\calK\to\calH$ and $\Psi:\calK\to\calG$ where $\calK$ is a third Lie groupoid and $\Phi$ is a Morita morphism. That fraction is often also denoted by $\Psi\Phi^{-1}$.
\begin{equation}\label{diagr:fraction}
\calH\stackrel{\Phi}{\longleftarrow}\calK\stackrel{\Psi}{\longrightarrow}\calG.
\end{equation}
Two fraction are said to be \textbf{equivalent} $\Psi_1/\Phi_1\cong\Psi_2/\Phi_2$ if there exist a third fraction $\Psi_3/\Phi_3$ and Morita maps $F_1:\calK_3\to\calK_1$ and $F_2:\calK_3\to\calK_2$ making the below diagram commutative up to isomorphisms of morphisms.

\[\xymatrix{ & \calK_1 \ar[ld]_{\Phi_1} \ar[rd]^{\Psi_1} & \\
\calH & \calK_3 \ar[r]_{\Psi_3} \ar[l]^{\Phi_3} \ar[u]^{F_1} \ar[d]_{F_2} & \calG\\
 & \calK_2 \ar[lu]^{\Phi_2} \ar[ru]_{\Psi_2} & \\
 }
\]

This is indeed an equivalence relation on fractions as can be proved by using weak fibred products (\cite{MM}, p. 124). The equivalence class of the fraction $\Psi/\Phi$ determines a \textbf{generalized map} $\mathrm{\Psi/\Phi:\calH\dashrightarrow\calG}$, also known as \textbf{generalized morphism} or \textbf{stacky map}, which can be viewed as a smooth map between the differentiable stacks presented by $\calH$ and $\calG$ (see \cite{2019riemannianstacks}, Section 6.2).

Given the fraction $\Psi/\Phi$, there is a map of complexes $\Phi_{*}\oplus\Psi_{*}:C_{def}^{*}(\calK)\to C_{def}^{*}(\Phi)\oplus C_{def}^{*}(\Psi)$ induced by the morphisms $\Phi$ and $\Psi$. We define the \textbf{deformation complex of the fraction $\Psi/\Phi$} by the mapping-cone complex of the map $\Phi_{*}\oplus\Psi_{*}$. That is,
$$C_{\mathrm{def}}^{*}(\Psi/\Phi):=C_{\mathrm{def}}^{*}(\Phi)\oplus C_{\mathrm{def}}^{*+1}(\calK)\oplus C_{\mathrm{def}}^{*}(\Psi),$$
with differential
$\delta(a,b,c)=(\Phi_{*}b-\delta_{\Phi}a,\; \delta_{\calK}b,\; \Psi_{*}b-\delta_{\Psi}a)$. 

\begin{remark}
Notice that this complex can also be defined for any pair of morphisms set as in diagram \eqref{diagr:fraction}. However the fact that in a fraction the left leg is a Morita map can be used to get an alternative expression of the deformation complex useful for computations.
\end{remark}

Equivalent fractions have isomorphic deformation cohomology as can be proven by using the quasi-isomorphisms $F^{*}$ and $F_{*}$ of deformation complexes (Propositions \ref{Mor1} and \ref{Mor2}), induced by a Morita map $F$ which relates two fractions, and by the isomorphism of deformation complexes of isomorphic morphisms (Theorem \ref{Theor:IsomorpMorphisms-Cohomology}). Hence, the deformation complex of a fraction induces a well-defined \textbf{deformation cohomology for generalized morphisms}. We register that fact in the following theorem.

\begin{theorem}\label{thm:cohomologyGeneraliz.maps}
If $\Psi/\Phi$ and $\Psi'/\Phi'$ are equivalent fractions from $\H$ to $\G$, then their deformation cohomologies $H^{*}_{def}(\Psi/\Phi)$ and $H^{*}_{def}(\Psi'/\Phi')$ are isomorphic.
\end{theorem}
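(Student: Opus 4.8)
The plan is to reduce the statement to two elementary invariance properties of the fraction complex and then to establish each by a mapping-cone comparison. Recall that $\Psi/\Phi \cong \Psi'/\Phi'$ is witnessed by a third fraction $\Psi_3/\Phi_3$ with kernel $\calK_3$ and Morita maps $F_1:\calK_3\to\calK$, $F_2:\calK_3\to\calK'$ such that $\Phi\circ F_1\cong\Phi_3\cong\Phi'\circ F_2$ and $\Psi\circ F_1\cong\Psi_3\cong\Psi'\circ F_2$ as morphisms. Since a composite of Morita maps is Morita, each $(\Psi F_i)/(\Phi F_i)$ is again a genuine fraction. Hence it suffices to prove: (a) if two fractions have the same kernel and legs that are pairwise isomorphic as morphisms, then their deformation cohomologies agree; and (b) for a Morita map $F:\calK_3\to\calK$, the fraction $(\Psi F)/(\Phi F)$ has the same deformation cohomology as $\Psi/\Phi$. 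Composing the four isomorphisms produced by (a) and (b) along the span then yields $H^{*}_{def}(\Psi/\Phi)\cong H^{*}_{def}(\Psi'/\Phi')$.

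Both (a) and (b) follow the same pattern. Since $C^{*}_{def}(\Psi/\Phi)$ is by definition the mapping cone of $\Phi_{*}\oplus\Psi_{*}$, any pair of quasi-isomorphisms forming a commutative square with the defining cochain map induces, through the long exact sequence of the cone and the five lemma, a quasi-isomorphism of cones. The crucial observation is that all comparison maps I need are the genuine push-forward and pullback cochain maps of Remark \ref{chain maps} (and its evident extensions to precomposed morphisms), which are quasi-isomorphisms precisely by Propositions \ref{Mor1} and \ref{Mor2}. I will avoid awkward zig-zags by factoring each comparison through a well-chosen intermediate cone.

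For (b), I would introduce the intermediate complex $\mathcal{M}$, the mapping cone of $\Phi_{*}\oplus\Psi_{*}:C^{*}_{def}(F)\to C^{*}_{def}(\Phi F)\oplus C^{*}_{def}(\Psi F)$ built from the morphism complex $C^{*}_{def}(F)$. There are then genuine cochain maps $C^{*}_{def}(\Psi/\Phi)\to\mathcal{M}$ and $C^{*}_{def}((\Psi F)/(\Phi F))\to\mathcal{M}$: the first is $F^{*}$ on the kernel term $C^{*}_{def}(\calK)\to C^{*}_{def}(F)$ together with $F^{*}$ on the two leg terms, and the second is $F_{*}$ on the kernel term $C^{*}_{def}(\calK_3)\to C^{*}_{def}(F)$ together with the identity on the legs. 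A short computation shows each square commutes (both composites apply $d(\Phi F)$, resp. $d(\Psi F)$, to the underlying cochain), and the kernel- and leg-components are quasi-isomorphisms by Propositions \ref{Mor1} and \ref{Mor2} (applied with identity morphisms where needed). The five lemma then gives $H^{*}_{def}(\Psi/\Phi)\cong H^{*}(\mathcal{M})\cong H^{*}_{def}((\Psi F)/(\Phi F))$.

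For (a), I would realize the isomorphisms of morphisms by the groupoid of arrows exactly as in the proof of Theorem \ref{Theor:IsomorpMorphisms-Cohomology}: the gauge maps relating the legs give classifying morphisms $\tilde{\tau}_\Phi:\calK\to\calH^{I}$ and $\tilde{\tau}_\Psi:\calK\to\calG^{I}$ with $\Phi=u\circ\tilde{\tau}_\Phi$, $\Phi''=l\circ\tilde{\tau}_\Phi$ (and similarly for $\Psi$), where $u,l$ are the upper and lower Morita morphisms. I then form the intermediate cone over the unchanged kernel term $C^{*}_{def}(\calK)$ with legs $C^{*}_{def}(\tilde{\tau}_\Phi)\oplus C^{*}_{def}(\tilde{\tau}_\Psi)$, and compare it to both $C^{*}_{def}(\Psi/\Phi)$ and $C^{*}_{def}(\Psi''/\Phi'')$ via the identity on the kernel and the push-forwards $u_{*}$, $l_{*}$ on the legs, which are quasi-isomorphisms by Proposition \ref{Mor2}; the five lemma finishes the argument. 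The main obstacle I anticipate is bookkeeping rather than conceptual: verifying that all these comparison maps are strict cochain maps, that is, that the squares commute on the nose including the cone differential and the sign twist $(-1)^{\deg}$ in $\tilde{\Phi}^{*}$, and checking the coherence of the ``up to isomorphism of morphisms'' data so that the two applications of (a) glue correctly with the two applications of (b) along the span.
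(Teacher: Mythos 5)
Your argument is correct and follows essentially the same route as the paper, which proves the theorem by citing exactly the three ingredients you use: the quasi-isomorphisms $F^{*}$ and $F_{*}$ from Propositions \ref{Mor1} and \ref{Mor2} and the invariance under isomorphism of morphisms from Theorem \ref{Theor:IsomorpMorphisms-Cohomology}. The paper leaves the assembly implicit; your intermediate mapping cones and five-lemma bookkeeping are a faithful and complete implementation of that sketch.
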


Thus, the deformation cohomology of a fraction is an algebraic object associated to the stacky map it represents. 
The deformation cohomology $H_{def}^{*}(\Psi/\Phi)$ also turns out to be very involved in the infinitesimal study of the space of generalized maps. For instance, every deformation of a fraction $\Psi/\Phi$ has a corresponding 1-cocycle whose cohomology class should be regarded as the velocity vector at (the class of) $\Psi/\Phi$ of the associated path of generalized morphisms. More extended and detailed results will lie on future work.\\


\section{Application: some remarks on deformations of multiplicative forms}\label{Section:multiplicativeforms}

In this section we use the deformation complex of morphisms to study deformations of multiplicative forms on Lie groupoids. Also, we characterize \textit{trivial} defomations of multiplicative forms in cohomological terms. The content of this section is also relevant to develope the theory of deformations of symplectic groupoids as in \cite{CardMS}.

Recall that a $k$-form $\omega\in\Omega^k(\G)$ is called a \textbf{multiplicative $k$-form} if it satisfies the
\textit{multiplicativity condition}
\begin{equation}\label{multiplicat.condition}
m^{*}\omega=pr_1^{*}\omega+pr_2^{*}\omega,
\end{equation}
where $pr_i,\ m:\calG^{(2)}\rightarrow\calG$ are the canonical projections and multiplication of $\calG$. A map \newline $\overline{\omega}:\bigoplus^{k} T\G\times I\to\mathbb{R}$ is called a \textbf{smooth family of multiplicative $k$-forms} if, for every $\e$, $\omega_\e:=\overline{\omega}(\cdot,\e)$ is a multiplicative $k$-form. We say that a smooth family $\omega_\e\in\Omega^k(\G)$ of multiplicative $k$-forms is a \textbf{deformation of} $\omega$ if $\omega_0=\omega$.

We consider first the particular case of multiplicative symplectic 2-forms on $\G$ (Proposition \ref{Prop: 2-forms} below), then we will generalize the situation to multiplicative $k$-forms. A more advanced study of the case of multiplicative symplectic 2-forms is made in \cite{CardMS} where we consider also a simultaneous deformation of the underlying Lie groupoid and a relation of its deformation cohomology to the Bott-Shulmann-Stasheff complex. Recall that the classical Moser's theorem of symplectic geometry deals with symplectic 2-forms on a differentiable manifold. This theorem says that a smooth family $\omega_\e$ of symplectic forms on a manifold $M$ is recovered as the pullback $F_\e^{*}\omega_0$ by a family of diffeomorphisms $F_\e$ of the symplectic form at time zero if, and only if, there exists a smooth family $\alpha_\e\in\Omega^{1}(M)$ of 1-forms such that
\begin{equation}\label{eq:Moser}
\frac{d}{d\e}\omega_\e=d_{dR}\alpha_\e,\ \text{for every } \e.
\end{equation}

The following proposition formulates an analogous result in the context of Lie groupoids, where instead we consider the \emph{multiplicative de Rham complex} $(\Omega_{mult}^{*}(\G), d_{dR})$ of $\G$, whose elements are 
multiplicative forms of $\G$.


\begin{proposition}\label{Prop: 2-forms}
Let $(\G,\omega)$ be a compact symplectic groupoid, and assume that $\omega_\e$ is a deformation of $\omega$. Then, $\omega_\e=F_\e^{*}\omega$ for a smooth family of groupoid automorphisms of $\G$, with $F_0=Id_\G$, if and only if the family of cocycles $\frac{d}{d\e}\omega_\e\in\Omega^{2}_{mult}(\G)$ is smoothly exact in $(\Omega_{mult}^{*}(\G), d_{dR})$.
\end{proposition}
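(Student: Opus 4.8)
The plan is to run a Moser-type argument adapted to the multiplicative setting, in which the role played by diffeomorphisms in the classical theorem (equation \eqref{eq:Moser}) is taken over by groupoid automorphisms and the role of the de Rham complex by the multiplicative de Rham complex. The single technical tool underlying both implications is the observation that, since $\omega$ is multiplicative and nondegenerate, the bundle map $\omega^{\flat}:T\G\to T^{*}\G$, $v\mapsto \iota_{v}\omega$, is an isomorphism of VB-groupoids from the tangent groupoid $T\G\rightrightarrows TM$ onto the cotangent groupoid $T^{*}\G\rightrightarrows A_{\G}^{*}$. Consequently $\omega^{\flat}$ restricts to a bijection between multiplicative vector fields on $\G$ (groupoid morphisms $\G\to T\G$) and multiplicative $1$-forms on $\G$ (groupoid morphisms into $T^{*}\G$, i.e. those satisfying the cochain condition \eqref{multiplicat.condition}); in other words, $\iota_{Y}\omega$ is multiplicative if and only if $Y$ is multiplicative. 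I will use the same fact for the nearby forms $\omega_{\e}$, which remain multiplicative and symplectic for $\e$ small.

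For the necessity direction I would assume $\omega_{\e}=F_{\e}^{*}\omega$ with $F_{\e}$ a smooth family of automorphisms, $F_{0}=\mathrm{Id}_{\G}$. Let $X_{\e}$ be the time-dependent vector field generating $F_{\e}$; differentiating the identity $F_{\e}(gh)=F_{\e}(g)F_{\e}(h)$ shows that $X_{\e}$ is a smooth family of multiplicative vector fields. Then Cartan's formula together with $d\omega=0$ gives
\begin{equation*}
\frac{d}{d\e}\omega_{\e}=F_{\e}^{*}(\mathcal{L}_{X_{\e}}\omega)=F_{\e}^{*}\,d(\iota_{X_{\e}}\omega)=d\big(F_{\e}^{*}\iota_{X_{\e}}\omega\big).
\end{equation*}
Setting $\alpha_{\e}:=F_{\e}^{*}\iota_{X_{\e}}\omega$, the correspondence above makes $\iota_{X_{\e}}\omega$ multiplicative, and $F_{\e}^{*}$ preserves multiplicativity since $F_{\e}$ is an automorphism; hence $\alpha_{\e}$ is a smooth family of multiplicative $1$-forms witnessing the smooth exactness of $\tfrac{d}{d\e}\omega_{\e}$ in $(\Omega^{*}_{mult}(\G),d_{dR})$.

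For the sufficiency direction I would carry out the Moser trick. Assume $\tfrac{d}{d\e}\omega_{\e}=d_{dR}\alpha_{\e}$ with $\alpha_{\e}$ a smooth family of multiplicative $1$-forms. Using nondegeneracy of $\omega_{\e}$, define the time-dependent vector field $Y_{\e}$ by $\iota_{Y_{\e}}\omega_{\e}=-\alpha_{\e}$. By the correspondence above $Y_{\e}$ is multiplicative. Since $\G$ is compact, the time-dependent flow $G_{\e}$ of $Y_{\e}$ is defined for $\e$ in a neighbourhood of $0$, and because each $Y_{\e}$ is multiplicative, $G_{\e}$ is a family of groupoid automorphisms with $G_{0}=\mathrm{Id}_{\G}$ (cf. \cite{MX}). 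A standard computation then gives
\begin{equation*}
\frac{d}{d\e}\big(G_{\e}^{*}\omega_{\e}\big)=G_{\e}^{*}\Big(\mathcal{L}_{Y_{\e}}\omega_{\e}+\frac{d}{d\e}\omega_{\e}\Big)=G_{\e}^{*}\big(d\,\iota_{Y_{\e}}\omega_{\e}+d_{dR}\alpha_{\e}\big)=0,
\end{equation*}
so $G_{\e}^{*}\omega_{\e}=\omega$ for all small $\e$; taking $F_{\e}:=G_{\e}^{-1}$ yields $\omega_{\e}=F_{\e}^{*}\omega$ as desired.

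I expect the genuine content, and the step most in need of care, to be the claim that $\omega^{\flat}$ is an isomorphism of VB-groupoids and hence that $\iota_{Y}\omega$ multiplicative forces $Y$ multiplicative. One implication ($Y$ multiplicative $\Rightarrow \iota_{Y}\omega$ multiplicative) is an immediate pairing computation using $m^{*}\omega=pr_{1}^{*}\omega+pr_{2}^{*}\omega$ and $Y(gh)=Tm(Y(g),Y(h))$; the reverse implication additionally requires that $Y$ be $s$- and $t$-projectable to a single vector field on $M$ (so that $Tm(Y(g),Y(h))$ makes sense) and then nondegeneracy of $\omega$ together with surjectivity of $Tm$ onto $T_{gh}\G$. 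The projectability is exactly where the symplectic-groupoid structure enters, and it is cleanest to phrase it through the VB-groupoid isomorphism $\omega^{\flat}$ rather than by hand. The only remaining point to verify is the completeness of the Moser flow, which is handled by the compactness hypothesis on $\G$.
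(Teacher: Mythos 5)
Your proposal is correct and follows essentially the same route as the paper: the paper's (very brief) proof also rests on the observation that the multiplicative symplectic form identifies $\Omega^{1}_{mult}(\G)$ with $\mathfrak{X}_{mult}(\G)\cong Z^{1}_{def}(\G)$ via $\omega^{\flat}$, and then runs the Moser trick so that the transgressing vector field integrates to a family of groupoid automorphisms. Your write-up simply fills in the details the paper leaves implicit (the necessity direction, the Cartan-formula computation, and the completeness of the flow from compactness).
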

\begin{proof}
This proof is just a multiplicative version of the proof of the classical Moser theorem. In order to prove it, we notice that the multiplicative symplectic 2-form $\omega$ yields an isomorphism between the space of multiplicative 1-forms $\Omega^{1}_{mult}(\G)$ and the space of multiplicative vector fields $\mathfrak{X}_{mult}(\G)\cong Z^{1}_{def}(\G)$. Therefore the time dependent flow $F_\e$ of the transgressing family $X_\e$ of vector fields will be given by a family of automorphisms of $\G$ starting at the identity $Id_\G$. 
\end{proof}

\begin{remark}
We can express equivalently the smooth exactness of $\frac{d}{d\e}\omega_\e\in\Omega^{2}_{mult}(\G)$ by saying that $\frac{d}{d\e}\omega_\e$ has a \textbf{smooth preimage by} $d_{dR}$ in $\Omega^{1}_{mult}(\G)$. That is, saying that there exists a smooth family $\alpha_\e\in\Omega^{1}_{mult}(M)$ such that
$\frac{d}{d\e}\omega_\e=d_{dR}\alpha_\e,$ for every $\e$, which is the same condition of equation \eqref{eq:Moser}.
\end{remark}

The following example shows that the classical Moser theorem is obtained as an application of the previous proposition to the pair groupoid.

\begin{example}
Let $\omega_\e$ be a smooth family of symplectic structures on a compact manifold $M$, and consider the induced family of symplectic groupoids $(\mathrm{Pair}(M),\tilde{\omega}_\e)$, where $\tilde{\omega}_\e=pr_1^{*}\omega_\e-pr_2^{*}\omega_\e$. Thus, since $\tilde{\omega}_\e$ is smoothly exact in $\Omega_{mult}^{*}(\mathrm{Pair}(M))$ if and only if $\omega_\e$ is smoothly exact in $\Omega^{*}(M)$, and the automorphisms of $\mathrm{Pair}(M)$ are of the form $F=f\times f$, where $f\in\mathrm{Diff}(M)$, it follows that the previous proposition reduces to the classical Moser's theorem in this case.
\end{example}

Next we pass to the general case of multiplicative $k$-forms on the groupoid $\G$. A multiplicative $k$-form $\omega$ can also be viewed as the morphism of Lie groupoids

\[\xymatrix{\bigoplus_{p_{\calG}}^{k}T\calG \ar[r]^{\hat{\omega}} \ar@<0.25pc>[d] \ar@<-0.25pc>[d] & \mathbb{R} \ar@<0.25pc>[d] \ar@<-0.25pc>[d]\\
\bigoplus_{p_{M}}^{k}TM \ar[r] & {*}}\]
which is $k$-linear and skewsymmetric in the sense that $\hat{\omega}:\bigoplus_{p_{\calG}}^{k}T\calG\to\mathbb{R}$ is $k$-linear with respect to the linear structure of $\bigoplus_{p_{\calG}}^{k}T\G$ over $\calG$. 
With this viewpoint, a smooth family of skew-symmetric and $k$-linear Lie groupoid morphisms $\hat{\omega}_\e:\bigoplus^{k}T\G\to \mathbb{R}$ is called a \textbf{deformation} of the multiplicative $k$-form $\omega_0\in\Omega^{k}(\G)$.


From the cohomological perspective, the skew-symmetry and $k$-linearity of the morphism $\hat{\omega}$ translate to \emph{skew-symmetric and $k$-linear deformation cochains} in $C^{*}_{def}(\hat{\omega})$ as described below. 

\begin{definition}
The \textbf{deformation complex of a multiplicative $k$-form} $\omega\in\Omega^{k}_{mult}(\G)$ consists of the skew-symmetric and $k$-linear cochains of $C^{\bullet}_{def}(\hat{\omega})$. We denote by $C^{*}_{def}(\omega)$ such a subcomplex of deformation cochains.
\end{definition}

Explicitly, the deformation complex $C^{\bullet}_{def}(\omega)$ can be described as follows. Consider the natural identification $\bigoplus_{p_{\calG^{(l)}}}^{k}T\calG^{(l)}\cong(\bigoplus_{p_\calG}^{k}T\calG)^{(l)}$, where $p_{\G^{(l)}}:T\calG^{(l)}\to\G^{(l)}$ and $p_{\G}:T\calG\to\G$ are the projections of the respective tangent bundles. Thus, the elements $c$ of $C^{*}_{def}(\omega)$ are given by those elements $c$ in $C^{*}_{def}(\hat{\omega})$ which turn the composition
\begin{equation}\label{eq:naturalidentifications}
\bigoplus_{p_{\calG^{(l)}}}^{k}T\calG^{(l)}\cong(\bigoplus_{p_\calG}^{k}T\calG)^{(l)}\stackrel{c}{\longrightarrow} T\mathbb{R}\cong\mathbb{R}\oplus\mathbb{R}\stackrel{pr_2}{\longrightarrow}\mathbb{R}
\end{equation}
a $k$-linear and skew-symmetric map. Along with the restriction $\delta_\omega$ of the deformation differential $\delta_{\hat{\omega}}$ of $C^{\bullet}_{def}(\hat{\omega})$, $(C^{\bullet}_{def}(\omega),\delta_\omega)$ becomes indeed a subcomplex: the \emph{deformation complex of} $\omega$.

\begin{remark}\label{rmk:differsubcomplex}
Considering the identification of $(\bigoplus_{p_\calG}^{k}T\calG)^{(l)}$ with $\bigoplus_{p_{\calG^{(l)}}}^{k}T\calG^{(l)}$ and the composition
$$(\bigoplus_{p_\calG}^{k}T\calG)^{(l)}\stackrel{c}{\longrightarrow} T\mathbb{R}\cong\mathbb{R}\oplus\mathbb{R}\stackrel{pr_2}{\longrightarrow}\mathbb{R},$$
the elements of the complex $C^{\bullet}_{def}(\omega)$ can be regarded as the elements of the subcomplex $C^{\bullet}_{k\mathrm{-lin},\mathrm{sk}}(\bigoplus^{k} T\calG)$ of $C^{\bullet}_{\mathrm{diff}}(\bigoplus^{k}T\calG)$ consisting of \emph{fiberwise $k$-linear and skew-symmetric differentiable cochains} of $C^{\bullet}_{\mathrm{diff}}(\bigoplus^{k}T\calG)$. Moreover, from a straightforward computation, one also gets the correspondence between the differentials of these complexes.
\end{remark}

The following proposition shows that this deformation complex is isomorphic to one that only depends on the simplicial structure of $\G$ and not of $T\G$ or $\omega$ explicitly.

\begin{proposition}
Given any multiplicative $k$-form $\omega\in\Omega^{k}_{mult}(\G)$, the deformation complex $(C^{\bullet}_{def}(\omega),\delta_\omega)$ is isomorphic to $(\Omega^{k}(\G^{(\bullet)}),\delta)$; where $\delta$ is the differential induced from the simplicial structure of $\G$.
\end{proposition}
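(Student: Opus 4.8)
The plan is to exhibit an explicit isomorphism of cochain complexes and check it intertwines the differentials. First I would unwind both sides at the level of cochains. By Remark \ref{rmk:differsubcomplex}, an element of $C^{k}_{def}(\omega)$ in simplicial degree $\bullet=l$ is, after composing with $pr_2:T\mathbb{R}\cong\mathbb{R}\oplus\mathbb{R}\to\mathbb{R}$, precisely a fiberwise $k$-linear and skew-symmetric map $\bigoplus_{p_{\calG^{(l)}}}^{k}T\calG^{(l)}\to\mathbb{R}$. But a fiberwise $k$-linear skew-symmetric function on the $k$-fold fibered sum of $T\calG^{(l)}$ over $\calG^{(l)}$ is exactly the same datum as an ordinary differential $k$-form on the manifold $\calG^{(l)}$, i.e. a section of $\Lambda^{k}T^{*}\calG^{(l)}$. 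This gives a canonical bijection, at each simplicial degree $l$,
\begin{equation*}
\Theta_l:C^{l}_{def}(\omega)\stackrel{\cong}{\longrightarrow}\Omega^{k}(\calG^{(l)}),
\end{equation*}
linear and natural in $l$. I would state this identification carefully using the natural isomorphism $\bigoplus_{p_{\calG^{(l)}}}^{k}T\calG^{(l)}\cong(\bigoplus_{p_\calG}^{k}T\calG)^{(l)}$ appearing in \eqref{eq:naturalidentifications}, so that the $s_\G$-projectability and the condition $c(h_1,\dots)\in T_{\hat\omega(h_1,\dots)}\mathbb{R}$ built into $C^{\bullet}_{def}(\hat\omega)$ become vacuous once we pass to the $pr_2$-component, leaving exactly a $k$-form on $\calG^{(l)}$.

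Next I would identify the differentials. The target complex $(\Omega^{k}(\calG^{(\bullet)}),\delta)$ has the simplicial differential $\delta=\sum_{i=0}^{l+1}(-1)^i d_i^{*}$, where $d_i:\calG^{(l+1)}\to\calG^{(l)}$ are the face maps of the nerve (the $d_0$ and $d_{l+1}$ being the omission of the outer arrows, the intermediate ones the multiplications $g_ig_{i+1}$). The plan is to transport the deformation differential $\delta_\omega$ through $\Theta$ and verify $\Theta_{l+1}\circ\delta_\omega=\delta\circ\Theta_l$ term by term. Here the key observation is that $\hat\omega$ takes values in the unit groupoid $\mathbb{R}\tto\mathbb{R}$, so the division map $\bar m_{\mathbb{R}}$ appearing in $\delta_{\hat\omega}$ is just subtraction in $\mathbb{R}$, and $d\bar m_{\mathbb{R}}(v,w)=v-w$ after the $pr_2$-identification. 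Consequently the leading term $-d\bar m_\calG(c(h_1h_2,\dots),c(h_2,\dots))$ of $\delta_{\hat\omega}$ degenerates to $c(h_1h_2,\dots)-c(h_2,\dots)$, which under $\Theta$ becomes exactly $-\,d_1^{*}+d_0^{*}$ on the form side; the remaining sum $\sum_{i=2}^{k}(-1)^i c(\dots h_ih_{i+1}\dots)+(-1)^{k+1}c(\dots)$ matches the intermediate and final face pullbacks. I would carry out this matching once in a single display, being careful with the sign and index conventions, to conclude that $\Theta$ is a cochain isomorphism.

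I expect the main obstacle to be purely bookkeeping rather than conceptual: reconciling the two differential conventions so that the faces, signs, and the degeneration of the division map line up. The subtle points are (i) confirming that the $s_\G$-projectability condition in the definition of $C^{\bullet}_{def}(\hat\omega)$ imposes no constraint after projecting to $\mathbb{R}$ (because $TM$ for the unit groupoid is a point-fiber, so $ds_\G\circ c$ lands in a trivial fiber), which is what frees $\Theta_l$ to be a genuine bijection onto all of $\Omega^{k}(\calG^{(l)})$; and (ii) checking that $\delta_\omega$, defined as the restriction of $\delta_{\hat\omega}$ to the $k$-linear skew-symmetric subcomplex, indeed preserves $k$-linearity and skew-symmetry so that $\Theta$ is defined on the nose. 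Both follow because each face map is a groupoid morphism, hence its pullback preserves linearity and skew-symmetry fiberwise, and because subtraction in $\mathbb{R}$ is linear. Once these are settled, naturality of $\Theta$ in $l$ gives the isomorphism of complexes $(C^{\bullet}_{def}(\omega),\delta_\omega)\cong(\Omega^{k}(\calG^{(\bullet)}),\delta)$, which is the claim. I would close by remarking that this makes the deformation cohomology of $\omega$ depend only on the simplicial manifold $\calG^{(\bullet)}$ and the de Rham functor, not on $\omega$ or the tangent prolongation $T\calG$ explicitly.
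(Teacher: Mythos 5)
Your proposal is correct and follows essentially the same route as the paper: identify the cochains with $k$-forms on $\G^{(l)}$ via the natural identification in \eqref{eq:naturalidentifications} and Remark \ref{rmk:differsubcomplex}, then match $\delta_\omega$ with the simplicial differential by noting that the division map of the codomain degenerates to subtraction in $\mathbb{R}$; you merely spell out the term-by-term sign bookkeeping that the paper leaves implicit. One small slip: the codomain of $\hat\omega$ is the additive group $\mathbb{R}\tto\{*\}$, not the unit groupoid $\mathbb{R}\tto\mathbb{R}$, though your actual argument (base a point, $s$-projectability vacuous, $d\bar m(v,w)=v-w$) treats it correctly.
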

\begin{proof}
The correspondence between the cochains is straightforward from the description of the elements of $C^{\bullet}_{def}(\omega)$ in expression \eqref{eq:naturalidentifications}. And the correspondence between the differentials follows directly from the identification of the deformation differential $\delta_\omega$ with the simplicial differential of the differentiable subcomplex $C^{\bullet}_{k-lin,\ sk}(\bigoplus^{k}T\G)$ of Remark \ref{rmk:differsubcomplex} above, which in turn identifies with the simplicial differential of the simplicial complex of $k$-forms over the nerve of $\G$.

\end{proof}

With this setting, a deformation $\hat{\omega}_\e$ of $\omega$ by multiplicative forms on $\calG$ determines:
\begin{enumerate}
  \item a smooth family $X_\e=\frac{d}{d\e}\hat{\omega}_\e$ of 1-cocycles in $C^{\bullet}_{k\mathrm{-lin},\ \mathrm{sk}}(\oplus^{k}T\calG)\cong C^{\bullet}_{def}(\omega)$ (Proposition \ref{cociclomorph.});\\
	
  \item a family of cochain maps
$$(\hat{\omega}_\e)_{*}:C_{def}^{\bullet}(\bigoplus^{k} T\calG)\to C^{\bullet}_{def}(\omega)\cong\Omega^{k}(\G^{(\bullet)}),$$
\noindent which follows from Remark \ref{chain maps}.\\

\end{enumerate}

Notice, however, that the cocycle condition $(1)$ also follows by taking directly the derivative $\frac{d}{d\e}$ of the multiplicativity condition of $\omega_\e$ (equation \eqref{multiplicat.condition}), obtaining that $\frac{d}{d\e}\omega_\e\in\Omega^{k}_{mult}(\calG)\cong Z^{1}_{def}(\omega)$.\\

We define now a map of complexes $\mathcal{T}:C_{def}^{*}(\calG)\to C_{def}^{*}(\bigoplus^{k}T\calG)$ which, composing with the map $(\hat{\omega}_{\e})_{*}$ above, is a relevant element in the statement of the theorem below that generalizes Proposition \ref{Prop: 2-forms} to $k$-forms. Consider the \emph{tangent lift} $T:C^{*}_{def}(\calG)\longrightarrow C^{*}_{def}(T\calG)$ of deformation cochains which, as checked in \cite{deformationsofVBgroupoids}, turns out to be a cochain map. For $c\in C^{k}_{def}(\calG)$ it is defined by
$$Tc:=J_\calG\circ dc,$$
where $J_\calG:T(T\calG)\rightarrow T(T\calG)$ is the involution map of the double tangent bundle of $\calG$. 
We thus define $\mathcal{T}$ as the map $\oplus^{k}T$. Explicitly,
$$\oplus^{k}T:C^{*}_{def}(\calG)\rightarrow C^{*}_{def}(\bigoplus^{k}T\calG);\ \ \oplus^{k}T(c):=\oplus^{k}(Tc).$$

\begin{theorem}\label{Theorem: k-forms}
Let $\omega_\e\in\Omega^{k}(\calG)$ be a deformation of the multiplicative $k$-form $\omega\in\Omega^{k}(\calG)$. Assume that $\G$ is compact. Then, $\omega_\e=\Phi_\e^{*}\omega$ for a smooth family $\Phi_\e$ of groupoid automorphisms of $\G$, with $\Phi_0=Id_\G$, if and only if the family of multiplicative $k$-forms $X_\e:=\frac{d}{d\e}\omega_\e\in\Omega^{k}_{mult}(\G)$ has a smooth pre-image in $Z_{def}^{1}(\G)$ by the map $(\hat{\omega}_\e)_{*}\circ\mathcal{T}:Z_{def}^{1}(\G)\to Z^{1}(\Omega^{k}(\G^{(\bullet)}))=\Omega^{k}_{mult}(\G)$.
\end{theorem}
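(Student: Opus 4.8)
The plan is to reduce the statement to the classical Moser flow argument, the only nontrivial input being the identification of the map $(\hat{\omega}_\e)_{*}\circ\mathcal{T}$ with a Lie derivative. Concretely, I would first establish the key computational lemma that for any multiplicative vector field $Z\in Z^{1}_{def}(\G)\cong\mathfrak{X}_{mult}(\G)$ one has
\[
(\hat{\omega}_\e)_{*}\circ\mathcal{T}(Z)=\mathcal{L}_{Z}\omega_\e\in\Omega^{k}_{mult}(\G).
\]
This follows by unwinding the definitions: the tangent lift $\mathcal{T}(Z)=\oplus^{k}T(Z)$ is the complete lift of $Z$ to the bundle $\bigoplus^{k}T\G$, whose value at a point $(v_1,\dots,v_k)$ over $g$ is $\tfrac{d}{dt}\big|_{0}(\oplus^{k}T\psi^{t})(v_1,\dots,v_k)$, where $\psi^{t}$ denotes the flow of $Z$; pushing forward by $\hat{\omega}_\e$ and using the identity $\hat{\omega}_\e\big((\oplus^{k}T\psi^{t})(v_1,\dots,v_k)\big)=\big((\psi^{t})^{*}\omega_\e\big)_{g}(v_1,\dots,v_k)$, then differentiating at $t=0$, yields exactly $(\mathcal{L}_{Z}\omega_\e)_{g}(v_1,\dots,v_k)$. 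Multiplicativity of $\mathcal{L}_{Z}\omega_\e$ is automatic since both $Z$ and $\omega_\e$ are multiplicative, so the codomain of the map is correctly $Z^{1}(\Omega^{k}(\G^{(\bullet)}))\cong\Omega^{k}_{mult}(\G)$, compatibly with the description of cochains in \eqref{eq:naturalidentifications}.

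With this lemma, the condition that $X_\e=\tfrac{d}{d\e}\omega_\e$ has a smooth pre-image in $Z^{1}_{def}(\G)$ becomes the existence of a smooth family of multiplicative vector fields $Z_\e$ satisfying the Moser equation $\tfrac{d}{d\e}\omega_\e=\mathcal{L}_{Z_\e}\omega_\e$. For the forward implication, given $\omega_\e=\Phi_\e^{*}\omega$ with $\Phi_\e$ a smooth family of automorphisms and $\Phi_0=\mathrm{Id}_\G$, I would set $\psi^{\e}=\Phi_\e^{-1}$ and let $W_\e$ be its generating time-dependent vector field; since the $\psi^{\e}$ are automorphisms, $W_\e$ is multiplicative, so $Z_\e:=-W_\e\in Z^{1}_{def}(\G)$ is a smooth family. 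Differentiating the constant family $(\psi^{\e})^{*}\omega_\e=\omega_0$ and using the standard identity $\tfrac{d}{d\e}(\psi^{\e})^{*}\omega_\e=(\psi^{\e})^{*}\big(\mathcal{L}_{W_\e}\omega_\e+\tfrac{d}{d\e}\omega_\e\big)$ forces $\tfrac{d}{d\e}\omega_\e=-\mathcal{L}_{W_\e}\omega_\e=\mathcal{L}_{Z_\e}\omega_\e=(\hat{\omega}_\e)_{*}\circ\mathcal{T}(Z_\e)$, exhibiting $Z_\e$ as the desired smooth pre-image.

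For the converse, assume a smooth family $Z_\e$ of multiplicative vector fields with $\tfrac{d}{d\e}\omega_\e=\mathcal{L}_{Z_\e}\omega_\e$. Let $\psi^{\e}$ be the flow of the time-dependent multiplicative vector field $-Z_\e$; compactness of $\G$ guarantees completeness, and multiplicativity of $-Z_\e$ guarantees, by the fact that the flow of a multiplicative vector field acts by groupoid automorphisms (\cite{MX}, Prop. 3.5), that each $\psi^{\e}$ is an automorphism of $\G$. The identity
\[
\frac{d}{d\e}(\psi^{\e})^{*}\omega_\e=(\psi^{\e})^{*}\Big(\mathcal{L}_{-Z_\e}\omega_\e+\frac{d}{d\e}\omega_\e\Big)=0
\]
then shows $(\psi^{\e})^{*}\omega_\e=\omega_0$, so $\omega_\e=\Phi_\e^{*}\omega$ with $\Phi_\e:=(\psi^{\e})^{-1}$, a smooth family of automorphisms with $\Phi_0=\mathrm{Id}_\G$, as required. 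The main obstacle I anticipate is the careful verification of the key lemma, in particular making the natural identifications of \eqref{eq:naturalidentifications} and the involution map $J_\calG$ entering the definition of $\mathcal{T}$ cooperate so that the pushforward of the complete lift is \emph{literally} the Lie derivative, with the correct sign and without stray factors from the identification $C^{1}_{def}(\omega_\e)\cong\Omega^{k}(\G^{(\bullet)})$. Once the signs and conventions are pinned down, the remaining flow computations are routine and mirror the proof of Proposition \ref{Prop: 2-forms}.
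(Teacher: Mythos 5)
Your proposal is correct and follows essentially the same route as the paper: the paper's proof is exactly the Moser flow argument, with your ``key lemma'' $(\hat{\omega}_\e)_{*}\circ\mathcal{T}(Z)=\mathcal{L}_{Z}\omega_\e$ appearing implicitly as the paper's first displayed computation $-X_\l=(d\omega_\l)\bigl(\tfrac{d}{d\e}\big|_{\e=\l}\oplus^{k}TF_\e\circ\oplus^{k}T(F_\l^{-1})\bigr)$, followed by the same differentiation of $F_\e^{*}\omega_\e$. You are in fact slightly more complete than the paper, which writes out only the converse direction and leaves the forward implication and the completeness/automorphism points (compactness, \cite{MX} Prop.\ 3.5) tacit.
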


\begin{proof}

The smoothness of the pre-images of $-X_\l$ implies
$$-X_\l=(\omega_\l)_{*}\circ\oplus^{k} T(Z_\l),$$
for some smooth family $Z_\e$ of deformation 1-cocycles (i.e. multiplicative vector fields) of $\calG$. Thus, let denote by $F_\e$ the flow of the time-dependent vector field $\{Z_\e\}_\e$ (starting at time zero) covering $f_\e$, then we have

\begin{align*}
-\left.\frac{d}{d\e}\right|_{\e=\l}\omega_\e&=(d\omega_\l)\left(\oplus^{k} T(\left.\frac{d}{d\e}\right|_{\e=\l}(F_\e\circ F_\l^{-1}))\right)\\
&=(d\omega_\l)\left(\left.\frac{d}{d\e}\right|_{\e=\l} \left(\oplus^{k} TF_\e \circ \oplus^{k} T(F_\l^{-1})\right)\right).
\end{align*}
Equivalently,
$$-\left.\frac{d}{d\e}\right|_{\e=\l}\omega_\e\circ\oplus^{k} TF_\l=(d\omega_\l)\left(\left.\frac{d}{d\e}\right|_{\e=\l}\oplus^{k} TF_\e\right).$$
In other words,

\begin{align*}
\left.\frac{d}{d\e}\right|_{\e=\l}(F_\e^{*}\omega_\e)&=F_\l^{*}(\left.\frac{d}{d\e}\right|_{\e=\l}\omega_\e)+\left.\frac{d}{d\e}\right|_{\e=\l}F_\e^{*}\omega_\l\\
&=0.
\end{align*}

This says that
$$\omega_\e=\Phi_\e^{*}\omega,$$
where $\Phi_\e=F_{\e}^{-1}$ is a smooth family of automorphisms of $\calG$ due to the multiplicativity of the time-dependent vector field $Z_\e$.

\end{proof}

\begin{remark}
The previous Theorem reduces to the Proposition \ref{Prop: 2-forms} when $\omega_\e$ is taken as a family of multiplicative and symplectic 2-forms on $\calG$.
\end{remark}

The following theorem now tells us about the meaning of the smooth cohomological triviality of the family of 1-cocycles $X_\e$ in terms of the deformation $\omega_\e$. Its proof is similar to the previous one. This fact will be approached a bit more for the particular case of symplectic groupoids in \cite{CardMS}.

\begin{theorem}
Let $\omega_\e\in\Omega^{k}(\calG)$ be a deformation of the multiplicative $k$-form $\omega\in\Omega^{k}(\calG)$. Assume that the groupoid $\G\tto M$ is compact.
Then, smooth exactness of the familiy of cocycles $X_\e$ amounts to the fact that the deformation $\omega_\e$ is of the form $\omega_\e=\Phi_\e^{*}\omega+s^{*}\beta_\e-t^{*}\beta_\e$; where $\Phi_\e$ is a smooth family of groupoid automorphisms of $\G$, with $\Phi_0=Id_\G$ and $\beta_\e$ is a smooth family of 2-forms on $M$ with $\beta_0=0$.
\end{theorem}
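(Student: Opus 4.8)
The plan is to run the Moser scheme of Theorem \ref{Theorem: k-forms} verbatim, but now relaxing both the hypothesis and the conclusion by simplicial coboundaries. Concretely, I read ``smooth exactness of $X_\e=\tfrac{d}{d\e}\omega_\e$'' as the exactness of its class $\bar X_\e$ in the cokernel of $(\hat{\omega}_\e)_{*}\circ\mathcal{T}$, i.e. the existence of smooth families $Z_\e\in Z^{1}_{def}(\G)$ of multiplicative vector fields and $\beta_\e\in\Omega^{k}(M)$ (these are $2$-forms precisely when $k=2$, as in the stated symplectic case) with
\[ X_\e+(\hat{\omega}_\e)_{*}\circ\mathcal{T}(Z_\e)=\delta_\omega\beta_\e=s^{*}\beta_\e-t^{*}\beta_\e . \]
This is the natural reading making the ``amounts to'' an equivalence: Theorem \ref{Theorem: k-forms} is exactly the special case in which no $\delta_\omega$-coboundary is allowed, and adding the freedom of a simplicial primitive $\beta_\e$ on the left corresponds term-by-term to adding the correction $s^{*}\beta_\e-t^{*}\beta_\e$ on the right, in complete analogy with the cokernel-exactness characterisations of triviality up to automorphisms in Section \ref{Section:Triviality}. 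Here I use that $(\hat{\omega}_\e)_{*}\circ\mathcal{T}(Z)$ is the multiplicative Lie derivative $\mathcal{L}_{Z}\omega_\e$, and that under $C^{*}_{def}(\omega)\cong(\Omega^{k}(\G^{(\bullet)}),\delta)$ the differential on $0$-cochains $\beta\in\Omega^{k}(M)$ is $\delta_\omega\beta=s^{*}\beta-t^{*}\beta$.

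For the forward implication I would first integrate the multiplicative field: by compactness of $\G$ the time-dependent flow $F_\e$ of $\{Z_\e\}$ from time $0$ is defined for $\e$ near $0$, and multiplicativity of $Z_\e$ forces each $F_\e$ to be a groupoid automorphism covering a diffeomorphism $f_\e$ of $M$ (flow of a multiplicative vector field, cf. \cite{MX}, Prop. 3.5). The crux is then the single computation
\[ \frac{d}{d\e}F_\e^{*}\omega_\e=F_\e^{*}\big(\mathcal{L}_{Z_\e}\omega_\e+X_\e\big)=F_\e^{*}\big(s^{*}\beta_\e-t^{*}\beta_\e\big)=s^{*}(f_\e^{*}\beta_\e)-t^{*}(f_\e^{*}\beta_\e)=\delta_\omega(f_\e^{*}\beta_\e), \]
where the third equality uses the intertwining $s\circ F_\e=f_\e\circ s$ and $t\circ F_\e=f_\e\circ t$. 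Thus $\tfrac{d}{d\e}F_\e^{*}\omega_\e$ is smoothly $\delta_\omega$-exact, and the scalar Moser integration already used in Proposition \ref{Prop: 2-forms} applies with primitive $\gamma_\e:=\int_0^{\e}f_\lambda^{*}\beta_\lambda\,d\lambda$ (so $\gamma_0=0$), yielding $F_\e^{*}\omega_\e=\omega+s^{*}\gamma_\e-t^{*}\gamma_\e$. Pulling back by $\Phi_\e:=F_\e^{-1}$ and commuting $s,t$ past $\Phi_\e$ once more gives $\omega_\e=\Phi_\e^{*}\omega+s^{*}(\phi_\e^{*}\gamma_\e)-t^{*}(\phi_\e^{*}\gamma_\e)$ with $\phi_\e:=f_\e^{-1}$, so the asserted form holds with $\beta_\e:=\phi_\e^{*}\gamma_\e$ and $\beta_0=0$.

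The converse I would obtain by differentiating $\omega_\e=\Phi_\e^{*}\omega+s^{*}\beta_\e-t^{*}\beta_\e$: writing $V_\e$ for the multiplicative generator of $\Phi_\e$ one has $\tfrac{d}{d\e}\Phi_\e^{*}\omega=\Phi_\e^{*}\mathcal{L}_{V_\e}\omega=\mathcal{L}_{Z_\e}\Phi_\e^{*}\omega$ with $Z_\e:=\Phi_\e^{*}V_\e$ again multiplicative; since $Z_\e$ is $s$- and $t$-related to its base field $a_\e$ on $M$, naturality turns $\mathcal{L}_{Z_\e}\delta_\omega\beta_\e$ into $\delta_\omega(\mathcal{L}_{a_\e}\beta_\e)$, and collecting terms gives $X_\e=(\hat{\omega}_\e)_{*}\circ\mathcal{T}(Z_\e)+\delta_\omega(\dot\beta_\e-\mathcal{L}_{a_\e}\beta_\e)$, i.e. $\bar X_\e$ is smoothly exact in the cokernel. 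I expect the main obstacle to be not any one calculation but the bookkeeping of the two transgressing directions: one must keep the automorphism part integrated from $Z_\e$ (whose completeness and automorphism property rest on compactness of $\G$ and multiplicativity) cleanly separated from the simplicially exact correction, and verify that the residual $F_\e^{*}(s^{*}\beta_\e-t^{*}\beta_\e)$ stays in $\mathrm{Im}(\delta_\omega)$ after transport by the flow, which is exactly where the intertwining of $F_\e$ with $s$ and $t$ is indispensable. Once this is arranged the argument collapses to the scalar Moser step of Proposition \ref{Prop: 2-forms}.
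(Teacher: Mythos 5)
Your proof is correct and follows exactly the route the paper has in mind: the paper offers no independent argument for this theorem beyond the remark that its proof is ``similar to the previous one'', and what you write is precisely the Moser scheme of Theorem \ref{Theorem: k-forms} (and Proposition \ref{Prop: 2-forms}) with the simplicial coboundary $s^{*}\beta_\e-t^{*}\beta_\e$ carried along through the flow. Your interpretive choice is also the right one, and in fact the only workable one: reading ``smooth exactness'' literally as $\delta_\omega$-exactness in $\Omega^{k}(\G^{(\bullet)})$ would make the converse false, since $\mathcal{L}_{Z}\omega$ need not be simplicially exact for $Z$ multiplicative (already on a trivial bundle of circle groups over $S^{1}$, where $s=t$ kills all coboundaries but $\mathcal{L}_{a(x)\partial_x}\bigl(b(x)\,d\theta\bigr)\neq0$), so the hypothesis must be exactness of $\bar{X}_\e$ modulo the image of $(\hat{\omega}_\e)_{*}\circ\mathcal{T}$, exactly as you take it. The two computations you isolate --- $\tfrac{d}{d\e}F_\e^{*}\omega_\e=F_\e^{*}(\delta_\omega\beta_\e)=\delta_\omega(f_\e^{*}\beta_\e)$ in one direction and $\mathcal{L}_{Z_\e}\delta_\omega\beta_\e=\delta_\omega(\mathcal{L}_{a_\e}\beta_\e)$ in the other, both resting on the intertwining of the flow with $s$ and $t$ --- are the whole content of the omitted argument.
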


\bibliographystyle{acm} 
\bibliography{bibliografia}  

\Addresses

\end{document}